\newcommand{\g}[1]{\gamma_{#1}}
\newcommand{\Dset}{\mathop{C_{\scriptscriptstyle{\!\Rset\to \Cset}}^{\infty}}}
\newcommand{\mlim}{\displaystyle{\scriptstyle{\mom}}\text{-}\!\lim_{\hspace*{-5mm}h\to 0}}
\newcommand{\Cset}{\mathop{\mathds{C}}}
\newcommand{\Nset}{\mathop{\mathds{N}}}
\newcommand{\Rset}{\mathop{\mathds{R}}}
\newcommand{\sgn}{\mathop{\mathrm{sgn}}}
\newcommand{\st}[1]{\mathop{\mathds{S}(\frac{1}{#1\rho})}}
\newcommand{\sech}{\mathop{\mathrm{sech}}}
\newcommand{\mom}{\ensuremath{{\mathcal{M}}}}
\newcommand{\Mi}{~_{\!\!\!{\mathcal{M}}}}
\newcommand{\Mscal}[2]{\langle #1, #2\rangle_{~_{\!\!\!\!{\mathcal{M}}}}}
\newcommand{\K}[1]{\ensuremath{{\mathcal{K}}^{#1}}}
\newcommand{\LL}{\ensuremath{L_{\!{\scriptscriptstyle{\mathcal{M}}}}^2}}
\newcommand{\LB}{\ensuremath{{\mathbf L}_{{\!{\scriptscriptstyle{\mathcal{M}}}}}^2}}
\newcommand{\LT}{\ensuremath{L^2_{d\alpha} }}
\newcommand{\norm}[1]{\left\|#1\right\|_{\scriptscriptstyle{\!{\mathcal{M}}}}}
\newcommand{\noi}[1]{\left\| #1\right\|_{d\alpha}}
\newcommand{\doti}[2]{\langle #1, #2\rangle_{d\alpha}}
\newcommand{\KK}{\ensuremath{\{\K{n}_z\}^{\scriptscriptstyle{\mathcal{M}}}_{n\in \Nset}}}
\newcommand{\PP}{\ensuremath{\{p_{n}(\omega)\}_{n\in \Nset}}}
\newcommand{\sinc}[1]{\mathrm{sinc}\, #1}
\newcommand{\BL}[1]{\ensuremath{\mathbf{BL}(#1)}}
\newcommand{\CE}{\mathrm{CE}^{\!\scriptscriptstyle{\mathcal{M}}}}
\newcommand{\CA}{\mathrm{CA}^{\!\scriptscriptstyle{\mathcal{M}}}}
\newcommand{\mm}{\mathbi{m}}
\newcommand{\dd}{{\mathop{{D}}}}
\newcommand{\da}{{\mathop{{d}\alpha(\omega)}}}
\newcommand{\e}{\mathop{{\mathrm{e}}}}
\newcommand{\s}[1]{\mathop{{\mathrm{s}(#1)}}}
\newcommand{\ds}[1]{\mathop{{\mathrm{ds}(#1)}}}
\newcommand{\ii}{\mathop{\dot{\imath}}}
\newcommand{\wght}{\mathop{\mathrm{w}}}
\def\mathbi#1{\textbf{\textit #1}}
\newcommand{\CC}{\ensuremath{\mathcal{B}}}
\newcommand{\Norm}[1]{\left\|#1\right \|^{\scriptscriptstyle{{\mathcal{M}}}}}
\newcommand{\LIM}[1]{\lim_{#1\to \infty}}
\newcommand{\PT}[2]{p_{#1}^{\scriptscriptstyle{T}}(#2)}
\newcommand{\Langle}{\langle\hspace*{-1mm}\langle}
\newcommand{\Rangle}{\rangle\hspace*{-1mm}\rangle}
\newcommand{\FT}{\mathcal{F}_{\!\!\!\!\!\Mi}}
\newtheorem{theorem}{Theorem}[section]
\newtheorem{corollary}[theorem]{Corollary}
\newtheorem{proposition}[theorem]{Proposition}
\newtheorem{conjecture}[theorem]{Conjecture}
\newtheorem{lemma}[theorem]{Lemma}
\newtheorem{definition}[theorem]{Definition}
\title{Chromatic Derivatives, Chromatic Expansions and Associated Spaces II}
\date{}
\author{Aleksandar Ignjatovi\'{c}}
\address{School of Computer Science and Engineering,
University of New South Wales Sydney, NSW 2052, Australia}
\email{aleksignjat@gmail.com}
\keywords{orthogonal polynomials, special functions, series expansions, almost periodic functions, Fourier transforms, chromatic derivatives, chromatic expansions, signal representation, sampling theorems, signal processing}
\subjclass[2000]{41A58, 42C15, 94A12, 94A20}
\begin{document}

\begin{abstract}This paper extends and corrects some of the results from our previous paper   ``\emph{Chromatic Derivatives, Chromatic Expansions and Associated Spaces}" which appeared in the \emph{East Journal on Approximations}, vol 15, pp. 263-302 (2009).
We study differential operators associated with families of polynomials orthonormal with respect to certain measures. These operators, when applied to the Fourier transforms of such measures, produce basis functions for expansions of functions analytic on some complex domains. For many classical families of orthogonal polynomials these basis functions are the familiar special functions, such as the Bessel and the spherical Bessel functions.  Many familiar identities involving such special functions turn out to be just special cases of such expansions. We also use these differential operators to introduce some new spaces of almost periodic functions. The notions we study here have been successfully applied to signal processing, for example to recovery of band-limited signals from their non-uniform samples as well as from their zero crossings and the locations of their extremal points.
\end{abstract}

\maketitle

\section{Motivation}

Signal processing mostly deals with band-limited signals of finite energy, represented by
real valued continuous $L^2$ functions whose Fourier transform is supported within a finite interval. 
By choosing the unit time interval appropriately, such support interval is usually taken to be  
$[-\pi,\pi]$. The Hilbert space of such functions with the usual scalar product is denoted by $\BL{\pi}$.  

Foundations of the classical digital signal processing
rest on the Whittaker--Kotel'nikov--Nyquist--Shannon Sampling
Theorem (for brevity the Shannon Theorem):
every signal $f\in\BL{\pi}$ can be represented using its samples
at integers and \textit{the cardinal
sine function} $\sinc t =\sin \pi t/(\pi t)$, as
\begin{equation}\label{nexp}
f(t)=\sum_{n=-\infty}^{\infty}f(n)\; \sinc (t-n).
\end{equation}

Such signal representation is of \textit{global nature},
because it involves samples of the signal at integers of
arbitrarily large absolute value. In fact, since for a fixed
$t$ the values of $\sinc (t-n)$ decrease only as ${1}/{(\pi |n|)}$ as $|n|$ grows,
the truncations of the above series do not provide satisfactory
local signal approximations: to guarantee a small error of 
approximation around $t=0$ one would need a large number of distant samples.

On the other hand, since every signal $f\in\BL{\pi}$ is a restriction to $\Rset$
of an entire function, such a signal can also be represented by the Taylor series,
\begin{equation}\label{taylor}
f(t)=\sum_{n=0}^{\infty}f^{(n)}(0)\;\frac{t^n}{n!}.
\end{equation}
Such a series converges uniformly on every finite interval, and
its truncations provide good local signal approximations. Since
the values of the derivatives $f^{(n)}(0)$ are determined by
the values of the signal in an arbitrarily small neighborhood
of zero, the Taylor expansion is of \textit{local nature}. Thus, in
this sense, the Shannon and the Taylor expansions are of
complementary nature.

However, unlike the Shannon expansion, the Taylor expansion has
found very limited use in signal processing, due to several
problems associated with its application to empirically sampled
signals.
\begin{enumerate}
\item[(I)] Numerical evaluation of higher order derivatives
    of a function given by its samples is very noise
    sensitive. Generally, one is cautioned against numerical
    differentiation:
    \begin{quote}``\ldots
    numerical differentiation should be avoided whenever
    possible, particularly when the data are empirical and
    subject to appreciable errors of
    observation''\cite{Hil}.\end{quote}

\item[(II)] The Taylor expansion of  a signal $f\in\BL{\pi}$
    converges non-uniformly on $\Rset$; its truncations
    have rapid error accumulation when moving away from the center
    of expansion and are unbounded.

\item[(III)] Since the Shannon expansion of a signal $f\in\BL{\pi}$
    converges to $f$ in $\BL{\pi}$, the action of a
    continuous linear shift invariant operator (in signal
    processing terminology, a \textit{filter})  $F$
    can be expressed using samples of
    $f$ and the \textit{impulse response}\footnote{The impulse response
    of a filter $F$ is usually defined as the image $F[\delta]$ of the Dirac's $\delta$ distribution, but since we are interested in band limited signals only, if the range of $F$ is contained in $\BL{\pi}$, then such impulse response is equal to $F[\sinc(\pi t)]$.} of
    $F$:
\begin{equation}\label{filter-ny}
F[f](t)=\sum_{n=-\infty}^{\infty}f(n)\;F[\sinc\!](t-n).
\end{equation}
In contrast, the polynomials obtained by truncating the
Taylor series do not belong to $\BL{\pi}$ and nothing
similar to \eqref{filter-ny} is true of the Taylor
expansion.
\end{enumerate}

Chromatic derivatives were introduced in \cite{IG0} to overcome
problem (I) above; the chromatic approximations were introduced
in \cite{IG00} to obtain local approximations of band-limited
signals which do not suffer from problems (II) and (III).

\subsection{Numerical differentiation of band limited signals}

To understand the problem of numerical differentiation of
band-limited signals, we consider an arbitrary $f\in \BL{\pi}$
and its Fourier transform $\widehat{f}(\omega)$; then
\[f^{(n)}(t)=\frac{1}{2\pi}\int_{-\pi}^{\pi}(\ii\omega)^n
\widehat{f}(\omega){\e}^{\ii\omega t}d\omega.\]
Figure~\ref{derivatives} (left) shows, for $n= 15$ to $n=18$,
the plots of $(\omega/\pi)^{n}$, which are, save a factor of
$\ii^n$, the \emph{symbols}, or, in signal processing terminology, the
\textit{transfer functions} of the normalized derivatives
$1/\pi^n \; {\mathrm d}^n/{\mathrm d }t^n$. These plots reveal
why there can be no practical method for any reasonable
approximation of derivatives of higher orders. Multiplication
of the Fourier transform of a signal by the transfer function
of a normalised derivative of higher order obliterates the
Fourier transform of the signal, leaving only its edges, which
in practice contain only noise. Moreover, the graphs of the
transfer functions of the normalised derivatives of high orders
and of the same parity cluster so tightly together that they
are allmost indistinguishable; see Figure~\ref{derivatives}
(left).\footnote{If the derivatives are not normalised, their
values can be very large and are again determined essentially
by the noise present at the edge of the bandwidth.}

However, contrary to a common belief, these facts \textit{do
not} preclude numerical evaluation of all differential
operators of higher orders, but only indicate that, from a
numerical perspective, the set of the derivatives $\{f,
f^\prime, f^{\prime\prime},\ldots\}$ is a very poor basis of the
vector space of linear differential operators with constant 
coefficients. We now show how to obtain a basis for this space
consisting of numerically robust linear differential operators.

\subsection{Chromatic derivatives}
Let polynomials $p^{\scriptscriptstyle{L}}_n(\omega)$ be
obtained by normalizing and scaling the Legendre polynomials,
so that
\begin{equation*}
\frac{1}{2\pi}\int_{-\pi}^{\pi}p^{\scriptscriptstyle{L}}_{\scriptstyle{n}}
(\omega)\;p^{\scriptscriptstyle{L}}_{\scriptstyle{m}}(\omega){\rm d}\omega=\delta(m-n)
\end{equation*}
where $\delta(n)$ is the Kroneker function $\delta(0)=1$ and $\delta(n)=0$ for all $n\neq 0$. We define operator polynomials\footnote{Thus, obtaining $\K{n}_t$
involves replacing $\omega^k$ in
$p^{\scriptscriptstyle{L}}_{\scriptstyle{n}}(\omega)$ with
$(-\ii)^k\,{\rm d}^k/{\rm d} t^k$ for all
$k\leq n$. If $\K{n}_t$ is applied to a function of a single
variable, we drop index $t$ in $\K{n}_t$.}
which we call \emph{the chromatic derivatives associated with the Legendre polynomials}, 
\begin{equation*}\label{dop} \K{n}_t={\ii}^{n}
p^{\scriptscriptstyle{L}}_{\scriptstyle{n}}\left(-\ii\;\frac{{\rm d}}{{\rm d} t}\right).
\end{equation*}
Since polynomials
$p^{\scriptscriptstyle{L}}_{\scriptstyle{n}}(\omega)$ contain
only powers of the same parity as $n$, operators $\K{n}$ have
real coefficients, and it is easy to verify that 
\begin{equation*}
\K{n}_t[{\e}^{\ii\omega t}]=
{\ii}^np^{\scriptscriptstyle{L}}_{\scriptstyle{n}}(\omega)\,{\e}^{\ii\omega t}.
\end{equation*}
In fact, chromatic derivatives were defined precisely with the purpose of having this property; this way, for all $f\in\BL{\pi}$, since $\widehat{f}(\omega)\;{\e}^{\ii\omega t}$ is bounded and continuous in $\omega$ and $t$ on $(-\pi,\pi)\times (-\infty, \infty)$ and continuously differentiable with respect to $t$, we have
\[
\K{n}[f](t)=\frac{1}{2\pi}\int_{-\pi}^{\pi}{\ii}^n
p^{\scriptscriptstyle{L}}_{\scriptstyle{n}}(\omega)\widehat{f}(\omega)\;
{\e}^{\ii\omega t}{\rm d}\omega.
\]

Figure~\ref{derivatives} (right)
shows the plots of
$p^{\scriptscriptstyle{L}}_{\scriptstyle{n}}(\omega)$, for $n=
15$ to $n=18$, which are the transfer functions (again save a
factor of $\ii^n$) of the corresponding operators $\K{n}$.
Unlike the transfer functions of the (normalized) derivatives
$1/\pi^n\;{\rm d}^n/{\rm d} t^n$ shown on the left, the transfer functions of the
chromatic derivatives $\K{n}$ form a family of well separated,
interleaved and increasingly refined comb filters. Instead of
obliterating, such operators encode the features of the Fourier
transform of the signal (in signal processing terminology, the
\textit{spectral features} of the signal). For this reason, we
call operators $\K{n}$ the {\em chromatic derivatives\/}
associated with the Legendre polynomials.

\begin{figure}
    \begin{center}
     \includegraphics[width=4.7in]{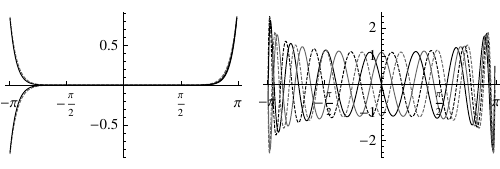}
      \caption{Graphs of $\left(\frac{\omega}{\pi}\right)^n$ (left) and of
      $P^{\scriptscriptstyle{L}}_{\scriptstyle{n}}(\omega)$
      (right) for $n=15-18$.}
      \label{derivatives}
    \end{center}
\end{figure}
Chromatic derivatives can be accurately and robustly evaluated from samples of the signal taken at the usual rate, slightly higher than the Nyquist rate, so that the useful bandwidth of the signal is contained within interval $[-.9\pi, .9\pi]$, thus also providing a slight guard band at the edges of the bandwidth. On Figure~\ref{remezLegendre32} the plot on the left shows the transfer function $\left(\frac{\omega}{\pi}\right)^{32}$ of the normalised standard derivative $\frac{1}{\pi^{32}}\frac{d^{32}}{dt^{32}}$ of order $32$ (gray) and the transfer function of its 129 tap FIR approximation\footnote{FIR filters or Finite Impulse Response filters with $2N+1$ coefficients  $c_k$ (in signal processing terminology \emph{taps}) are given by $\mathcal{A}[f](t)=\sum_{k=-N}^{N}c_{k}\,f(t+k)$.}  (black); the plot in the middle represents the transfer function of the ideal filter corresponding to the chromatic derivative $\K{32}$ of order 32 (gray) and the transfer function of its 129 tap FIR approximation (black). The
pass-band of both implemented filters is 90\% of the full bandwidth $[-\pi,\pi]$; the stop-band is $|\omega|\geq 0.98\pi$; both filters were designed using the Remez exchange algorithm \cite{Opp}. The plot on the right shows the error of a 129 tap FIR approximation of $\K{32}$ over its bandpass.  As it can clearly be seen on that figure, the FIR filter approximating the normalised standard derivative is essentially useless; on the other hand the FIR approximation of the chromatic derivative of the same order has an error of less than $10^{-6}$ within the entire signal bandwidth $|\omega|\leq .9\pi$.
Implementations of filters for
operators $\K{n}$ of orders higher than 100 have been tested
in practice and proved to be both extremely accurate and noise robust given sufficient (but still reasonable) number of taps (i.e., samples of the signal); see \cite{IWK1} and \cite{IWK2}.
\begin{figure}
    \begin{center}
      \includegraphics[width=7.2in]{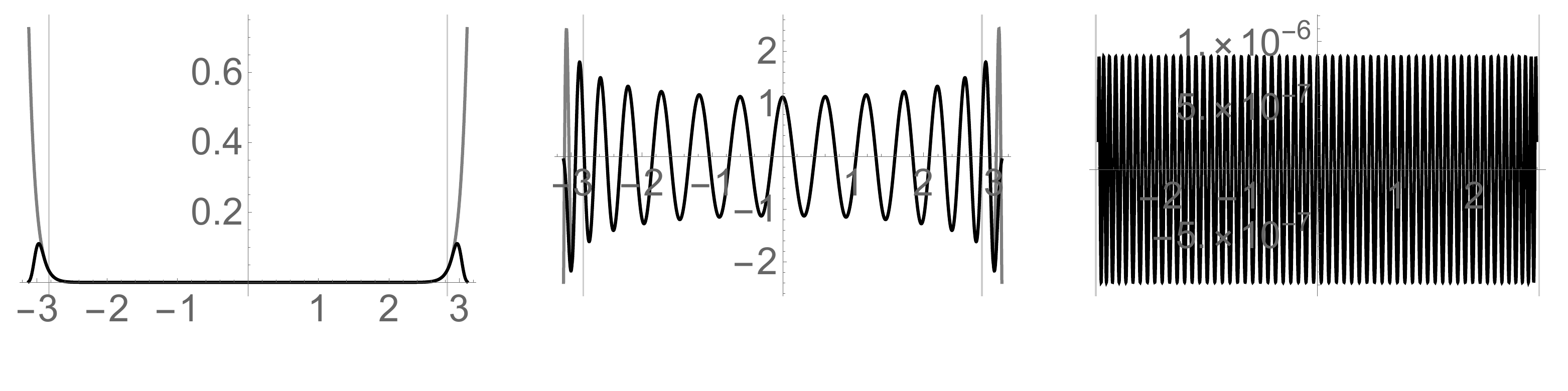}
      \caption{Left: transfer function of the (normalised) standard derivative $\frac{1}{\pi^{32}}\frac{d^{32}}{dt^{32}}$ of order 32 (gray) and of its FIR filter approximation (black). Center: transfer function of the chromatic derivative $\K{32}$ (gray) and of its FIR approximation (black). Right: the error of a 129 tap FIR filter approximation for $\K{32}$ within its bandpass $[-.9\pi,.9\pi]$.}
      \label{remezLegendre32}
    \end{center}
\end{figure}

In particular, one can show that
\begin{equation}\label{cdersinc}
\K{n}[\sinc](t)=(-1)^n\;\sqrt{2n+1}\;{\mathrm j}_{n}(\pi t),
\end{equation}
where ${\mathrm j}_n(x)$ is the spherical Bessel function of the first kind of order $n$. 
Note that \eqref{nexp} and \eqref{cdersinc} imply that
\begin{equation}\label{no}
\K{k}[f](t)=\sum_{n=-\infty}^{\infty}f(n)\;\K{k}[\sinc\!](t-n)
=\sum_{n=-\infty}^{\infty}f(n)\;(-1)^k\;\sqrt{2k+1}\;{\mathrm j}_k(\pi(t-n)).
\end{equation}
However, in practice, the values of $\K{k}[f](t)$, especially
for larger values of $k$, \textit{cannot} be obtained from the
Nyquist rate samples by differentiating the Shannon expansion 
as in \eqref{no}. This is
due to the fact that functions $\K{k}[\sinc\!](t-n)$ decay very
slowly as $|n|$ grows; see Figure~\ref{j15} (left). Thus, to
achieve any accuracy, a truncated Shannon expansion would 
have to contain an extremely large number of terms.

\begin{figure}
    \begin{center}
      \includegraphics[width=6.2in]{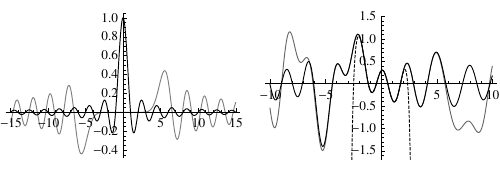}
      \caption{(left) plots of $\sinc(t)$ (black) and $\K{15}[\sinc](t)$ (grey); (right) A band-limited signal (black) and its chromatic approximation of order 15 (grey) and its Taylor approximation of the same order (dashed).}
      \label{j15}
    \end{center}
\end{figure}

\subsection{Chromatic expansions}
The above considerations show that numerical evaluation of 
the chromatic
derivatives associated with the Legendre polynomials does not
suffer problems which preclude numerical evaluation of the
``standard'' derivatives of higher orders. On the other hand,
the chromatic expansions, defined in Proposition~\ref{app}
below, were conceived as a solution to the problems associated with
the use of the Taylor expansion.\footnote{Propositions stated
in this section are special cases of general propositions
proved in subsequent sections.}

\begin{proposition}\label{app}
Let $\K{n}$ be the chromatic derivatives associated with the
Legendre polynomials, let ${\mathrm j}_n(x)$ be the spherical
Bessel function of the first kind of order $n$, and let $f$  be
an arbitrary entire function; then for all $z,u\in\Cset$,
\begin{eqnarray}
f(z)&=&\sum_{n=0}^{\infty}\;\K{n}[f](u)\;  \K{n}_u[\sinc (z-u)]\label{CEL}\\
&=&\sum_{n=0}^{\infty}(-1)^n\;\K{n}[f](u)\;  \K{n}[\sinc ](z-u)\label{CEL1}\\
&=&\sum_{n=0}^{\infty}\K{n}[f](u)\;\sqrt{2n+1}\;  {\mathrm j}_n (\pi(z-u))
\end{eqnarray}
Moreover, if  $f\in\BL{\pi}$, then the series converges uniformly on
$\Rset$ and also converges with respect to the $L^2$ norm of the space $\BL{\pi}$.\footnote{Note that $\sinc(t)={\mathrm j}_0(\pi t)$.}
\end{proposition}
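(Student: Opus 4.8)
The plan is to prove the first equality \eqref{CEL} for an arbitrary entire $f$, deduce the other two from it, and treat the $\BL{\pi}$ assertions separately.

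\emph{Reduction.} First I would record the bookkeeping that makes the three series term‑by‑term equal. Because $p^{\scriptscriptstyle{L}}_{\scriptstyle{n}}$ contains only powers of $\omega$ of parity $n$, the chain rule $\tfrac{d^{k}}{du^{k}}g(z-u)=(-1)^{k}g^{(k)}(z-u)$ gives $\K{n}_u[\sinc(z-u)]=(-1)^{n}\K{n}[\sinc](z-u)$, and \eqref{cdersinc} turns this into $\K{n}_u[\sinc(z-u)]=\sqrt{2n+1}\,{\mathrm j}_n(\pi(z-u))$. Hence it suffices to prove, for all $z,u\in\Cset$,
\[
f(z)=F(z):=\sum_{n=0}^{\infty}(-1)^{n}\,\K{n}[f](u)\,\K{n}[\sinc](z-u).
\]

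\emph{$F$ is entire.} I would show this series converges absolutely and uniformly on compacta. For $\K{n}[f](u)$, insert Cauchy's formula $f^{(k)}(u)=\tfrac{k!}{2\pi\ii}\oint_{|w-u|=R}f(w)(w-u)^{-k-1}\,dw$ into $\K{n}[f](u)=\ii^{n}\sum_{k}c_{n,k}(-\ii)^{k}f^{(k)}(u)$, where $p^{\scriptscriptstyle{L}}_{\scriptstyle{n}}(\omega)=\sum_{k}c_{n,k}\omega^{k}$, to get $|\K{n}[f](u)|\le M_R(u)\sum_{k}|c_{n,k}|\,k!\,R^{-k}$ with $M_R(u)=\max_{|w-u|=R}|f|$. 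Since $p^{\scriptscriptstyle{L}}_{\scriptstyle{n}}$ is a scaled Legendre polynomial, the Laplace integral for $P_n$ gives $\sum_{k}|c_{n,k}|\,y^{k}\le\sqrt{2n+1}\,(2y/\pi+1)^{n}$, and writing $k!R^{-k}=\int_0^\infty e^{-s}(s/R)^{k}\,ds$ then yields $|\K{n}[f](u)|\le\sqrt{2n+1}\,M_R(u)\,e^{\pi R/2}(2/\pi R)^{n}n!$. On the other side, $|\K{n}[\sinc](z-u)|=\sqrt{2n+1}\,|{\mathrm j}_n(\pi(z-u))|\le\sqrt{2n+1}\,(\pi|z-u|)^{n}e^{\pi^{2}|z-u|^{2}/4}/(2n+1)!!$, while $n!/(2n+1)!!\le 2^{-n}$. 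Multiplying, the factor $n!$ cancels against $(2n+1)!!$, and the general term is bounded by $\mathrm{const}(R,u)\,(2n+1)(|z-u|/R)^{n}$; taking $R=2\delta$ makes this summable uniformly on $\{|z-u|\le\delta\}$, so $F$, being a locally uniform limit of entire functions, is entire.

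\emph{$F=f$.} Since $F$ is entire and the convergence is locally uniform, $\K{k}_z$ may be applied to the series term by term. Plugging $\K{n}[\sinc](t)=\tfrac{1}{2\pi}\int_{-\pi}^{\pi}\ii^{n}p^{\scriptscriptstyle{L}}_{\scriptstyle{n}}(\omega)\,\e^{\ii\omega t}\,d\omega$ (valid since $\sinc\in\BL{\pi}$), the identity $\K{k}_z[\e^{\ii\omega z}]=\ii^{k}p^{\scriptscriptstyle{L}}_{\scriptstyle{k}}(\omega)\e^{\ii\omega z}$, and the orthonormality of $\{p^{\scriptscriptstyle{L}}_{\scriptstyle{n}}\}$ into the $z$‑derivatives gives the biorthogonality relation $\K{k}_z\!\big[\K{n}[\sinc](z-u)\big]\big|_{z=u}=(-1)^{n}\delta(n-k)$, whence $\K{k}[F](u)=\K{k}[f](u)$ for every $k\ge0$. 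Because $p^{\scriptscriptstyle{L}}_{\scriptstyle{n}}$ has degree exactly $n$, each $\K{n}$ is a polynomial of degree exactly $n$ in $D=\tfrac{d}{dz}$, so $\K{0},\dots,\K{N}$ span the same operators as $I,D,\dots,D^{N}$ with invertible (triangular) transition; thus $\K{k}[F-f](u)=0$ for all $k$ forces $(F-f)^{(k)}(u)=0$ for all $k$, and the entire function $F-f$ vanishes identically. This proves \eqref{CEL}, hence the other two identities, for all $z,u\in\Cset$.

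\emph{The band‑limited case.} For $f\in\BL{\pi}$ and real $z,u$ I would use Parseval. The function $g_u(\omega):=\widehat f(\omega)\e^{\ii\omega u}$ lies in $L^{2}[-\pi,\pi]$, and in the real orthonormal basis $\{p^{\scriptscriptstyle{L}}_{\scriptstyle{n}}\}$ of $L^{2}([-\pi,\pi],\tfrac{d\omega}{2\pi})$ its Fourier coefficients are $(-\ii)^{n}\K{n}[f](u)$; if $\sigma_N$ denotes its $N$‑th Fourier partial sum, then the $N$‑th partial sum $S_N$ of the chromatic series has $\widehat{S_N}=\e^{-\ii\omega u}\,\mathbf{1}_{[-\pi,\pi]}\,\sigma_N$. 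Hence $\|f-S_N\|_{\BL{\pi}}$ is, up to a fixed constant, $\|g_u-\sigma_N\|_{L^{2}}\to 0$; and for real $z-u$, Cauchy--Schwarz gives $|f(z)-S_N(z)|\le\|g_u-\sigma_N\|_{L^{2}}\,\|\e^{-\ii\omega(z-u)}\|_{L^{2}}=\|g_u-\sigma_N\|_{L^{2}}$, so the convergence is also uniform on $\Rset$. The one delicate point of the whole argument is the convergence estimate for $F$: the $n!$‑type growth that Cauchy's inequality forces on $\K{n}[f](u)$ for \emph{every} entire $f$ (only $M_R(u)$ reflects how large $f$ is) must be seen to be exactly undone by the $1/(2n+1)!!$ decay of ${\mathrm j}_n(\pi(z-u))$; the rest is a careful but routine combination of the orthonormality of the $p^{\scriptscriptstyle{L}}_{\scriptstyle{n}}$, the Fourier formula for $\K{n}[\sinc]$, and Taylor's theorem.
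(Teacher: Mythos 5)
Your proposal is correct, and its overall architecture (growth bound on $\K{n}[f](u)$, decay bound on the basis functions, matching of all chromatic derivatives at $u$, then Parseval for the band-limited clauses) mirrors the paper's; but the way you obtain the key estimates is genuinely different. The paper proves Proposition~\ref{app} as a special case of the general theory of weakly bounded moment functionals: the bounds $|(\K{n}\circ \dd^k)[\mm](0)|\leq (M+1)^{2k}k!^p$ and $|\K{n}[t^k/k!](0)|\leq (3M)^n/k!^p$ are derived by induction on the three-term recurrence (Lemma~\ref{bounds}), then packaged into $|\K{n}[\mm](z)|\leq |Lz|^n n!^{p-1}\e^{|Lz|^\kappa}$ and a growth lemma for $\K{n}[f](u)$, giving Theorem~\ref{PWT}; the uniform-on-$\Rset$ and $L^2$ statements come from Theorem~\ref{unif-con} and Proposition~\ref{in-LL}. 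You instead work directly with the Legendre case: Cauchy's estimates plus the Laplace integral bound on the coefficient sums of $p^{\scriptscriptstyle L}_n$ give $|\K{n}[f](u)|\lesssim \sqrt{2n+1}\,M_R(u)\e^{\pi R/2}(2/\pi R)^n n!$, and the power-series bound $|{\mathrm j}_n(x)|\leq |x|^n\e^{c|x|^2}/(2n+1)!!$ together with $n!/(2n+1)!!\leq 2^{-n}$ yields local uniform convergence with $R=2\delta$; the identification $F=f$ via term-by-term application of $\K{k}_z$ (justified by Weierstrass), the biorthogonality $(\K{k}\circ\K{n})[\sinc](0)=(-1)^n\delta(n-k)$, and the triangular change of basis between $\{\K{k}\}$ and $\{\dd^k\}$ is a complete and clean argument; your band-limited treatment is essentially the paper's own (Cauchy--Schwarz against the tail of the orthonormal expansion of $\widehat f(\omega)\e^{\ii\omega u}$, plus completeness of the Legendre system). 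What your route buys is a self-contained, elementary proof with explicit constants requiring nothing beyond classical special-function facts; what the paper's route buys is generality — the same argument covers every weakly bounded functional (Hermite, Gegenbauer, Jacobi, \ldots) and delivers the stronger conclusions of Theorem~\ref{unif-con} (uniform convergence on strips) and Proposition~\ref{in-LL} (convergence in \LL) in one stroke. The only cosmetic blemishes are unverified but true auxiliary inequalities (the coefficient-sum bound via the Laplace integral, the exact constant in the exponent of the ${\mathrm j}_n$ bound), none of which affects the argument.
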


The series in \eqref{CEL} is called \textit{the chromatic
expansion of $f$ associated with the Legendre polynomials\/}; a
truncation of this series is called a \textit{chromatic
approximation\/} of $f$. As the Taylor approximation, a
chromatic approximation is also a local approximation; its
coefficients are the values of differential operators
$\K{m}[f](u)$ at a single instant $u$, and for all $k\leq n$,
\begin{equation*}
f^{(k)}(u)=\frac{{\rm d}^k}{{\rm d}t^k}
\left[\sum_{m=0}^{n}\K{m}[f](u) \;\K{m}_u[\sinc(t-u)]\right]_{t=u}.
\end{equation*}

Figure~\ref{j15} (right) compares the behavior of the chromatic
approximation (black) of a signal $f\in\BL{\pi}$ (gray) with
the behavior of its Taylor approximation (dashed).
Both approximations are of order 15. The signal $f(t)$
is defined using the Shannon expansion, with
samples $\{f(n)\ : \ |f(n)|<1,\  -32\leq n\leq 32\}$ which were
randomly generated. The plot
reveals that, when approximating a signal $f\in\BL{\pi}$, a
chromatic approximation has a much gentler error accumulation
when moving away from the point of expansion than the Taylor
approximation of the same order. Moreover, unlike the monomials 
which appear in the Taylor formula,
functions $\sqrt{2n+1}\;{\mathrm
j}_n(\pi t)$ belong to $\BL{\pi}$ and satisfy
$|\K{n}[\sinc\!](t)|\leq 1$ for all $t\in\Rset$. Consequently,
the chromatic approximations obtained by truncating the chromatic expansions
also belong to $\BL{\pi}$ and are
bounded over the entire set of reals $\Rset$.

Since by Proposition~\ref{app} the chromatic approximation of a
signal $f\in\BL{\pi}$ converges to $f$ in $\BL{\pi}$, if $A$ is
a filter, then $A$ commutes with the differential operators
$\K{n}$ and thus for every $f\in \BL{\pi}$,
\begin{equation}\label{filter-ce}
A[f](t)=\sum_{n=0}^{\infty}(-1)^n\;\K{n}[f](u)\;
\K{n}[A[\,\sinc ]](t-u).
\end{equation}
A comparison of \eqref{filter-ce} with \eqref{filter-ny}
provides further evidence that, while local just like the
Taylor expansion, the chromatic expansion associated with the
Legendre polynomials possesses the features that make the
Shannon expansion so useful in signal processing. This,
together with numerical robustness of chromatic derivatives,
makes chromatic approximations applicable in fields involving
empirically sampled data, such as digital signal and image
processing; see e.g. \cite{WAKI}, \cite{WIK}, \cite{WSI}, \cite{IWK2}, \cite{IGF}, \cite{Sav}.

\subsection{A local definition of the scalar product in \BL{\pi}}
Proposition~\ref{fun} below demonstrates another important 
property of the chromatic derivatives associated with
the Legendre polynomials.

\begin{proposition}\label{fun}
Let $f:\Rset\rightarrow\Rset$ be a restriction of an
entire function; then the following are equivalent:
\begin{enumerate}
\item $\sum_{n=0}^{\infty}\K{n}[f](0)^2 < \infty$;
\item for all $t\in\Rset$ the sum $\sum_{n=0}^{\infty}\K{n}[f](t)^2$
converges to a constant function, i.e., its values are independent of $t\in\Rset$;
\item $f\in\BL{\pi}$.
\end{enumerate}
\end{proposition}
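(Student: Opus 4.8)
The plan is to prove the cycle $(2)\Rightarrow(1)\Rightarrow(3)\Rightarrow(2)$. The implication $(2)\Rightarrow(1)$ is immediate: evaluate the constant function in $(2)$ at $t=0$. The engine behind the other two implications is a remark I would record first, namely that the functions $\phi_n(t):=\sqrt{2n+1}\,{\mathrm j}_n(\pi t)=(-1)^n\K{n}[\mathrm{sinc}](t)$ form an orthonormal system in $\BL{\pi}$. To see this, note that $\widehat{\mathrm{sinc}}$ is the indicator of $[-\pi,\pi]$, so by the integral representation of $\K{n}[f]$ recorded in the excerpt, $\widehat{\phi_n}(\omega)=(-\ii)^n\,p^{\scriptscriptstyle L}_n(\omega)$ on $[-\pi,\pi]$; then Plancherel together with the normalization $\frac{1}{2\pi}\int_{-\pi}^{\pi}p^{\scriptscriptstyle L}_n\,p^{\scriptscriptstyle L}_m\,{\rm d}\omega=\delta(n-m)$ gives $\langle\phi_n,\phi_m\rangle_{\BL{\pi}}=\delta(n-m)$. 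The same computation, carried out with the extra factor $\e^{-\ii\omega u}$, shows that $\K{n}[f](u)=\langle f,\phi_n(\cdot-u)\rangle_{\BL{\pi}}$ for every $f\in\BL{\pi}$, and that each translated family $\{\phi_n(\cdot-u)\}_{n\in\Nset}$ is again orthonormal (translation being unitary on $\BL{\pi}$).

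For $(1)\Rightarrow(3)$, assume $\sum_{n}\K{n}[f](0)^2<\infty$. Since $\{\phi_n\}_{n\in\Nset}$ is orthonormal and the coefficients are square-summable, the series $g:=\sum_{n}\K{n}[f](0)\,\phi_n$ converges in the norm of the Hilbert space $\BL{\pi}$ to some $g\in\BL{\pi}$. Because $\BL{\pi}$ is a reproducing-kernel Hilbert space with kernel $\mathrm{sinc}(t-s)$, point evaluation is bounded, $|h(t)|\le\|h\|_{\BL{\pi}}$ for $h\in\BL{\pi}$, so the partial sums of $g$ converge to $g(z)$ pointwise on $\Rset$ and, by analytic continuation, on $\Cset$. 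On the other hand, Proposition~\ref{app} applied to the entire function $f$ (with $u=0$) says exactly that $f(z)=\sum_{n}\K{n}[f](0)\,\phi_n(z)$ as a pointwise-convergent series. Two series with identical terms have identical sums, hence $f=g\in\BL{\pi}$.

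For $(3)\Rightarrow(2)$, assume $f\in\BL{\pi}$. By Proposition~\ref{app}, for every $u$ the series $\sum_{n}\K{n}[f](u)\,\phi_n(\cdot-u)$ converges to $f$ in the $\BL{\pi}$-norm. Since $\{\phi_n(\cdot-u)\}_{n\in\Nset}$ is orthonormal, continuity of the inner product identifies $\K{n}[f](u)$ as the Fourier coefficients of $f$ in this basis, and Parseval's identity gives $\sum_{n}\K{n}[f](u)^2=\|f\|_{\BL{\pi}}^2$. Thus for every $u$ the series in $(2)$ converges, and its sum is the $u$-independent constant $\|f\|_{\BL{\pi}}^2$.

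I expect the crux to be the reconciliation step in $(1)\Rightarrow(3)$: one must match two \emph{a priori} unrelated limits — the pointwise chromatic expansion, which Proposition~\ref{app} supplies for every entire function with no growth hypothesis whatsoever, and the Hilbert-space sum $\sum_n\K{n}[f](0)\,\phi_n$, which lives in $\BL{\pi}$ — and the bridge is precisely the reproducing-kernel property of $\BL{\pi}$, i.e.\ that norm convergence there forces locally uniform pointwise convergence. The remaining ingredients are routine: the Plancherel/change-of-variable computations for $\widehat{\phi_n}$ and for $\K{n}[f](u)=\langle f,\phi_n(\cdot-u)\rangle_{\BL{\pi}}$, and the standard fact that the span of $\{p^{\scriptscriptstyle L}_n\}$ is dense in $L^2[-\pi,\pi]$ — which makes $\{\phi_n\}$ a complete orthonormal basis of $\BL{\pi}$ and is what turns $(2)$ into a genuine equivalence rather than a one-sided estimate.
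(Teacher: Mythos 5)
Your proposal is correct, but it takes a different route from the paper. The paper never proves this proposition directly: it is the Legendre special case of the general machinery of Sections 2--3, namely the definition of $\LB$ (which, since $\rho=0$ here, asks exactly for an entire function with $\sum_n|\K{n}[f](0)|^2<\infty$), the isomorphism $\FT:\LB\to\LT$ with $\LT=L^2[-\pi,\pi]$ (Definition~\ref{bfL2}, Proposition~\ref{f-phi} and the corollary that $\FT^{-1}$ inverts $\FT$), Corollary~\ref{cdnorm} for the $t$-independence of $\sum_n|\K{n}[f](t)|^2$, and the identification of $\LL$ with $\BL{\pi}$ in Example~1. Your argument shares the essential core --- expanding $\widehat f$ in the orthonormal Legendre basis and reading off Parseval, with your $g=\sum_n\K{n}[f](0)\phi_n$ being exactly $\FT^{-1}[\varphi_f]$ in disguise --- but the identification step in $(1)\Rightarrow(3)$ is genuinely different: the paper's route shows $\K{n}[g](0)=\K{n}[f](0)$ for all $n$ (orthonormality plus norm convergence), converts this to $g^{(n)}(0)=f^{(n)}(0)$ via the triangular relation \eqref{direct} between the $\K{n}$ and $\dd^n$, and concludes $f=g$ by uniqueness of analytic continuation; you instead invoke Proposition~\ref{app} for arbitrary entire functions together with the reproducing-kernel property of $\BL{\pi}$ to match the two series pointwise. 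Your use of Proposition~\ref{app} is legitimate and non-circular (its general form, Theorem~\ref{PWT}, is proved independently of this proposition), but it is a heavier import: you are calling on the pointwise-convergence theorem for chromatic expansions of entire functions where the paper needs only the $L^2$/Fourier-isomorphism layer; the derivative-matching argument just sketched would make your $(1)\Rightarrow(3)$ self-contained at that lighter level. What your route buys in exchange is a proof that stays entirely inside the classical Paley--Wiener picture (spherical Bessel functions, the kernel $\sinc(t-s)$, boundedness of point evaluation), with no reference to $\LB$, $\LT$ or $\FT$; and your observation that completeness of $\{\phi_n(\cdot-u)\}$ (density of polynomials in $L^2[-\pi,\pi]$, the paper's Lemma~\ref{complete}) upgrades Bessel to Parseval correctly supplies the $(3)\Rightarrow(2)$ equality without needing the ``Moreover'' clause of Proposition~\ref{app}. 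The aside ``by analytic continuation, on $\Cset$'' is unnecessary for the conclusion (equality of $f$ and $g$ on $\Rset$ already gives $f\in\BL{\pi}$) but harmless.
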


The next proposition is relevant for signal
processing because it provides \textit{local representations}
of the usual norm, the scalar product and the convolution in $\BL{\pi}$,
respectively, which are defined globally, as improper integrals.

\begin{proposition}\label{local-space} Let $\K{n}$ be the chromatic
derivatives associated with the (rescaled and normalized)
Legendre polynomials, and $f, g\in\BL{\pi}$. Then the following
sums do not depend on $t\in\Rset$ and satisfy
\begin{eqnarray}
\sum_{n=0}^{\infty}\K{n}[f](t)^2\hspace*{-2mm}&=&\hspace*{-2mm}
\int_{-\infty}^{\infty} f(x)^2 dx;\\
\sum_{n=0}^{\infty}\K{n}[f](t)\K{n}[g](t)
\hspace*{-2mm}&=&\hspace*{-2mm} \int_{-\infty}^{\infty}
f(x)g(x) dx;\\
\sum_{n=0}^{\infty}\K{n}[f](t) \K{n}_t[g(u-t)]
\hspace*{-2mm}&=&\hspace*{-2mm}
\int_{-\infty}^{\infty}f(x)g(u-x) dx.
\end{eqnarray}
\end{proposition}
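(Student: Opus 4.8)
The plan is to pass to the Fourier side, where, for band--limited signals, the chromatic derivatives become Fourier coefficients with respect to the orthonormal Legendre system; then all three identities collapse to Parseval's identity, combined with Plancherel's theorem for the first two and with the convolution theorem for the third. The independence of $t$ will be automatic, since $t$ will disappear from the right--hand sides.

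To set this up, recall the formula derived before the proposition: for $f\in\BL{\pi}$,
\[
\K{n}[f](t)=\frac{1}{2\pi}\int_{-\pi}^{\pi}\ii^{\,n}\,p^{\scriptscriptstyle L}_n(\omega)\,\widehat f(\omega)\,\e^{\ii\omega t}\,{\rm d}\omega,
\]
and note that $\K{n}[f](t)\in\Rset$, since $f$ is real valued and the operators $\K{n}$ have real coefficients. Equip $L^2([-\pi,\pi])$ with the scalar product $\langle\phi,\psi\rangle=\frac{1}{2\pi}\int_{-\pi}^{\pi}\phi(\omega)\overline{\psi(\omega)}\,{\rm d}\omega$; then $\{p^{\scriptscriptstyle L}_n\}_{n\ge 0}$ is a complete orthonormal system there — orthonormality is its defining property and completeness is the Weierstrass approximation theorem. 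Consequently $(-\ii)^{\,n}\K{n}[f](t)$ is exactly the $n$-th Fourier coefficient, relative to $\{p^{\scriptscriptstyle L}_n\}$, of $\omega\mapsto\widehat f(\omega)\e^{\ii\omega t}$, a function in $L^2([-\pi,\pi])$ because $\widehat f$ is. The same remark applies to $g$, and, for the third identity, to $t\mapsto g(u-t)$, which lies in $\BL{\pi}$ and has Fourier transform $\omega\mapsto\e^{-\ii\omega u}\widehat g(-\omega)=\e^{-\ii\omega u}\overline{\widehat g(\omega)}$, the last step using that $g$ is real.

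Now fix $t$ and set $a_n=(-\ii)^n\K{n}[f](t)$ and $b_n=(-\ii)^n\K{n}[g](t)$; these lie in $\ell^2$, so every sum below converges absolutely by Cauchy--Schwarz. Using reality of the $\K{n}[\cdot](t)$ we get $\K{n}[f](t)\,\K{n}[g](t)=(\ii^n a_n)\overline{(\ii^n b_n)}=a_n\overline{b_n}$, so Parseval's identity gives
\[
\sum_{n=0}^{\infty}\K{n}[f](t)\,\K{n}[g](t)=\sum_{n=0}^{\infty}a_n\overline{b_n}=\frac{1}{2\pi}\int_{-\pi}^{\pi}\widehat f(\omega)\,\overline{\widehat g(\omega)}\,{\rm d}\omega,
\]
the $\e^{\ii\omega t}$ factors cancelling; by Plancherel this equals $\int_{-\infty}^{\infty}f(x)g(x)\,{\rm d}x$ and is independent of $t$. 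Taking $g=f$ yields the first identity and arbitrary $g$ the second. For the third, $(-\ii)^n\K{n}_t[g(u-t)]$ is the $n$-th Fourier coefficient of $\omega\mapsto\e^{-\ii\omega u}\overline{\widehat g(\omega)}\,\e^{\ii\omega t}$, so, again discarding conjugates and the factor $\ii^n\overline{\ii^n}=1$ by reality, Parseval gives
\[
\sum_{n=0}^{\infty}\K{n}[f](t)\,\K{n}_t[g(u-t)]=\frac{1}{2\pi}\int_{-\pi}^{\pi}\widehat f(\omega)\,\widehat g(\omega)\,\e^{\ii\omega u}\,{\rm d}\omega,
\]
which is independent of $t$ and equals $(f\ast g)(u)=\int_{-\infty}^{\infty}f(x)g(u-x)\,{\rm d}x$ by the convolution theorem $\widehat{f\ast g}=\widehat f\,\widehat g$; for complex $u$ both sides are entire in $u$, so the identity extends by analytic continuation.

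I do not expect a genuine obstacle here: once the chromatic derivatives are recognized as Legendre--Fourier coefficients, the entire content is Parseval plus the elementary Fourier calculus of $\BL{\pi}$, and the only non-routine ingredient is the completeness of $\{p^{\scriptscriptstyle L}_n\}$ in $L^2([-\pi,\pi])$, which is what upgrades Bessel's inequality to the equalities above. The rest is bookkeeping: the constant $1/(2\pi)$, the interplay between $\ii^n$ and $(-\ii)^n$ via the parity relation $p^{\scriptscriptstyle L}_n(-\omega)=(-1)^n p^{\scriptscriptstyle L}_n(\omega)$, and tracking when conjugates vanish because the functions involved are real. As an alternative for the first two identities one can invoke Proposition~\ref{app} directly: the chromatic expansion of $f$ converges in $\BL{\pi}$, and the functions $z\mapsto\K{n}_u[\sinc{(z-u)}]$ form an orthonormal set in $\BL{\pi}$ — their Fourier transforms are $(-\ii)^n p^{\scriptscriptstyle L}_n(\omega)\e^{-\ii\omega u}$ — so one substitutes the $L^2$-convergent expansion of $f$ into $\int_{-\infty}^{\infty}f(x)g(x)\,{\rm d}x$ and integrates term by term, using $\int_{-\infty}^{\infty}\K{n}_u[\sinc{(x-u)}]\,g(x)\,{\rm d}x=\K{n}[g](u)$.
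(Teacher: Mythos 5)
Your proof is correct and follows essentially the same route as the paper: the paper obtains this proposition as the Legendre special case of Proposition~\ref{local-space-gen}, whose proof (via Proposition~\ref{some} and Corollaries~\ref{cdnorm}--\ref{cdconv}, together with Proposition~\ref{weight-space}/Example~1 to identify the $d\alpha$-scalar product with the usual $L^2$ one) is exactly your argument of reading $(-\ii)^n\K{n}[f](t)$ as the coefficients of $\widehat f(\omega)\e^{\ii\omega t}$ in the complete orthonormal polynomial system and applying Parseval, Plancherel and the convolution theorem. The only inessential difference is that you work concretely in $L^2[-\pi,\pi]$ with Weierstrass completeness instead of invoking the general $\LT$/$\LB$ machinery (where completeness comes from Riesz's theorem).
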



In this paper we consider chromatic derivatives and chromatic
expansions which correspond to some very general families of
orthogonal polynomials, and prove generalisations of the above
propositions, also extending our previous work.

\section{Basic Notions}
\subsection{Families of orthogonal polynomials} Let
$\mom:\mathcal{P}_{\omega}\to  \Rset$ be a linear
functional on the vector space $\mathcal{P}_{\omega}$ of
polynomials with real coefficients in the variable $\omega$ and let
${\mu}_{n} = \mom(\omega^{n})$. Such $\mom$ is called a \textit{moment
functional} and ${\mu}_{n}$ is the \textit{moment} of
$\mom$ of order $n$.  The \textit{Hankel determinant} of order $n$ is defined as 
$\Delta_n=\left|\mu_{i+j}\right|_{i,j=0}^n$.
The moment functionals $\mom$ which we consider are assumed to be positive definite, i.e., $\Delta_{n}>0$ for all $n$;
such functionals also satisfy $\mu_{2n}> 0$. We also assume that \mom\ is normalised, so that $\mom(1)=\mu_0=1$.
For such functionals there exists a non-decreasing bounded function $\alpha(\omega)$ taking infinitely many values, called a \emph{moment distribution function},  such that the following improper Stieltjes integrals exist and satisfy
\begin{equation}\label{l3}
\mu_n=\int_{-\infty}^{\infty}
\omega^{n}\,\da.
\end{equation}
\begin{definition}
A moment functional \mom\ and the corresponding moment distribution function $\alpha(\omega)$ are called \emph{chromatic}
if the corresponding moments  $\mu_n$ satisfy 
\begin{equation}\label{limsup}
\rho=\limsup_{n\to \infty}
\left(\frac{\mu_{2n}}{(2n)!}\right)^{\frac{1}{2n}}={e}\,
\limsup_{n\to \infty} \frac{\mu_{2n}^{\frac{1}{2n}}}{2n}<\infty.
\end{equation}
\end{definition}

Let $\PP$ be the  family of orthonormal polynomials corresponding to \mom,  such that $p_{n}(\omega)$ is of degree $n$ and
\begin{equation}\label{polyortho}
\mom(p_{n}(\omega)\,p_{m}(\omega))=\int_{-\infty}^{\infty }p_{n}(\omega)\,p_{m}(\omega)
\,\da=\delta(m-n).
\end{equation}
Note that $p_0(\omega)\equiv 1$ because, by normalisation, $\mu_0=1$.
For such a family of polynomials  there exists a sequence of positive reals
$\gamma_n>0$ and a sequence of reals $\beta_n$ such that for all integers $n\geq 1$,
\begin{equation}\label{poly}
p_{n+1}(\omega)=\frac{\omega+\beta_n}{\gamma_{n}}\,
p_{n}(\omega)-\frac{\gamma_{n-1}}{\gamma_{n}}\,
p_{n-1}(\omega).
\end{equation}
 If we set $\gamma_{-1}=1$ and $p_{-1}(\omega)\equiv 0$,
then \eqref{poly} holds for $n=0$ as well. If $\mu_{2n+1}=0$ for all $n$, then such \mom\  is \emph{symmetric}, in which case $\beta_n=0$ for all $n$ and each polynomial $p_{n}(\omega)$ contains only powers of $\omega$ of the same parity as $n$.

We will make use of the Christoffel-Darboux equality for orthonormal polynomials,
\begin{equation}\label{CDP}
\sum_{k=0}^{n}
p_{k}(\omega)\,p_{k}(\sigma)
= \frac{\gamma_n
(p_{n+1}(\omega)\,p_{n}(\sigma)-p_{n+1}(\sigma)\,p_{n}(\omega))}{\omega-\sigma}.
\end{equation}
By subtracting and adding $p_{n+1}(\sigma)\,p_{n}(\sigma)$ within the parentheses on the right of   \eqref{CDP}  and then letting $\sigma\to \omega$ we obtain
\begin{equation}\label{sumsquares}
\sum_{k=0}^{n}p_{k}(\omega)^2 =  \gamma_n \,
(p^\prime_{n+1}(\omega)\,
p_{n}(\omega)-p_{n+1}(\omega)\,
p_{n}^\prime(\omega)).
\end{equation}

\subsection{Space $\LT$.}
We denote by $\LT$  the Hilbert space of functions\footnote{We slightly abuse the notation here, because we allow $\varphi(\omega)$ to contain fixed complex parameters.} $\varphi:\Rset\to  \Cset$ for which the improper Lebesgue - Stieltjes integral
$\int_{-\infty}^{\infty}|\varphi(\omega)|^2\, \da$ exists and is finite, with the scalar product defined by
$\doti{\varphi}{\psi}=\int_{-\infty}^{\infty}\varphi(\omega) \,\overline{\psi(\omega)}\, \da$, and with the
corresponding norm denoted by $\noi{\varphi}$.

\begin{lemma}\label{meuw}
Let $\alpha(\omega)$ be a chromatic moment distribution function and $\rho$ such that
\eqref{limsup} holds. Then for every real $a$ such that
$0\leq a  < 1/\rho$,\footnote{We set $1/0=\infty$.}
\begin{equation}\label{weight}
\int_{-\infty}^{\infty}{\e}^{a |\omega|}\da <\infty.
\end{equation}
\end{lemma}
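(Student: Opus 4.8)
The plan is to bound $\int_{-\infty}^{\infty} \e^{a|\omega|}\,\da$ by expanding the exponential into its power series and integrating term by term, reducing everything to the even moments $\mu_{2n}$ whose growth is controlled by \eqref{limsup}. First I would note that $\e^{a|\omega|} = \sum_{k=0}^{\infty} a^k|\omega|^k/k!$, and since all summands are nonnegative and $\alpha$ is non-decreasing, the Monotone Convergence Theorem (Beppo Levi) lets me interchange sum and integral:
\[
\int_{-\infty}^{\infty}\e^{a|\omega|}\,\da = \sum_{k=0}^{\infty}\frac{a^k}{k!}\int_{-\infty}^{\infty}|\omega|^k\,\da.
\]
So it suffices to show the right-hand series converges. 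The odd-index terms involve $\int |\omega|^{2n+1}\da$, which I do not control directly, so the next step is to dominate these by even moments: by the Cauchy–Schwarz inequality in $\LT$ applied to $|\omega|^n$ and $|\omega|^{n+1}$ (or simply $|\omega|^{2n+1}\le \tfrac12(|\omega|^{2n}+|\omega|^{2n+2})$ after splitting on $|\omega|\lessgtr 1$), we get $\int|\omega|^{2n+1}\da \le \sqrt{\mu_{2n}\,\mu_{2n+2}}$. Hence every term of the series is bounded by a constant times $a^m \mu_{2\lceil m/2\rceil}/m!$ up to harmless factors, and it is enough to prove that $\sum_{n} a^{2n}\mu_{2n}/(2n)!$ converges (the neighbouring factorials and the geometric factor $a$ only help).

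For the latter series I would apply the root test. By \eqref{limsup}, $\limsup_{n\to\infty}\bigl(\mu_{2n}/(2n)!\bigr)^{1/(2n)} = \rho$, so for any $\varepsilon>0$ with $a(\rho+\varepsilon)<1$ — which exists precisely because $a<1/\rho$, and trivially if $\rho=0$ — we have $\mu_{2n}/(2n)! \le (\rho+\varepsilon)^{2n}$ for all large $n$, whence $a^{2n}\mu_{2n}/(2n)! \le \bigl(a(\rho+\varepsilon)\bigr)^{2n}$, a convergent geometric series. This gives convergence of the dominating series, hence of the original one, proving \eqref{weight}. The edge case $a=0$ is immediate since $\mu_0=1<\infty$.

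The main obstacle, such as it is, is the bookkeeping in reducing the full exponential series (with all powers $|\omega|^k$) to the even moments alone, since the definition in \eqref{limsup} only gives information about $\mu_{2n}$; once the Cauchy–Schwarz domination $\int|\omega|^{2n+1}\da\le\sqrt{\mu_{2n}\mu_{2n+2}}$ is in place and one checks that $\sqrt{\mu_{2n}\mu_{2n+2}}/(2n+1)!$ also has $\limsup$ of its $(2n+1)$-th root equal to $\rho$ (which follows from the same hypothesis together with $(2n)!\,(2n+2)!/((2n+1)!)^2 \to 1$ in the relevant sense after taking roots), the rest is a routine root-test argument. I would also remark that the interchange of summation and integration is justified without any uniform-integrability subtlety precisely because the integrand is a series of nonnegative terms, so Tonelli/Beppo Levi applies directly to the Lebesgue–Stieltjes measure $d\alpha$.
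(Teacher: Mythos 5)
Your proposal is correct and follows essentially the same route as the paper: expand ${\e}^{a|\omega|}$ into its power series, interchange sum and integral, reduce the odd absolute moments to even ones, and conclude by a root-test argument driven by \eqref{limsup}. The only differences are cosmetic — the paper dominates odd terms via $|\omega|^{2n+1}<1+\omega^{2n+2}$ (so $\int|\omega|^{2n+1}\da\le 1+\mu_{2n+2}$) and justifies the interchange by uniform convergence on compact intervals, whereas you use Cauchy--Schwarz/AM--GM and Tonelli; both are valid.
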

\begin{proof}
Since for all $b>0$ the series $\sum_{n=0}^{\infty}{a ^n|\omega|^n}/{n!}$ converges to ${\e}^{a |\omega|}$ uniformly on $[-b,b]$, we obtain 
$\int_{-b}^{b}{\e}^{a |\omega|}\da =
\sum_{n=0}^{\infty}{a ^n}/{n!}\int_{-b}^{b}|\omega|^n\da.$
For even $n$ we have $\int_{-b}^{b}\omega^n\da\leq\mu_n.$
For odd $n$ we have $|\omega|^n<1+\omega^{n+1}$ for
all $\omega$, and thus
$\int_{-b}^{b}|\omega|^n\da <\int_{-b}^{b}\da +
\int_{-b}^{b}\omega^{n+1}\da\leq 1 + \mu_{n+1}.$
Let $\eta_n=\mu_n$ if $n$ is even, and $\eta_n=1+\mu_{n+1}$
if $n$ is odd. Then also
$\int_{-\infty}^{\infty}{\e}^{a |\omega|}\da \leq
\sum_{n=0}^{\infty}{a ^n}\eta_n/{n!}$.
It is easy to see that  $\limsup_{n\to \infty}(\eta_n/n!)^{1/n}=\rho$; thus,
for  $0\leq a <1/\rho$ the last sum converges to a
finite limit.
\end{proof}

On the other hand, the proof of Theorem 5.2
in \S II.5 of \cite{Fr} shows that if
\eqref{weight} holds for some $a > 0$, then
\eqref{limsup} also holds for some $\rho< 1/a $.
Thus, we get the following Corollary.
\begin{corollary}
A moment distribution function $\alpha(\omega)$ is chromatic just in case it satisfies \eqref{weight} for some $a >0$.
\end{corollary}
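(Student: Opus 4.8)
The plan is to read the Corollary as an equivalence and prove the two implications separately, reusing Lemma~\ref{meuw} for one direction and a one-line moment estimate (or, if preferred, the estimate from the proof of Theorem~5.2 in \S II.5 of \cite{Fr} quoted just above) for the other. Neither direction requires new machinery; the Corollary is essentially a repackaging of Lemma~\ref{meuw} together with its stated converse.

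For the ``only if'' direction I would argue as follows. Suppose $\alpha(\omega)$ is chromatic, i.e., the quantity $\rho$ in \eqref{limsup} is finite. Then $1/\rho>0$ (using the convention $1/0=\infty$), so one can choose a real number $a$ with $0<a<1/\rho$. Lemma~\ref{meuw} applies to this $a$ and yields $\int_{-\infty}^{\infty}\e^{a|\omega|}\da<\infty$, which is exactly \eqref{weight} for a positive value of $a$. That settles this implication.

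For the ``if'' direction, suppose \eqref{weight} holds for some $a>0$, and set $C=\int_{-\infty}^{\infty}\e^{a|\omega|}\da$, a finite positive number (note $C\geq\mu_0=1$). The key point is that each individual term of the series $\e^{a|\omega|}=\sum_{k\geq 0}a^{k}|\omega|^{k}/k!$ is dominated by the whole sum, so pointwise $|\omega|^{2n}\leq (2n)!\,a^{-2n}\e^{a|\omega|}$. Integrating this nonnegative inequality against $d\alpha$ and recalling \eqref{l3}, we get $\mu_{2n}=\int_{-\infty}^{\infty}\omega^{2n}\da\leq (2n)!\,a^{-2n}C$, hence
\[
\left(\frac{\mu_{2n}}{(2n)!}\right)^{1/(2n)}\leq\frac{C^{1/(2n)}}{a}.
\]
Since $C$ is a fixed positive constant, $C^{1/(2n)}\to 1$ as $n\to\infty$, so $\rho=\limsup_{n\to\infty}(\mu_{2n}/(2n)!)^{1/(2n)}\leq 1/a<\infty$; thus $\alpha(\omega)$ is chromatic. (If one wants the sharper strict inequality $\rho<1/a$, one invokes the estimate obtained in the proof of Theorem~5.2 in \S II.5 of \cite{Fr} instead.)

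I do not anticipate any real obstacle. The only points needing a word of care are measure-theoretic bookkeeping: justifying the term-by-term domination $|\omega|^{2n}\leq (2n)!\,a^{-2n}\e^{a|\omega|}$ under the Lebesgue--Stieltjes integral (immediate, as both sides are nonnegative measurable and the majorant is $\alpha$-integrable by hypothesis), and observing that \eqref{limsup} is phrased solely in terms of the even moments $\mu_{2n}$, which is precisely what the displayed bound controls—so, exactly as in Lemma~\ref{meuw}, the odd moments never enter.
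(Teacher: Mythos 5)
Your proof is correct. The ``only if'' direction is exactly the paper's: choose any $a$ with $0<a<1/\rho$ and invoke Lemma~\ref{meuw}. For the ``if'' direction, however, you take a genuinely different and more self-contained route. The paper simply delegates this implication to the proof of Theorem~5.2 in \S II.5 of \cite{Fr}, which yields the strict conclusion that \eqref{limsup} holds with some $\rho<1/a$. You instead prove the needed bound directly by a Chebyshev-type domination: since all terms of $\e^{a|\omega|}=\sum_k a^k|\omega|^k/k!$ are nonnegative, $|\omega|^{2n}\leq (2n)!\,a^{-2n}\e^{a|\omega|}$ pointwise, and integrating against the nonnegative measure $d\alpha$ gives $\mu_{2n}\leq (2n)!\,a^{-2n}C$ with $C=\int\e^{a|\omega|}\da\geq\mu_0=1$, hence $\rho\leq 1/a<\infty$. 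This is enough, because the definition of ``chromatic'' only requires finiteness of $\rho$, not the strict inequality $\rho<1/a$; what your elementary argument buys is independence from the external reference, while what the citation buys is the sharper strict bound (irrelevant for the corollary as stated). Your closing remarks on measurability and on the fact that only even moments enter \eqref{limsup} are exactly the right points of care, so I see no gap.
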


\noindent {For the remaining part of this
paper we assume that $\alpha(\omega)$ is a chromatic moment distribution function.}

For every $0<c\leq \infty$, we let $\mathds{S}(c)=\{z\in
\Cset\;:\;|\mathfrak{Im}(z)|<c\}$.

\begin{proposition}\label{anaz}
Let for $z\in \st{}$
\begin{equation}\label{mmz}
\mm(z)=\int_{-\infty}^{\infty}{\e}^{{\ii}\,\omega z }\da.
\end{equation}
Then $\mm(z)$ is analytic on the strip $\st{}$ and for all integers $n\geq 0$,
\begin{equation}\label{dmmz}
\mm^{(n)}(z)=\int_{-\infty}^{\infty}({\ii}\,\omega)^n{\e}^{{\ii}\,\omega z }\da.
\end{equation}
\end{proposition}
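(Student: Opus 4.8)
The plan is to establish analyticity of $\mm(z)$ on $\st{}$ by exhibiting it locally as a uniform limit of analytic functions (the partial Stieltjes integrals over $[-N,N]$), and then to justify differentiation under the integral sign by the same uniform-convergence argument, using Lemma~\ref{meuw} to control the tails.

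First I would fix a compact subset of the strip $\st{}$; by definition of $\st{}$ this lies in a closed substrip $\{|\mathfrak{Im}(z)|\le c\}$ with $c<1/\rho$, and one can further choose a real $a$ with $c<a<1/\rho$. For such $z$ the integrand satisfies $|{\e}^{\ii\omega z}|={\e}^{-\omega\,\mathfrak{Im}(z)}\le {\e}^{c|\omega|}\le {\e}^{a|\omega|}$, so by Lemma~\ref{meuw} the integral defining $\mm(z)$ converges absolutely and uniformly (in $z$) on that substrip; the same bound shows the tail integrals $\int_{|\omega|>N}$ are small uniformly in $z$. For each fixed $N$, the truncated integral $\mm_N(z)=\int_{-N}^{N}{\e}^{\ii\omega z}\da$ is analytic on all of $\Cset$ — this follows from Morera's theorem together with Fubini to interchange the $\omega$-integral with a contour integral over any triangle, the integrand being jointly continuous and the domain of integration compact. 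Since $\mm_N\to\mm$ uniformly on the substrip, $\mm$ is analytic there by the standard theorem on uniform limits of analytic functions; as the compact set was arbitrary, $\mm$ is analytic on $\st{}$.

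For the derivative formula \eqref{dmmz}, the cleanest route is to differentiate the truncated integrals: since the integrand ${\e}^{\ii\omega z}$ is entire in $z$ with $\partial_z^n{\e}^{\ii\omega z}=(\ii\omega)^n{\e}^{\ii\omega z}$, and $[-N,N]$ is compact, one may differentiate under the integral sign freely to get $\mm_N^{(n)}(z)=\int_{-N}^{N}(\ii\omega)^n{\e}^{\ii\omega z}\da$. Now I would again invoke Lemma~\ref{meuw}: for $z$ in the substrip, $|(\ii\omega)^n{\e}^{\ii\omega z}|=|\omega|^n{\e}^{-\omega\,\mathfrak{Im}(z)}\le |\omega|^n{\e}^{c|\omega|}$, and since $c<a<1/\rho$ we have $|\omega|^n{\e}^{c|\omega|}\le C_{n,a,c}\,{\e}^{a|\omega|}$ for a suitable constant (because $|\omega|^n{\e}^{(c-a)|\omega|}$ is bounded on $\Rset$), so $\int_{-\infty}^{\infty}(\ii\omega)^n{\e}^{\ii\omega z}\da$ converges absolutely and uniformly on the substrip and $\mm_N^{(n)}\to\int_{-\infty}^{\infty}(\ii\omega)^n{\e}^{\ii\omega z}\da$ uniformly there. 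By the theorem on uniform limits of analytic functions, $\mm_N\to\mm$ uniformly implies $\mm_N^{(n)}\to\mm^{(n)}$ locally uniformly, so the two limits agree, giving \eqref{dmmz}.

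The main obstacle is purely the interchange-of-limit bookkeeping: making sure the exponential majorant ${\e}^{a|\omega|}$ is genuinely available, which forces the careful choice $|\mathfrak{Im}(z)|\le c<a<1/\rho$ rather than merely $<1/\rho$, and verifying that polynomial factors $|\omega|^n$ can be absorbed into a slightly larger exponential without leaving the admissible range — both handled by Lemma~\ref{meuw} once the strict inequalities are arranged. Everything else (Morera plus Fubini for $\mm_N$, differentiation under the integral over a compact interval, and the uniform-limit theorem for analytic functions) is standard. One could alternatively avoid the truncation and argue directly via dominated convergence on difference quotients, but the truncation route makes the analyticity and the derivative formula fall out of the same uniform-convergence lemma with minimal fuss.
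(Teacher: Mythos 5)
Your argument is correct, but it takes a genuinely different route from the paper's. The paper does not truncate: it fixes $z=x+\ii y$ with $|y|<1/\rho$ and bounds $\int_{-b}^{b}|\omega|^n {\e}^{|\omega y|}\da$ directly by expanding ${\e}^{|\omega y|}$ into its power series and estimating $\int_{-b}^{b}|\omega|^{n+k}\da$ by the modified moments $\eta_{n+k}$ (the same device as in the proof of Lemma~\ref{meuw}), which gives $\int_{-\infty}^{\infty}|\omega|^n {\e}^{|\omega y|}\da\leq\sum_{k}|y|^k\eta_{n+k}/k!<\infty$ because $\limsup_{k\to\infty}(\eta_{n+k}/k!)^{1/k}=\rho$; it then differentiates under the integral sign in \eqref{mmz}, obtaining \eqref{dmmz} and analyticity in one stroke. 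You instead reuse Lemma~\ref{meuw} as a black box, absorb the polynomial factor $|\omega|^n$ into a slightly larger admissible exponential ${\e}^{a|\omega|}$ with $c<a<1/\rho$, and deduce both analyticity and \eqref{dmmz} from the truncated integrals $\mm_N(z)=\int_{-N}^{N}{\e}^{\ii\omega z}\da$ via Morera plus Fubini and the Weierstrass theorem on locally uniform limits of analytic functions. Your route buys economy and tidiness: the moment-series estimate is not repeated, the polynomial weight is handled by a one-line absorption, and analyticity together with the derivative identity drop out of a single uniform-convergence statement on closed substrips exhausting $\st{}$. The paper's direct estimate, on the other hand, is more self-contained (no complex-analytic limit machinery), and the quantitative bound with the $\eta_{n+k}$ it produces is exactly the one reused, with $\varphi$ inserted, in the proof of Proposition~\ref{f-phi}, which is presumably why the author argues that way. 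Both proofs are sound.
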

\begin{proof}
Fix $n$ and let $z=x+\ii y$ with $|y|<1/\rho$; then for
every $b>0$,
\begin{equation*}
\int_{-b}^{b}|
(\ii \omega)^n{\e}^{{\ii}\omega z }|\da
=\int_{-b}^{b}|\omega|^n {\e}^{|\omega y|}\da
=\sum_{k=0}^{\infty}\frac{|y|^k}{k!}\int_{-b}^{b} |\omega|^{n+k}\da.
\end{equation*}
As in the proof of  Lemma \ref{meuw}, we let
$\eta_m=\mu_m$ for even $m$ and
$\eta_m=1+\mu_{m+1}$ for odd $m$; then the above
implies
\[\int_{-\infty}^{\infty}|
(\ii \omega)^n{\e}^{{\ii}\omega z }|\da
\leq\sum_{k=0}^{\infty}\frac{|y|^k\eta_{n+k}}{k!},\] and it is
easy to see that for every fixed $n$,
$\limsup_{k\to \infty} (\eta_{n+k}/k!)^{1/k}=\rho$. Thus, for $z\in \st{}$ we can differentiate under the integral in \eqref{mmz} any number of times to obtain \eqref{dmmz}.
\end{proof}

Since for all non-negative integers $n$
\begin{align}\label{dm}
\mm^{(n)}(0)={\ii}^n\mu_{n}
\end{align}
and since in the symmetric case all moments of odd order are zero,  for symmetric moment functionals function $\mm(t)$, which is the restriction of function $\mm(z)$ to the reals, is real valued.

\begin{lemma}\label{complete}
If \mom\ is chromatic, then $\PP$ is a complete system in
$\LT$.
\end{lemma}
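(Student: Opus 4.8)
The plan is to show that the orthogonal complement of $\PP$ in $\LT$ is trivial; since $\LT$ is a Hilbert space, this is exactly completeness. So suppose $\varphi\in\LT$ satisfies $\doti{\varphi}{p_n}=0$ for all $n\in\Nset$. Each $p_n$ has real coefficients and degree $n$, so $\overline{p_n(\omega)}=p_n(\omega)$ for real $\omega$ and $p_0,\dots,p_n$ span all polynomials of degree $\le n$; hence the assumption is equivalent to $\int_{-\infty}^{\infty}\varphi(\omega)\,\omega^k\,\da=0$ for every integer $k\ge 0$. Note also that, by the Cauchy--Schwarz inequality together with $\mu_0=1$, we have $\int_{-\infty}^{\infty}|\varphi(\omega)|\,\da\le\noi{\varphi}<\infty$, so $\varphi\,\da$ is a finite complex Borel measure on $\Rset$.

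Next I would introduce the function
\[
\Phi(z)=\int_{-\infty}^{\infty}\varphi(\omega)\,{\e}^{{\ii}\,\omega z}\,\da ,
\]
and show, exactly as in Proposition~\ref{anaz}, that $\Phi$ is analytic on the strip $\st{2}$ and that for all integers $n\geq 0$ one may differentiate under the integral, $\Phi^{(n)}(z)=\int_{-\infty}^{\infty}\varphi(\omega)\,({\ii}\,\omega)^n{\e}^{{\ii}\,\omega z}\,\da$. The only new ingredient is one application of Cauchy--Schwarz: for $z=x+\ii y$ with $2|y|<1/\rho$ and any $b>0$,
\[
\int_{-b}^{b}|\varphi(\omega)|\,|\omega|^n\,{\e}^{|\omega y|}\,\da
\;\le\;\noi{\varphi}\Bigl(\int_{-b}^{b}|\omega|^{2n}\,{\e}^{2|\omega y|}\,\da\Bigr)^{1/2},
\]
and, picking $a$ with $2|y|<a<1/\rho$, the elementary bound $|\omega|^{2n}{\e}^{2|y||\omega|}\le C_n\,{\e}^{a|\omega|}$ together with Lemma~\ref{meuw} bounds the right-hand side uniformly in $b$. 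This domination legitimises the usual Morera/dominated-convergence argument.

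Since $\varphi$ is orthogonal to every power $\omega^k$, we obtain $\Phi^{(n)}(0)={\ii}^n\int_{-\infty}^{\infty}\varphi(\omega)\,\omega^n\,\da=0$ for all $n$, so the Taylor expansion of $\Phi$ at $0$ is identically zero and hence $\Phi\equiv 0$ on the connected open set $\st{2}$. In particular $\int_{-\infty}^{\infty}\varphi(\omega)\,{\e}^{{\ii}\,\omega x}\,\da=0$ for every real $x$; that is, the Fourier--Stieltjes transform of the finite complex measure $\varphi\,\da$ vanishes identically. By the uniqueness theorem for Fourier transforms of finite measures, $\varphi\,\da$ is the zero measure, so $\varphi(\omega)=0$ for $\alpha$-almost every $\omega$, i.e.\ $\varphi=0$ in $\LT$. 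Therefore $\PP$ has trivial orthogonal complement and is complete.

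The one genuinely delicate point is the passage from ``all moments of $\varphi\,\da$ vanish'' to ``$\varphi\,\da=0$'': for an arbitrary measure this is precisely the (possibly indeterminate) Hamburger moment problem, and the implication can fail. The chromatic hypothesis --- equivalently, by Lemma~\ref{meuw} and its Corollary, $\int_{-\infty}^{\infty}{\e}^{a|\omega|}\,\da<\infty$ for some $a>0$ --- is exactly what circumvents this, by forcing $\Phi$ to be analytic on a strip so that the argument can be closed via uniqueness of Fourier transforms rather than via moment determinacy. Everything else is a routine repetition of estimates already established for Proposition~\ref{anaz}.
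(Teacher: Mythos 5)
Your argument is correct, but it takes a genuinely different route from the paper. The paper disposes of this lemma by citation: it invokes a theorem of Riesz (Theorem 5.1 in \S II.5 of Freud's book) which guarantees completeness of $\PP$ in $\LT$ under the much weaker Carleman-type condition $\liminf_{n\to\infty}\mu_{2n}^{1/(2n)}/(2n)<\infty$, which the chromatic hypothesis \eqref{limsup} trivially implies. You instead give a self-contained proof: take $\varphi\in\LT$ orthogonal to all $p_n$ (hence to all powers $\omega^k$), form $\Phi(z)=\int\varphi(\omega)\e^{\ii\omega z}\da$, use the exponential integrability from Lemma~\ref{meuw} together with Cauchy--Schwarz to get analyticity of $\Phi$ on $\st{2}$ (this is in substance the same estimate as in Proposition~\ref{f-phi}, which appears later in the paper but does not depend on this lemma, so there is no circularity), conclude $\Phi\equiv 0$ from the vanishing of all derivatives at $0$ via the identity theorem, and finish with uniqueness of Fourier--Stieltjes transforms of finite complex measures. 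All steps check out, including the passage from $\varphi\,\da=0$ to $\varphi=0$ in $\LT$, and you correctly flag that the moment-determinacy issue is what makes some hypothesis necessary. The trade-off: the paper's route is shorter and rests on a sharper classical result (completeness already under a Carleman condition, so the chromatic hypothesis is sufficient but far from necessary), whereas your route uses the full strength of the exponential moment bound but has the merit of being elementary, self-contained, and of making transparent exactly how strip-analyticity of the transform substitutes for moment determinacy.
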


\begin{proof}
Follows from a theorem of  Riesz (\cite{Riesz}; see, for example, Theorem 5.1
in \S II.5 of \cite{Fr}) which asserts that if
$\liminf_{n\to \infty}
{\mu_{2n}^{1/(2n)}}/(2n)<\infty$, then $\PP$ is a
complete system in $\LT$.\footnote{Note that we need the
stronger condition $\limsup_{n\to \infty}
\left({\mu_{2n}}/{(2n)!}\right)^{1/(2n)}<\infty$ to ensure that function
$\mm(z)$ defined by \eqref{mmz} is analytic on a strip
(Proposition~\ref{anaz}).}
\end{proof}

\subsection{Chromatic derivatives}

Let $D^n_z=d^n/dz^n$; with every family of orthonormal polynomials \PP\  we associate a family of linear
differential operators $\KK$ defined by the operator
polynomial\footnote{Thus, to obtain $\K{n}_z$, one replaces
$\omega^k$ in $p_{n}(\omega)$ by $(-\ii)^k D^k_z$, where
$\dd^k_z[f(z,\vec{w})]=\frac{\partial^k}{\partial z^k}f(z,\vec{w})$. We use
the square brackets to indicate the arguments of operators
acting on various function spaces. If $A$ is a differential
operator, and if a function $f(z,\vec{w})$ has parameters
$\vec{w}$, we write $A_{z}[f(z,\vec{w})]$ to distinguish the variable $z$
of differentiation; if $f(z)$ contains only variable $z$, we
write $A[f(z)]$ for $A_{z}[f(t)]$ and $\dd^k[f(z)]$ for
$\dd^k_z [f(z)]$. }
\begin{align*}
\K{n}_z={\ii}^n\, p_{n}
\left(-\ii \dd_z\right),
\end{align*}
and call them \textit{the chromatic derivatives} associated with
polynomials \PP.  Equation \eqref{poly} implies that such operators satisfy
the recurrence
\begin{equation}\label{three-term}
\K{n+1}=\frac{1}{{\gamma_{n}}}\,\left(\dd\circ
\K{n}+\ii \beta_n\K{n}+\gamma_{n-1}\, \K{n-1}\right)
\end{equation}
with the same coefficients $\gamma_n>0$ and $\beta_n$ as in \eqref{poly}.
The definition of the chromatic derivatives was chosen so that 
\begin{equation}\label{iwt}
\K{n}_z[e^{\ii\omega z}]=
{\ii}^n p_{n}(\omega)\,{\e}^{\ii\omega z}
\end{equation}
and consequently 
\begin{equation}
\K{n}_z[\mm](z)=\int_{-\infty}^{\infty}{\ii}^{n}\,p_{n}(\omega)\,{\e}^{{\ii}\omega z} \da.
\end{equation}
Note that for the symmetric families of orthonormal polynomials $\beta_n=0$ and thus the corresponding chromatic derivatives have real coefficients.  

The basic properties of orthonormal polynomials imply that for all integers $m,n$,
\begin{equation}\label{orthonorm}
(-1)^{n}(\K{n}\circ \K{m})[\mm](0)=\delta(m-n),
\end{equation}
and, if $m < n$ then
\begin{equation}\label{bkn}
(\dd^m\circ\K{n})[\mm](0)=0.
\end{equation}
For  $m<n$ all powers of $z$ in $\K{m}\left[{z^n}\right]$ are positive; thus, if $m<n$ then
\begin{equation}
\K{m}\left[{z^n}\right](0)=0. 
\label{zero-mon}\end{equation}

The following lemma corresponds to the Christoffel - Darboux equality for orthonormal polynomials and has a similar proof. 

\begin{lemma}[\!\!\cite{IG5}]\label{CD} For all infinitely differentiable functions $f(t),g(t) \,:\Rset \to  \Cset$ of a real variable $t$
\begin{equation}\label{C-D}
 \dd_t\left[\sum_{m=0}^{n} \K{m}[f(t)]\,\overline{\K{m}[g(t)]}\right]= {\gamma_{n}}\,
(\K{n+1}[f(t)]\,\overline{\K{n}[g(t)]}+\K{n}[f(t)]\,\overline{\K{n+1}[g(t)]}).
\end{equation}
\end{lemma}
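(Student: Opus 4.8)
The plan is to obtain \eqref{C-D} by a telescoping argument driven entirely by the three-term recurrence \eqref{three-term}. First I would rewrite \eqref{three-term} as $\dd\circ\K{m}=\gamma_m\K{m+1}-\ii\beta_m\K{m}-\gamma_{m-1}\K{m-1}$, which, under the conventions $\gamma_{-1}=1$ and $\K{-1}\equiv 0$ inherited from \eqref{poly}, is valid for every $m\geq 0$. Applying this operator identity to $f(t)$ gives $\dd_t[\K{m}[f(t)]]=\gamma_m\K{m+1}[f(t)]-\ii\beta_m\K{m}[f(t)]-\gamma_{m-1}\K{m-1}[f(t)]$; applying it to $g(t)$ and then conjugating, and using that $\gamma_m$ and $\beta_m$ are real, gives $\overline{\dd_t[\K{m}[g(t)]]}=\gamma_m\overline{\K{m+1}[g(t)]}+\ii\beta_m\overline{\K{m}[g(t)]}-\gamma_{m-1}\overline{\K{m-1}[g(t)]}$.

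Next I would expand $\dd_t\bigl[\K{m}[f(t)]\,\overline{\K{m}[g(t)]}\bigr]$ by the product rule and substitute the two displayed expressions. The two terms carrying a factor $\ii\beta_m$ — namely $-\ii\beta_m\,\K{m}[f]\,\overline{\K{m}[g]}$ coming from differentiating the first factor and $+\ii\beta_m\,\K{m}[f]\,\overline{\K{m}[g]}$ coming from differentiating the second — cancel, leaving $\dd_t\bigl[\K{m}[f]\,\overline{\K{m}[g]}\bigr]=T_m-T_{m-1}$, where $T_m:=\gamma_m\bigl(\K{m+1}[f(t)]\,\overline{\K{m}[g(t)]}+\K{m}[f(t)]\,\overline{\K{m+1}[g(t)]}\bigr)$. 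The conventions $\gamma_{-1}=1$, $\K{-1}\equiv 0$ make this valid also at $m=0$, where $T_{-1}=0$. Summing over $m=0,\dots,n$, the right-hand side telescopes to $T_n-T_{-1}=T_n$, which is precisely the right-hand side of \eqref{C-D}.

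The only point needing any care — essentially the sole obstacle — is the cancellation of the $\ii\beta_m$ terms, which relies on correctly propagating the complex conjugation through the recurrence applied to $g$ (so that $\overline{-\ii\beta_m\K{m}[g]}=+\ii\beta_m\overline{\K{m}[g]}$); in the symmetric case $\beta_m=0$ this is vacuous. Everything else is the product rule and a telescoping sum, and since the argument is purely differential-algebraic the hypothesis that $f,g$ are merely infinitely differentiable suffices.
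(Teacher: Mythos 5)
Your proposal is correct and follows essentially the same route as the paper: apply the three-term recurrence \eqref{three-term} to $f$ and (conjugated) to $g$, use that conjugation commutes with $\dd_t$ for functions of a real variable so the product rule applies, observe the $\ii\beta_m$ terms cancel, and telescope over $m=0,\dots,n$ with the conventions $\gamma_{-1}=1$, $\K{-1}\equiv 0$. The only cosmetic difference is that you solve the recurrence for $\dd\circ\K{m}$ while the paper solves it for $\gamma_m\K{m+1}$ before summing; the computation is identical.
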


\begin{proof} We  apply  \eqref{three-term}  to $f(t)$ and multiply both sides of the resulting equation by $\overline{\K{m}[g]}$ to obtain
\begin{align*}
\gamma_m\K{m+1}[f]\overline{\K{m}[g]}=(\dd_t\circ\K{m})[f]\; \overline{\K{m}[g]}+\ii\beta_m \K{m}[f]\overline{\K{m}[g]}+
\gamma_{m-1}\K{m-1}[f]\overline{\K{m}[g]}
\end{align*} 
We then apply \eqref{three-term} to $g(t)$, take the complex conjugate of both sides of the resulting equation and multiply both sides by $\K{m}[f]$ to obtain 
\begin{align*}
\gamma_m\overline{\K{m+1}[g]}{\K{m}[f]}=\overline{(\dd_t\circ\K{m})[g]}\; {\K{m}[f]}-\ii\beta_m \overline{\K{m}[g]}{\K{m}[f]}+
\gamma_{m-1}\overline{\K{m-1}[g]}{\K{m}[f]}
\end{align*} 
Summing  these two equations and rearranging  the terms we obtain 
\begin{align*}
&(\dd_t\circ\K{m})[f]\; \overline{\K{m}[g]}+\overline{(\dd_t\circ\K{m})[g]}\; {\K{m}[f]}=\\
&\hspace*{20mm}\gamma_m(\K{m+1}[f]\overline{\K{m}[g]}+\overline{\K{m+1}[g]}{\K{m}[f]})-\gamma_{m-1}(\K{m}[f]\overline{\K{m-1}[g]}+\overline{\K{m}[g]}{\K{m-1}[f]}).
\end{align*} 
Since $\overline{\dd_t[f](t)}=\dd_t \overline{f(t)}$ for functions of a real variable $t$, the lefthand side of this equation is equal to $\dd_t[\K{m}[f]\; \overline{\K{m}[g]}]$. 
Summing such equations for all $0\leq m\leq n$ the righthand sides form a telescoping sum equal to the righthand side of \eqref{C-D}.
\end{proof}

\subsection{Chromatic expansions} Let $f:\Rset\to \Rset$ or $f:\Cset\to \Cset$ be infinitely
differentiable at a real or complex $u$; the formal series
\begin{align}\label{cex}
\CE[f,u](z)&=\sum_{k=0}^{\infty}(-1)^k\K{k}[f](u)\;\K{k}[\mm](z-u)\nonumber
\end{align}
is called the \textit{chromatic expansion} of $f$  associated 
with \mom,\footnote{Sometimes we abuse the terminology and say that the expansion is associated with polynomials \PP\  or with the operators $\{\K{n}\}_{n\in\Nset}$.}   centred at $u$, and
\begin{equation*}
\CA[f,n,u](z)=\sum_{k=0}^{n}(-1)^k\K{k}[f](u)\K{k}[\mm](z-u)
\end{equation*}
is the \textit{chromatic approximation} of $f$ of order $n$. 
From \eqref{orthonorm} it follows that the chromatic
approximation $\CA[f,n,u](z)$ of order $n$ of $f(z)$ for all
$m\leq n$ satisfies
\begin{align*}
\K{m}_{\!z} [\CA[f,n,u](z)]\big |_{z=u}&=\sum_{k=0}^{n} \,(-1)^k\K{k}
[f](u)\, (\K{m}\circ\K{k})[\mm](0)\ =\ \K{m}[f](u).
\end{align*}

Since $\dd^m$ can be expressed as a linear combination of operators $\K{k}$
for $k\leq m$, also $f^{(m)}(u) = \dd^m_{\!z}[\CA[f,n,u](z)]\big |_{z=u}$
for all $m\leq n$. Thus, in particular,
\begin{align}\label{k-to-d}
f^{(n)}(u) = \dd^n_z[\CA[f,n,u](z)]\big |_{z=u} \!\!&=
\sum_{k=0}^{n}(-1)^k \, \K{k}[f](u)\,(\dd^{n}\circ \K{k})[\mm](0)
\end{align}

Similarly, since  for $m\leq n$
\[f^{(m)}(u)={\dd^m_{\!z}}\!\left[\sum_{k=0}^{n}f^{(k)}(u)\frac{(z-u)^k}{k!}\right]
_{z=u}\] also 
\begin{align}\label{d-to-k}
\K{n}[f](u)
=\K{n}_z\left[\sum_{k=0}^{n}f^{(k)}(u)\frac{(z-u)^k}{k!}\right]_{z=u}=\!\sum_{k=0}^{n}f^{(k)}(u)\,
\K{n}_z\left[\frac{z^k}{k!}\right]\!\!(0)
\end{align}

Equations \eqref{k-to-d} and \eqref{d-to-k} relate
the standard and the chromatic bases of the vector space of linear
differential operators with constant coefficients,
\begin{align}
\dd^{n} &= \sum_{k=0}^{n}(-1)^k \, (\dd^{n} \circ \K{k})[\mm](0)\;
\K{k};
\label{inverse}\\
\K{n}&=\sum_{k=0}^{n} \K{n}\left[\frac{z^k}{k!}\right]\!\!(0)\;
\dd^k.\label{direct}
\end{align}

\subsection{Space $\LB$}
\begin{definition}
$\LB$ is the vector space of functions $f:\st{2}
\to  \Cset$ which are analytic on $\st{2}$ and
satisfy $\sum_{n=0}^{\infty}|\K{n}[f](0)|^2<\infty$.
\end{definition}

\begin{definition}\label{bfL2}
Let $f(z)\in \LB$; then function 
\begin{equation}\label{ft0}
 \varphi_f(\omega)=\sum_{n=0}^{\infty}
(-{\ii})^{n}\K{n}[f](0)\,p_{n}(\omega)
\end{equation}
belongs to $\LT$ and we call it the Fourier transform of $f(z)$ with respect to $\mom$, denoted by
$\FT[f](\omega)$.
\end{definition}

\begin{proposition}\label{f-phi}
Let $\varphi(\omega)\in \LT$; we can define a
corresponding function
$f_{\varphi}:\st{2}\to  \Cset$ by
\begin{equation}\label{ift0}
f_{\varphi}(z)=\int_{-\infty}^{\infty}\varphi(\omega)
e^{{\ii}\omega z }\,\da.
\end{equation}
Such $f_{\varphi}(z)$ is analytic on $\st{2}$ and for all $n$
and all $z\in \st{2}$,
\begin{equation}\label{fourier-int-der}
\K{n}[f_{\varphi}](z)=\int_{-\infty}^{\infty}
{\ii}^{n}\,p_{n}(\omega)\,
\varphi(\omega)\, {\e}^{{\ii}\omega z} \da.
\end{equation}
\end{proposition}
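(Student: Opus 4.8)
The plan is to establish that the integral defining $f_\varphi$ and, more generally, the integrals $I_k(z):=\int_{-\infty}^{\infty}(\ii\omega)^{k}\varphi(\omega)\e^{\ii\omega z}\da$ converge absolutely for every $z\in\st{2}$, then to prove by induction on $k$ that $f_\varphi^{(k)}(z)=I_k(z)$ throughout the open strip (so that, in particular, $f_\varphi$ is analytic there), and finally to derive \eqref{fourier-int-der} by expanding $\K{n}$ in the monomial basis of differential operators.

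For the absolute convergence I would write $z=x+\ii y$ with $|y|<1/(2\rho)$, so that $|(\ii\omega)^{k}\varphi(\omega)\e^{\ii\omega z}|=|\omega|^{k}|\varphi(\omega)|\e^{-\omega y}\le|\omega|^{k}|\varphi(\omega)|\e^{|y||\omega|}$; Cauchy--Schwarz in $\LT$ then bounds the absolute integral by $\noi{\varphi}\left(\int_{-\infty}^{\infty}|\omega|^{2k}\e^{2|y||\omega|}\da\right)^{1/2}$. Since $2|y|<1/\rho$, I can pick $\epsilon>0$ with $2|y|+\epsilon<1/\rho$, and because $|\omega|^{2k}\le C\,\e^{\epsilon|\omega|}$ for a suitable $C=C(k,\epsilon)$, Lemma~\ref{meuw} gives $\int|\omega|^{2k}\e^{2|y||\omega|}\da\le C\int\e^{(2|y|+\epsilon)|\omega|}\da<\infty$. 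Hence $f_\varphi=I_0$ and every $I_k$ are well defined on $\st{2}$.

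For the derivative formula I would fix $z_0=x_0+\ii y_0\in\st{2}$ and choose $\delta,\epsilon>0$ so small that $2(|y_0|+\delta)+\epsilon<1/\rho$. For $0<|h|\le\delta$, the difference quotient $(I_k(z_0+h)-I_k(z_0))/h$ minus $I_{k+1}(z_0)$ equals $\int_{-\infty}^{\infty}(\ii\omega)^{k}\varphi(\omega)\e^{\ii\omega z_0}\left(\frac{\e^{\ii\omega h}-1}{h}-\ii\omega\right)\da$; the elementary bound $|\e^{w}-1-w|\le\frac12|w|^{2}\e^{|w|}$ with $w=\ii\omega h$ gives $\left|\frac{\e^{\ii\omega h}-1}{h}-\ii\omega\right|\le\frac{|h|}{2}|\omega|^{2}\e^{\delta|\omega|}$, so the integrand is dominated, uniformly in $h$ with $|h|\le\delta$, by $\frac{|h|}{2}|\omega|^{k+2}|\varphi(\omega)|\e^{(|y_0|+\delta)|\omega|}$, which is integrable by the estimate just described. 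Thus the expression is $\OO{|h|}$ and tends to $0$, so $I_k'(z_0)=I_{k+1}(z_0)$; taking $k=0$ shows $f_\varphi$ is complex differentiable at every point of $\st{2}$, hence analytic there, and induction on $k$ gives $f_\varphi^{(k)}=I_k$ for all $k$. Finally, writing $p_n(\omega)=\sum_{k=0}^{n}c_{n,k}\omega^{k}$ and hence $\K{n}_z=\ii^{n}p_n(-\ii\dd_z)=\sum_{k=0}^{n}c_{n,k}\ii^{n-k}\dd_z^{k}$, linearity gives $\K{n}[f_\varphi](z)=\sum_{k=0}^{n}c_{n,k}\ii^{n-k}I_k(z)=\int_{-\infty}^{\infty}\left(\sum_{k=0}^{n}c_{n,k}\ii^{n-k}(\ii\omega)^{k}\right)\varphi(\omega)\e^{\ii\omega z}\da=\int_{-\infty}^{\infty}\ii^{n}p_n(\omega)\varphi(\omega)\e^{\ii\omega z}\da$, which is \eqref{fourier-int-der}.

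The hard part will be controlling the polynomial factors $|\omega|^{m}$ that repeated differentiation under the integral sign manufactures: the hypotheses supply only an $L^{2}(d\alpha)$ bound on $\varphi$ together with the exponential-moment bound $\int\e^{a|\omega|}\da<\infty$ for $a<1/\rho$ from Lemma~\ref{meuw}, so the key maneuver is to absorb each polynomial factor into a slightly larger exponent $\e^{\epsilon|\omega|}$ and then retreat to a strictly narrower strip --- which is precisely the reason the domain of analyticity is $\st{2}$ rather than $\st{}$. The remaining ingredients (Cauchy--Schwarz, the Taylor remainder estimate, dominated convergence, and the algebra of the last step) should all be routine.
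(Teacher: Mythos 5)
Your argument is correct; it differs from the paper's proof in technique rather than in overall strategy. Both proofs come down to showing that $\int_{-\infty}^{\infty}|\omega|^{k}\,|\varphi(\omega)|\,\e^{|y|\,|\omega|}\da$ is finite, locally uniformly for $|y|<\frac{1}{2\rho}$, via Cauchy--Schwarz in $\LT$, and then differentiating under the integral sign and expanding $\K{n}=\ii^{n}p_{n}(-\ii\dd)$ in powers of $\dd$ to get \eqref{fourier-int-der}. The paper establishes the finiteness by expanding $\e^{|y|\,|\omega|}$ into its power series, applying Cauchy--Schwarz termwise, and bounding the result by $\noi{\varphi}\sum_{k}\sqrt{\mu_{2n+2k}}\,|y|^{k}/k!$, whose convergence for $|y|\le\frac{1}{2\rho}-\varepsilon$ follows from the moment asymptotics encoded in \eqref{limsup}; you instead absorb the polynomial factor $|\omega|^{2k}$ into a small exponential $\e^{\epsilon|\omega|}$ and quote Lemma~\ref{meuw}, which is a shade more elementary and sidesteps computing $\limsup_{k}\sqrt[2k]{\mu_{2n+2k}}\big/\sqrt[k]{k!}$. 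You also make explicit what the paper leaves implicit: the step from these bounds to $I_{k}'=I_{k+1}$ (hence analyticity of $f_\varphi$ on $\st{2}$) is justified by a difference-quotient argument with the remainder estimate $|\e^{w}-1-w|\le\tfrac{1}{2}|w|^{2}\e^{|w|}$ and an integrable dominating function; your domination of $|\omega|^{k+2}|\varphi(\omega)|\,\e^{(|y_{0}|+\delta)|\omega|}$ is indeed covered by your first estimate with $k+2$ in place of $k$, thanks to the choice $2(|y_{0}|+\delta)+\epsilon<1/\rho$. Both routes deliver the same conclusion; yours is more self-contained on the analysis side, while the paper's series bound exhibits the uniform convergence on the closed substrips $|y|\le\frac{1}{2\rho}-\varepsilon$ in a single display.
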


\begin{proof}
Let $z = x +\ii y$, with $|y|<1/(2\rho)$. For every fixed $n$ and
$b>0$ we have
\begin{align*}
\hspace*{-20mm}\int_{-b}^{b}| (\ii \omega)^n
\varphi(\omega){\e}^{{\ii}\omega z}|\da
&=\int_{-b}^{b}|\omega|^n |\varphi(\omega)
|{\e}^{|\omega y|}\da\\
&=\sum_{k=0}^{\infty}\frac{|y|^k}{k!}\int_{-b}^{b}
|\omega|^{n+k}|\varphi(\omega)| \da\\
&\leq\sum_{k=0}^{\infty}\frac{|y|^k}{k!}
\left(\int_{-b}^{b} \omega^{2n+2k}\da\int_{-b}^{b}
|\varphi(\omega)|^2 \da\right)^{1/2}.
\end{align*}
Thus, also \[\int_{-\infty}^{\infty}| (\ii \omega)^n
\varphi(\omega){\e}^{{\ii}\omega z }|\da\leq
\noi{\varphi(\omega)}
\sum_{k=0}^{\infty}\frac{\sqrt{\mu_{2n+2k}}}{k!}\,|y|^k.\]
It is easy to see that  for every fixed $n$, 
$\limsup_{k\to \infty}
{\sqrt[2k]{\mu_{2n+2k}}}/{\sqrt[k]{k!}}=2\rho$; thus, for every $\varepsilon >0$ the sum on the right hand side of the above equation converges uniformly for all $|y|\leq \frac{1}{2\rho}-\varepsilon $.
In particular, we can differentiate under the integral sign in \eqref{ift0} any number of times, which implies \eqref{fourier-int-der}.
\end{proof}

\begin{definition}
For $\varphi\in \LT$ we call $f_{\varphi}(z)$ defined by \eqref{ift0} the inverse Fourier   transform of $\varphi(z)$ with respect to $\mom$, denoted by
$\FT^{-1}[\varphi](z)$.
\end{definition}

\begin{lemma} Assume that  for all $n$,  $\varphi_n(\omega)\in \LT$ and that $\varphi(\omega),\psi(\omega)\in \LT$;  if $\varphi_n(\omega)\to  \varphi(\omega)$ in $\LT$, then 
\begin{align}\label{cont1}\lim_{n\to \infty}\int_{-\infty}^\infty \varphi_n(\omega)\psi(\omega)\da=\int_{-\infty}^\infty \varphi(\omega)\psi(\omega)\da.
\end{align}
In particular, if in $\LT$ we have $\varphi(\omega)=\sum_{k=1}^\infty \eta_k(\omega)$, then 
\begin{equation}\label{suma}
\int_{-\infty}^{\infty}\psi(\omega) \sum_{k=0}^\infty\eta_k(\omega)\da=\sum_{k=0}^\infty\int_{-\infty}^\infty \psi(\omega)\eta_k(\omega)\da.
\end{equation}
\end{lemma}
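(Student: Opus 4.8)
The plan is to derive both statements from the Cauchy--Schwarz inequality for the Lebesgue--Stieltjes integral against $d\alpha$, together with the linearity of that integral over finite sums.

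First I would record the elementary estimate that for any $f,g\in\LT$ the product $fg$ is $d\alpha$-integrable and
\[
\left|\int_{-\infty}^{\infty} f(\omega)\,g(\omega)\,\da\right|
\le \int_{-\infty}^{\infty} |f(\omega)|\,|g(\omega)|\,\da
\le \noi{f}\,\noi{g},
\]
the last step being Cauchy--Schwarz applied to the nonnegative functions $|f|,|g|\in\LT$. Phrasing the bound with absolute values sidesteps the one point needing care, namely that the integrals in \eqref{cont1} involve $\varphi\psi$ rather than $\varphi\overline{\psi}$, so they are not literally the scalar product $\doti{\cdot}{\cdot}$; if one prefers, one may instead observe that $\overline{\psi}\in\LT$ with $\noi{\overline{\psi}}=\noi{\psi}$ and write the integral as $\doti{\varphi}{\overline{\psi}}$. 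In either case the three integrals appearing in \eqref{cont1} are well defined because $\varphi_n,\varphi,\psi\in\LT$.

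For \eqref{cont1} I would apply this estimate to $f=\varphi_n-\varphi$ and $g=\psi$, using linearity over the difference:
\[
\left|\int_{-\infty}^{\infty}\varphi_n\psi\,\da-\int_{-\infty}^{\infty}\varphi\psi\,\da\right|
=\left|\int_{-\infty}^{\infty}(\varphi_n-\varphi)\psi\,\da\right|
\le \noi{\varphi_n-\varphi}\,\noi{\psi},
\]
and the right-hand side tends to $0$ precisely because $\varphi_n\to\varphi$ in $\LT$.

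For \eqref{suma} I would take $\varphi_n=\sum_{k=0}^{n}\eta_k$, the partial sums of the series; these lie in $\LT$, and the hypothesis that $\varphi=\sum_k\eta_k$ in $\LT$ says exactly that $\varphi_n\to\varphi$ in $\LT$. Linearity of the integral over the finite sum gives $\int_{-\infty}^{\infty}\psi\,\varphi_n\,\da=\sum_{k=0}^{n}\int_{-\infty}^{\infty}\psi\,\eta_k\,\da$; letting $n\to\infty$ and invoking \eqref{cont1} identifies the limit of the left-hand side with $\int_{-\infty}^{\infty}\psi\sum_k\eta_k\,\da$, and shows simultaneously that the series of numbers on the right converges (its partial sums do), which is the asserted identity. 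There is no substantive obstacle here; the whole content is the continuity of the bilinear pairing $(f,g)\mapsto\int fg\,\da$ in its first argument, which is immediate from Cauchy--Schwarz.
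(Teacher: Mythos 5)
Your proposal is correct and follows essentially the same route as the paper: the paper also deduces \eqref{cont1} from the Cauchy--Schwarz bound on $\int(\varphi-\varphi_n)\psi\,\da$ (indeed your version is stated more carefully, with the norms rather than the squared integrals), and \eqref{suma} is then the special case where $\varphi_n$ are the partial sums. Nothing is missing; your explicit treatment of the partial-sum step and of the conjugation issue is a harmless elaboration of what the paper leaves implicit.
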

\begin{proof} Follows from 
\[\left|\int_{-\infty}^\infty(\varphi(\omega)-\varphi_n(\omega))\psi(\omega)\da\right|\leq \int_{-\infty}^\infty|\varphi(\omega)-\varphi_n(\omega)|^2\da  \int_{-\infty}^\infty|\psi(\omega)|^2\da\to  0.\]
\end{proof}

\begin{corollary}
$\FT^{-1}$ is an inverse of $\FT$.
\end{corollary}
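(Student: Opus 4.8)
The plan is to prove that $\FT:\LB\to\LT$ and $\FT^{-1}:\LT\to\LB$ are mutually inverse bijections, i.e.\ that $\FT\circ\FT^{-1}=\mathrm{id}$ on $\LT$ and $\FT^{-1}\circ\FT=\mathrm{id}$ on $\LB$; note that part of the first identity is the (a priori non-obvious) assertion that $\FT^{-1}$ actually maps $\LT$ into $\LB$, so that $\FT$ may be applied to $\FT^{-1}[\varphi]$. Everything rests on one elementary computation: setting $z=0$ in \eqref{fourier-int-der} and using that each $p_n$ has real coefficients (recurrence \eqref{poly}), one obtains, for every $\varphi\in\LT$,
\[(-\ii)^n\,\K{n}[\FT^{-1}[\varphi]](0)=\int_{-\infty}^\infty p_n(\omega)\,\varphi(\omega)\,\da=\doti{\varphi}{p_n},\qquad n\in\Nset;\]
in words, the chromatic Taylor coefficients of $\FT^{-1}[\varphi]$ at $0$ are exactly the Fourier coefficients of $\varphi$ with respect to the orthonormal system $\PP$.

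First I would prove $\FT\circ\FT^{-1}=\mathrm{id}$ on $\LT$. Fix $\varphi\in\LT$. By Lemma~\ref{complete} the system $\PP$ is complete in $\LT$, so $\varphi=\sum_{n=0}^\infty\doti{\varphi}{p_n}\,p_n$ in $\LT$ and Parseval's identity gives $\sum_{n=0}^\infty|\doti{\varphi}{p_n}|^2=\noi{\varphi}^2<\infty$. By the identity displayed above this says exactly that $\sum_{n=0}^\infty|\K{n}[\FT^{-1}[\varphi]](0)|^2<\infty$; since $\FT^{-1}[\varphi]$ is also analytic on $\st{2}$ (Proposition~\ref{f-phi}), it lies in $\LB$, which in particular shows $\FT^{-1}(\LT)\subseteq\LB$. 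Then Definition~\ref{bfL2} yields
\[\FT[\FT^{-1}[\varphi]](\omega)=\sum_{n=0}^\infty(-\ii)^n\,\K{n}[\FT^{-1}[\varphi]](0)\,p_n(\omega)=\sum_{n=0}^\infty\doti{\varphi}{p_n}\,p_n(\omega)=\varphi(\omega)\ \text{in }\LT.\]

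Next I would prove $\FT^{-1}\circ\FT=\mathrm{id}$ on $\LB$. Fix $f\in\LB$ and set $\varphi=\FT[f]=\sum_{m=0}^\infty(-\ii)^m\K{m}[f](0)\,p_m(\omega)$, a series converging in $\LT$ by Definition~\ref{bfL2}; by continuity of the scalar product and orthonormality, $\doti{\varphi}{p_n}=(-\ii)^n\K{n}[f](0)$. Applying the displayed computation of the first paragraph to $\FT^{-1}[\varphi]=\FT^{-1}[\FT[f]]$ gives $(-\ii)^n\K{n}[\FT^{-1}[\FT[f]]](0)=\doti{\varphi}{p_n}=(-\ii)^n\K{n}[f](0)$, hence $\K{n}[\FT^{-1}[\FT[f]]](0)=\K{n}[f](0)$ for every $n$. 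Because by \eqref{inverse} each $\dd^n$ is a finite linear combination of $\K{0},\dots,\K{n}$, the functions $f$ and $\FT^{-1}[\FT[f]]$ have identical derivatives of every order at $0$; both being analytic on the connected domain $\st{2}\ni 0$, they share the same Taylor expansion there, so the identity theorem forces $\FT^{-1}[\FT[f]]=f$ throughout $\st{2}$.

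Most of this is routine once the earlier results are in hand; I expect the real content to sit in two places. The first is the appeal to completeness of $\PP$ in $\LT$ (Lemma~\ref{complete}, ultimately Riesz's theorem), needed to replace Bessel's inequality by the \emph{equality} $\sum|\doti{\varphi}{p_n}|^2=\noi{\varphi}^2$ that places $\FT^{-1}[\varphi]$ in $\LB$. The second is the upgrade from ``$f$ and $\FT^{-1}[\FT[f]]$ agree to all orders at $0$'' to ``they agree on all of $\st{2}$,'' which uses analyticity on the whole strip and hence the precise definition of $\LB$; without that, matching chromatic derivatives at a single point would pin down the function only near $0$.
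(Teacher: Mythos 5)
Your proof is correct, and for the identity $\FT^{-1}\circ\FT=\mathrm{id}$ on $\LB$ it is essentially the paper's argument: match the chromatic derivatives of $f$ and of $\FT^{-1}[\FT[f]]$ at $0$ (the paper does this by plugging the series for $\varphi_f$ into \eqref{fourier-int-der} and interchanging sum and integral via \eqref{suma}; you do it by computing $\doti{\varphi_f}{p_n}$ by continuity of the scalar product --- the same fact in different packaging), then pass to ordinary derivatives via \eqref{inverse} and invoke analyticity on the connected strip $\st{2}$. Where you go beyond the paper's proof of this corollary is in also establishing the other composite, $\FT\circ\FT^{-1}=\mathrm{id}$ on $\LT$, including the needed fact that $\FT^{-1}$ maps $\LT$ into $\LB$; the paper does not put this inside the corollary's proof but obtains it immediately afterwards from Proposition \ref{some}, Corollary \ref{cdnorm} and the remark that $\FT$ and $\FT^{-1}$ are bijections between $\LB$ and $\LT$, using exactly the ingredients you use (completeness of $\PP$ from Lemma \ref{complete} and Parseval). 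So your write-up is a self-contained two-sided version of what the paper distributes between the corollary and its surrounding statements; no gaps.
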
 

\begin{proof} Assume $f\in\LB$; then $\varphi_f=  \sum_{n=0}^{\infty}
(-{\ii})^{n}\K{n}[f](0)\,p_{n}(\omega)\in \LT$. Let $g(t)=\FT^{-1}[\varphi_f(\omega)]$; then 
\eqref{fourier-int-der}, \eqref{suma} and the fact that $p_n(\omega)$ are orthonormal imply
\begin{eqnarray}
\K{n}[g](0)&=&\int_{-\infty}^{\infty}
{\ii}^{n}\,p_{n}(\omega)\,
\sum_{k=0}^\infty (-{\ii})^{k}\K{k}[f](0) p_{k}(\omega)\da\\
&=&\sum_{k=0}^\infty \K{k}[f](0){\ii}^{n}(-{\ii})^{k}\int_{-\infty}^{\infty}\,p_{n}(\omega)p_{k}(\omega)\da
=\K{n}[f](0)
\end{eqnarray}
Thus, also $g^{(n)}(0)=f^{(n)}(0)$ and consequently $g(\omega)=f(\omega)$ over $\st{2}$.
\end{proof}

\begin{proposition}\label{some}
Let $\varphi(\omega)\in\LT$ and $f_{\varphi}(z)$ be given by \eqref{ift0}; if for some fixed $u\in\st{2}$ the
function $\varphi(\omega){\e}^{{\ii}\omega u}$ also belongs to
$\LT$, then for such  $u$ we obtain that
\begin{equation}\label{ft-expand}
\varphi(\omega)e^{{\ii}\omega u }=\sum_{n=0}^{\infty}(-{\ii})^{n}
\K{n}[f_{\varphi}](u)\,p_{n}(\omega),
\end{equation}
where the equality holds in the sense of $\LT$, and
\begin{equation}\label{inde}
\sum_{n=0}^{\infty} |\K{n}[f_{\varphi}](u)|^2=
\noi{\varphi(\omega){\e}^{{\ii}\omega u}}^{2} <\infty.
\end{equation}
\end{proposition}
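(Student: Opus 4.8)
The plan is to recognise \eqref{ft-expand} as nothing but the Fourier expansion of $\varphi(\omega){\e}^{{\ii}\omega u}$ with respect to the orthonormal basis $\PP$ of $\LT$, and \eqref{inde} as the corresponding Parseval identity. The whole argument is short because the required integral representation of the coefficients is supplied in advance by Proposition~\ref{f-phi}.

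First I would set $\chi(\omega):=\varphi(\omega){\e}^{{\ii}\omega u}$, which lies in $\LT$ by hypothesis. Since $\mom$ is chromatic, Lemma~\ref{complete} tells us that $\PP$ is a complete orthonormal system in $\LT$, hence an orthonormal basis; therefore $\chi$ admits an expansion $\chi=\sum_{n=0}^{\infty}c_n\,p_n$ converging in the norm $\noi{\cdot}$, with $c_n=\doti{\chi}{p_n}$ and $\noi{\chi}^2=\sum_{n=0}^{\infty}|c_n|^2<\infty$.

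Next I would identify the coefficients $c_n$. Because the three-term recurrence \eqref{poly} has real coefficients $\gamma_n,\beta_n$ and $p_0\equiv 1$, each $p_n(\omega)$ is real on $\Rset$, so $\overline{p_n(\omega)}=p_n(\omega)$ and
\[
c_n=\int_{-\infty}^{\infty}\varphi(\omega){\e}^{{\ii}\omega u}\,p_n(\omega)\,\da.
\]
On the other hand, $f_\varphi$ is given by \eqref{ift0} with $\varphi\in\LT$, so Proposition~\ref{f-phi} applies; evaluating \eqref{fourier-int-der} at $z=u\in\st{2}$ yields
\[
\K{n}[f_\varphi](u)=\int_{-\infty}^{\infty}{\ii}^{n}\,p_n(\omega)\,\varphi(\omega)\,{\e}^{{\ii}\omega u}\,\da={\ii}^{n}c_n,
\]
so $c_n=(-{\ii})^{n}\K{n}[f_\varphi](u)$. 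Substituting this into the Fourier expansion of $\chi$ and into Parseval's identity gives \eqref{ft-expand} and \eqref{inde} respectively, the finiteness in \eqref{inde} being exactly the hypothesis $\varphi(\omega){\e}^{{\ii}\omega u}\in\LT$.

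There is no genuine obstacle here; the only point that needs a word of care is that the integral representation \eqref{fourier-int-der} of $\K{n}[f_\varphi](u)$ requires only $\varphi\in\LT$ and $u\in\st{2}$, and \emph{not} the stronger assumption $\varphi(\omega){\e}^{{\ii}\omega u}\in\LT$, so it is legitimately available. The stronger assumption is invoked solely to guarantee that $\chi$ itself belongs to $\LT$, so that it admits a norm-convergent expansion in the basis $\PP$ and a finite Parseval sum.
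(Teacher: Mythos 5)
Your proof is correct and follows essentially the same route as the paper: identify the coefficients $\doti{\varphi(\omega){\e}^{{\ii}\omega u}}{p_n(\omega)}$ via the integral formula \eqref{fourier-int-der} from Proposition~\ref{f-phi}, then invoke completeness of $\PP$ in $\LT$ and Parseval's identity. Your extra remarks (reality of the $p_n$, and that \eqref{fourier-int-der} needs only $\varphi\in\LT$ and $u\in\st{2}$) are accurate clarifications of steps the paper leaves implicit.
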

\begin{proof}
By Proposition \ref{f-phi}, if $u\in\st{2}$,
then equation \eqref{fourier-int-der} holds
for the corresponding $f_{\varphi}$ given by \eqref{ift0}. If
also $\varphi(\omega){\e}^{{\ii}\omega u}\in\LT$, then
\eqref{fourier-int-der} implies that
\begin{equation}\label{proj}
\doti{\varphi(\omega){\e}^{{\ii}\omega u}}{p_{n}(\omega)}=
(-{\ii})^n\K{n}[f_{\varphi}](u).
\end{equation}
Since $\PP$ is a complete
orthonormal system in $\LT$, \eqref{proj} implies
\eqref{ft-expand}, and Parseval's Theorem implies \eqref{inde}.
\end{proof}

\begin{corollary}\label{cdnorm}
If $\varphi(\omega)\in\LT$ and $t\in\Rset$, then also $\varphi(\omega)
{\e}^{{\ii}\omega t}\in\LT$ and  \eqref{inde} implies that for all $t\in \Rset$,
\begin{equation}\sum_{n=0}^{\infty} |\K{n}[f_{\varphi}](t)|^2
=\noi{\varphi(\omega){\e}^{{\ii}\omega t
}}^{2}=\noi{\varphi(\omega)}^{2}.
\end{equation}
\end{corollary}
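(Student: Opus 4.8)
The plan is to obtain the corollary directly from Proposition~\ref{some}, the only new ingredient being the elementary observation that multiplication by ${\e}^{{\ii}\omega t}$ acts as an isometry of $\LT$ whenever $t$ is real.

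First I would record the isometry. For real $\omega$ and real $t$ the product $\omega t$ is real, so $|{\e}^{{\ii}\omega t}|=1$ and hence $|\varphi(\omega){\e}^{{\ii}\omega t}|^2=|\varphi(\omega)|^2$ pointwise. Integrating against $\da$ gives $\int_{-\infty}^{\infty}|\varphi(\omega){\e}^{{\ii}\omega t}|^2\,\da=\int_{-\infty}^{\infty}|\varphi(\omega)|^2\,\da<\infty$, which shows at once that $\varphi(\omega){\e}^{{\ii}\omega t}\in\LT$ and that $\noi{\varphi(\omega){\e}^{{\ii}\omega t}}=\noi{\varphi(\omega)}$.

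Next I would check that the hypotheses of Proposition~\ref{some} are satisfied with $u=t$. Since $\alpha(\omega)$ is chromatic we have $\rho<\infty$, so $1/(2\rho)>0$ and every real $t$ satisfies $|\mathfrak{Im}(t)|=0<1/(2\rho)$, i.e.\ $t\in\st{2}$. Combined with the previous paragraph this means that both $\varphi(\omega)\in\LT$ and $\varphi(\omega){\e}^{{\ii}\omega t}\in\LT$, so \eqref{inde} applies with $u=t$ and yields $\sum_{n=0}^{\infty}|\K{n}[f_{\varphi}](t)|^2=\noi{\varphi(\omega){\e}^{{\ii}\omega t}}^2$. Substituting the isometry identity $\noi{\varphi(\omega){\e}^{{\ii}\omega t}}=\noi{\varphi(\omega)}$ then gives the two claimed equalities.

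There is essentially no obstacle: the statement is merely the restriction of Proposition~\ref{some} to real arguments, and the one point to watch is that the strip $\st{2}$ really does contain all of $\Rset$, which is precisely where the chromatic hypothesis $\rho<\infty$ enters. I would also note in passing that the resulting value $\sum_{n=0}^{\infty}|\K{n}[f_{\varphi}](t)|^2$ is therefore independent of $t\in\Rset$, a fact that will be useful later when deriving the local representations of the norm and scalar product in the band-limited setting.
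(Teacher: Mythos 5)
Your proof is correct and follows exactly the route the paper intends for this corollary: for real $t$ one has $|{\e}^{{\ii}\omega t}|=1$, so multiplication by ${\e}^{{\ii}\omega t}$ preserves membership in $\LT$ and the norm, and then \eqref{inde} of Proposition~\ref{some} applied with $u=t\in\st{2}$ gives the stated equalities. No gaps; the observation about $t$-independence is likewise the intended takeaway.
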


By Proposition \ref{f-phi} if $\varphi(\omega)\in\LT$ then $f_{\varphi}(z)=\FT^{-1}[\varphi](z)$ given by \eqref{ift0} is analytic on $\st{2}$. Thus, Corollary \ref{cdnorm} with $t=0$ implies the following corollary.
\begin{corollary}
If $\varphi(\omega)\in\LT$ then $f_{\varphi}(z)=\FT^{-1}[\varphi](z)$  belongs to $\LB$.
\end{corollary}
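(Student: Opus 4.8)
The plan is simply to check the two clauses in the definition of $\LB$ against results already in hand. Analyticity on the strip $\st{2}$ is immediate: Proposition~\ref{f-phi} asserts precisely that the function $f_{\varphi}(z)$ defined by \eqref{ift0} is analytic on $\st{2}$, and by definition $\FT^{-1}[\varphi]=f_{\varphi}$. So the only thing left is the summability condition $\sum_{n=0}^{\infty}|\K{n}[f_{\varphi}](0)|^2<\infty$.

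For that I would invoke Corollary~\ref{cdnorm} with the choice $t=0$. Since $\varphi(\omega)\in\LT$, Corollary~\ref{cdnorm} applies and gives $\sum_{n=0}^{\infty}|\K{n}[f_{\varphi}](0)|^2=\noi{\varphi(\omega)\,{\e}^{{\ii}\omega\cdot 0}}^2=\noi{\varphi(\omega)}^2$, which is finite because membership in $\LT$ means exactly that this norm is finite. (Equivalently, one can bypass the corollary and read off the same identity from \eqref{inde} of Proposition~\ref{some} with $u=0$, noting that $\varphi(\omega){\e}^{{\ii}\omega\cdot 0}=\varphi(\omega)\in\LT$ trivially, so the extra hypothesis of Proposition~\ref{some} is automatically satisfied.) Combining the two observations, $f_{\varphi}$ is analytic on $\st{2}$ and has $\sum_{n}|\K{n}[f_{\varphi}](0)|^2<\infty$, i.e. $f_{\varphi}\in\LB$.

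There is no real obstacle: all the genuine analysis has already been carried out upstream — differentiation under the integral sign (the estimate with the bound $\limsup_{k\to\infty}\sqrt[2k]{\mu_{2n+2k}}/\sqrt[k]{k!}=2\rho$ in Proposition~\ref{f-phi}), and the Parseval identity built on completeness of $\PP$ in $\LT$ (Lemma~\ref{complete} and Proposition~\ref{some}). The one point worth a sentence of care is just that $t=0$ is an admissible value of the real parameter in Corollary~\ref{cdnorm}, so that no condition beyond $\varphi\in\LT$ is required; everything else is a direct appeal to the cited statements.
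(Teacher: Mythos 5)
Your proposal is correct and follows exactly the paper's own route: the paper also obtains analyticity of $f_{\varphi}$ on $\st{2}$ from Proposition~\ref{f-phi} and then applies Corollary~\ref{cdnorm} with $t=0$ to get $\sum_{n}|\K{n}[f_{\varphi}](0)|^2=\noi{\varphi}^2<\infty$. Nothing is missing.
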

Note that we have already shown that if $f(z)\in\LB$ then $\varphi_f(\omega)=\mathcal{F}[f(z)](\omega)\in\LT$; thus, $\mathcal{F}$ and $\mathcal{F}^{-1}$ are bijections between $\LB$ and $\LT$. In particular, $\noi{\mathcal{F}[\mm^{(k)}(z)](\omega)}^2=\int_{-\infty}^\infty\omega^{2k}\da=\mu_{2k}<\infty$; similarly, 
$\noi{\mathcal{F}[\K{k}[\mm(z)]](\omega)}^2=\int_{-\infty}^\infty p_k(\omega)^{2}\da=1$.
Thus, $\mm^{(k)}(z)$ and $\K{k}[\mm](z)$  belong to $\LB$ for all $k$. 

\begin{lemma}\label{euw}
For every $\varepsilon$, $0< \varepsilon< \frac{1}{2\rho}$, and every polynomial $P(\omega)$, if $u\in \mathds{S}\!\left(\frac{1}{2\rho}-\varepsilon \right)$, then $\noi{P(\omega){\e}^{\ii u\; \omega}}\leq \noi{P(\omega){\e}^{\left(\frac{1}{2\rho}-\varepsilon\right) |\omega|}}<\infty$.
\end{lemma}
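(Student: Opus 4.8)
The plan is to reduce the claimed inequality to a pointwise comparison of the two integrands and then to invoke Lemma~\ref{meuw} for the finiteness. First I would write $u=x+\ii y$ with $x,y\in\Rset$; the hypothesis $u\in\mathds{S}\!\left(\tfrac{1}{2\rho}-\varepsilon\right)$ says exactly that $|\mathfrak{Im}(u)|=|y|<\tfrac{1}{2\rho}-\varepsilon$. Since $\ii u\omega=\ii x\omega-y\omega$, for every $\omega\in\Rset$ one has $|{\e}^{\ii u\omega}|={\e}^{-y\omega}\le {\e}^{|y|\,|\omega|}\le {\e}^{\left(\frac{1}{2\rho}-\varepsilon\right)|\omega|}$, and hence $|P(\omega){\e}^{\ii u\omega}|^2\le |P(\omega)|^2{\e}^{\left(\frac{1}{\rho}-2\varepsilon\right)|\omega|}=|P(\omega){\e}^{\left(\frac{1}{2\rho}-\varepsilon\right)|\omega|}|^2$ for all $\omega$. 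Integrating this bound against $\da$ (monotonicity of the Lebesgue--Stieltjes integral) and taking square roots yields the stated inequality $\noi{P(\omega){\e}^{\ii u\omega}}\le\noi{P(\omega){\e}^{\left(\frac{1}{2\rho}-\varepsilon\right)|\omega|}}$, provided the right-hand side is finite.

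For the finiteness I would proceed as follows. Set $c=\tfrac{1}{\rho}-2\varepsilon$; the constraints $0<\varepsilon<\tfrac{1}{2\rho}$ are precisely what force $0\le c<\tfrac1\rho$ (with the usual convention $1/0=\infty$, under which the degenerate case $\rho=0$, where $\da$ has bounded support and every exponential weight is integrable, is immediate). Choosing any $c'$ with $c<c'<\tfrac1\rho$, I would note that a polynomial factor is absorbed by an arbitrarily small extra exponential: since $|P(\omega)|^2{\e}^{-(c'-c)|\omega|}\to 0$ as $|\omega|\to\infty$, there is a finite constant $C$ (depending only on $P$, $c$, $c'$) with $|P(\omega)|^2{\e}^{c|\omega|}\le C\,{\e}^{c'|\omega|}$ for all $\omega\in\Rset$. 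Then Lemma~\ref{meuw}, applied with $a=c'$ (legitimate because $0\le c'<1/\rho$), gives $\int_{-\infty}^{\infty}{\e}^{c'|\omega|}\,\da<\infty$, whence $\noi{P(\omega){\e}^{\left(\frac{1}{2\rho}-\varepsilon\right)|\omega|}}^2=\int_{-\infty}^{\infty}|P(\omega)|^2{\e}^{c|\omega|}\,\da\le C\int_{-\infty}^{\infty}{\e}^{c'|\omega|}\,\da<\infty$.

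I do not expect a genuine obstacle: the argument is a one-line majorization of integrands followed by a direct appeal to Lemma~\ref{meuw}. The only points that demand a little care are bookkeeping ones: correctly translating $|\mathfrak{Im}(u)|<\tfrac{1}{2\rho}-\varepsilon$ into the bound on $|{\e}^{\ii u\omega}|$ and tracking the factor $2$ produced by squaring (which is exactly why the exponent in the statement is $\tfrac{1}{2\rho}-\varepsilon$ rather than $\tfrac1\rho-\varepsilon$), the elementary fact that polynomial growth is swallowed by any positive exponential margin so that the integrand drops from $|P(\omega)|^2{\e}^{c|\omega|}$ to a constant multiple of ${\e}^{c'|\omega|}$, and verifying that the hypotheses on $\varepsilon$ leave room to pick $c'$ strictly between $c$ and $1/\rho$.
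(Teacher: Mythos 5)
Your main argument is correct, and it is exactly the argument the paper leaves implicit (Lemma \ref{euw} is stated there without proof): for $u\in\mathds{S}\!\left(\frac{1}{2\rho}-\varepsilon\right)$ the pointwise bound $|{\e}^{\ii u\omega}|={\e}^{-\mathfrak{Im}(u)\,\omega}\leq{\e}^{\left(\frac{1}{2\rho}-\varepsilon\right)|\omega|}$ gives the comparison of the two norms by monotonicity of the Lebesgue--Stieltjes integral, and finiteness follows from Lemma \ref{meuw} once the polynomial is absorbed into a slightly larger exponential ${\e}^{c'|\omega|}$ with $\frac{1}{\rho}-2\varepsilon<c'<\frac{1}{\rho}$, which your choice of $c'$ legitimately allows.

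The one point you should correct is the parenthetical treatment of the degenerate case $\rho=0$: it is not true that $\rho=0$ forces $\da$ to have bounded support. Bounded support does imply $\rho=0$, but not conversely; the Hermite weight of Example 6 has $\mu_{2n}=\Gamma(n+1/2)/\sqrt{\pi}$, hence $1/\rho=\infty$, while its support is all of $\Rset$. Fortunately no separate argument is needed there: when $\rho=0$, Lemma \ref{meuw} gives $\int_{-\infty}^{\infty}{\e}^{a|\omega|}\da<\infty$ for every finite $a\geq 0$, so your main argument applies verbatim with any finite exponent in place of $\frac{1}{2\rho}-\varepsilon$. The only genuine awkwardness in that case lies in the statement itself, where $\frac{1}{2\rho}-\varepsilon$ must be read as an arbitrary finite positive constant when $\frac{1}{2\rho}=\infty$ (otherwise the middle quantity need not be finite for weights of unbounded support); that is a defect of the formulation, not of your proof, but the bounded-support claim as written is false and should be dropped.
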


\begin{corollary}\label{sum-squares}
Let $0<\varepsilon<\frac{1}{2\rho}$ and let $m$ be any non-negative integer; then for all $u\in \mathds{S}(\frac{1}{2\rho}-\varepsilon)$
\begin{equation}\label{sumis2}
\sum_{n=0}^{\infty}|(\K{n}\circ\K{m})[\mm](u)|^2
< \noi{p_m(\omega){\e}^{(\frac{1}{2\rho}-\varepsilon)|\omega| }}^2<\infty.
\end{equation}
In particular, for all real $t$ we have
\begin{equation}\label{sumis1}
\sum_{n=0}^{\infty} |(\K{n}\circ \K{m})[\mm](t)|^2=1.
\end{equation}
\end{corollary}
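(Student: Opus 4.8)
The plan is to recognise the inner function $\K{m}[\mm](z)$ as an inverse $\mom$-Fourier transform of a polynomial, and then to read the two assertions off Proposition~\ref{some} and Corollary~\ref{cdnorm}.

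First I would set $\varphi(\omega)={\ii}^m p_m(\omega)$ and $g_m(z):=\FT^{-1}[\varphi](z)$. Since $p_m$ is a polynomial and all moments of $\mom$ are finite, $\varphi\in\LT$, and comparing \eqref{ift0} with \eqref{iwt} shows $g_m(z)=\int_{-\infty}^{\infty}{\ii}^m p_m(\omega)\,\e^{{\ii}\omega z}\,\da=\K{m}[\mm](z)$. By Proposition~\ref{f-phi}, $g_m$ is analytic on $\st{2}$ and $\K{n}[g_m](z)=\int_{-\infty}^{\infty}{\ii}^{n}p_n(\omega)\varphi(\omega)\e^{{\ii}\omega z}\,\da$; in particular $\K{n}[g_m](z)=(\K{n}\circ\K{m})[\mm](z)$ for every $n$ and every $z\in\st{2}$.

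Next, fix $\varepsilon\in(0,\frac{1}{2\rho})$ and $u\in\mathds{S}(\frac{1}{2\rho}-\varepsilon)\subseteq\st{2}$. Lemma~\ref{euw} (applied to $P=p_m$) gives $\noi{p_m(\omega)\e^{{\ii}u\omega}}\le\noi{p_m(\omega)\e^{(\frac{1}{2\rho}-\varepsilon)|\omega|}}<\infty$, so $\varphi(\omega)\e^{{\ii}\omega u}\in\LT$ and Proposition~\ref{some} is applicable to $g_m$ at $u$. Its conclusion \eqref{inde} reads
\[\sum_{n=0}^{\infty}|(\K{n}\circ\K{m})[\mm](u)|^2=\noi{\varphi(\omega)\e^{{\ii}\omega u}}^2=\noi{p_m(\omega)\e^{{\ii}u\omega}}^2,\]
and the Lemma~\ref{euw} estimate then yields the right-hand inequality of \eqref{sumis2}. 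For the strict left-hand inequality, write $u=a+{\ii}b$ with $|b|<\frac{1}{2\rho}-\varepsilon$; then $|\e^{{\ii}u\omega}|=\e^{-b\omega}\le\e^{|b||\omega|}<\e^{(\frac{1}{2\rho}-\varepsilon)|\omega|}$ for every $\omega\ne0$. Because $\alpha(\omega)$ assumes infinitely many values while the polynomial $p_m$ has only finitely many zeros, $p_m(\omega)^2$ is strictly positive on a set of positive $\alpha$-measure on which $\omega\ne0$, so integrating the strict pointwise inequality against $p_m(\omega)^2\,\da$ keeps it strict; this is \eqref{sumis2}.

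For the ``in particular'' claim I take $t\in\Rset$. Then $\varphi(\omega)\e^{{\ii}\omega t}\in\LT$ automatically, and Corollary~\ref{cdnorm} applied to $\varphi$ and $g_m$ gives $\sum_{n=0}^{\infty}|(\K{n}\circ\K{m})[\mm](t)|^2=\noi{\varphi(\omega)}^2=\int_{-\infty}^{\infty}p_m(\omega)^2\,\da=1$ by the orthonormality \eqref{polyortho}, which is \eqref{sumis1}. The only genuinely delicate points are checking the hypothesis $\varphi(\omega)\e^{{\ii}\omega u}\in\LT$ of Proposition~\ref{some} (which is exactly what Lemma~\ref{euw} supplies) and justifying the strictness in \eqref{sumis2}; everything else is a direct substitution into results already established.
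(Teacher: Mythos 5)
Your proof is correct and follows essentially the same route as the paper: take $\varphi(\omega)={\ii}^m p_m(\omega)$ so that $f_\varphi(z)=\K{m}[\mm](z)$, use Lemma~\ref{euw} to verify the hypothesis of Proposition~\ref{some}, and read \eqref{sumis2} and \eqref{sumis1} off the resulting Parseval identity (with orthonormality of $p_m$ giving the value $1$ for real $t$). Your extra argument for the strictness of the inequality in \eqref{sumis2} is a small refinement that the paper leaves implicit, but it does not change the approach.
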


\begin{proof}  Lemma \ref{euw} implies that ${\ii}^mp_m(\omega){\e}^{\ii u}$ belongs to $\LT$; thus, we can apply Proposition \ref{some} with $\varphi(\omega)={\ii}^mp_m(\omega)$, in which case
$f_{\varphi}(z)=\K{m}[\mm](z)$ and
\begin{equation}\label{sumsqpar}
\sum_{n=0}^{\infty} |(\K{n}\circ\K{m})[\mm](u)|^2=
\noi{p_m(\omega){\e}^{{\ii}\omega u}}^{2}.
\end{equation}
If $u$ is real we get \eqref{sumis1}; if  $u\in \mathds{S}(\frac{1}{2\rho}-\varepsilon)$ then the claim follows from Lemma \ref{euw}.
\end{proof}

\subsection{Space $\LL$}

\begin{corollary}\label{cdscpr} Assume that $\varphi(\omega),\psi(\omega)\in\LT$ and that $u\in\Rset$; then  \eqref{proj} implies 
\begin{align*}
\doti{\varphi(\omega)}{\psi(\omega)} = \doti{\varphi(\omega){\e}^{{\ii}\omega u}}{\psi(\omega){\e}^{{\ii}\omega u}}&=
\sum_{n=0}^{\infty} (-\ii)^n \K{n}[f_\varphi](u)\; \overline{({-\ii})^n\K{n}[f_\psi](u)}\nonumber\\
&= \sum_{n=0}^{\infty}  \K{n}[f_\varphi](u)\; \overline{\K{n}[f_\psi](u)}.
\end{align*}
\end{corollary}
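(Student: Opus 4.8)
The plan is to derive the identity from the sesquilinear (polarized) form of Parseval's theorem for the complete orthonormal system $\PP$ in $\LT$, exactly as \eqref{inde} in Proposition~\ref{some} was derived from its norm form. First I would dispatch the leftmost equality: for $u\in\Rset$ we have $|{\e}^{\ii\omega u}|=1$ for all $\omega$, so by Corollary~\ref{cdnorm} both $\varphi(\omega){\e}^{\ii\omega u}$ and $\psi(\omega){\e}^{\ii\omega u}$ lie in $\LT$, and since ${\e}^{\ii\omega u}\,\overline{{\e}^{\ii\omega u}}=1$ the integrand of $\doti{\varphi(\omega){\e}^{\ii\omega u}}{\psi(\omega){\e}^{\ii\omega u}}$ coincides with that of $\doti{\varphi(\omega)}{\psi(\omega)}$, giving $\doti{\varphi(\omega)}{\psi(\omega)}=\doti{\varphi(\omega){\e}^{\ii\omega u}}{\psi(\omega){\e}^{\ii\omega u}}$.

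Next I would invoke the sesquilinear Parseval identity: for a complete orthonormal system $\{p_n\}$ and any $F,G\in\LT$ one has $\doti{F}{G}=\sum_{n=0}^{\infty}\doti{F}{p_n}\,\overline{\doti{G}{p_n}}$, with the series converging absolutely because $(\doti{F}{p_n})_n$ and $(\doti{G}{p_n})_n$ are in $\ell^2$ by Bessel/Parseval. This follows from the norm form of Parseval already used in Proposition~\ref{some} by polarization, or directly by expanding $F=\sum_n\doti{F}{p_n}p_n$ in $\LT$ and using continuity of the scalar product as in the Lemma preceding the corollary that $\FT^{-1}$ inverts $\FT$. Applying it with $F=\varphi(\omega){\e}^{\ii\omega u}$ and $G=\psi(\omega){\e}^{\ii\omega u}$ yields
\[
\doti{\varphi(\omega){\e}^{\ii\omega u}}{\psi(\omega){\e}^{\ii\omega u}}
=\sum_{n=0}^{\infty}\doti{\varphi(\omega){\e}^{\ii\omega u}}{p_n(\omega)}\,\overline{\doti{\psi(\omega){\e}^{\ii\omega u}}{p_n(\omega)}}.
\]

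Finally I would substitute \eqref{proj}, which applies since $\varphi(\omega){\e}^{\ii\omega u},\psi(\omega){\e}^{\ii\omega u}\in\LT$: it gives $\doti{\varphi(\omega){\e}^{\ii\omega u}}{p_n(\omega)}=(-\ii)^n\K{n}[f_\varphi](u)$ and the analogous formula for $\psi$, producing the middle expression in the statement. Since $(-\ii)^n\,\overline{(-\ii)^n}=|(-\ii)^n|^2=1$, these unimodular factors cancel and the last expression $\sum_{n=0}^{\infty}\K{n}[f_\varphi](u)\,\overline{\K{n}[f_\psi](u)}$ results. There is no real obstacle here; the only point meriting a word of care is the passage from the norm form of Parseval to its sesquilinear form and the attendant absolute convergence of the series, which is routine in a Hilbert space once one notes that the two coefficient sequences are square-summable by \eqref{inde}.
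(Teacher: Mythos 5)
Your proposal is correct and follows essentially the same route as the paper: the paper's corollary is exactly the polarized (sesquilinear) Parseval identity for the complete orthonormal system $\PP$ applied to $\varphi(\omega){\e}^{\ii\omega u},\psi(\omega){\e}^{\ii\omega u}\in\LT$, with the coefficients supplied by \eqref{proj} and the unimodular factors $(-\ii)^n$ cancelling, just as you argue. Nothing further is needed.
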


\begin{corollary}\label{cdconv} Assume that $\varphi(\omega),\psi(\omega)\in\LT$ and that $u, t\in \Rset$; then 
\begin{align*}
\sum_{n=0}^{\infty}(-1)^n \K{n}[f_{\varphi}](u)\, \K{n}[f_{\psi}](t-u)=\int_{-\infty}^{\infty}\varphi(\omega)\,\psi(\omega)\,{\e}^{{\ii}\omega t}\da.
\end{align*}
\end{corollary}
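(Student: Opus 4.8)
The plan is to split the exponential as ${\e}^{\ii\omega t}={\e}^{\ii\omega u}\,{\e}^{\ii\omega(t-u)}$, to expand the first factor times $\varphi$ into the orthonormal polynomials $\PP$ by means of Proposition~\ref{some}, and then to integrate the resulting series against $\psi(\omega){\e}^{\ii\omega(t-u)}$ term by term.

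First I would note that, since $u$ and $t-u$ are real, Corollary~\ref{cdnorm} guarantees that both $\varphi(\omega){\e}^{\ii\omega u}$ and $\psi(\omega){\e}^{\ii\omega(t-u)}$ belong to $\LT$. Applying Proposition~\ref{some} to $\varphi$ at the (real, hence admissible) centre $u$ gives the $\LT$-convergent expansion
\[\varphi(\omega){\e}^{\ii\omega u}=\sum_{n=0}^{\infty}(-\ii)^{n}\K{n}[f_{\varphi}](u)\,p_{n}(\omega).\]
Next I would invoke the term-by-term integration lemma \eqref{suma}, taking the fixed factor to be $\psi(\omega){\e}^{\ii\omega(t-u)}\in\LT$ and $\eta_n(\omega)=(-\ii)^{n}\K{n}[f_{\varphi}](u)\,p_{n}(\omega)$, which yields
\[\int_{-\infty}^{\infty}\varphi(\omega)\,\psi(\omega)\,{\e}^{\ii\omega t}\da
=\sum_{n=0}^{\infty}(-\ii)^{n}\K{n}[f_{\varphi}](u)\int_{-\infty}^{\infty}p_{n}(\omega)\,\psi(\omega)\,{\e}^{\ii\omega(t-u)}\da .\]
By \eqref{fourier-int-der} of Proposition~\ref{f-phi}, applied to $\psi$ at the point $t-u\in\st{2}$, the inner integral equals $\ii^{-n}\K{n}[f_{\psi}](t-u)=(-\ii)^{n}\K{n}[f_{\psi}](t-u)$. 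Substituting this and using $(-\ii)^{n}(-\ii)^{n}=(-1)^{n}$ produces exactly $\sum_{n=0}^{\infty}(-1)^{n}\K{n}[f_{\varphi}](u)\,\K{n}[f_{\psi}](t-u)$, which is the claimed identity; this argument also shows en route that the series on the left-hand side converges.

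I expect the only point that needs any care is the bookkeeping required before interchanging sum and integral in \eqref{suma}, namely the memberships $\varphi(\omega){\e}^{\ii\omega u},\ \psi(\omega){\e}^{\ii\omega(t-u)}\in\LT$ and the $\LT$-convergence of the polynomial expansion of $\varphi(\omega){\e}^{\ii\omega u}$ — all already furnished by Corollary~\ref{cdnorm} and Proposition~\ref{some} — together with tracking the powers of $\ii$. One could instead try to deduce the result from Corollary~\ref{cdscpr} by rewriting the integral as $\doti{\varphi(\omega){\e}^{\ii\omega t}}{\overline{\psi(\omega)}}$, but that route would force one to relate $\K{n}[f_{\overline{\psi}}]$ to $\overline{\K{n}[f_{\psi}]}$, which is awkward in the non-symmetric case where the operators $\K{n}$ have complex coefficients; the direct expansion above sidesteps this entirely.
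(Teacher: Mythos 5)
Your proposal is correct and follows essentially the same route as the paper's proof: split ${\e}^{\ii\omega t}={\e}^{\ii\omega u}{\e}^{\ii\omega (t-u)}$, expand $\varphi(\omega){\e}^{\ii\omega u}$ via \eqref{ft-expand} from Proposition~\ref{some}, interchange sum and integral using \eqref{cont1}/\eqref{suma}, and identify the inner integral through \eqref{fourier-int-der}. Your handling of the powers of $\ii$ and the explicit membership checks match the paper's argument, so there is nothing to correct.
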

\begin{proof}
Since $t\in \Rset$, we have
\begin{align*}
\left|\int_{-\infty}^{\infty}\varphi(\omega)\,\psi(\omega)\,{\e}^{{\ii}\omega t}\da\right|\leq &\int_{-\infty}^{\infty}|\varphi(\omega)|^2\da\;\int_{-\infty}^{\infty}|\psi(\omega)|^2\da<\infty.
\end{align*}
Using \eqref{ft-expand} and \eqref{cont1} and since $\varphi(\omega),\psi(\omega)\in \LT$, we obtain that for all $u,t\in\Rset$, 
\begin{align*}
\int_{-\infty}^{\infty}\varphi(\omega)\,\psi(\omega)\,{\e}^{{\ii}\omega t}\da&=\int_{-\infty}^{\infty}\varphi(\omega)\,{\e}^{{\ii}\omega u}\psi(\omega)\,{\e}^{{\ii}\omega (t-u)}\da\\
&=\int_{-\infty}^{\infty} \sum_{n=0}^{\infty}(-{\ii})^{n}
\K{n}[f_{\varphi}](u)p_{n}(\omega)\psi(\omega)\,{\e}^{{\ii}\omega (t-u)}\da\\
&=\sum_{n=0}^{\infty}(-1)^{n}
\K{n}[f_{\varphi}](u)\,\int_{-\infty}^{\infty} {\ii}^{n}p_{n}(\omega)\psi(\omega)\,{\e}^{{\ii}\omega (t-u)}\da\\
&=\sum_{n=0}^{\infty}(-1)^{n} \K{n}[f_{\varphi}](u)\, \K{n}[f_{\psi}](t-u)
\end{align*}
\end{proof}

\begin{definition}
$\LL$ is the vector space of the restrictions of functions from $\LB$ to real arguments.
\end{definition}

The following Proposition follows directly from Corollaries  \ref{cdnorm} -- \ref{cdconv}.
\begin{proposition}[\!\!\cite{IG5}]\label{local-space-gen}
We can introduce a locally defined scalar product, an associated
norm and a convolution of functions in  \LL\  by the following
sums which are independent of a real $u$:
\begin{align}
\norm{f}^2&=\sum_{n=0}^{\infty}|\K{n}[f](u)|^2=
\noi{\FT[f](\omega)}^2;\label{nor}\\
\Mscal{f}{g}&=\sum_{n=0}^{\infty}\K{n}[f](u)\overline{\K{n}[g](u)}
=\doti{\FT[f](\omega)}{\FT[g](\omega)};\label{scl}\\
(f \ast_{\!\Mi} g)(t)&=\sum_{n=0}^{\infty}(-1)^n\K{n}[f](u)
\K{n}[g](t-u)\label{convolution}
=\int_{-\infty}^{\infty}
{\FT[f](\omega)}\;{\FT[g](\omega)}{\e}^{{\ii}\omega t}
\da.
\end{align}
\end{proposition}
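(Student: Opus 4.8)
The plan is to reduce all three identities to Corollaries~\ref{cdnorm}, \ref{cdscpr} and \ref{cdconv} by passing through the bijection $\FT:\LB\to\LT$ established above. Given $f,g\in\LL$, they are restrictions to $\Rset$ of (unique, by analyticity on $\st{2}$) functions in $\LB$; writing $\varphi=\FT[f]$ and $\psi=\FT[g]$, the Corollary asserting that $\FT^{-1}$ inverts $\FT$ shows that these functions are exactly $f_{\varphi}$ and $f_{\psi}$ in the notation of Proposition~\ref{f-phi}, so that $\K{n}[f](u)=\K{n}[f_{\varphi}](u)$ and $\K{n}[g](u)=\K{n}[f_{\psi}](u)$ for every $u\in\Rset$ (indeed for every $u\in\st{2}$). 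With this dictionary in hand, \eqref{nor} is exactly Corollary~\ref{cdnorm} applied to $\varphi$, \eqref{scl} is Corollary~\ref{cdscpr} applied to the pair $\varphi,\psi$, and \eqref{convolution} is Corollary~\ref{cdconv} applied to the pair $\varphi,\psi$; in each case the right-hand side ($\noi{\varphi(\omega)}^2$, $\doti{\varphi(\omega)}{\psi(\omega)}$, $\int_{-\infty}^{\infty}\varphi(\omega)\psi(\omega){\e}^{{\ii}\omega t}\da$) manifestly does not involve $u$, which is precisely the asserted independence of $u$.

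What remains is to verify the auxiliary points that make the statement meaningful. First, $\FT[f]$ is well defined on $\LL$: a function in $\LL$ extends to at most one function analytic on $\st{2}$, that extension lies in $\LB$, and $\FT$ of it is unambiguous. Second, the series defining $\Mscal{f}{g}$ in \eqref{scl} and $(f\ast_{\!\Mi}g)(t)$ in \eqref{convolution} converge absolutely: by \eqref{nor} the sequences $(\K{n}[f](u))_n$ and $(\K{n}[g](u))_n$ lie in $\ell^2$ with norms $\norm{f}$ and $\norm{g}$, so the Cauchy--Schwarz inequality in $\ell^2$ bounds both series by $\norm{f}\,\norm{g}$. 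Third, the integral on the right of \eqref{convolution} is finite for real $t$, since $|{\e}^{{\ii}\omega t}|=1$ and $\varphi\psi\in L^1_{d\alpha}$ by the Cauchy--Schwarz inequality in $\LT$, so $(f\ast_{\!\Mi}g)(t)$ is genuinely a well-defined function of $t$.

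I do not expect a serious obstacle: essentially all the analytic content --- differentiation under the integral sign, completeness of $\PP$, and Parseval's identity --- has already been absorbed into Propositions~\ref{f-phi} and \ref{some} and the three Corollaries, so what is left is bookkeeping. The one point that requires a moment's care is the identification of an element of $\LL$ with the restriction of its inverse $\FT$-transform, i.e.\ that $f=\FT^{-1}[\FT[f]]$ on $\Rset$; this is exactly where the definition of $\LL$ as the space of restrictions of $\LB$-functions (rather than of arbitrary smooth $f$ with $\sum_n|\K{n}[f](0)|^2<\infty$) is used, together with the already-proved fact that $\FT^{-1}\FT$ is the identity on $\LB$.
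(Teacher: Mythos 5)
Your proposal is correct and follows exactly the paper's route: the paper's proof is precisely the remark that the proposition "follows directly from Corollaries \ref{cdnorm}--\ref{cdconv}," which is the reduction you carry out, with the identification $f=f_{\varphi}$, $\varphi=\FT[f]$, supplied by the inversion corollary. Your additional checks (well-definedness of $\FT$ on \LL, absolute convergence via Cauchy--Schwarz, finiteness of the convolution integral) are sound bookkeeping that the paper leaves implicit.
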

 \begin{corollary}
 The space $\LL$ with the scalar product  $\Mscal{f}{g}$ is isomorphic to the space $\LT$  with the scalar product $\doti{\varphi_f}{\varphi_g}$ via $f(t)\mapsto \varphi_f(\omega)=\FT[f](\omega)$ given by \eqref{ft0}, with the inverse given by $\varphi(\omega)\mapsto f_\varphi(t)=\FT^{-1}[\varphi](t)$ given by \eqref{ift0}.
\end{corollary}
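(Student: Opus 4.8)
The plan is to show that the map $\FT\colon f\mapsto\varphi_f$ defined by \eqref{ft0} is a linear bijection from $\LL$ onto $\LT$ which carries the scalar product $\Mscal{\cdot}{\cdot}$ into $\doti{\cdot}{\cdot}$; since almost all of the needed ingredients are already in place, the argument is essentially an assembly of the preceding results. First I would check that $\FT$ is well defined on $\LL$: every $f\in\LL$ is the restriction to $\Rset$ of a unique function $\tilde f\in\LB$, uniqueness holding because two functions analytic on $\st{2}$ that agree on $\Rset$ coincide, and the chromatic derivatives satisfy $\K{n}[\tilde f](0)=\K{n}[f](0)$ since they are computed from ordinary derivatives at $0$; by Definition~\ref{bfL2} the series \eqref{ft0} built from these coefficients converges in $\LT$, so $\varphi_f=\FT[f]\in\LT$. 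Linearity is immediate because the coefficients $(-\ii)^n\K{n}[f](0)$ depend linearly on $f$.

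Next I would establish bijectivity. Injectivity follows at once from the already-proved fact that $\FT^{-1}\circ\FT$ is the identity: if $\varphi_f=\varphi_g$, then $f=\FT^{-1}[\varphi_f]=\FT^{-1}[\varphi_g]=g$ on $\st{2}$, hence on $\Rset$. For surjectivity, given $\varphi\in\LT$, Proposition~\ref{f-phi} shows $f_\varphi=\FT^{-1}[\varphi]$ is analytic on $\st{2}$, and Corollary~\ref{cdnorm} with $t=0$ gives $\sum_{n=0}^{\infty}|\K{n}[f_\varphi](0)|^2=\noi{\varphi}^2<\infty$, so $f_\varphi\in\LB$ and its restriction to $\Rset$ lies in $\LL$. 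Applying Proposition~\ref{some} with $u=0$ — legitimate since $\varphi(\omega){\e}^{{\ii}\omega\cdot 0}=\varphi(\omega)\in\LT$ — equation \eqref{ft-expand} yields $\varphi(\omega)=\sum_{n=0}^{\infty}(-\ii)^n\K{n}[f_\varphi](0)\,p_n(\omega)=\FT[f_\varphi](\omega)$, so $\FT$ is onto.

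Finally, the scalar products correspond. By \eqref{scl} of Proposition~\ref{local-space-gen}, for $f,g\in\LL$ we have $\Mscal{f}{g}=\doti{\FT[f](\omega)}{\FT[g](\omega)}=\doti{\varphi_f}{\varphi_g}$, and in particular $\Norm{f}=\noi{\varphi_f}$; since $\doti{\cdot}{\cdot}$ is a genuine scalar product on $\LT$, this transfers positive definiteness to $\Mscal{\cdot}{\cdot}$ on $\LL$ (if $\Norm{f}=0$ then $\varphi_f=0$, whence $f=\FT^{-1}[0]=0$). Hence $\FT$ is an isometric isomorphism of inner-product spaces with inverse $\FT^{-1}$, as claimed.

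I do not expect a genuine obstacle here; the only point requiring a little care is the bookkeeping in the identification of $\LL$ with $\LB$ via restriction and analytic continuation, so that the chromatic derivatives $\K{n}[f](u)$ entering $\Mscal{\cdot}{\cdot}$ are unambiguously defined for $f\in\LL$. Everything else is a direct invocation of Proposition~\ref{local-space-gen} together with the bijectivity of $\FT$ already established via Propositions~\ref{f-phi} and~\ref{some} and the corollaries surrounding them.
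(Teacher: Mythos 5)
Your proposal is correct and follows essentially the same route as the paper: the paper treats this corollary as an immediate consequence of the previously established facts that $\FT$ and $\FT^{-1}$ are mutually inverse bijections between $\LB$ (equivalently $\LL$) and $\LT$, together with the scalar-product identity \eqref{scl} of Proposition~\ref{local-space-gen}, which is exactly the assembly you carry out. Your only additions --- the explicit remark on the unique analytic extension identifying $\LL$ with $\LB$ and the transfer of positive definiteness --- are routine bookkeeping consistent with the paper's intent.
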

Letting $u=0$, $u=t$ and $u=t/2$ in in \eqref{convolution}
we get the following lemma.
\begin{lemma}[\!\cite{IG5}]\label{con}
For every $f,g\in \LL$ and for every $t\in\Rset,$
\begin{equation}\label{symm}
\sum_{k=0}^{\infty}
(-1)^k\,\K{k}[f](t)\,\K{k}[g](0)
=\sum_{k=0}^{\infty}(-1)^k\,\K{k}[f](0)\, \K{k}[g](t)
=\sum_{k=0}^{\infty}(-1)^k\,\K{k}[f](t/2)\, \K{k}[g](t/2).
\end{equation}
\end{lemma}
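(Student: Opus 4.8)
The plan is to obtain all three expressions in \eqref{symm} by specialising a single identity, namely the $u$-independence of the convolution sum recorded in Proposition~\ref{local-space-gen}. Recall that for $f,g\in\LL$, formula \eqref{convolution} asserts that
$$(f \ast_{\!\Mi} g)(t)=\sum_{n=0}^{\infty}(-1)^n\K{n}[f](u)\,\K{n}[g](t-u)=\int_{-\infty}^{\infty}\FT[f](\omega)\,\FT[g](\omega)\,{\e}^{{\ii}\omega t}\da,$$
and that the left-hand sum converges and takes the same value for every real $u$; this is precisely Corollary~\ref{cdconv}, applied with $\varphi=\FT[f]$ and $\psi=\FT[g]$, both of which lie in $\LT$ because $f,g\in\LL$ (the space $\LL$ consisting of restrictions to the reals of functions in $\LB$, whose $\FT$-transforms are in $\LT$).

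Then I would fix $t\in\Rset$ and read off the three claimed sums as three instances of the above. Setting $u=t$ (so that $t-u=0$) gives $(f \ast_{\!\Mi} g)(t)=\sum_{k=0}^{\infty}(-1)^k\K{k}[f](t)\,\K{k}[g](0)$. Setting $u=0$ (so that $t-u=t$) gives $(f \ast_{\!\Mi} g)(t)=\sum_{k=0}^{\infty}(-1)^k\K{k}[f](0)\,\K{k}[g](t)$. Setting $u=t/2$ (so that $t-u=t/2$) gives $(f \ast_{\!\Mi} g)(t)=\sum_{k=0}^{\infty}(-1)^k\K{k}[f](t/2)\,\K{k}[g](t/2)$. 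In each case the substituted value of $u$ and the corresponding value $t-u$ are real because $t$ is real, so Corollary~\ref{cdconv} applies verbatim and each sum converges to the common quantity $(f \ast_{\!\Mi} g)(t)$. Equating the three right-hand sides yields \eqref{symm}.

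There is essentially no genuine obstacle here: the only thing to verify is that the hypotheses of Corollary~\ref{cdconv} are met for each substitution, i.e. that $f,g\in\LL$ (so $\FT[f],\FT[g]\in\LT$) and that all three choices of $u$, together with the matching $t-u$, remain on the real axis, which they do for real $t$. If one prefers a more self-contained argument avoiding the detour through $\FT$ and Parseval, one can instead note directly that the function $u\mapsto\sum_{n=0}^{\infty}(-1)^n\K{n}[f](u)\,\K{n}[g](t-u)$ is real-analytic and has vanishing $u$-derivative: differentiating partial sums and using the recurrence \eqref{three-term} (equivalently, the convolution analogue of the Christoffel--Darboux identity underlying Lemma~\ref{CD}) makes the finite sums telescope, and the remainder is controlled by \eqref{sumis1}. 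Either way, the statement is an immediate corollary of material already established, with the route through \eqref{convolution} being the shortest.
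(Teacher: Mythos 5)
Your proof is correct and coincides with the paper's own argument: the paper obtains Lemma~\ref{con} precisely by noting that the convolution sum in \eqref{convolution} is independent of the real parameter $u$ and then substituting $u=t$, $u=0$ and $u=t/2$. Your verification that Corollary~\ref{cdconv} applies (since $\FT[f],\FT[g]\in\LT$ and all the substituted points are real) is exactly the justification implicit in the paper, so nothing further is needed.
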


\subsection{Relationship of $\FT[f](\omega)$ with the usual Fourier transform}
If a moment distribution function $\alpha(\omega)$ is absolutely continuous then
$\da= \wght(\omega)\mathrm{d}\omega$ for some
non-negative weight function $\wght(\omega)$; in such a case for every  $f(t)\in \LL$ equation 
\eqref{ift0} implies
\begin{equation}\label{oldft}
f(t)=\int_{-\infty}^{\infty}\FT[f](\omega)\;{\e}^{\ii\omega t}\wght(\omega)\;{\rm d}\omega.
\end{equation}

\begin{proposition}[\!\!\cite{IG5}]\label{weight-space}
Assume that for a function $f(t)$ there exists a function $\widehat{f}(\omega)$ such that
$f(t)=\frac{1}{2\pi}\int_{-\infty}^{\infty}
\widehat{f}(\omega)\,{\e}^{\ii\omega t}\,{\rm d}\omega$ for all
$t\in\Rset$; then $f(t)\in\LL$ if and only if
$\int_{-\infty}^{\infty}|\widehat{f}(\omega)|^2
\wght(\omega)^{-1}\,{\rm d}\omega<\infty,$ in which case
$\widehat{f}(\omega)=2\pi\, \FT[f](\omega)\wght(\omega)$.
\end{proposition}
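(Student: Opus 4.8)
The plan is to pass between the classical Fourier picture and the $\mathcal{M}$-Fourier picture by exploiting that, in the absolutely continuous case $\da=\wght(\omega)\,{\rm d}\omega$, multiplication by $\wght(\omega)$ converts $\FT[f]$ into a constant multiple of $\widehat f$, together with the facts established above that $\FT$ and $\FT^{-1}$ are mutually inverse bijections between $\LB$ and $\LT$ and that \eqref{oldft} holds.

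For the implication ``$\Leftarrow$'' I would assume $\int_{-\infty}^{\infty}|\widehat f(\omega)|^2\wght(\omega)^{-1}\,{\rm d}\omega<\infty$ and put $\varphi(\omega)=\widehat f(\omega)/(2\pi\,\wght(\omega))$. Then $\noi{\varphi}^{2}=\int_{-\infty}^{\infty}|\varphi(\omega)|^{2}\wght(\omega)\,{\rm d}\omega=\frac{1}{4\pi^{2}}\int_{-\infty}^{\infty}|\widehat f(\omega)|^{2}\wght(\omega)^{-1}\,{\rm d}\omega<\infty$, so $\varphi\in\LT$. By Proposition~\ref{f-phi} and the corollary stating that $f_{\varphi}\in\LB$, the function $f_{\varphi}(z)=\int_{-\infty}^{\infty}\varphi(\omega)\,{\e}^{\ii\omega z}\,\da$ is analytic on $\st{2}$ and its restriction to $\Rset$ lies in $\LL$; moreover, since $\da=\wght(\omega)\,{\rm d}\omega$, for every real $t$ we have $f_{\varphi}(t)=\int_{-\infty}^{\infty}\varphi(\omega)\wght(\omega)\,{\e}^{\ii\omega t}\,{\rm d}\omega=\frac{1}{2\pi}\int_{-\infty}^{\infty}\widehat f(\omega)\,{\e}^{\ii\omega t}\,{\rm d}\omega=f(t)$. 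Thus $f$ is the restriction of $f_{\varphi}\in\LB$, whence $f\in\LL$, and applying $\FT$ to $f=f_{\varphi}=\FT^{-1}[\varphi]$ gives $\FT[f]=\varphi$, i.e.\ $\widehat f(\omega)=2\pi\,\FT[f](\omega)\wght(\omega)$.

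For ``$\Rightarrow$'' I would assume $f\in\LL$, so that $\FT[f]\in\LT$, and use \eqref{oldft} to write $f(t)=\int_{-\infty}^{\infty}\FT[f](\omega)\wght(\omega)\,{\e}^{\ii\omega t}\,{\rm d}\omega=\frac{1}{2\pi}\int_{-\infty}^{\infty}g(\omega)\,{\e}^{\ii\omega t}\,{\rm d}\omega$, where $g(\omega):=2\pi\,\FT[f](\omega)\wght(\omega)$. Hence both $g$ and the hypothesised $\widehat f$ reproduce $f$ via the inversion integral. To identify them, I would note that $\mu_{0}=\int_{-\infty}^{\infty}\wght(\omega)\,{\rm d}\omega=1$, so by Cauchy--Schwarz $\int_{-\infty}^{\infty}|g(\omega)|\,{\rm d}\omega\le 2\pi\,\noi{\FT[f]}\left(\int_{-\infty}^{\infty}\wght(\omega)\,{\rm d}\omega\right)^{1/2}=2\pi\,\noi{\FT[f]}<\infty$, so $g\in L^{1}(\Rset)$; reading the defining integral for $f$ as a Lebesgue integral forces $\widehat f\in L^{1}(\Rset)$ as well, so $h:=\widehat f-g\in L^{1}(\Rset)$ has identically vanishing inverse Fourier transform and therefore $h=0$ almost everywhere by injectivity of the Fourier transform on $L^{1}$. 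Then $\int_{-\infty}^{\infty}|\widehat f(\omega)|^{2}\wght(\omega)^{-1}\,{\rm d}\omega=\int_{-\infty}^{\infty}|g(\omega)|^{2}\wght(\omega)^{-1}\,{\rm d}\omega=4\pi^{2}\int_{-\infty}^{\infty}|\FT[f](\omega)|^{2}\wght(\omega)\,{\rm d}\omega=4\pi^{2}\norm{f}^{2}<\infty$, and the same computation records $\widehat f=2\pi\,\FT[f]\wght$.

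The step I expect to be the main obstacle is the identification in the ``$\Rightarrow$'' direction: upgrading ``$\widehat f$ and $2\pi\,\FT[f]\wght$ yield the same inversion integral'' to their almost-everywhere equality. This is exactly where the normalisation $\mu_{0}=\int\wght\,{\rm d}\omega=1$ is used in an essential way, since it is what guarantees integrability of $2\pi\,\FT[f]\wght$ for every $f\in\LL$; once that is in hand, standard $L^{1}$ Fourier uniqueness closes the argument, and everything else reduces to bookkeeping with the $\LB$--$\LT$ correspondence already developed.
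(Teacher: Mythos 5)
Your proof is correct and takes essentially the same route as the paper: the backward direction sets $\varphi(\omega)=\widehat f(\omega)/(2\pi\,\wght(\omega))\in\LT$ and invokes the $\FT^{-1}$ machinery, while the forward direction combines \eqref{oldft} with uniqueness of the Fourier transform to identify $\widehat f=2\pi\,\FT[f]\wght$. Your extra step justifying that uniqueness applies (showing $2\pi\,\FT[f]\wght\in L^{1}$ via Cauchy--Schwarz and $\mu_{0}=1$, and reading the hypothesis as forcing $\widehat f\in L^{1}$) simply makes explicit what the paper's appeal to ``uniqueness of the Fourier transform'' leaves implicit.
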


\begin{proof}If $f(t)\in\LL$ then \eqref{oldft} holds and this implies, by the uniqueness of the Fourier transform, that $\widehat{f}(\omega)=2\pi\; \FT[f](\omega)\wght(\omega)$ and consequently
\begin{align*}
\int_{-\infty}^\infty \frac{|\widehat{f}(\omega)|^2}{\wght(\omega)}{\rm d}\omega=\int_{-\infty}^\infty \frac{{4\pi^2}| \FT[f](\omega)|^2 \wght(\omega)^2}{\wght(\omega)}{\rm d}\omega ={4\pi^2}\noi{\FT[f](\omega)}^2<\infty.
\end{align*}
In the opposite direction, 
\begin{align*}
\int_{-\infty}^\infty \frac{|\widehat{f}(\omega)|^2}{\wght(\omega)^2}\wght(\omega){\rm d}\omega=
\int_{-\infty}^\infty \frac{|\widehat{f}(\omega)|^2}{\wght(\omega)}{\rm d}\omega<\infty
\end{align*}
implies ${\widehat{f}(\omega)}/{\wght(\omega)}\in \LT$ and thus 
\[f(t)=\frac{1}{2\pi}\int_{-\infty}^{\infty}
\widehat{f}(\omega)\;{\e}^{\ii\omega t}\;{\rm d}\omega=\int_{-\infty}^{\infty}\frac{1}{2\pi}
\frac{\widehat{f}(\omega)}{\wght(\omega)}\;{\e}^{\ii\omega t}\;\wght(\omega){\rm d}\omega\in \LL
\]
and $\widehat{f}(\omega)/({2\pi}\wght(\omega))= \FT[f](\omega)$.
\end{proof}
\section{Uniform convergence of chromatic expansions}

\begin{theorem}\label{unif-con}
Assume $f\in\LB$; then for all $u\in\Rset$ and $\frac{1}{2\rho}>\varepsilon>0$,
the chromatic series $\CE[f,u](z)$ of $f(z)$ converges to
$f(z)$ uniformly on the strip
$\mathds{S}(\frac{1}{2\rho}-\varepsilon)$.
\end{theorem}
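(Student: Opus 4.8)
The plan is to prove the statement in two independent steps: first establish that the partial sums of $\CE[f,u](z)$ are uniformly Cauchy on the strip $\mathds{S}(\frac{1}{2\rho}-\varepsilon)$ (hence converge uniformly to \emph{some} function $S(z)$), and then separately identify the pointwise limit with $f(z)$ by expanding $\FT[f](\omega)\e^{\ii\omega u}$ in $\LT$. The uniform step is where all the work is.

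\emph{Uniform convergence.} Fix $u\in\Rset$ and $0<\varepsilon<\frac{1}{2\rho}$, and let $z\in\mathds{S}(\frac{1}{2\rho}-\varepsilon)$; then $w=z-u$ lies in the same strip, since $\mathfrak{Im}(w)=\mathfrak{Im}(z)$. Applying Corollary~\ref{sum-squares} with $m=0$ (so $\K{0}[\mm]=\mm$ and $p_0\equiv 1$) gives, with $C_\varepsilon^2:=\noi{\e^{(\frac{1}{2\rho}-\varepsilon)|\omega|}}^2$ (finite by Lemma~\ref{meuw}), the bound $\sum_{n=0}^\infty|\K{n}[\mm](z-u)|^2\le C_\varepsilon^2$ \emph{uniformly} for all $z$ in the strip. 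On the other hand $f\in\LB$ gives $\FT[f]\in\LT$, and Corollary~\ref{cdnorm} with $t=u$ gives $\sum_{n=0}^\infty|\K{n}[f](u)|^2=\noi{\FT[f](\omega)}^2<\infty$. Hence, by Cauchy--Schwarz, for all $M\le N$ and all $z$ in the strip,
\[
\Bigl|\sum_{k=M}^{N}(-1)^k\K{k}[f](u)\,\K{k}[\mm](z-u)\Bigr|
\le C_\varepsilon\Bigl(\sum_{k=M}^{\infty}|\K{k}[f](u)|^2\Bigr)^{1/2},
\]
and the right-hand side is independent of $z$ and tends to $0$ as $M\to\infty$. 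Thus the partial sums of $\CE[f,u](z)$ are uniformly Cauchy on $\mathds{S}(\frac{1}{2\rho}-\varepsilon)$ and converge uniformly there to some $S(z)$.

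\emph{Identification of the limit.} Put $\varphi=\FT[f]\in\LT$, so $f=f_\varphi$ is given by \eqref{ift0}. Since $u\in\Rset\subset\st{2}$ and $|\e^{\ii\omega u}|=1$, we have $\varphi(\omega)\e^{\ii\omega u}\in\LT$, and Proposition~\ref{some} gives the $\LT$-convergent expansion $\varphi(\omega)\e^{\ii\omega u}=\sum_{n=0}^\infty(-\ii)^n\K{n}[f](u)\,p_n(\omega)$. Fix $z$ in the strip; then $\e^{\ii\omega(z-u)}\in\LT$, because $\int_{-\infty}^\infty|\e^{\ii\omega(z-u)}|^2\,\da=\int_{-\infty}^\infty\e^{-2\omega\,\mathfrak{Im}(z)}\,\da\le\int_{-\infty}^\infty\e^{2|\mathfrak{Im}(z)|\,|\omega|}\,\da<\infty$ by Lemma~\ref{meuw}, using $2|\mathfrak{Im}(z)|<1/\rho$. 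Multiplying the expansion by $\e^{\ii\omega(z-u)}$ and using \eqref{cont1} to pass the integral through the series,
\[
f(z)=\int_{-\infty}^\infty\varphi(\omega)\e^{\ii\omega u}\e^{\ii\omega(z-u)}\,\da
=\sum_{n=0}^\infty(-\ii)^n\K{n}[f](u)\int_{-\infty}^\infty p_n(\omega)\e^{\ii\omega(z-u)}\,\da .
\]
By \eqref{fourier-int-der} applied to $f_\varphi=\mm$ (the case $\varphi\equiv 1$) we have $\int_{-\infty}^\infty p_n(\omega)\e^{\ii\omega(z-u)}\,\da=(-\ii)^n\K{n}[\mm](z-u)$, and since $(-\ii)^{2n}=(-1)^n$ this gives $f(z)=\sum_{n=0}^\infty(-1)^n\K{n}[f](u)\,\K{n}[\mm](z-u)=\CE[f,u](z)$ for each fixed $z$ in the strip. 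Combined with the previous step, $S(z)=f(z)$, so $\CE[f,u](z)$ converges to $f(z)$ uniformly on $\mathds{S}(\frac{1}{2\rho}-\varepsilon)$.

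The only genuine ingredient is the \emph{uniform}-in-$z$ estimate $\sum_n|\K{n}[\mm](z-u)|^2\le C_\varepsilon^2$ over the whole strip, which is precisely what Corollary~\ref{sum-squares} provides; note that $C_\varepsilon\to\infty$ as $\varepsilon\to 0$, which is exactly why the open strip $\st{2}$ itself cannot be included in the statement. Everything else — the $\LT$-memberships of $\e^{\ii\omega(z-u)}$ and $\varphi(\omega)\e^{\ii\omega u}$ needed to invoke Proposition~\ref{some} and \eqref{cont1}, and the bookkeeping of the powers of $\ii$ — is routine.
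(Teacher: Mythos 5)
Your argument is correct and uses exactly the paper's ingredients: the $\LT$-expansion of $\FT[f](\omega)\,{\e}^{\ii\omega u}$ from Proposition~\ref{some}, Lemma~\ref{meuw} for the weight $\noi{{\e}^{(\frac{1}{2\rho}-\varepsilon)|\omega|}}$, and a Cauchy--Schwarz/Parseval estimate that yields the same bound as the paper's inequality \eqref{convergence}. The only difference is organizational: the paper applies Cauchy--Schwarz under the $\da$-integral to bound $|f(z)-\CA[f,n,u](z)|$ directly, while you apply it in $\ell^2$ via Corollary~\ref{sum-squares} (itself the Parseval transcription of that same integral bound) and then identify the limit pointwise, which is an equivalent rearrangement of the same proof.
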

\begin{proof}
Assume $u\in\Rset$ and let $\phi(\omega)\equiv 1$; then $f_\varphi(z-u)=\mm(z-u)$. By applying \eqref{fourier-int-der} 
we obtain that for all $z\in\st{2}$,
\begin{equation}\label{exp-CA}\CA[f,n,u](z)=
\int_{-\infty}^{\infty}\sum_{k=0}^{n}(-{\ii})^{k}
\K{k}[f](u)\,
p_{k}(\omega){\e}^{\ii \omega (z-u)}\da.
\end{equation}

Since $f\in\LB$, we obtain 
$\FT[f](\omega)\in\LT$ and since $u\in\Rset$ equation  
\eqref{ft-expand} from Proposition ~\ref{some} implies that in $\LT$
\[
\FT[f](\omega)\;{\e}^{\ii\omega u}=
\sum_{k=0}^{\infty}(-{\ii})^{k}
\K{k}[f](u)\,p_{k}(\omega).
\]
Thus,
\begin{align}\label{f-ex}
f(z)=\int_{-\infty}^{\infty}\FT[f](\omega){\e}^{\ii \omega u}{\e}^{\ii \omega (z-u)}\da= \int_{-\infty}^{\infty}\sum_{k=0}^{\infty}(-{\ii})^{k}
\K{k}[f](u)\,p_{k}(\omega){\e}^{\ii \omega (z-u)}\da.
\end{align}
Consequently, from \eqref{exp-CA} and \eqref{f-ex},
\begin{align*}
 |f(z)-\CA[f,n,u](z)|
&\leq\int_{-\infty}^{\infty}\left|
\sum_{k=n+1}^{\infty}(-{\ii})^{k}
\K{k}[f](u)\,p_{k}(\omega){\e}^{\ii \omega (z-u)}
\right|\da\nonumber\\
&\leq\left(\!\int_{-\infty}^{\infty}\!
\left|\sum_{k=n+1}^{\infty}(-{\ii})^{n}
\K{n}[f](u)\,p_{n}(\omega)\right|^2\!\da
\int_{-\infty}^{\infty}|{\e}^{\ii\omega(z-u)}|^2\da\!\right)^{1/2}
\end{align*}
This implies that for $z\in \mathds{S}(\frac{1}{2\rho}-\varepsilon)$ and $u\in \Rset$,  using Corollary \ref{cdnorm},  we obtain
\begin{equation}\label{convergence}
|f(z)-\CA[f,n,u](z)|\leq\!\left(\sum_{k=n+1}^{\infty}|
\K{n}[f](u)|^2
\!\!\int_{-\infty}^{\infty}
{\e}^{(\frac{1}{\rho}-2\varepsilon)|\omega|}\da\right)^{1/2}.
\end{equation}
Consequently, \eqref{sumis1} of Corollary \ref{sum-squares} with $m=0$ and Lemma \ref{meuw} 
imply that $\CE[f,u](z)$ converges to $f(z)$ uniformly on
$\mathds{S}(\frac{1}{2\rho}-\varepsilon)$.
\end{proof}

\begin{proposition}\label{rep} Space $\LB$ consists precisely
of functions
\begin{equation}\label{ser}f(z)=\sum_{k=0}^{\infty}a_{k} \K{k}[\mm](z)
\end{equation}
restricted to $\mathds{S}(\frac{1}{2\rho})$, 
where $a=\Langle
a_k\Rangle_{k\in\Nset}$ is a complex sequence in $l^2$.
\end{proposition}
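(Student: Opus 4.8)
The plan is to establish the claimed equality as a double inclusion, using the Fourier transform $\FT$ as the main tool to transfer everything to $\LT$, where the statement becomes the trivial assertion that $\LT$ is the closed linear span of the complete orthonormal system $\PP$.

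First I would prove the easy inclusion: given $a=\Langle a_k\Rangle_{k\in\Nset}\in l^2$, consider $\varphi(\omega)=\sum_{k=0}^\infty (-\ii)^k a_k\, p_k(\omega)$, which belongs to $\LT$ since $\PP$ is orthonormal and $a\in l^2$, with $\noi{\varphi}^2=\sum_k|a_k|^2$. Then $f_\varphi=\FT^{-1}[\varphi]$ is analytic on $\st{2}$ and belongs to $\LB$ by the corollary following Corollary~\ref{cdnorm}. Applying \eqref{fourier-int-der} with $z$ replaced by $z$ and termwise integration justified by \eqref{suma} (exactly as in the proof that $\FT^{-1}$ inverts $\FT$), one computes $\K{n}[f_\varphi](0)=a_n$, so that $f_\varphi(z)=\sum_{k=0}^\infty a_k\,\K{k}[\mm](z)$; the series converges uniformly on $\st{2}$ — and hence on each $\mathds{S}(\frac{1}{2\rho}-\varepsilon)$ — by Theorem~\ref{unif-con} applied to $f_\varphi$, since the chromatic expansion of $f_\varphi$ centred at $u=0$ is precisely $\sum_k(-1)^k\K{k}[f_\varphi](0)\,\K{k}[\mm](z)=\sum_k a_k\,\K{k}[\mm](z)$ once one recalls $\K{k}[\mm](z)=(-1)^k\K{k}[\mm](z)$... more carefully, $\CE[f_\varphi,0](z)=\sum_k(-1)^k\K{k}[f_\varphi](0)\K{k}[\mm](z)$, and since $a_k=\K{k}[f_\varphi](0)$ the displayed series \eqref{ser} is a reindexed form of the chromatic expansion, so it represents $f_\varphi$ on $\mathds{S}(\frac{1}{2\rho})$. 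This shows every series \eqref{ser} with $l^2$ coefficients lies in $\LB$.

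Conversely, suppose $f\in\LB$. By definition $\sum_{n=0}^\infty|\K{n}[f](0)|^2<\infty$, so the sequence $a=\Langle\K{k}[f](0)\Rangle_{k\in\Nset}$ lies in $l^2$. By Theorem~\ref{unif-con} with $u=0$, the chromatic series $\CE[f,0](z)=\sum_{k=0}^\infty(-1)^k\K{k}[f](0)\,\K{k}[\mm](z)$ converges to $f(z)$ uniformly on every $\mathds{S}(\frac{1}{2\rho}-\varepsilon)$, hence pointwise on $\mathds{S}(\frac{1}{2\rho})$. Thus $f(z)=\sum_{k=0}^\infty a_k\,\K{k}[\mm](z)$ with $a_k=(-1)^k\K{k}[f](0)$, which is of the form \eqref{ser} with $a\in l^2$. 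This completes the double inclusion. I would also remark that the representation is unique: applying $\K{n}_z[\cdot]\big|_{z=0}$ termwise (legitimate because of the uniform convergence of the differentiated series, or more cleanly because \eqref{orthonorm} gives $(-1)^n(\K{n}\circ\K{m})[\mm](0)=\delta(m-n)$ and one passes to the $\LT$ side via $\FT$) recovers $a_n=(-1)^n\K{n}[f](0)$ from $f$.

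The main obstacle is the justification of termwise operations — differentiating \eqref{ser} under $\K{n}$ and integrating \eqref{fourier-int-der} termwise — but this is not a genuine difficulty here: the cleanest route is to never differentiate the series \eqref{ser} directly and instead do all term-by-term manipulations on the $\LT$ side, where convergence is in the Hilbert-space norm and \eqref{cont1}/\eqref{suma} apply immediately, then transport the conclusion back through the bijection $\FT^{-1}\colon\LT\to\LB$ established above. With that strategy the proof is essentially a bookkeeping exercise combining Theorem~\ref{unif-con}, the completeness of $\PP$ (Lemma~\ref{complete}), and Parseval's theorem; no new estimates beyond those already in the excerpt are required.
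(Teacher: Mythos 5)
Your argument is correct, but for the substantive direction ($l^2$ coefficients $\Rightarrow$ membership in $\LB$) it takes a genuinely different route from the paper. The paper works directly with the series \eqref{ser}: it proves absolute and uniform convergence on $\mathds{S}(\frac{1}{2\rho}-\varepsilon)$ via Cauchy--Schwarz and Corollary~\ref{sum-squares}, then painstakingly justifies termwise application of $\K{m}$ (uniform convergence of the differentiated series, the three-term recurrence \eqref{three-term}, and an induction) in order to compute $\K{m}[f](0)=(-1)^m a_m$ and conclude $f\in\LB$. You instead synthesize the limit function on the $\LT$ side: put $\varphi=\sum_k(\pm\ii)^k a_k p_k\in\LT$, note $f_\varphi=\FT^{-1}[\varphi]\in\LB$ by the corollary after Corollary~\ref{cdnorm}, compute $\K{n}[f_\varphi](0)$ from \eqref{fourier-int-der} and \eqref{suma}, and then invoke Theorem~\ref{unif-con} to identify the sum of \eqref{ser} with $f_\varphi$ --- thereby avoiding any termwise differentiation of \eqref{ser}. (The converse direction is the same in both proofs: Theorem~\ref{unif-con} with $u=0$.) Your route is shorter and leans only on results already established; what the paper's heavier computation buys is Corollary~\ref{postdiff} --- the termwise formula \eqref{insidediff}, the strip bound \eqref{stripbound}, and the analytic-continuation statement --- which is extracted from that proof and used later, and which your argument does not yield as a byproduct. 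One bookkeeping slip: with your $\varphi=\sum_k(-\ii)^k a_k p_k$ you get $\K{n}[f_\varphi](0)=a_n$, so $\CE[f_\varphi,0](z)=\sum_k(-1)^k a_k\K{k}[\mm](z)$, not $\sum_k a_k\K{k}[\mm](z)$; the parenthetical ``$\K{k}[\mm](z)=(-1)^k\K{k}[\mm](z)$'' is false and ``reindexed'' is not the right word. The repair is immediate --- start from $\varphi=\sum_k\ii^k a_k p_k$, or observe that $a\mapsto\Langle(-1)^k a_k\Rangle_{k\in\Nset}$ is a bijection of $l^2$, so the family of series \eqref{ser} coincides with the family of chromatic expansions with square-summable coefficients --- but state it explicitly.
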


\begin{proof}
Assume $a\in l^2$; by Corollary \ref{sum-squares} with $m=1$, for every
$\varepsilon>0$, if $z\in \mathds{S}(\frac{1}{2\rho}-\varepsilon)$ then
\begin{align*}
\sum_{k=N+1}^{\infty}|a_{k}\K{k}[\mm](z)|&\leq
\left(\sum_{k=N+1}^{\infty}|a_{k}|^2\sum_{k=N+1}^{\infty}
|\K{k}[\mm](z)|^2\right)^{1/2}\\
&\leq \left(\sum_{k=N+1}^{\infty}|a_{k}|^2\right)^{1/2}
\noi{{\e}^{(\frac{1}{2\rho}-\varepsilon) |\omega| }},
\end{align*}
which implies that the series in \eqref{ser} converges absolutely and uniformly on
$\mathds{S}(\frac{1}{2\rho}-\varepsilon)$. Consequently,
$f(z)$ is analytic on $\st{2}$.  We now show that  
\begin{equation}\label{circ}
\K{m}[f](z)=\sum_{k=0}^\infty a_k(\K{k}\circ\K{m})[m](z)
\end{equation}
with the series converging absolutely and uniformly on $\mathds{S}(\frac{1}{2\rho}-\varepsilon)$.
For every $m$ equation \eqref{sumis2} in Corollary  \ref{sum-squares} implies 
\begin{align*}
\sum_{k=N+1}^\infty|a_k(\K{k}\circ\K{m})[\mm](z)|&\leq 
\left(\sum_{k=N+1}^\infty|a_k|^2\sum_{k=N+1}^\infty|(\K{m}\circ\K{k})[\mm](z)|^2\right)^{1/2}\\
&\leq \left(\sum_{k=N+1}^\infty|a_k|^2\right)^{1/2}\noi{p_m(\omega){\e}^{(\frac{1}{2\rho}-\varepsilon)|\omega| }}
\end{align*}
Thus, for every $m$ the series $\sum_{k=0}^\infty a_k(\K{k}\circ\K{m})[\mm](z)$ also converges absolutely and uniformly on  $\mathds{S}(\frac{1}{2\rho}-\varepsilon)$.
This fact and the inequality 
\begin{align*}
|(D\circ\K{k}\circ\K{n})[\mm](z)|\leq \gamma_n|(\K{k}\circ\K{n+1})[\mm](z)|+|\beta_n(\K{k}\circ\K{n})[\mm](z)|+
\gamma_{n-1}|(\K{k}\circ\K{n-1})[\mm](z)|
\end{align*}
which follows from \eqref{three-term} imply that the series at the righthand side of the equation 
\begin{align*}
(D\circ\K{n})[f](t)=\sum_{k=0}^\infty a_k(D\circ\K{k}\circ\K{n})[\mm](t-u)
\end{align*}
also converges absolutely and uniformly, thus justifying that equation. Equation \eqref{circ} now follows by a straightforward induction on $n$. From this equation and \eqref{orthonorm} we conclude that 
\begin{align*}
\K{m}[f](0)=
\sum_{n=0}^{\infty}a_n(\K{m}\circ\K{n})[\mm](0)=(-1)^ma_m.
\end{align*}
Consequently, $\sum_{n=0}^{\infty} |\K{n}[f](0)|^2=\sum_{n=0}^{\infty}
|a_n|^2<\infty$ and so $f(z)\in \LB$.  Proposition
\ref{unif-con} provides the opposite direction.
\end{proof}

Note that the proof of Proposition \ref{rep} also proves the following corollary.

\begin{corollary}\label{postdiff} For $f(t)\in\LB$ and $u\in \Rset$ we have
\begin{enumerate}
\item 
\begin{align}\label{insidediff}
\K{n}[f](z)=\sum_{k=0}^\infty(-1)^k\K{k}[f](u)(\K{k}\circ\K{n})[\mm](z-u)
\end{align}
with the series converging absolutely and uniformly on $\Rset$.

\item For every $0<\varepsilon<1/(2\rho)$ functions $f(z)\in\LB$ are bounded on the strip
$\mathds{S}(\frac{1}{2\rho}-\varepsilon)$  
\begin{equation}\label{stripbound}
|f(z)|\leq \left(\sum_{n=0}^{\infty}|\K{n}[f](0)|^2\right)^{1/2}
\noi{{\e}^{(\frac{1}{2\rho}-\varepsilon) |\omega|}}.
\end{equation}
\item 
If $f(z)$ is analytic in a neighbourhood of zero and satisfies $\sum_{n=0}^\infty |\K{n}[f](0)|^2<\infty$, then its chromatic expansion provides the analytic continuation of such $f(z)$ on $\st{2}$.
\end{enumerate}
\end{corollary}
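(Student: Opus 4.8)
The plan is to assemble ingredients already established, chiefly Theorem~\ref{unif-con}, Proposition~\ref{rep} and Corollary~\ref{sum-squares}; as the remark preceding the statement indicates, the argument is essentially that of Proposition~\ref{rep} re-centred at $u$. For item~(1), fix $f\in\LB$, $u\in\Rset$ and an integer $n\ge 0$, and set $a_k=(-1)^k\K{k}[f](u)$. By Corollary~\ref{cdnorm} (equivalently \eqref{nor}) we have $\Langle a_k\Rangle_{k\in\Nset}\in l^2$ with $\sum_k|a_k|^2=\norm{f}^2$ independent of $u$, and by Theorem~\ref{unif-con} we already know $f(z)=\CE[f,u](z)=\sum_{k=0}^{\infty}a_k\K{k}[\mm](z-u)$ on $\st{2}$. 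For real $t$, \eqref{sumis1} of Corollary~\ref{sum-squares} gives $\sum_{k=0}^{\infty}|(\K{k}\circ\K{n})[\mm](t-u)|^2=1$, so Cauchy--Schwarz yields
\[\sum_{k=0}^{\infty}\bigl|a_k(\K{k}\circ\K{n})[\mm](t-u)\bigr|\le\norm{f},\]
i.e. the series in \eqref{insidediff} converges absolutely and uniformly on $\Rset$; using \eqref{sumis2} in place of \eqref{sumis1} gives absolute and locally uniform convergence on all of $\st{2}$. To identify the sum with $\K{n}[f](z)$ one repeats the argument that produced \eqref{circ} in the proof of Proposition~\ref{rep}: the bound $|(\dd\circ\K{k}\circ\K{n})[\mm](z)|\le\gamma_n|(\K{k}\circ\K{n+1})[\mm](z)|+|\beta_n|\,|(\K{k}\circ\K{n})[\mm](z)|+\gamma_{n-1}|(\K{k}\circ\K{n-1})[\mm](z)|$ coming from \eqref{three-term}, together with \eqref{sumis2}, shows the series of termwise derivatives converges uniformly, so termwise differentiation is legitimate, and \eqref{insidediff} then follows by induction on $n$ via \eqref{three-term}, the base case $n=0$ being Theorem~\ref{unif-con} itself.

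For item~(2), take $u=0$ and $n=0$ in item~(1) to get $f(z)=\sum_{k=0}^{\infty}(-1)^k\K{k}[f](0)\K{k}[\mm](z)$ on $\st{2}$; then for $z\in\mathds{S}(\frac{1}{2\rho}-\varepsilon)$, Cauchy--Schwarz gives $|f(z)|\le\bigl(\sum_k|\K{k}[f](0)|^2\bigr)^{1/2}\bigl(\sum_k|\K{k}[\mm](z)|^2\bigr)^{1/2}$, and since $\K{0}$ is the identity and $p_0\equiv1$, \eqref{sumis2} of Corollary~\ref{sum-squares} with $m=0$ bounds the second factor by $\noi{{\e}^{(\frac{1}{2\rho}-\varepsilon)|\omega|}}$, which gives \eqref{stripbound}. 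For item~(3), suppose $f$ is analytic near $0$ and $\sum_n|\K{n}[f](0)|^2<\infty$, and put $g(z)=\sum_{k=0}^{\infty}(-1)^k\K{k}[f](0)\K{k}[\mm](z)$, the chromatic expansion of $f$ centred at $0$; since the coefficient sequence lies in $l^2$, Proposition~\ref{rep} shows $g\in\LB$ is analytic on $\st{2}$, and the computation closing the proof of Proposition~\ref{rep} (using \eqref{orthonorm}) gives $\K{m}[g](0)=\K{m}[f](0)$ for every $m$, whence $g^{(m)}(0)=f^{(m)}(0)$ for every $m$ by \eqref{inverse}. Thus $f$ and $g$ have the same Taylor series at $0$ and, both being analytic near $0$, agree on a neighbourhood of $0$; since $g$ is analytic on all of $\st{2}$, it is the unique analytic continuation of $f$ there.

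The only step that is not routine bookkeeping is the termwise differentiation used to pass from the convergent series to $\K{n}[f](z)$ in item~(1), and even that is a verbatim re-run of the estimate already carried out for \eqref{circ} in Proposition~\ref{rep}, now with $\mm(z)$ replaced by $\mm(z-u)$ and the coefficients $a_k$ by $(-1)^k\K{k}[f](u)$; so I do not anticipate a genuine obstacle.
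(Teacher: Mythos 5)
Your proposal is correct and follows essentially the same route as the paper, which proves this corollary simply by observing that the argument of Proposition~\ref{rep} (the $l^2$ coefficient sequence, the Cauchy--Schwarz bounds from Corollary~\ref{sum-squares}, and the termwise differentiation justified via \eqref{three-term}) goes through verbatim with the series re-centred at $u$ and $a_k=(-1)^k\K{k}[f](u)$. The only cosmetic point is that for the uniformity claim in item~(1) you should apply your Cauchy--Schwarz estimate to the tail $\sum_{k=N+1}^{\infty}$ (as in the proof of Proposition~\ref{rep}) rather than to the full series, but this is the same computation and not a gap.
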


\subsection{Chromatic expansions of functions in \LL}
Note that for $f\in\LL$ we have $\CE[f,u](t)=(f\ast_{\Mi}\mm)(t)$; thus, $(f\ast_{\Mi}\mm)(t)=f(t)$.
Also, 
note that using \eqref{sumis1} with $m=0$ we get that for $t,u\in \Rset$
\begin{align}
|f(t)-\CA[f,n,u](t)|&\leq\sum_{k=n+1}^{\infty}
|\K{k}[f](u)\K{k}[\mm](t-u)|\nonumber\\
&\leq\left(\sum_{k=n+1}^{\infty}|\K{k}[f](u)|^2\right)^{1/2}
\left(1-\sum_{k=0}^{n}|\K{k}[\mm](t-u)|^2\right)^{1/2}.\label{error}
\end{align}

Let
\begin{align*}
E_n(t)=\left(1-\sum_{k=0}^{n}|\K{k}[\mm](t)|^2\right)^{1/2};
\end{align*}
then, using Lemma \ref{CD}, we obtain
\begin{align*}
E_n^\prime(t)=\gamma_n\;\mathfrak{Re}(\K{n+1}[\mm](t)\;\overline{\K{n}[\mm](t)})
\left(1-\sum_{k=0}^{n}|\K{k}[\mm](t)|^2\right)^{-1/2}.
\end{align*}
Since $(D^{k}\circ\K{n})[\mm](0)=0$ for all $0\leq k\leq n-1$, the smallest powers in the Taylor expansions of $\K{n+1}[\mm](t)$ and of $\K{n}[\mm](t)$ at $t=0$ are $n+1$ and $n$, respectively. Thus, $E_n^{(k)}(0)=0$ for all $k\leq 2n+1$. Consequently,
$E_{n}(0)=0$ and $E_{n}(t)$ is very flat around $t=0$. For that reason chromatic expansions provide
excellent local approximations of functions in $\LL$.\\

Let $u$ be a fixed real
parameter and $t$ a variable ranging over $\Rset$; consider functions\\ $B^{u}_{n}(t)=
(-1)^n\K{n}[\mm](t-u)$. Since the value of $\Mscal{B^{u}_{n}(t)}{B^{u}_{m}(t)}$ does not depend on $t$, using \eqref{orthonorm},
\begin{align}
\Mscal{B^{u}_{n}(t)}{B^{u}_{m}(t)}
&=\sum_{k=0}^{\infty}(-1)^{m+n}(\K{k}\circ\K{n})[\mm](t-u)
\overline{(\K{k}\circ\K{m})[\mm](t-u)}\Big|_{t=u}=\delta(m-n).
\end{align}
Thus, the family $\{B^{n}_{u}(t)\}_{n\in\Nset}$ is orthonormal in
$\LL$.  

\begin{proposition}[\!\!\cite{IG5}]\label{in-LL}
The chromatic expansion $\CE[f,u](t)$ of $f(t)\in\LL$ is the (generalised) 
Fourier series of $f(t)$ with respect to the orthonormal basis
$\{B^u_n(t)\}_{n\in\Nset}$.  The chromatic
expansion converges to $f(t)$
in \LL; thus, $\{B_n^u(t)\}_{n\in\Nset}$ is a complete
orthonormal basis of \LL.
\end{proposition}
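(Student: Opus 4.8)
The plan is to show that for $f\in\LL$ the chromatic expansion $\CE[f,u]$ is, coefficient by coefficient, nothing but the generalised Fourier series of $f$ relative to the orthonormal system $\{B^u_n(t)\}_{n\in\Nset}$ (already shown to be orthonormal in \LL\ just above the statement), and then to read off convergence in \LL\ — and with it completeness — from the Parseval-type identity \eqref{nor}.

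First I would unwind the definitions. Since $B^u_k(t)=(-1)^k\K{k}[\mm](t-u)$, we have $\CA[f,n,u](t)=\sum_{k=0}^{n}\K{k}[f](u)\,B^u_k(t)$ and $\CE[f,u](t)=\sum_{k=0}^{\infty}\K{k}[f](u)\,B^u_k(t)$, so only the identity $\Mscal{f}{B^u_k}=\K{k}[f](u)$ remains to be checked. For this I would use that the scalar product \eqref{scl} may be evaluated at any real point, in particular at the centre $u$ itself, giving $\Mscal{f}{B^u_k}=\sum_{n=0}^{\infty}\K{n}[f](u)\,\overline{\K{n}[B^u_k](u)}$. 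Because the operators $\K{n}$ have constant coefficients, they commute with translations, so $\K{n}_t[\K{k}[\mm](t-u)]\big|_{t=u}=(\K{n}\circ\K{k})[\mm](0)$, which by \eqref{orthonorm} equals $(-1)^n\delta(n-k)$; hence $\K{n}[B^u_k](u)=(-1)^{n+k}\delta(n-k)=\delta(n-k)$ and the sum collapses to $\Mscal{f}{B^u_k}=\K{k}[f](u)$. This establishes the first assertion.

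For convergence in \LL, I would invoke orthonormality of $\{B^u_k\}$ together with the fact that the coefficients of the partial sum $\CA[f,n,u]$ coincide with the Fourier coefficients $\K{k}[f](u)$; the usual Pythagoras computation then yields $\norm{f-\CA[f,n,u]}^2=\norm{f}^2-\sum_{k=0}^{n}|\K{k}[f](u)|^2$. By \eqref{nor} the subtracted partial sum increases to $\norm{f}^2$, so the right-hand side tends to $0$; hence $\CA[f,n,u]\to f$ in \LL, i.e. $\CE[f,u]$ converges to $f$ in \LL. Completeness of $\{B^u_n\}$ is then immediate: every $f\in\LL$ is the \LL-limit of truncations of its Fourier series with respect to this system, so no nonzero element of \LL\ is orthogonal to all the $B^u_n$. (Alternatively one may compute $\FT[B^u_n](\omega)=(-\ii)^n p_n(\omega)\,{\e}^{-\ii\omega u}$ and combine Lemma \ref{complete} with the unitarity on \LT\ of multiplication by the unimodular factor ${\e}^{-\ii\omega u}$, $u\in\Rset$; but the route through \eqref{nor} is shorter and self-contained.)

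I anticipate no real obstacle: the argument is an assembly of facts already in hand, and the one computation to get right is $\Mscal{f}{B^u_k}=\K{k}[f](u)$, which rests on evaluating the $u$-independent scalar product at the centre of expansion and on the orthonormality relation \eqref{orthonorm}. Once that is in place, the Parseval identity \eqref{nor} simultaneously delivers convergence of the chromatic expansion in \LL\ and the completeness of the basis $\{B^u_n\}$.
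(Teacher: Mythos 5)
Your proposal is correct and takes essentially the same route as the paper: the key identity $\Mscal{f}{B^u_k}=\K{k}[f](u)$ is obtained exactly as in the paper's proof, by evaluating the $t$-independent scalar product at $t=u$ and invoking \eqref{orthonorm}. Your Pythagoras step is just a repackaging of the paper's direct computation of $\K{k}_t[f(t)-\CA[f,n,u](t)]|_{t=u}$ ($0$ for $k\leq n$, $\K{k}[f](u)$ for $k>n$); both arguments conclude via \eqref{nor} that the tail $\sum_{k>n}|\K{k}[f](u)|^2$ tends to zero.
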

\begin{proof}
For $f\in\LL$ the scalar product $\Mscal{f(t)}{(-1)^n\K{n}[\mm](t-u)}$ does not depend on $t\in\Rset$; thus, by \eqref{orthonorm},
\begin{align}\label{projf}
\Mscal{f(t)}{(-1)^n\K{n}[\mm](t-u)}&=
\sum_{k=0}^{\infty}(-1)^n
\K{k}[f](t)\overline{(\K{k}\circ\K{n})[\mm](t-u)}\Big|_{t=u}\\
&= \sum_{k=0}^{\infty}(-1)^n
\K{k}[f](u)\overline{(\K{k}\circ\K{n})[\mm](0)}\nonumber\\
&=\K{n}[f](u).\nonumber
\end{align}

Since  $\K{k}_t[f(t)-\CA[f,n,u](t)]|_{t=u}$ equals $0$ for
$k\leq n$ and equals $\K{k}[f](u)$ for $k>n$, we obtain
$\norm{f-\CA[f,n,u]}=
\sum_{k=n+1}^{\infty}\K{k}[f](u)^2\to  0$.
\end{proof}

\subsection{Chromatic expansions and linear operators on \LL}
Assume that $g(t),g^\prime(t)\in\LL$ and that for every fixed $h$ also $f(t,h)\in\LL$; we define $\mlim f(t,h)$ to be the limit in the sense of $\LL$, i.e.,  $\mlim f(t,h)=g(t)$ if an only if  $\lim_{h\to  0}\norm{f(t,h)-g(t)}=0$. 
\begin{lemma}\label{deriv}
Assume that $f(t),f^{\prime}(t),f^{\prime\prime}(t)\in \LL$; then
\begin{align*}
\mlim \left(\frac{f(t+h)-f(t)}{h}\right)=f^{\prime}(t).
\end{align*}
\end{lemma}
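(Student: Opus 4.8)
The plan is to pass to the Fourier transform side, where the difference quotient becomes multiplication by the function $(\e^{\ii\omega h}-1)/h$, and then invoke the isometry between $\LL$ and $\LT$ together with dominated convergence. Concretely, write $\varphi=\FT[f]\in\LT$; then by Proposition~\ref{f-phi} (equation \eqref{ift0}) applied to the translate we have $f(t+h)=\int_{-\infty}^{\infty}\varphi(\omega)\e^{\ii\omega h}\e^{\ii\omega t}\da$, so $\FT\!\left[\frac{f(t+h)-f(t)}{h}\right](\omega)=\frac{\e^{\ii\omega h}-1}{h}\,\varphi(\omega)$, while $\FT[f'](\omega)=\ii\omega\,\varphi(\omega)$ (this last identity is the $n=1$ case of \eqref{fourier-int-der} together with \eqref{three-term}, or it follows from differentiating \eqref{ift0} under the integral sign, which is legitimate since $f'\in\LL$). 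By the corollary following Proposition~\ref{local-space-gen}, the map $\FT$ is an isometry from $\LL$ onto $\LT$, so it suffices to show
\[
\left\|\frac{\e^{\ii\omega h}-1}{h}\,\varphi(\omega)-\ii\omega\,\varphi(\omega)\right\|_{d\alpha}\longrightarrow 0\quad\text{as }h\to 0.
\]

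For this, first note the elementary pointwise bound $\left|\frac{\e^{\ii\omega h}-1}{h}\right|\le|\omega|$, valid for all real $\omega$ and $h\neq 0$ (from $|\e^{\ii s}-1|\le|s|$). Hence the integrand of the squared norm above is dominated pointwise by $\bigl(2|\omega\varphi(\omega)|\bigr)^2=4\,\omega^2|\varphi(\omega)|^2$, and the latter is $d\alpha$-integrable precisely because $\ii\omega\varphi(\omega)=\FT[f'](\omega)\in\LT$, i.e. $f'\in\LL$. Meanwhile, for each fixed $\omega$ we have $\frac{\e^{\ii\omega h}-1}{h}\to\ii\omega$ as $h\to 0$, so the integrand tends to $0$ pointwise. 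The Lebesgue dominated convergence theorem (for the Lebesgue--Stieltjes measure $d\alpha$) then gives that the integral tends to $0$, which is exactly the displayed limit.

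Pulling this back through the isometry yields $\bigl\|\frac{f(t+h)-f(t)}{h}-f'(t)\bigr\|_{\scriptscriptstyle{\!{\mathcal{M}}}}\to 0$, which is the definition of $\mom$-$\lim_{h\to 0}\frac{f(t+h)-f(t)}{h}=f'(t)$. One small bookkeeping point to verify along the way is that $\frac{f(t+h)-f(t)}{h}$ genuinely lies in $\LL$ for each fixed $h\neq 0$ (so that the $\mom$-limit is even defined): this holds because its Fourier transform $\frac{\e^{\ii\omega h}-1}{h}\varphi(\omega)$ is in $\LT$ by the same domination $\left|\frac{\e^{\ii\omega h}-1}{h}\varphi(\omega)\right|\le|\omega\varphi(\omega)|$ and $\omega\varphi\in\LT$, and then the corollary after Proposition~\ref{local-space-gen} places the inverse transform back in $\LL$.

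I do not expect a serious obstacle here; the only mild subtlety is the hypothesis management. The statement assumes $f,f',f''\in\LL$, and the argument above in fact only uses $f,f'\in\LL$ — the extra assumption $f''\in\LL$ is presumably included for uniformity with later results (or to ensure $f'$ itself has a well-behaved difference quotient elsewhere), but it does no harm. The step I would be most careful about is justifying $\FT[f'](\omega)=\ii\omega\,\FT[f](\omega)$: the clean route is to observe that $f'\in\LL$ means $\K{n}[f']$ are square-summable, and $\K{n}[f']=\K{n}[f^{(1)}]$ relates to the $\K{n}$ of $f$ via \eqref{three-term}, or more directly to differentiate \eqref{ift0} under the integral, which Proposition~\ref{f-phi} licenses on $\st{2}\supset\Rset$, giving $f'(t)=\int_{-\infty}^{\infty}\ii\omega\,\varphi(\omega)\e^{\ii\omega t}\da$ and hence, by the injectivity of $\FT^{-1}$ (the corollary that $\FT^{-1}$ inverts $\FT$), that $\ii\omega\varphi(\omega)=\FT[f'](\omega)$.
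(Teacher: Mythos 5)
Your argument is correct, and it reaches the same Fourier-side reformulation as the paper (the multiplier $\frac{\e^{\ii\omega h}-1}{h}-\ii\omega$ acting on $\FT[f]$, transferred through the isometry of Proposition~\ref{local-space-gen}), but the convergence mechanism is genuinely different. The paper does not use dominated convergence at all: it uses the sharper elementary inequality $\bigl|\frac{\e^{\ii\omega h}-1}{h}-\ii\omega\bigr|^2\le\frac{\omega^4h^2}{4}$, which yields the quantitative bound $\norm{\frac{f(t+h)-f(t)}{h}-f'(t)}^2\le\frac{h^2}{4}\,\noi{\FT[f'']}^2$; this is where the hypothesis $f''\in\LL$ is actually consumed, and it buys an explicit $\OO{h}$ rate for the $\mom$-limit. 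Your route replaces this with the cruder bound $\bigl|\frac{\e^{\ii\omega h}-1}{h}\bigr|\le|\omega|$ plus pointwise convergence and dominated convergence, which buys weaker hypotheses (only $f,f'\in\LL$ are needed, as you note) at the cost of losing the rate; both conclusions are valid, and your observation that $f''\in\LL$ is superfluous for the bare statement is accurate. One point to tighten: your justification of $\FT[f'](\omega)=\ii\omega\,\FT[f](\omega)$ via ``injectivity of $\FT^{-1}$ on $\LT$'' is slightly circular, since that corollary presupposes $\ii\omega\,\FT[f]\in\LT$, which is exactly the membership your domination step needs. The clean fix is to note that differentiation under the integral (licensed by the estimates in the proof of Proposition~\ref{f-phi}) gives $f'(t)=\int_{-\infty}^{\infty}\ii\omega\,\FT[f](\omega)\,\e^{\ii\omega t}\da$ with $\omega\FT[f]\in L^1_{d\alpha}$, while $f'(t)=\int_{-\infty}^{\infty}\FT[f'](\omega)\,\e^{\ii\omega t}\da$; uniqueness for Fourier--Stieltjes transforms of finite complex measures then identifies the two densities $d\alpha$-a.e., after which $\omega\FT[f]\in\LT$ follows because $\FT[f']\in\LT$. (The paper is equally implicit on this identification, using $\omega^2\FT[f]=-\FT[f'']$ without comment, so this is a refinement rather than a defect specific to your write-up.)
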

\begin{proof} Since
\begin{align*}
\frac{f(t+h)-f(t)}{h}-f^\prime(t)=\int_{-\infty}^\infty \FT[f](\omega)\left(\frac{{\e}^{\ii \omega h}-1}{h}-\ii \omega \right) 
{\e}^{\ii \omega t}\da,
\end{align*}
we obtain
\begin{align*}
\norm{\frac{f(t+h)-f(t)}{h}-f^\prime(t)}^2&=\noi{  \FT[f](\omega)\left(\frac{{\e}^{\ii \omega h}-1}{h}-\ii \omega \right) }^2\\
&=\int_{-\infty}^{\infty}| \FT[f](\omega)|^2\left|\frac{{\e}^{\ii w h}-1}{h}-\ii \omega \right|^2\da.
\end{align*}
One can verify that 
\begin{align*}
\left|\frac{{\e}^{\ii \omega h}-1}{h}-\ii \omega \right|^2=\omega^2\left(\left(\frac{\sin\frac{h\omega}{2}}{\frac{h \omega}{2}}\right)^{\!2}-2\,\frac{\sin h\omega}{h \omega}+1\right)\leq\frac{\omega^4h^2}{4};
\end{align*}
thus,
\begin{align*}
\norm{\frac{f(t+h)-f(t)}{h}-f^\prime(t)}^2\leq\frac{h^2}{4}\int_{-\infty}^{\infty}|\omega^{2} \FT[f](\omega)|^2\da=\frac{h^2}{4}\norm{ \FT[f^{\prime\prime}](\omega)}^2,
\end{align*}
which converges to zero as $h\to  0$.
\end{proof}

\begin{lemma}
Let  $A$ be a linear operator on $\LL$ which is continuous with respect to the norm of $\LL$ and which is shift invariant, i.e., such that for every fixed $h$, $A[f(t+h)]=A[f](t+h)$ for all $f\in \LL$. Assume also that $f(t), f^\prime(t), f^{\prime\prime}(t)\in \LL$, then $A[f^{\prime}(t)]=(A[f(t)])^{\prime}$.
\end{lemma}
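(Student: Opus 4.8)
The plan is to exploit the fact that the difference quotient $(f(t+h)-f(t))/h$ converges to $f'(t)$ in the norm of $\LL$ (Lemma~\ref{deriv}), together with the continuity and shift invariance of $A$. First I would fix $f$ with $f, f', f'' \in \LL$ and observe that by shift invariance, for every fixed $h\neq 0$,
\begin{align*}
A\!\left[\frac{f(t+h)-f(t)}{h}\right] = \frac{A[f](t+h)-A[f](t)}{h}.
\end{align*}
The left-hand side makes sense because $\LL$ is a vector space and $\tfrac{1}{h}(f(t+h)-f(t))\in\LL$ whenever $f\in\LL$.

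Next I would let $h\to 0$ on both sides. On the left, Lemma~\ref{deriv} gives $\mlim \tfrac{1}{h}(f(t+h)-f(t)) = f'(t)$, and since $A$ is continuous with respect to the norm of $\LL$, it commutes with this limit, so $\mlim A\!\left[\tfrac{1}{h}(f(t+h)-f(t))\right] = A[f'(t)]$. On the right, I need $\mlim \tfrac{1}{h}(A[f](t+h)-A[f](t)) = (A[f])'(t)$, which is again Lemma~\ref{deriv} applied to the function $g = A[f]$; but to invoke that lemma I must know that $g, g', g'' \in \LL$. Since $A$ is continuous on $\LL$ we have $g = A[f]\in\LL$; the regularity of $g'$ and $g''$ should follow by applying $A$ to the difference quotients of $f$ and $f'$ and using shift invariance and continuity once more — concretely, $g' = A[f']$ and $g'' = A[f'']$, each obtained by the same difference-quotient argument, so both lie in $\LL$ because $f', f''\in\LL$. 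Once both sides converge, uniqueness of limits in $\LL$ yields $A[f'(t)] = (A[f(t)])'$.

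The main obstacle I anticipate is the bookkeeping needed to justify that $g' = A[f']$ and $g'' = A[f'']$ lie in $\LL$ \emph{before} one is entitled to apply Lemma~\ref{deriv} to $g$; this is a mild chicken-and-egg issue resolved by first establishing $A[f'(t)] = (A[f])'(t)$ at the level of the limiting identity for the \emph{first} derivative of an $f$ with sufficiently many derivatives in $\LL$ (so that $f'$ also has two derivatives in $\LL$, allowing the argument to be applied to $f'$ in place of $f$), and then noting the identity propagates. In other words, one applies the difference-quotient identity twice: once to get $(A[f])' = A[f']$ and once, using $f', f'', f''' \in \LL$ if needed, to control $(A[f])''= A[f'']$. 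If the hypothesis as stated ($f, f', f'' \in \LL$) is exactly what is available, the cleanest route is to show directly that the right-hand difference quotient $\tfrac{1}{h}(A[f](t+h)-A[f](t))$ equals $A\!\left[\tfrac{1}{h}(f(t+h)-f(t))\right]$ and converges in $\LL$ to $A[f'(t)]$ purely by continuity of $A$ and Lemma~\ref{deriv} applied to $f$, never needing two derivatives of $A[f]$ at all; then the limit of the right-hand side is by definition $(A[f])'(t)$ provided one knows $(A[f])'$ exists in $\LL$, which is precisely what the computation exhibits. This last point — that the existence of the $\LL$-limit of the difference quotients of $A[f]$ is what we mean by $(A[f])'\in\LL$ — is the conceptual crux, and once it is articulated the proof is a two-line application of continuity.
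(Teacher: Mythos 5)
Your proposal is correct and takes essentially the same route as the paper: by linearity and shift invariance, $A$ applied to the difference quotient of $f$ equals the difference quotient of $A[f]$, and then Lemma~\ref{deriv} together with the continuity of $A$ lets you pass to the limit on both sides. The ``chicken-and-egg'' concern you raise is unnecessary — the paper, like your final paragraph, never invokes Lemma~\ref{deriv} for $A[f]$ itself; it simply identifies the $\LL$-limit of the difference quotients of $A[f]$ (which exists and equals $A[f']$ by continuity) with $(A[f])'$.
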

\begin{proof} Using Lemma \ref{deriv} and the facts that $A$ is continuous with respect to the norm of \LL\ and that is  linear and shift invariant, we obtain 
\begin{align*}A[ f^\prime(t)]&=A\left[\mlim\frac{ f(t+h)- f(t)}{h}\right]=
\mlim A\left[\frac{ f(t+h)- f(t)}{h}\right]\\
&=\mlim\frac{A[ f](t+h)-A[ f](t)}{h}=(A[ f(t)])^\prime.
\end{align*}
\end{proof}

Since $\mm^{(k)}(t)\in \LL$ for all integers $k>0$, by induction it follows that $A[\mm^{(k)}(t)]=(A[\mm(t)])^{(k)}$ for all integers $k$, which in turn implies that also  $A[\K{k}[\mm(t)]]=\K{k}[A[\mm(t)]]$.  Using Proposition  \ref{in-LL}, we obtain the following corollary.
\begin{corollary} For every linear operator on $\LL$ which is continuous with respect to the norm of $\LL$ and which is shift invariant and for all $f(t)\in \LL$ and $u\in\Rset$,
\begin{equation*}
A[f](t)=\sum_{n=0}^{\infty} \,(-1)^{n}\K{n} [f](u)\, \K{n}[A[\mm]](t-u)
=(f\ast_{\Mi}A[\mm])(t).
\end{equation*}
\end{corollary}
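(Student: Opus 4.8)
The plan is to apply the operator $A$ term by term to the chromatic expansion of $f$, using the continuity of $A$ to interchange it with the infinite sum, its shift invariance to relocate the shift by $u$, and the commutation relation $A[\K{k}[\mm]]=\K{k}[A[\mm]]$ recorded just above the statement.

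First I would recall from Proposition~\ref{in-LL} that for $f\in\LL$ and a fixed $u\in\Rset$ the chromatic approximations $\CA[f,n,u](t)=\sum_{k=0}^{n}(-1)^k\K{k}[f](u)\,\K{k}[\mm](t-u)$ converge to $f$ in the norm of $\LL$. Each summand $\K{k}[\mm](\,\cdot\,-u)$ lies in $\LL$: the functions $\K{k}[\mm](z)$ belong to $\LB$ (shown just before Lemma~\ref{euw}), and a real shift keeps a function in $\LL$ since by \eqref{ift0} it amounts to multiplying the corresponding $\LT$-element by the unimodular factor $\e^{-\ii\omega u}$. Hence $\CA[f,n,u]\in\LL$ and $A$ may be applied to it.

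Next, since $A$ is linear and continuous with respect to the $\LL$-norm, I would pass $A$ through the limit and then through each finite partial sum, obtaining in $\LL$
\[
A[f](t)=\lim_{n\to\infty}A[\CA[f,n,u]](t)=\lim_{n\to\infty}\sum_{k=0}^{n}(-1)^k\K{k}[f](u)\;A\!\left[\K{k}[\mm](\,\cdot\,-u)\right](t).
\]
By shift invariance, $A[\K{k}[\mm](\,\cdot\,-u)](t)=A[\K{k}[\mm]](t-u)$, and by the commutation relation $A[\K{k}[\mm]]=\K{k}[A[\mm]]$ noted before the statement, each term equals $(-1)^k\K{k}[f](u)\,\K{k}[A[\mm]](t-u)$. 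This gives $A[f](t)=\sum_{k=0}^{\infty}(-1)^k\K{k}[f](u)\,\K{k}[A[\mm]](t-u)$ with convergence in $\LL$; since $\mm\in\LL$ and $A$ maps $\LL$ into $\LL$, we have $A[\mm]\in\LL$, so the right-hand side is precisely $(f\ast_{\!\Mi}A[\mm])(t)$ by \eqref{convolution}, and is in particular independent of $u$. Because $\LL$-convergence forces uniform convergence on each strip $\mathds{S}(\tfrac{1}{2\rho}-\varepsilon)$ via the bound \eqref{stripbound}, the displayed identity also holds pointwise.

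The one delicate point is the interchange of $A$ with the infinite series, and it is handled without extra work: Proposition~\ref{in-LL} supplies convergence of the chromatic expansion in exactly the norm with respect to which $A$ is assumed continuous, so no tail estimates are needed. Everything else — distributing $A$ over finite sums, invoking shift invariance, using $A[\K{k}[\mm]]=\K{k}[A[\mm]]$, and matching the result to the definition of $\ast_{\!\Mi}$ — is purely formal once that interchange is in place.
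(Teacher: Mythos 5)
Your argument is correct and follows essentially the same route as the paper: it combines Proposition~\ref{in-LL} (convergence of the chromatic expansion in the norm of $\LL$), the continuity and linearity of $A$ to pass it through the series, shift invariance, and the commutation relation $A[\K{k}[\mm]]=\K{k}[A[\mm]]$ established immediately before the corollary, then identifies the result with $(f\ast_{\!\Mi}A[\mm])(t)$ via \eqref{convolution}. Your extra checks (that the shifted basis functions lie in $\LL$ and that $\LL$-convergence yields pointwise convergence via \eqref{stripbound}) only make explicit what the paper leaves implicit.
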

Consequently, the action of such $A$ on any function in $\LL$
is uniquely determined by $A[\mm(t)]$. Thus, $\mm(t)$ plays the same role which $\sinc(t)$ plays  in the standard signal processing
paradigm based on the Shannon expansion.

\begin{proposition}
If  $A$ is a continuous linear operator on \LL\ such that 
\begin{equation}\label{dcomm}
(A\circ D^n)[\mm](t)=(D^n\circ A)[\mm](t)\end{equation}
for all $n$, then such $A$ must be shift invariant.
\end{proposition}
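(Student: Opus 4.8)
The plan is to show that for every fixed shift $h$, the operator $f(t)\mapsto A[f](t+h)$ and the operator $f(t)\mapsto A[f(t+h)]$ agree on $\mm(t)$, and then invoke the fact established in the preceding corollary that a continuous shift-invariant operator (or any continuous linear operator, once we know its action on $\mm$) is completely determined by its value on $\mm$. First I would observe that it suffices to prove $A[\mm(t+h)]=A[\mm](t+h)$ for all real $h$ and $t$: indeed, if that holds, then defining $B_h[f](t)=A[f(t+h)]-A[f](t+h)$ gives a continuous linear operator on $\LL$ with $B_h[\mm]\equiv 0$, and by the representation of continuous linear operators via convolution with $A[\mm]$ (the corollary just above), or more directly since $\CE[\,\cdot\,,u]$ converges in $\LL$ and $B_h$ is continuous and linear, $B_h[f]\equiv 0$ for all $f\in\LL$, which is exactly shift invariance of $A$.

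Next I would reduce the identity $A[\mm(t+h)]=A[\mm](t+h)$ to a statement about chromatic derivatives at a point. Both sides are functions in $\LL$ (the left because $\mm(t+h)\in\LL$ as a shift of $\mm$, and $A$ maps $\LL$ to $\LL$; the right because it is a shift of $A[\mm]\in\LL$). Two functions in $\LL$ are equal iff all their chromatic derivatives agree at $0$, by \eqref{nor}. So I want $\K{n}\big[A[\mm(\cdot+h)]\big](0)=\K{n}\big[A[\mm]\big](h)$ for all $n$. Now the key input is hypothesis \eqref{dcomm}: $A$ commutes with every $D^n$ on $\mm$. Since $\mm(z)$ is analytic on $\st{}$ with $\mm^{(k)}(0)=\ii^k\mu_k$, its Taylor expansion at $0$ gives $\mm(t+h)=\sum_k \mm^{(k)}(h)\,t^k/k!$, and because $\mm^{(k)}\in\LL$ with the series converging appropriately (one uses the growth estimates already proved, e.g. Corollary~\ref{postdiff} and the bound on $\noi{{\e}^{a|\omega|}}$ in Lemma~\ref{meuw}), one can pass $A$ and $\K{n}$ term by term. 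Using \eqref{direct} to write $\K{n}$ as a finite combination of $D^k$, and then \eqref{dcomm} to move $A$ past each $D^k$ acting on $\mm$, one gets $\K{n}[A[\mm]](t)=A[\K{n}[\mm]](t)$; evaluating the Taylor-type identity at the appropriate point then yields $\K{n}\big[A[\mm(\cdot+h)]\big](0)=\K{n}\big[A[\mm]\big](h)$.

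Concretely, I would argue as follows. Fix $h$. Since $\mm(\cdot+h)=\sum_{k=0}^\infty \frac{h^k}{k!}\mm^{(k)}(\cdot)$ with convergence in $\LL$ (the norms $\norm{\mm^{(k)}}^2=\mu_{2k}$ grow slowly enough by chromaticity, so $\sum_k \frac{|h|^k}{k!}\sqrt{\mu_{2k}}<\infty$ for all $h$), continuity and linearity of $A$ give $A[\mm(\cdot+h)]=\sum_{k=0}^\infty\frac{h^k}{k!}A[\mm^{(k)}]$ in $\LL$. By the remark preceding the corollary above, $A[\mm^{(k)}]=(A[\mm])^{(k)}$ — but here we must be careful, because that remark was proved for shift-invariant $A$, which is precisely what we are trying to establish; instead I would derive $A[\mm^{(k)}]=(A[\mm])^{(k)}$ directly from \eqref{dcomm}, since $D^k[\mm]=\mm^{(k)}$ and $(A\circ D^k)[\mm]=(D^k\circ A)[\mm]$ is literally the hypothesis. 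Hence $A[\mm(\cdot+h)]=\sum_{k=0}^\infty\frac{h^k}{k!}(A[\mm])^{(k)}$, and since $A[\mm]\in\LL$ is (the restriction of) an analytic function on $\st{2}$, the right side is its Taylor series, which sums to $(A[\mm])(\cdot+h)=A[\mm](\cdot+h)$, at least for $|h|$ small enough that the shifted strip stays inside the domain; for general $h$ one iterates, or notes both sides lie in $\LL$ and the identity, holding on an interval of $h$, extends by the group property of shifts. This gives $A[\mm(\cdot+h)]=A[\mm](\cdot+h)$, and then $B_h\equiv 0$ as above, completing the proof.

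The main obstacle I anticipate is the convergence bookkeeping in the second displayed step: one must justify interchanging $A$ with the infinite Taylor sum of $\mm(\cdot+h)$, i.e. that the partial sums converge in the $\LL$-norm, and separately that $\sum_k \frac{h^k}{k!}(A[\mm])^{(k)}$ converges to the shift $A[\mm](\cdot+h)$ in $\LL$ (this is where chromaticity of $\alpha$, via $\norm{g^{(k)}}^2=\noi{\omega^k\FT[g]}^2$ and the exponential-moment bound of Lemma~\ref{meuw}, does the work). The commutation $A[\mm^{(k)}]=(A[\mm])^{(k)}$ itself is immediate from \eqref{dcomm} and is not the difficulty; the difficulty is purely the analytic justification of the term-by-term manipulations, which the growth estimates already assembled in Section~2 and the start of Section~3 are designed to supply.
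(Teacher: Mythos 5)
There is a genuine gap, and it sits exactly at your reduction step. From $B_h[\mm]\equiv 0$ you cannot conclude $B_h\equiv 0$: a continuous linear operator on the Hilbert space $\LL$ is not determined by its value on the single vector $\mm$ (compare the identity with the orthogonal projection onto the span of $\mm$ --- both are continuous, linear, and agree on $\mm$). The convolution-representation corollary you invoke to make this leap is proved in the paper \emph{only for shift-invariant} continuous operators (its proof passes through the lemma that shift-invariant continuous operators commute with $D$), so applying it to $B_h$, whose shift invariance is precisely what is at stake, is circular --- note that you caught the analogous circularity for $A[\mm^{(k)}]=(A[\mm])^{(k)}$ but not here. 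Your ``more direct'' route fares no better as stated: expanding $f$ in the chromatic basis with $u=0$ and using continuity gives $B_h[f]=\sum_{n}(-1)^n\K{n}[f](0)\,B_h[\K{n}[\mm]]$, so you need $B_h$ to annihilate \emph{every} basis function $\K{n}[\mm]$, not just $\K{0}[\mm]=\mm$. The gap is repairable inside your framework, since \eqref{dcomm} and linearity give $A[D^k\K{n}[\mm]]=D^k\K{n}[A[\mm]]$ for all $k,n$, so the same Taylor-in-$h$ argument can be run for each $\K{n}[\mm]$; but that repair has to be carried out, and it multiplies the convergence bookkeeping you already flagged. On that bookkeeping there is a second error: $\sum_k \frac{|h|^k}{k!}\sqrt{\mu_{2k}}<\infty$ for \emph{all} $h$ is false for general chromatic $\mom$ --- it holds only for $|h|$ below roughly $1/(2\rho)$ (e.g.\ the Laguerre and Herron examples have $\rho>0$), and the proposition is stated without weak boundedness; your fallback ``extend by the group property of shifts'' is again the statement being proven, so the extension must be done by a careful iteration in which the commutation hypothesis is re-derived for the shifted function at each step.

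For comparison, the paper's proof avoids all of this: by Proposition~\ref{in-LL} and continuity, $A[f](t)=\sum_n(-1)^n\K{n}[f](0)\,A[\K{n}[\mm]](t)$; hypothesis \eqref{dcomm} (applied through the polynomial $\K{n}$ in $D$) turns $A[\K{n}[\mm]]$ into $\K{n}[A[\mm]]$; and Lemma~\ref{con}, applied to $f$ and $A[\mm]\in\LL$, swaps the evaluation points to give $A[f](t)=\sum_n(-1)^n\K{n}[A[\mm]](0)\,\K{n}[f](t)$, exhibiting $A$ as a convergent series of the shift-invariant operators $\K{n}$ applied to $f$, whence shift invariance is immediate. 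No Taylor series in the shift, no radius restrictions, and no determination-by-$\mm$ principle is needed.
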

\begin{proof}
The continuity of $A$ on \LL, equation \eqref{dcomm} and Lemma~\ref{con} imply that for every $f\in \LL$,
\begin{align*}
A[f](t)&= \sum_{n=0}^{\infty} \,(-1)^{n}\K{n} [f](0)\,
\K{n}[A[\mm]](t) =\sum_{n=0}^{\infty} \,(-1)^{n} \K{n}[A[\mm]](0)\, \K{n} [f](t).
\end{align*}
Since operators $\K{n}$ are shift invariant, such $A$ must also be 
shift invariant.
\end{proof}

\section{Examples}\label{examples}
We now present a few examples of chromatic derivatives and
chromatic expansions, associated with several classical
families of orthogonal polynomials. The formulas below were derived with the help of  the \emph{Mathematica}\texttrademark\  software.

\subsection{Example 1: the Legendre polynomials}
Let $L_n(\omega)$ be the Legendre polynomials; if we set
$p_{n}^{\scriptscriptstyle{L}}(\omega)=
\sqrt{2n+1}\,L_n(\omega/\pi)$ then
\begin{equation*}
\int_{-\pi}^{\pi}
p_{n}^{\scriptscriptstyle{L}}(\omega)
p_{m}^{\scriptscriptstyle{L}}(\omega)\;\frac{1}{2\pi}\,d \omega=\delta(m-n).
\end{equation*}
The corresponding recursion coefficients in equation
\eqref{poly} are given by the formula 
\[\gamma_n=\frac{\pi
(n+1)}{\sqrt{4(n+1)^2-1}};\]
the corresponding space $\LT$ is
$L^2[-\pi,\pi]$. In this case
\[\mm(z)=j_0(\pi z)=\sinc(t)\]
and
\[\K{n}[\mm](z)=(-1)^n\sqrt{2n+1}j_n(\pi z)\] 
where $j_n(z)$ is the spherical Bessel function of order $n$.\footnote{$j_n(x)=\frac{\pi}{2x}J_{n+1/2}(x)$ where $J_a(x)$ is the Bessel function of the first kind.}\\
In this case $\mu_{2n}=\frac{\pi^{2n}}{2n+1}$ and so $\rho=\lim_{n\to\infty}\left(\frac{\mu_{2n}}{(2n)!}\right)^{\frac{1}{2n}}=0$.
Consequently, $1/\rho=\infty$ and $\mm(z)$ is an entire function.
The space $\LB$ for this particular example
consists of all entire functions whose restrictions to $\Rset$
belong to $L^2$ and which have a Fourier transform supported in
$[-\pi,\pi]$. Proposition~\ref{local-space-gen} implies that in
this case our locally defined scalar product $\Mscal{f}{g}$,
norm $\norm{f}$ and convolution $(f \ast_{\Mi} g)(t)$ coincide
with the usual scalar product, norm and convolution on $L_2$.

\subsection{Example 2: the Chebyshev polynomials of the first kind} Let
$p_{n}^{\scriptscriptstyle{T}}(\omega)$ be the family of
orthonormal polynomials obtained by normalising and rescaling
the Chebyshev polynomials of the first kind, $T_n(\omega)$, by
setting $p_{0}^{\scriptscriptstyle{T}}(\omega)= 1$ and
$p_{n}^{\scriptscriptstyle{T}}(\omega)=\sqrt{2}\;T_n(\omega/\pi)$
for $n>0$.  In this case
\begin{equation*}
\int_{-\pi}^{\pi}p_{n}^{\scriptscriptstyle{T}}(\omega)
p_{m}^{\scriptscriptstyle{T}}(\omega)
\frac{d\omega}{
\pi^2\sqrt{\pi^2-\omega^2}}=\delta(n-m)
\end{equation*}
and the corresponding space $\LB$ contains functions which do not
belong to $L^2$; the corresponding function \eqref{mmz} is
$\mm(z)={\mathrm J}_0(\pi z)\not\in L^2$ and  for $n>0$,
$\K{n}[\mm](z)=(-1)^{n}\sqrt{2}\,{\mathrm J}_n(\pi z)$, where
${\mathrm J}_n(z)$ is the Bessel function of the first kind of
order $n$. In the recurrence relation \eqref{three-term} the
coefficients are given by $\gamma_0=\pi/\sqrt{2}$ and
$\gamma_n=\pi/2$ for $n>0$.

The chromatic expansion of a function $f(z)$ is the Neumann
series of $f(z)$ (see \cite{WAT}),
\begin{equation*}
f(z)=f(u){\mathrm J}_0 (\pi(
z-u))+\sqrt{2}\;\sum_{n=1}^{\infty}\K{n}[f](u){\mathrm J}_n(\pi (z-u)).
\end{equation*}
Thus, the chromatic expansions corresponding to various
families of orthogonal polynomials can be seen as
generalisations of the Neumann series, while the families of
corresponding functions $\{\K{n}[\mm](z)\}_{n\in\Nset}$ can be
seen as generalisations and a uniform representation of some
familiar families of special functions. In this case $\mu_{2n}=\frac{\pi^{2n-1/2}\Gamma(n+1/2)}{n!}$ and again $1/\rho=\infty$.

\subsection{Example 3: the Chebyshev polynomials of the second kind} 
Let $p_{n}^{\scriptscriptstyle{U}}(\omega)$ be the family of
orthonormal polynomials obtained by rescaling
the Chebyshev polynomials of the second kind, $U_n(\omega)$, by
setting $p_{0}^{\scriptscriptstyle{U}}(\omega)= 1$ and
$p_{n}^{\scriptscriptstyle{U}}(\omega)=U_n(\omega/\pi)$
for $n>0$.  In this case
\begin{equation*}
\int_{-\pi}^{\pi}p_{n}^{\scriptscriptstyle{U}}(\omega)
p_{m}^{\scriptscriptstyle{U}}(\omega)
\frac{2}{\pi^2}\sqrt{1-\left(\frac{\omega}{\pi}\right)^2}
d\omega=\delta(n-m)
\end{equation*}
The recursion coefficients are given by $\gamma_n=\pi/2$ and $\mm(z)=J_0(\pi z)+J_2(\pi z)$, while\\ $\K{n}[\mm](z)=(-1)^n(J_n(\pi z)+J_{n+2}(\pi z)$ and $\mu_{2n}=\frac{\pi^{2n-1/2}\Gamma(n+1/2)}{2\Gamma(n+2)}$. Again $\rho=0$ and $\mm(z)$ is entire.

\subsection{Example 4: the Gegenbauer polynomials} We rescale and normalise the Gegenbauer polynomials $C^{(a)}_n(\omega)$ by setting 
\[p^{G}_n(\omega)=\frac{\sqrt{\Gamma(a+1/2)\Gamma(n+1)(n+a)}\Gamma(a)}{\pi^{1/4}2^{1/2-a}\sqrt{\Gamma(n+2a)\Gamma(a+1)}}C^{(a)}_n\left(\frac{\omega}{\pi}\right)\]
Then $p^{G}_0(\omega)=1$ and for $a>-1/2$  such polynomials are orthonormal with respect to weight \\$\frac{\Gamma(a+1)}{\pi^{3/2}\Gamma(a+1/2)}\left(1-\left(\frac{\omega}{\pi}\right)^2\right)^{a-1/2}$. The recursion coefficients are given by $\gamma_n=\frac{\pi}{2}\sqrt{\frac{(n+1)(n+2a)}{(n+a)(n+a+1)}}$, and \\$\mm(z)=_0\!\!F_1(;1+a;-\pi^2z^2/4)$, where $_0\!F_1(;x;y)$ is the confluent hypergeometric function; 
\[\K{n}[\mm](z)=2a{\ii}^ne^{-\ii\pi z}\sqrt{\frac{(a+n)\Gamma(2a)}{a\Gamma(n+1)\Gamma(2a+n)}}
\sum_{m=0}^n\binom{m}{n}\frac{(-1)^{m+n}\Gamma(2a+m+n)}{\Gamma(2a+m+1)}  {}_1\!F_1(1/2+a+m,2a+m+1,2\ii\pi z)\]
where ${}_1\!F_1(x,y,z)$ is the Kummer confluent hypergeometric function.

\subsection{Example 5:  the Jacobi polynomials} The Jacobi polynomials generalise several classical families of orthogonal polynomials, such as the Legendre, the Chebyshev and the Gegenbauer polynomials. Thus, for $a,b>-1$ we let 
\[W(a,b)=\int_{-\pi}^{\pi}\left(1-\frac{\omega}{\pi}\right)^a\left(1+\frac{\omega}{\pi}\right)^bd\omega=\pi\left(\frac{_2F_1(1,-a;b+2;-1)}{b+1}+\frac{_2F_1(1,-b;a+2;-1)}{a+1}\right)\]
where $_2F_1$ is the hypergeometric function.
We rescale and normalise the standard Jacobi polynomials $\mathcal{J}_n^{(a,b)}(x)$ to obtain polynomials $p^{J}_n(\omega)$ so that
\[\int_{-\pi}^{\pi}p^J_n(\omega)p^{J}_m(\omega)\frac{\left(1-\frac{\omega}{\pi}\right)^a\left(1+\frac{\omega}{\pi}\right)^b}{W(a,b)}d\omega=\delta(m-n)\]
by setting
\[ p^J_n(\omega)=\left(\frac{(2n+a+b+1)\Gamma(a+1)\Gamma(b+1)\Gamma(n+1)\Gamma(n+a+b+1)}{\Gamma(2+a+b)\Gamma(n+a+1)\Gamma(n+b+1)}\right)^{1/2}\mathcal{J}_n^{(a,b)}\left(\frac{\omega}{\pi}\right)\]
The recursion coefficients for such orthonormal polynomials are given by
\begin{align*}
\gamma_n&=\frac{2\pi}{2n+a+b+2}\sqrt{\frac{(n+1)(n+a+1)(n+b+1)(n+a+b+1)}{(2n+a+b+1)(2n+a+b+3)}}\\
\beta_n&=\frac{\pi\,(a^2-b^2)}{(2n+a+b+2)(2n+a+b)}.
\end{align*}
In this case
\[\mm(z)=\sgn(a+b+1)\sqrt{\frac{\pi\; 2^{a+b+1}\Gamma(a+1)\Gamma(b+1)}{\Gamma(a+b+2)W(a,b)}}\;{\e}^{-\ii \pi t}\; _1F_1(b+1;a+b+2; 2\ii\pi z)\]
where $_1F_1$ is the confluent hypergeometric function, and 
\begin{align*}
\K{n}[\mm](z)=&{\ii}^n{\e}^{-\ii\,\pi\,z}\, \Gamma(a+1)\sqrt{\frac{\pi\,2^{a+b+1}(2n+a+b+1)\Gamma(n+b+1)}{W(a,b)\Gamma(n+a+1)\Gamma(n+1)\Gamma(n+a+b+1)}}\\
&\hspace*{5mm}  \sum_{m=0}^n{{n}\choose{m}}\frac{(-1)^{m}\Gamma(n+m+a+b+1)}{\Gamma(m+a+b+2)}\;_1F_1(m+b+1;m+a+b+2; 2\ii\,\pi\,z)
\end{align*}

\subsection{Example 6:  the Hermite
polynomials}

Let $H_n(\omega)$ be the Hermite polynomials; then polynomials
$p_{n}^{\scriptscriptstyle{H}}(\omega)=
(2^{n}n!)^{-1/2}H_n(\omega)$  satisfy
\begin{equation*}\int_{-\infty}^{\infty}
p_{n}^{\scriptscriptstyle{H}}(\omega)
p_{m}^{\scriptscriptstyle{H}}(\omega)
\;{\e}^{-\omega^2}\;\frac{d\omega}{\sqrt{\pi}}
=\delta(n-m).
\end{equation*}
The corresponding space $\LB$ contains entire functions whose
Fourier transform $\widehat{f}(\omega)$ satisfies\\
$\int_{-\infty}^{\infty} |\widehat{f}(\omega)|^2
\;{\e}^{\omega^2} d \omega<\infty$. In this case the space
$\LL$ contains non-bandlimited signals; the corresponding
function defined by \eqref{mmz} is $\mm(z)={\e}^{-z^2/4}$ and
$\K{n}[\mm](z)= (-1)^{n}(2^{n}\,n!)^{-1/2}\,z^{n}
{\e}^{-z^2/4}$. The corresponding recursion coefficients are
given by $\gamma_n=\sqrt{(n+1)/2}$. In this case $\mu_{2n}=\Gamma(n+1/2)/\sqrt{\pi}$ and thus $1/\rho=\infty$.

\subsection{Example 7: the Laguerre polynomials} The Laguerre polynomials\footnote{Our definition of the Laguerre polynomials is slightly different from the usual one - we multiply the odd degree polynomials by $-1$ to ensure that the leading coefficient is always positive.} $p^{L}_n(\omega)$ satisfy 
\begin{align*}
\int_{0}^{\infty}p^{L}_n(\omega)p^{L}_m(\omega){\e}^{-\omega}d\omega=\delta(m-n)
\end{align*}
The recursion coefficients are given by $\gamma_n=n+1$ and $\beta_n=-(2n+1)$. For this family 
\begin{align}
&\mm(z)=\frac{1}{1-\ii z};&&
\K{n}[\mm](z)=\frac{1}{1-\ii z}\left(\frac{-z}{1-\ii z}\right)^n.&
\end{align}

Since $\mu_n=\int_{0}^{\infty}\omega^n{\e}^{-\omega}d\omega=n!$, we have $\rho=\lim_{n\to  \infty}\left(\frac{\mu_{2n}}{(2n)!}\right)^{1/(2n)}=1$; note that $\mm(z)$ has a pole at $- \ii$. Since $\K{n}[\mm](-\ii/2)=2\,{\ii}^n$, the sum $\sum_{n=1}^\infty|\K{n}[\mm](-\ii/2)|^2$ diverges. Thus, while $\mm(z)$ is analytic on $\mathds{S}(1)$, the chromatic expansions converge uniformly on
$\mathds{S}(1/2-\varepsilon)$.

\subsection{Example 8: the Herron family}
This example is a slight modification of an example from
\cite{HB}. Let the family of orthonormal polynomials be
given by the recursion $L_0(\omega)= 1$, $L_1(\omega)=\omega$,
and $L_{n+1}(\omega)=\omega/(n+1)L_n(\omega)-
n/(n+1)L_{n-1}(\omega).$ Then
\begin{equation*}
\frac{1}{2}\int_{-\infty}^{\infty}L(m,\omega)\;L(n,\omega)\;
\sech\left(\frac{\pi\omega}{2}\right)
\;d\omega=
\delta(m-n).
\end{equation*}
In this case $\mm(z)=\sech z$ and $\K{n}[\mm](z)= (-1)^{n}\sech
z\, \tanh^{n} z$. The recursion coefficients are given by
$\gamma_n=n+1$ for all $n\geq 0$. If $E_n$ are the Euler
numbers, then $\sech z=\sum_{n=0}^\infty E_{2n}\,z^{2n}/(2n)!$,
with the series converging only in the disc of radius $\pi/2$.
In this case $\mm(z)$ is analytic on the strip $\mathds{S}(\pi/2)$, with the
corresponding chromatic expansions converging uniformly on the strip $\mathds{S}(\pi/4-\varepsilon)$.
In this case \\$\mu_{2n}=4^{-n}\pi^{-2n-1}(2n)!(\zeta(2n+1,1/4)-\zeta(2n+1,3/4))$ and 
\[\rho=\lim_{n\to\infty}\left(\frac{1}{2\pi}(\zeta(2n+1,1/4)-\zeta(2n+1,3/4))\right)^{1/(2n)}=2/\pi\]
Thus, $1/\rho=\pi/2$ ($\zeta(x,y)$ is the generalised Riemann zeta function).

\section{Weakly bounded moment functionals}\label{sec5}
\subsection{}

To study the point-wise convergence of chromatic expansions of functions
which are not in $\LB$ we found it necessary to
restrict the class of chromatic moment functionals. The
restricted class is a slight variant of the class introduced in \cite{IG5}, and is still very broad
and contains functionals that correspond to many classical families of orthogonal
polynomials. For technical simplicity in the definition below all conditions
are formulated using a single constant $M$ in all of the bounds.

\begin{definition}[\!\!\cite{IG5}]\label{def-weak} Let \mom\ be any 
moment functional such that for some $\gamma_n>0$ and some reals $\beta_n$ 
\eqref{three-term} holds for the corresponding family of orthonormal polynomials.

\begin{enumerate} \item \mom\ is \emph{weakly bounded}
if there exist some $M \geq 1$ and some $0\leq p<1$ such that for all $n\geq 0$,

\begin{align}\label{one-weak}
&&\frac{1}{M}(n + 1)^p\leq \gamma_n\leq M (n + 1)^p; &&
\frac{|\beta_n|}{ \gamma_n} \leq M. &&
\end{align}
\item \mom\ is \textit{bounded} if it is weakly bounded with $p=0$.
\end{enumerate}
\end{definition}

Functionals in our Examples 1 -- 5 are bounded. 
For bounded moment functionals \mom\ the
corresponding moment distribution function $ \alpha(\omega)$ has a finite
support (see e.g., \cite{Chih}). However, $\mm(t)$ can be of infinite energy (i.e., not
in $L^2$) as is the case in our Example 2. The moment functional in
Example 6 is weakly bounded but not bounded $(p=1/2)$; the
moment functionals in Examples 7 and 8 are not weakly bounded $(p=1)$.
We note that important examples of classical orthogonal
polynomials which correspond to weakly bounded moment
functionals are provided by the following lemma.

\begin{lemma} Let $C,c>0$ and $0< p<1$,  if  $\da=C{\e}^{-c|\omega|^\frac{1}{p}}d\omega$ and $\int_{-\infty}^{\infty}\da=1$, then the corresponding moment functional \mom\ is weakly bounded.
\end{lemma}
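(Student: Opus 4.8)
The plan is to verify the two bounds in \eqref{one-weak} for the recursion coefficients $\gamma_n$ and $\beta_n$ of the orthonormal polynomials associated with the weight $\mathrm{w}(\omega)=C\,\e^{-c|\omega|^{1/p}}$. Since this weight is even, the measure is symmetric, so all odd moments vanish and hence $\beta_n=0$ for all $n$; this immediately disposes of the condition $|\beta_n|/\gamma_n\leq M$ (any $M\geq 1$ works). Thus the whole content of the lemma is the two-sided estimate $\frac{1}{M}(n+1)^p\leq \gamma_n\leq M(n+1)^p$, i.e. that $\gamma_n$ grows like $n^p$. This is a classical ``Freud weight'' asymptotic: for weights of the form $\e^{-c|\omega|^\lambda}$ with $\lambda>0$, the recurrence coefficients satisfy $\gamma_n\sim a_\lambda\, n^{1/\lambda}$ for an explicit constant $a_\lambda$, which here with $\lambda=1/p$ gives exactly the $n^p$ growth.

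First I would recall that $\gamma_n^2 = \mom(\omega\,p_n\,p_{n+1})$, and more usefully that $\gamma_n$ equals the ratio of leading coefficients of $p_n$ and $p_{n+1}$; equivalently, in terms of the Hankel determinants $\Delta_n$ of the moments, $\gamma_n^2 = \Delta_{n+1}\Delta_{n-1}/\Delta_n^2$. The growth of $\gamma_n$ is therefore governed by the growth of the $L^2(\mathrm{w})$-extremal (monic) polynomials. The cleanest route is to invoke the known asymptotics for Freud-type weights: for $\mathrm{w}(\omega)=\e^{-Q(\omega)}$ with $Q(\omega)=c|\omega|^\lambda$, $\lambda>1$, one has $\gamma_n = \tfrac{1}{2}\,a_n(1+o(1))$ where $a_n$ is the Mhaskar--Rakhmanov--Saff number, i.e. the positive root of $n = \tfrac{2}{\pi}\int_0^1 a_n Q'(a_n s)(1-s^2)^{-1/2}\,\mathrm{d}s$. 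Plugging in $Q'(x)=c\lambda x^{\lambda-1}$ and solving gives $a_n = \kappa\, n^{1/\lambda}$ for an explicit $\kappa=\kappa(c,\lambda)>0$; with $\lambda=1/p$ this is $a_n=\kappa\,n^{p}$, hence $\gamma_n\sim \tfrac{\kappa}{2}n^p$. From an asymptotic $\gamma_n\sim \tfrac{\kappa}{2}n^p$ one extracts a uniform two-sided bound $\frac{1}{M}(n+1)^p\leq\gamma_n\leq M(n+1)^p$ for a suitable $M\geq 1$: the asymptotic pins down all large $n$, and only finitely many small $n$ (where $\gamma_n>0$ anyway) remain, which can be absorbed by enlarging $M$.

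For a more self-contained argument that avoids quoting the full Freud machinery, one can instead get matching upper and lower bounds directly. For the upper bound, use the extremal characterization $\gamma_0\gamma_1\cdots\gamma_{n-1} = \min\{\|P\|_{L^2(\mathrm{w})} : P \text{ monic of degree } n\}$ and test with a rescaled Hermite-like or explicit monic polynomial (e.g. $\omega\mapsto a_n^n\,\widetilde P(\omega/a_n)$ for a fixed good monic $\widetilde P$ and $a_n=n^p$), together with the change of variables $\omega = a_n s$ which turns $\int |P|^2\e^{-c|\omega|^{1/p}}\mathrm{d}\omega$ into something controlled by $a_n^{2n+1}$ times a bounded factor; taking ratios of consecutive products yields $\gamma_n\leq M(n+1)^p$. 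For the lower bound, one uses that the largest zero $x_{n,n}$ of $p_n$ satisfies $x_{n,n}\asymp a_n\asymp n^p$ (which follows from the fact that the measure has ``effective support'' of size $a_n$ at level $n$, made rigorous via the Mhaskar--Saff inequality $\|P\mathrm{w}\|_{\mathbb{R}} = \|P\mathrm{w}\|_{[-a_n,a_n]}$ for $\deg P\leq n$), and then that $\gamma_n \geq \tfrac{1}{2}(x_{n+1,n+1}-x_{n,\text{something}})$-type interlacing estimates, or more simply $\gamma_{n-1}\gamma_n = \mom(\omega^2 p_n^2)-\beta_n'^2 \geq$ (distance of mass from origin)$^2 \asymp n^{2p}$; combined with the upper bound this forces $\gamma_n \geq \frac{1}{M}(n+1)^p$.

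The main obstacle is the lower bound on $\gamma_n$ (equivalently, showing the zeros of $p_n$ spread out to distance of order $n^p$ from the origin): the upper bound is essentially a one-line extremal-polynomial comparison, and $\beta_n=0$ is free from symmetry, but proving that $\gamma_n$ does not decay or stall requires either the Mhaskar--Rakhmanov--Saff equilibrium analysis or the Freud conjecture asymptotics. I would therefore present the clean version: cite the Freud-weight recurrence-coefficient asymptotics $\gamma_n\sim \tfrac{\kappa}{2}n^p$ (available in standard references on orthogonal polynomials for exponential weights), and then note that an asymptotic equivalence of the form $\gamma_n/n^p\to \tfrac{\kappa}{2}\in(0,\infty)$ trivially yields the required uniform two-sided bound with $p$ as in the statement and some $M\geq 1$, since $\gamma_n>0$ always and only finitely many indices are not yet in the asymptotic regime.
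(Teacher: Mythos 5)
Your proposal is correct and follows essentially the same route as the paper: the paper's proof is a one-line citation of Theorem 1.3 in the Deift et al. reference, which gives $\lim_{n\to\infty}\gamma_n/(n+1)^p$ equal to a finite positive constant for the weight $\e^{-c|\omega|^{1/p}}$, from which the two-sided bound in \eqref{one-weak} (and $\beta_n=0$ by symmetry) follows exactly as you argue. Your extra remarks on absorbing the finitely many initial indices into $M$ and on the symmetry giving $\beta_n=0$ are details the paper leaves implicit, but they do not change the approach.
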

\begin{proof}
Theorem 1.3 in \cite{deift} implies that in such a case $\LIM{n}{\gamma_n}/{(n+1)^p}=1/2$; see \cite{JAT} for the details.\\
\end{proof}

Weakly bounded moment functionals satisfy a useful estimation of
the coefficients  in the corresponding equations
\eqref{inverse} and \eqref{direct} relating the chromatic and
the ``standard'' derivatives.

\begin{lemma}[\!\!\cite{IG5}]\label{bounds}
Assume that $\mom$ is weakly bounded. Then the
following inequalities hold for all non-negative integers $k$ and $n$:
\begin{align}
|(\K{n}\circ \dd^k)[\mm](0)|&\leq(M+1)^{2k}k!^p;\label{b-bound}\\
\left|\K{n} \left[\frac{t^k}{k!}\right](0)\right|&\leq \frac{(3M)^{n}}{k!^p}.\label{mono-bound}
\end{align}
\end{lemma}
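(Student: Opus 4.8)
The plan is to establish both inequalities by induction, using the three-term recurrence \eqref{three-term} as the main engine and the weak boundedness hypothesis \eqref{one-weak} to control the growth of the coefficients $\gamma_n$ and $\beta_n/\gamma_n$.

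For \eqref{b-bound}, I would first observe that the quantity $a_{n,k} := (\K{n}\circ\dd^k)[\mm](0)$ satisfies a recurrence inherited from \eqref{three-term}: applying $\K{n+1} = \frac{1}{\gamma_n}(\dd\circ\K{n} + \ii\beta_n\K{n} + \gamma_{n-1}\K{n-1})$ and composing with $\dd^k$ gives $a_{n+1,k} = \frac{1}{\gamma_n}(a_{n,k+1} + \ii\beta_n a_{n,k} + \gamma_{n-1} a_{n-1,k})$, where $a_{n,k+1}$ arises because $\dd\circ\K{n}\circ\dd^k = \K{n}\circ\dd^{k+1}$ (constant-coefficient operators commute). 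The base cases are $a_{0,k} = \dd^k[\mm](0) = \ii^k\mu_k$ and, by \eqref{bkn}, $a_{n,k} = 0$ whenever $k < n$. I would then run an induction — most naturally on $n$, with the bound stated uniformly in $k$ — using $\gamma_{n-1}/\gamma_n \leq M^2 (n/(n+1))^p \leq M^2$, the bound $|\beta_n|/\gamma_n \leq M$, and $1/\gamma_n \leq M/(n+1)^p \leq M$. The factor $k!^p$ on the right-hand side is the delicate part: each step of the recurrence in effect trades a unit of differentiation order, and the factorial-to-the-$p$ growth must be shown to absorb the incremental contributions; I expect to need the elementary inequality $(k+1)!^p \leq (k+1)^p\, k!^p$ together with careful bookkeeping so the constant $(M+1)^{2k}$ comes out with exactly that exponent. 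This interplay is where the constant $M+1$ (rather than $M$) and the exponent $2k$ are forced.

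For \eqref{mono-bound}, I would set $b_{n,k} := \K{n}[t^k/k!](0)$ and again derive its recurrence from \eqref{three-term}. Here $\dd[t^k/k!] = t^{k-1}/(k-1)!$, so $(\dd\circ\K{n})[t^k/k!](0) = \K{n}[t^{k-1}/(k-1)!](0) = b_{n,k-1}$ — actually one must be slightly careful, since $\dd$ and $\K{n}$ commute, $(\dd\circ\K{n})[t^k/k!] = \K{n}[t^{k-1}/(k-1)!]$ — giving $b_{n+1,k} = \frac{1}{\gamma_n}(b_{n,k-1} + \ii\beta_n b_{n,k} + \gamma_{n-1} b_{n-1,k})$. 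By \eqref{zero-mon}, $b_{n,k} = 0$ when $k < n$, and $b_{0,k} = \delta(k)$ (since $p_0 \equiv 1$). Induction on $n$ then yields $|b_{n,k}| \leq (3M)^n/k!^p$: the three terms on the right contribute roughly $M/k!^p \cdot (3M)^n \cdot (\text{something} \leq 1) + M \cdot (3M)^n/k!^p + M^2 (3M)^{n-1}/k!^p$, and the constant $3$ in the base of $3M$ is precisely what makes $1 + 1 + 1/M \leq 3$ close the induction. The $1/k!^p$ weighting is consistent with the recurrence because decreasing the monomial degree by one (the $b_{n,k-1}$ term) only improves the factorial bound.

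The main obstacle I anticipate is the first inequality: matching the factorial exponent $p$ and the specific constant $(M+1)^{2k}$ requires tracking how the "differentiation budget" $k$ and the "chromatic order" $n$ trade off against one another under the recurrence, and one must choose the induction variable (and possibly a joint induction on $n+k$ or a nested induction) so that the vanishing conditions $a_{n,k}=0$ for $k<n$ are available exactly when needed to terminate the recursion before the bounds blow up. The second inequality is more routine once the recurrence and its vanishing pattern \eqref{zero-mon} are in hand.
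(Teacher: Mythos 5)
Your treatment of \eqref{mono-bound} is essentially the paper's own argument: the recurrence for $\K{n+1}[t^k/k!](0)$ obtained from \eqref{three-term}, induction on $n$, the vanishing condition \eqref{zero-mon}, and the same bookkeeping via $1/\gamma_n\leq M/(n+1)^p$, $|\beta_n|/\gamma_n\leq M$, $\gamma_{n-1}/\gamma_n\leq M^2$ and $k^p\leq (n+1)^p$, with the factor $3$ closing the induction exactly as you indicate. That half is fine.

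For \eqref{b-bound}, however, there is a genuine gap, and it is precisely the point you flagged but did not resolve. Writing $a_{n,k}=(\K{n}\circ\dd^k)[\mm](0)$, your identity $a_{n+1,k}=\frac{1}{\gamma_n}\bigl(a_{n,k+1}+\ii\beta_n a_{n,k}+\gamma_{n-1}a_{n-1,k}\bigr)$ is correct, but an induction on $n$ (with the bound uniform in $k$) cannot close: the first term is controlled by the inductive hypothesis at degree $k+1$, giving $\frac{1}{\gamma_n}|a_{n,k+1}|\leq M(M+1)^{2k+2}(k+1)!^p$, which already exceeds the target $(M+1)^{2k}k!^p$, and weak boundedness supplies no decay in $1/\gamma_n$ (only $1/\gamma_n\leq M(n+1)^{-p}\leq M$), so nothing absorbs the surplus factor of order $M(M+1)^2(k+1)^p$. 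A joint induction on $n+k$ does not help either, since the troublesome term $a_{n,k+1}$ has the same value of $n+k$ as the quantity being bounded; the difficulty is that your orientation of the recurrence trades ``down one in $n$'' for ``up one in $k$'', while the bound you are proving grows in $k$. The paper runs the recursion the other way: solve the same identity for the term of higher differentiation order, $\K{n}\circ\dd^{k+1}=\gamma_n\,\K{n+1}\circ\dd^{k}-\ii\beta_n\,\K{n}\circ\dd^{k}-\gamma_{n-1}\,\K{n-1}\circ\dd^{k}$, and induct on $k$, proving the bound simultaneously for all $n\leq k$ (by \eqref{bkn} all other entries vanish, which also kills one or two of the three terms in the boundary cases $n=k+1$ and $n=k$). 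Then every coefficient appearing is multiplicative and controlled by \eqref{one-weak} together with $n\leq k+1$, namely $\gamma_n,\ |\beta_n|,\ \gamma_{n-1}\leq M^2(k+1)^p$, so the three contributions sum to at most $(M^2+2M)(k+1)^p(M+1)^{2k}k!^p<(M+1)^{2(k+1)}(k+1)!^p$; this is exactly how the constant $(M+1)^{2k}$ and the factor $k!^p$ arise. With that reversal your outline becomes the paper's proof; without it, the first inequality is not established.
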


\begin{proof}
By \eqref{bkn}, it is enough to prove \eqref{b-bound} for all
$n,k$ such that $n\leq k$. We proceed by induction on $k$. If $k=0$ and $n=0$ then $|(\K{n}\circ \dd^k)[\mm](0)|=\mm(0)=1$ and the left side and the right side of \eqref{b-bound} are equal. Assume that the statement holds for all $n\leq k$. Then for all $n\leq k+1$, applying
\eqref{three-term} to $\dd^{k}[\mm](t)$ and then setting $t=0$ we obtain
\begin{equation}\label{auxrec}|(\K{n}\circ \dd^{k+1})[\mm](0)|\leq\! {\gamma_{n}}|(\K{n+1}\circ\dd^{k})[\mm](0)|+
|\beta_n(\K{n}\circ\dd^{k})[\mm](0)|+{\gamma_{n-1}}\,|(\K{n-1}\circ \dd^{k})[\mm](0)|.
\end{equation}
Case 1: If $n=k+1$ then the first two summands on the right hand side are equal to zero and we obtain using the induction hypothesis that 
\begin{equation*}|(\K{k+1}\circ \dd^{k+1})[\mm](0)|\leq{\gamma_{k}}\,|(\K{k}\circ \dd^{k})[\mm](0)|
\leq M(k+1)^p(M+1)^{2k}k!^p< (M+1)^{2k+2}(k+1)!^p.
\end{equation*}
Case 2: If $n=k$ then the first summand on the right is equal to zero and $|\beta_k|\leq M\gamma_k\leq M^2(k+1)^p$; thus, we obtain
\begin{align*}|(\K{k}\circ \dd^{k+1})[\mm](0)|&\leq\!
|\beta_k(\K{k}\circ\dd^{k})[\mm](0)|+{\gamma_{k-1}}\,|(\K{k-1}\circ \dd^{k})[\mm](0)|\\
&\leq\!M^2(k+1)^p(M+1)^{2k}k!^p+Mk^p\,(M+1)^{2k}k!^p\\
&<\!(M+1)^{2k+2}(k+1)!^p
\end{align*}
Case 3: If $n<k$ then $\gamma_n\leq M(n+1)^p\leq M k^p$; $\beta_n\leq M\gamma_n\leq M^2k^p$ and 
$\gamma_{n-1}\leq M(k-1)^p$; thus, we obtain from \eqref{auxrec}
\begin{align*}\label{auxrec}|(\K{n}\circ \dd^{k+1})[\mm](0)|&\leq\! Mk^p(M+1)^{2k}k!^p+
M^2k^p(M+1)^{2k}k!^p+M(k-1)^p(M+1)^{2k}k!^p\\
&<\! (M+1)^{2k+2}(k+1)!^p.
\end{align*}

Similarly, by \eqref{zero-mon}, it is enough to prove
\eqref{mono-bound} for all $k\leq n$. This time we proceed by
induction on $n$. If $n=k=0$ the left side of \eqref{mono-bound} is equal to the right side.  We now use \eqref{three-term} and \eqref{one-weak} to obtain that for all $k\leq n+1$
\begin{align*}
\left|\K{n+1}\left[\frac{t^k}{k!}\right]\right| &\leq \frac{1}{\gamma_n}\left|(\K{n}\circ D)\left[\frac{t^k}{k!}\right]\right|+ \frac{|\beta_n|}{\gamma_n}\left|\K{n}\left[\frac{t^k}{k!}\right]\right|+\frac{\gamma_{n-1}}{\gamma_n}\left|\K{n-1}\left[\frac{t^k}{k!}\right]\right|.
\end{align*}
Thus,
\begin{align*}
\left|\K{n+1}\left[\frac{t^k}{k!}\right]\right|(0) &\leq  \frac{M}{(n+1)^p} \left|\K{n}\left[\frac{t^{k-1}}{(k-1)!}\right]\right|(0)+ M\left|\K{n}\left[\frac{t^{k}}{k!}\right]\right|(0)+ M^2\,
\left|\K{n-1}\left[\frac{t^k}{k!}\right]\right|(0).
\end{align*}
By induction hypothesis and using \eqref{zero-mon} again, we
obtain that for all $k\leq n+1$, we have $k^p\leq(n+1)^p$ and thus
\begin{align*}
\left|\K{n+1}\left[\frac{t^k}{k!}\right](0)\right|\leq\frac{M}{k^p}\,\frac{(3M)^{n}}{(k-1)!^p}+M \frac{(3M)^n}{k!^p}+ M^2 \frac{(3M)^{n-1}}{k!^p}\leq \frac{(3M)^{n+1}}{k!^p}.
\end{align*}
\end{proof}

\begin{corollary}[\!\!\cite{IG5}]\label{poly-bdd} Let $\mom$ be weakly bounded;
then for every fixed $n$
\[
\LIM{k}\left|\frac{(\K{n}\circ \dd^{k})
\left[\mm\right](0)}{k!}\right|^{1/k}=0,
\]
and the convergence is uniform in $n$.
\end{corollary}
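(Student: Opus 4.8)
The plan is to read the statement off directly from the estimate \eqref{b-bound} of Lemma~\ref{bounds}, which already contains all of the work. First I would write, for all integers $k\geq 1$ and all $n\geq 0$,
\[
\left|\frac{(\K{n}\circ\dd^{k})[\mm](0)}{k!}\right|^{1/k}\leq\left(\frac{(M+1)^{2k}\,k!^{\,p}}{k!}\right)^{1/k}=(M+1)^{2}\left(k!^{\,1/k}\right)^{p-1},
\]
and immediately observe that the right-hand bound depends only on $k$, not on $n$. This is the point of the argument: the uniformity in $n$ will then be automatic, since the same majorant controls every term of the family.

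Next I would note that $0\leq p<1$ forces the exponent $p-1$ to be strictly negative, whereas $k!^{1/k}\to\infty$ as $k\to\infty$ (for instance by Stirling's formula $k!^{1/k}\sim k/\mathrm{e}$, or by the crude bound $k!\geq (k/2)^{k/2}$). Hence $\left(k!^{1/k}\right)^{p-1}\to 0$, and therefore $(M+1)^{2}\left(k!^{1/k}\right)^{p-1}\to 0$ as well; letting $\varepsilon_k$ denote this majorant, we have $\sup_{n}\left|(\K{n}\circ\dd^{k})[\mm](0)/k!\right|^{1/k}\leq\varepsilon_k\to 0$, which is exactly the asserted limit together with its uniformity in $n$.

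There is no genuine obstacle remaining at this stage. The delicate part, namely the three-case induction bounding $|(\K{n}\circ\dd^{k})[\mm](0)|$ by $(M+1)^{2k}k!^{\,p}$ via the recurrence \eqref{three-term} and the weak-boundedness inequalities \eqref{one-weak}, was already carried out in Lemma~\ref{bounds}. What remains is only the elementary observation that dividing by $k!$ and extracting the $k$-th root turns the factorial growth of exponent $p$ into factorial decay of exponent $p-1<0$, and that since the resulting estimate is free of $n$ the convergence is uniform in $n$ without any further argument.
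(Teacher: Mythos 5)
Your proposal is correct and follows essentially the same route as the paper: apply the bound \eqref{b-bound} from Lemma~\ref{bounds}, divide by $k!$, take the $k$-th root to get the $n$-independent majorant $(M+1)^2 k!^{(p-1)/k}$, and use $k!^{1/k}\to\infty$ (Stirling) with $p-1<0$ to conclude, the uniformity in $n$ being automatic since the majorant does not involve $n$.
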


\begin{proof}  By \eqref{b-bound},
\begin{equation}\label{poly-bound-eq}
\left|\frac{(\K{n}\circ \dd^k)\left[\mm\right](0)}{k!}\right|^{1/k}\leq
\frac{(M+1)^2}{k!^{(1-p)/k}} <\frac{(M+1)^2{\e}^{1-p}}{k^{1-p}}\to  0.
\end{equation}
\end{proof}
Taking $n=0$ in the above corollary we obtain the following consequence. 
\begin{corollary}[\!\!\cite{IG5}]\label{moments-asymptotics}
Let $\mm(z)$ correspond to a weakly bounded moment functional
\mom; then
\begin{equation}\label{entire}
\LIM{k}\left|\frac{\mu_k}{k!}\right|^{1/k}=
\LIM{k}\left|\frac{\mm^{(k)}(0)}{k!}\right|^{1/k}
=0.\end{equation}
Thus, since \eqref{limsup} is satisfied with $\rho=0$, function $\mm(z)$ which correspond to a weakly bounded moment functional is an entire function.
\end{corollary}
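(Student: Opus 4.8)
The final statement is an immediate corollary of Corollary~\ref{poly-bdd}, obtained by setting $n=0$, so the plan is simply to unwind the definitions. Because $\mom$ is normalised we have $p_0(\omega)\equiv 1$, hence $\K{0}$ is the identity operator and $(\K{0}\circ\dd^k)[\mm](0)=\mm^{(k)}(0)$ for every $k\ge 0$. Taking $n=0$ in Corollary~\ref{poly-bdd} therefore yields at once
\[
\LIM{k}\left|\frac{\mm^{(k)}(0)}{k!}\right|^{1/k}=0 .
\]
By \eqref{dm}, $\mm^{(k)}(0)=\ii^k\mu_k$, so $|\mm^{(k)}(0)|=|\mu_k|$ and the two limits in \eqref{entire} coincide; this proves the displayed equation.

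For the last sentence I would argue that the bound just obtained holds along the full sequence of indices, in particular along the even subsequence, so that $\limsup_{n\to\infty}(\mu_{2n}/(2n)!)^{1/(2n)}=0$. Hence the constant $\rho$ of \eqref{limsup} is $0$; in particular $\rho<\infty$, so $\mom$ is chromatic and $1/\rho=\infty$. Proposition~\ref{anaz} then applies and shows that $\mm(z)$ is analytic on $\st{}=\mathds{S}(1/\rho)=\mathds{S}(\infty)=\Cset$, i.e.\ $\mm(z)$ is entire.

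There is no genuine obstacle here; the only point that deserves care is the logical order. Lemma~\ref{bounds} and Corollary~\ref{poly-bdd} are derived purely from the three-term recurrence \eqref{three-term} together with the weak-boundedness inequalities \eqref{one-weak}, with no prior information about the analyticity of $\mm(z)$; it is exactly these recursive ``symbol-level'' estimates on the coefficients relating the chromatic and the standard derivatives that first force $\rho=0$. Only after chromaticity has thereby been established may one invoke Proposition~\ref{anaz} to upgrade the real-variable function $\mm(t)$ to an entire function. So the sequence of deductions --- algebraic growth bound first, then analyticity --- must not be reversed.
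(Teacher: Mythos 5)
Your proof is correct and follows exactly the paper's route: the paper obtains the corollary precisely by taking $n=0$ in Corollary~\ref{poly-bdd} (so that $(\K{0}\circ\dd^{k})[\mm](0)=\mm^{(k)}(0)=\ii^{k}\mu_{k}$) and then concluding $\rho=0$, hence analyticity of $\mm(z)$ on all of $\Cset$ via Proposition~\ref{anaz}. Your additional remark on the logical order (the recurrence-based bounds come first, analyticity afterwards) is a sound clarification but not a departure from the paper's argument.
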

If \eqref{one-weak} holds with $p=1$,
then Lemma~\ref{bounds} implies only that
\begin{equation}\label{p=1}
\left|\frac{(\K{n}\circ\dd^{k})\left[\mm\right](0)}{k!}\right|^{1/k}\leq (M+1)^2
\end{equation}
Examples 7 and 8 show that in such a case the corresponding function
$\mm(z)$ need not be entire. Thus, if the goal is to obtain 
chromatic expansions of entire functions, the upper bound in
\eqref{one-weak} of the definition of a weakly bounded moment
functional is sharp.
Corollary~\ref{moments-asymptotics}  also has the following consequence.
\begin{corollary}[\!\!\cite{IG5}]\label{cite0}
If \mom\ is weakly bounded, then the corresponding family of
polynomials $\PP$ is a complete system in $\LT$.
\end{corollary}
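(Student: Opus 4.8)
The plan is to reduce the statement to Lemma~\ref{complete} by observing that a weakly bounded moment functional is in particular chromatic. First I would invoke Corollary~\ref{moments-asymptotics}: since \mom\ is weakly bounded, \eqref{entire} gives $\LIM{k}\left|\mu_k/k!\right|^{1/k}=0$. Restricting to the subsequence of even indices, this yields $\limsup_{n\to\infty}\left(\mu_{2n}/(2n)!\right)^{1/(2n)}=0<\infty$, which is precisely condition \eqref{limsup} with $\rho=0$. Hence \mom\ is chromatic, and Lemma~\ref{complete} immediately gives that $\PP$ is a complete system in $\LT$.

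An equivalent route, avoiding the detour through the definition of a chromatic functional, is to appeal directly to the theorem of Riesz used in the proof of Lemma~\ref{complete}. That criterion requires only $\liminf_{n\to\infty}\mu_{2n}^{1/(2n)}/(2n)<\infty$. By \eqref{entire} together with Stirling's formula $\left(k!\right)^{1/k}\sim k/{\e}$, one has $\mu_{2n}^{1/(2n)}/(2n)\to 0$; in particular the required $\liminf$ is zero, so Riesz's completeness criterion applies.

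There is no genuine obstacle here: the corollary is a direct specialisation of Corollary~\ref{moments-asymptotics} combined with Lemma~\ref{complete}. The only point that deserves a moment's attention is the passage from the full-sequence statement \eqref{entire}, which concerns $\mu_k$ for all $k$, to the even-index $\limsup$ appearing in the chromaticity condition; this is immediate, since convergence of the whole sequence to $0$ forces the $\limsup$ of every subsequence to be $0$ as well.
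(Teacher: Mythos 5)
Your proposal is correct and matches the paper's own (implicit) argument: Corollary~\ref{moments-asymptotics} shows that \eqref{limsup} holds with $\rho=0$, so \mom\ is chromatic, and completeness then follows from Lemma~\ref{complete} via the Riesz criterion. Your alternative remark, applying Riesz's condition $\liminf_{n\to\infty}\mu_{2n}^{1/(2n)}/(2n)<\infty$ directly, is also fine and is essentially the same mechanism unwrapped.
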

Thus, Corollary \ref{cite0} implies that the Legendre, Chebyshev, Gegenbauer, Jacobi and Hermite families of orthogonal polynomials are complete
in their corresponding spaces $\LT$.

We will make use of the inequalities $(n/e)^n<n!<n(n/e)^n$ which for all $n$ follow from the Stirling estimate for $n!$.
\begin{lemma}\label{bounds11} If $0\leq p<1$ and $\kappa$ is an integer such that $\kappa\geq 1/(1-p)$, then for all $m$,
\begin{equation}\label{sily1}
m!^{1-p}\geq  \lfloor{m}/{\kappa}\rfloor!
\end{equation}

\end{lemma}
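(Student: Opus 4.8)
The plan is to peel off the floor by monotonicity of the factorial and reduce \eqref{sily1} to the one-parameter inequality $(\kappa q)!^{\,1-p}\ge q!$ for every integer $q\ge 0$, which I would then prove by induction on $q$. Before anything else I record the two elementary consequences of the hypotheses $0\le p<1$ and $\kappa\ge 1/(1-p)$ that will be used: first $\kappa\ge 1$, so $\kappa$ is a positive integer and $\kappa q\ge q$ for all $q$; second $\kappa(1-p)\ge 1$. For the reduction, set $q=\lfloor m/\kappa\rfloor$; then $m\ge \kappa q$, hence $m!\ge(\kappa q)!$ and therefore $m!^{\,1-p}\ge(\kappa q)!^{\,1-p}$, so it is enough to show $(\kappa q)!^{\,1-p}\ge q!$, since $q!$ is exactly the right-hand side of \eqref{sily1}.

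For the induction, the base case $q=0$ is trivial, both sides being $1$. For the step, write $(\kappa(q+1))! = (\kappa q)!\cdot P_q$, where $P_q=(\kappa q+1)(\kappa q+2)\cdots(\kappa q+\kappa)$ is the product of the $\kappa$ consecutive integers following $\kappa q$. The induction hypothesis gives $(\kappa(q+1))!^{\,1-p}\ge q!\cdot P_q^{\,1-p}$, so it suffices to establish the block estimate $P_q^{\,1-p}\ge q+1$; once that is in hand, $q!\cdot P_q^{\,1-p}\ge(q+1)!$ and the induction closes.

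The only genuine computation, and the single place the hypothesis on $\kappa$ is used, is this block estimate. Each of the $\kappa$ factors of $P_q$ is at least $\kappa q+1$, so $P_q\ge(\kappa q+1)^{\kappa}$ and hence $P_q^{\,1-p}\ge(\kappa q+1)^{\kappa(1-p)}$. Since $\kappa q+1\ge 1$ and $\kappa(1-p)\ge 1$, the map $x\mapsto x^{\kappa(1-p)}$ does not decrease $x$ on $[1,\infty)$, so $(\kappa q+1)^{\kappa(1-p)}\ge \kappa q+1\ge q+1$, the last step using $\kappa q\ge q$. This yields $P_q^{\,1-p}\ge q+1$, completing the argument.

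I do not anticipate a real obstacle here; the only point requiring care is that the exponent $\kappa(1-p)$ in the block estimate must be at least $1$, which is precisely the stated constraint $\kappa\ge 1/(1-p)$ — a quick numerical check (e.g.\ $p=2/3$, $\kappa=2$, $m=14$) shows \eqref{sily1} can fail when $\kappa<1/(1-p)$, so there is no slack to exploit. I note in passing that this telescoping induction does not actually invoke the Stirling bounds $(n/e)^n<n!<n(n/e)^n$ recalled just above; one could instead estimate $(\kappa q)!$ against $q!$ directly via Stirling, but the inductive route is cleaner.
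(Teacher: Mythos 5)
Your proof is correct, and it takes a genuinely different route from the paper. The paper argues via the Stirling-type bounds recorded just before the lemma: it notes $m!^{1-p}\geq m!^{1/\kappa}>(m/e)^{m/\kappa}$ and then compares this with the assertion that $\lfloor m/\kappa\rfloor!$ ``behaves like'' $(m/(e\kappa))^{m/\kappa}$ --- essentially an asymptotic comparison of growth rates, left at the level of a sketch (the constants and small values of $m$ are not tracked explicitly). You instead peel off the floor via $m!\geq(\kappa q)!$ with $q=\lfloor m/\kappa\rfloor$ and prove $(\kappa q)!^{\,1-p}\geq q!$ by induction on $q$, with the inductive step reduced to the exact block estimate $P_q^{\,1-p}\geq(\kappa q+1)^{\kappa(1-p)}\geq q+1$, which is where $\kappa(1-p)\geq 1$ enters. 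What your route buys is a fully elementary, uniform verification of \eqref{sily1} for every $m$ with no asymptotics and no appeal to Stirling; what the paper's route buys is brevity and a transparent view of why the exponent threshold $\kappa\geq 1/(1-p)$ is the natural one in terms of growth of factorials, which your numerical check ($p=2/3$, $\kappa=2$, $m=14$) confirms is sharp. Both arguments use the hypothesis only through $\kappa(1-p)\geq 1$, so they are faithful to the same mechanism; yours is the more airtight write-up.
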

\begin{proof}
We have 
$m!^{1-p}\geq m!^{1/\kappa}>
\left({m}/{e}\right)^{{m}/{\kappa}}$, while $\lfloor m/\kappa\rfloor !$ behaves like $(m/(e \kappa))^{m/\kappa}$.
\end{proof}

\begin{lemma}\label{boundsP} Let $p,\kappa$ be as in the previous lemma. Then for every $l>0$ there exist $L$ and $C>0$
such that for all $z\in \Cset$,
\begin{equation}\label{bound-PKnb}
\sum_{m=0}^{\infty}\frac{|l z|^{m}}{m!^{1-p}}\leq\, C\,{\e}^{|Lz|^\kappa}.
\end{equation}
\end{lemma}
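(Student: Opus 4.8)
The plan is to bound the series termwise via Lemma~\ref{bounds11} and then \emph{regroup} the resulting series so that it becomes a genuine exponential series. Writing $x=|lz|\ge 0$, inequality \eqref{sily1} gives $m!^{1-p}\ge\lfloor m/\kappa\rfloor!$ for every $m$, so
\[
\sum_{m=0}^{\infty}\frac{|lz|^{m}}{m!^{1-p}}\ \le\ \sum_{m=0}^{\infty}\frac{x^{m}}{\lfloor m/\kappa\rfloor!}.
\]
Since every term on the right is non-negative, I may regroup the sum by the value $j=\lfloor m/\kappa\rfloor$: the indices $m$ with $\lfloor m/\kappa\rfloor=j$ are exactly $m=j\kappa,\,j\kappa+1,\,\dots,\,j\kappa+\kappa-1$.

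This regrouping is the heart of the argument, and it yields
\[
\sum_{m=0}^{\infty}\frac{x^{m}}{\lfloor m/\kappa\rfloor!}=\sum_{j=0}^{\infty}\frac{1}{j!}\sum_{r=0}^{\kappa-1}x^{j\kappa+r}=\Bigl(\sum_{r=0}^{\kappa-1}x^{r}\Bigr)\sum_{j=0}^{\infty}\frac{(x^{\kappa})^{j}}{j!}=\Bigl(\sum_{r=0}^{\kappa-1}x^{r}\Bigr)\,{\e}^{x^{\kappa}}.
\]
It then remains only to absorb the polynomial prefactor $\sum_{r=0}^{\kappa-1}x^{r}$ into a second exponential, uniformly in $x$ (equivalently, in $z\in\Cset$). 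For $0\le r\le\kappa-1$ one has $x^{r}\le 1+x^{\kappa}$ — trivially when $0\le x\le 1$, and when $x\ge 1$ because $x^{r}\le x^{\kappa-1}\le x^{\kappa}$ — and $1+x^{\kappa}\le{\e}^{x^{\kappa}}$, so $\sum_{r=0}^{\kappa-1}x^{r}\le\kappa\,{\e}^{x^{\kappa}}$.

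Chaining the three displays gives
\[
\sum_{m=0}^{\infty}\frac{|lz|^{m}}{m!^{1-p}}\ \le\ \kappa\,{\e}^{2|lz|^{\kappa}}\ =\ \kappa\,{\e}^{|Lz|^{\kappa}},
\]
with $L=2^{1/\kappa}\,l$, which is \eqref{bound-PKnb} with $C=\kappa$. I do not anticipate a substantial obstacle here; the only points that need a little care are the legitimacy of rearranging the series — immediate, since all terms are non-negative — and the requirement that $C$ and $L$ be independent of $z$, which is precisely what the crude bound $x^{r}\le 1+x^{\kappa}$ secures. If one wanted a cleaner constant, one could instead absorb the prefactor by the estimate $\sum_{r=0}^{\kappa-1}x^{r}\le C_\varepsilon\,{\e}^{\varepsilon x^\kappa}$ for arbitrarily small $\varepsilon>0$, at the cost of pushing $L$ slightly above $l$; the version above is the simplest.
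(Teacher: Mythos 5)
Your proposal is correct and follows essentially the same route as the paper: both rest on Lemma~\ref{bounds11} to replace $m!^{1-p}$ by $\lfloor m/\kappa\rfloor!$ and then compare with the exponential series $\sum_j |lz|^{\kappa j}/j!$. The only difference is cosmetic — you regroup the series exactly to get the prefactor $\sum_{r=0}^{\kappa-1}|lz|^r$ and absorb it with the explicit choice $L=2^{1/\kappa}l$, $C=\kappa$, whereas the paper splits into the cases $|lz|>1$ and $|lz|\le 1$ and then absorbs the factor $\kappa(|lz|^{\kappa-1}+1)$ by taking some $L>l$.
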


\begin{proof}  For $z$ such that $|lz|>1$ we obtain
\begin{align*}
\sum_{m=0}^{\infty}\frac{|l z|^{m}}{m!^{1-p}}\leq 
\sum_{m=0}^{\infty}\frac{|l z|^{m}}{\lfloor m/\kappa\rfloor!}\leq 
\sum_{m=0}^{\infty}\frac{ |l z|^{\kappa\lfloor m/\kappa\rfloor+\kappa-1}}{\lfloor
m/\kappa\rfloor !}
=\kappa\,|l z|^{\kappa-1}
\sum_{j=0}^{\infty}\frac{|l z|^{\kappa j}}{j!}
 =\kappa\,|l z|^{\kappa-1}\,{\e}^{|lz|^\kappa}.
\end{align*}
For $z$ such that $|lz|\leq1$ we obtain
\begin{align*}
\sum_{m=0}^{\infty}\frac{|l z|^{m}}{m!^{1-p}}\leq \sum_{m=0}^{\infty}\frac{|l z|^{m}}{\lfloor m/\kappa\rfloor!}\leq \sum_{m=0}^{\infty}\frac{ |l z|^{\kappa\lfloor m/\kappa\rfloor}}{\lfloor
m/\kappa\rfloor !}
=\kappa \sum_{j=0}^{\infty}\frac{|l z|^{\kappa j}}{j!}
 = \kappa\,{\e}^{|lz|^\kappa}.
\end{align*}
Thus, for all $z$,
\begin{equation*}
\sum_{m=0}^{\infty}\frac{|l z|^{m}}{m!^{1-p}}\leq\, \kappa(|l z|^{\kappa-1}+1)\,{\e}^{|lz|^\kappa}.
\end{equation*}
One can now choose $L> l$ and $C>0$ such that  \eqref{bound-PKnb} holds for all $z\in\Cset$.
\end{proof}

\begin{lemma}\label{bounds1} Let $p,\kappa$ be as in lemma \ref{bounds11}. Then there exists a constant $L$
such that for every $n$ and every $z\in \Cset$,
\begin{equation}\label{bound-Knb}
|\K{n}[\mm](z)|\leq  \frac{|L z|^{n}}{n!^{1-p}}\,{\e}^{|L z|^\kappa}.
\end{equation}
\end{lemma}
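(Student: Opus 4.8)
\section*{Proof proposal}

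The plan is to expand $\K{n}[\mm](z)$ in a power series about the origin and estimate its Taylor coefficients by means of the bound \eqref{b-bound}. Since $\mom$ is weakly bounded, Corollary~\ref{moments-asymptotics} guarantees that $\mm(z)$ is entire, hence so is $\K{n}[\mm](z)$. As $\dd$ and $\K{n}$ are both polynomials in $\dd$, they commute, so the $j$-th Taylor coefficient of $\K{n}[\mm]$ at $0$ equals $\frac{1}{j!}(\dd^{\,j}\circ\K{n})[\mm](0)=\frac{1}{j!}(\K{n}\circ\dd^{\,j})[\mm](0)$; by \eqref{bkn} this vanishes for $j<n$. Writing $j=n+m$ we get
\[
\K{n}[\mm](z)=\sum_{m=0}^{\infty}\frac{(\K{n}\circ\dd^{\,n+m})[\mm](0)}{(n+m)!}\,z^{\,n+m}.
\]

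First I would substitute the estimate \eqref{b-bound}, namely $|(\K{n}\circ\dd^{\,n+m})[\mm](0)|\leq (M+1)^{2(n+m)}(n+m)!^{\,p}$, which reduces the factorial in the denominator from $(n+m)!$ to $(n+m)!^{\,1-p}$ and yields
\[
|\K{n}[\mm](z)|\leq\sum_{m=0}^{\infty}\frac{\big((M+1)^2|z|\big)^{n+m}}{(n+m)!^{\,1-p}}.
\]
Then I would use $(n+m)!\geq n!\,m!$ (equivalently $\binom{n+m}{m}\geq 1$), hence $(n+m)!^{\,1-p}\geq n!^{\,1-p}\,m!^{\,1-p}$, to factor the right-hand side:
\[
|\K{n}[\mm](z)|\leq\frac{\big((M+1)^2|z|\big)^{n}}{n!^{\,1-p}}\sum_{m=0}^{\infty}\frac{\big((M+1)^2|z|\big)^{m}}{m!^{\,1-p}}.
\]
At this point Lemma~\ref{boundsP}, applied with $l=(M+1)^2$, supplies constants $L_0$ and $C_0>0$ with $\sum_{m}\big((M+1)^2|z|\big)^{m}/m!^{\,1-p}\leq C_0\,{\e}^{|L_0 z|^{\kappa}}$, giving
\[
|\K{n}[\mm](z)|\leq C_0\,\frac{\big((M+1)^2|z|\big)^{n}}{n!^{\,1-p}}\,{\e}^{|L_0 z|^{\kappa}}.
\]

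The only remaining step is to absorb the constant $C_0$ and to merge $(M+1)^2$ and $L_0$ into a single constant $L$ depending only on $M,p,\kappa$. For $n\geq 1$ the extra factor $|z|^{n}$ can be exploited by splitting on $|z|$: on $|z|\leq 1$ one only needs $C_0\,{\e}^{|L_0|^{\kappa}}\big((M+1)^2\big)^{n}\leq L^{n}$, which holds once $L/(M+1)^2\geq\max\{1,\,C_0{\e}^{|L_0|^{\kappa}}\}$; on $|z|\geq 1$ it suffices that $L\geq(M+1)^2$, $L\geq L_0$ and $L^{\kappa}-L_0^{\kappa}\geq\log C_0$. Taking $L$ larger than all of these thresholds gives \eqref{bound-Knb} for every $n\geq 1$ and every $z\in\Cset$, and the $n=0$ case is just the displayed bound $|\mm(z)|\leq C_0{\e}^{|L_0 z|^{\kappa}}$ on $\mm=\K{0}[\mm]$. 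I expect this last piece of bookkeeping, rather than any conceptual point, to be the only place that requires care; the substantive inputs are the commutativity of $\dd$ and $\K{n}$, the vanishing \eqref{bkn}, the growth estimate \eqref{b-bound}, and Lemma~\ref{boundsP}.
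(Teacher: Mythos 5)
Your argument is essentially the paper's own proof: expand $\K{n}[\mm](z)$ in its Taylor series about $0$, discard the first $n$ coefficients via \eqref{bkn}, bound the remaining ones by \eqref{b-bound}, use $n!\,m!\le (n+m)!$ to pull out $\frac{|(M+1)^2z|^n}{n!^{1-p}}$, and finish with Lemma~\ref{boundsP}. Your final bookkeeping (merging $C_0$, $(M+1)^2$ and $L_0$ into a single $L$ by splitting $|z|\le 1$ from $|z|\ge 1$) is actually spelled out more carefully than in the paper, which simply asserts that some $L\ge (M+1)^2$ works; the only spot where you match the paper's level of glossing rather than improve on it is $n=0$, where your stated bound still carries the constant $C_0$ that the lemma's constant-free form does not allow.
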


\begin{proof}  Using the Taylor series for $\K{n}[\mm](z)$,
\eqref{bkn} and  \eqref{b-bound} and inequality $m!n!\leq (m+n)!$ we obtain that for all $z$,
\begin{align*}
|\K{n}[\mm](z)|&\leq \sum_{r=n}^\infty \frac{|(\K{n}\circ D^r)[\mm](0)|}{r!}\,|z|^r\leq\sum_{m=0}^{\infty}\frac{|(M+1)^2
z|^{n+m}}{(n+m)!^{1-p}}\\ &\leq\ \frac{|(M+1)^2 z|^{n}}{n!^{1-p}}
\sum_{m=0}^{\infty}\frac{|(M+1)^2 z|^{m}}{m!^{1-p}}.
\end{align*}
Using  Lemma \ref{boundsP} we can choose $L\geq (M+1)^2$ such that 
$|\K{n}[\mm](z)|\leq \frac{|L z|^{n}}{n!^{1-p}}\,{\e}^{|L z|^\kappa}.$
\end{proof}
Note that, despite the roughness of the estimates in the above proof, the bound provides an accurate estimate of the type of the growth rate of $\mm(z)$, at least in case of bounded moment functionals ($p=0$ and $\kappa=1$) where $\mm(z)$ is of exponential type, as well as in the case of the Hermite polynomials ($p=1/2$ and $\kappa=2$), in which case $\K{n}[\mm](z)= (-1)^{n}(2^{n}\,n!)^{-1/2}\,z^{n}
{\e}^{-z^2/4}$ and thus for $z={\ii}y$ for a real $y$, we have  $|\K{n}[\mm](z)|=\frac{(q|z|)^n}{(n!)^{1/2}}{\e}^{\left(s|z|\right)^2}$ with $q=2^{-1/2}$ and $s=1/2$. 

\subsection{Point-wise convergence of chromatic expansions}

\begin{lemma} Let $p$ be as in Definition \ref{def-weak} and let $f(z)$ be any function analytic in a disc centred at $u\in\Cset$. Then
$\limsup_{n\to  \infty}\left|\frac{\K{n}[f](u)}{n!^{1-p}}
\right|^{1/n}<\infty$ and if $f(z)$ is entire, then $\lim_{n\to  \infty}\left|\frac{\K{n}[f](u)}{n!^{1-p}}
\right|^{1/n}=0$.
\end{lemma}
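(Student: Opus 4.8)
The plan is to reduce everything to the identity \eqref{d-to-k}, which expresses a chromatic derivative through the local Taylor data of $f$, and then to feed in the coefficient bound \eqref{mono-bound} of Lemma~\ref{bounds} together with Cauchy's estimates. Since $f$ is analytic near $u$ it is infinitely differentiable there, so \eqref{d-to-k} applies and gives $\K{n}[f](u)=\sum_{k=0}^{n}f^{(k)}(u)\,\K{n}_z[z^{k}/k!](0)$; combining this with \eqref{mono-bound} yields the master estimate
\begin{equation*}
|\K{n}[f](u)|\ \le\ (3M)^{n}\sum_{k=0}^{n}\frac{|f^{(k)}(u)|}{k!^{p}} .
\end{equation*}

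For the first assertion I would use that $f$ is analytic on some disc $|z-u|<\delta$, so for any fixed $0<r<\delta$ Cauchy's estimate gives $|f^{(k)}(u)|\le C_{r}\,k!\,r^{-k}$ for all $k$, with $C_{r}=\max_{|z-u|=r}|f(z)|$. Since $k!^{1-p}\le n!^{1-p}$ for $k\le n$ and the monotone geometric sequence $r^{-k}$ takes its maximum on $\{0,\dots,n\}$ at an endpoint, the master estimate becomes
\begin{equation*}
|\K{n}[f](u)|\ \le\ (3M)^{n}\,C_{r}\,n!^{1-p}\,(n+1)\,\max(1,r^{-1})^{n}.
\end{equation*}
Dividing by $n!^{1-p}$ and taking $n$-th roots, the polynomial factor $(C_{r}(n+1))^{1/n}\to 1$, so $\limsup_{n}|\K{n}[f](u)/n!^{1-p}|^{1/n}\le 3M\max(1,r^{-1})<\infty$.

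For the second assertion, $f$ entire means the Taylor series of $f$ at $u$ has infinite radius of convergence, i.e.\ $\limsup_{k}|f^{(k)}(u)/k!|^{1/k}=0$; hence for every $\varepsilon>0$ there is $C_{\varepsilon}$ with $|f^{(k)}(u)|\le C_{\varepsilon}\,\varepsilon^{k}k!$ for all $k$. Plugging this into the master estimate,
\begin{equation*}
|\K{n}[f](u)|\ \le\ (3M)^{n}C_{\varepsilon}\sum_{k=0}^{n}\varepsilon^{k}k!^{1-p}\ \le\ (3M)^{n}C_{\varepsilon}\,(n+1)\bigl(1+\varepsilon^{n}n!^{1-p}\bigr),
\end{equation*}
where the last step uses that the ratio of consecutive terms of $k\mapsto\varepsilon^{k}k!^{1-p}$ equals $\varepsilon(k+1)^{1-p}$, which is increasing in $k$, so this sequence decreases then increases and its maximum over $\{0,\dots,n\}$ is $\max(1,\varepsilon^{n}n!^{1-p})$. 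Dividing by $n!^{1-p}$ and taking $n$-th roots, the factor $(1/n!^{1-p})^{1/n}\to 0$ because $n!^{1/n}\to\infty$ while $(\varepsilon^{n})^{1/n}=\varepsilon$, so $\limsup_{n}|\K{n}[f](u)/n!^{1-p}|^{1/n}\le 3M\varepsilon$, and since $\varepsilon$ is arbitrary the limit is $0$.

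I expect the only point requiring care to be the elementary estimate of $\sum_{k=0}^{n}\varepsilon^{k}k!^{1-p}$ by a constant multiple of its largest term, and this is precisely where the hypothesis $p<1$ enters: it keeps $k!^{1-p}$ growing faster than any geometric sequence, so the sum is controlled by its last term. The rest is a routine combination of \eqref{d-to-k}, Lemma~\ref{bounds}, and the Cauchy estimates, so I would not expect any genuine obstacle.
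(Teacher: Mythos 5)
Your proof is correct and follows essentially the same route as the paper's: both express $\K{n}[f](u)$ through the Taylor data of $f$ at $u$ via \eqref{direct}/\eqref{d-to-k}, apply the coefficient bound \eqref{mono-bound}, and control $|f^{(k)}(u)|$ by Cauchy-type estimates (the paper phrases this as choosing $\tau$ with $\limsup_k|f^{(k)}(u)/k!|^{1/k}<\tau$, which covers both assertions at once). The only difference is technical and minor: you bound $\sum_{k\leq n}\varepsilon^{k}k!^{1-p}$ by its largest term using the log-convexity of the summands, whereas the paper uses $k!\leq n(n/\e)^{k}$ and sums a geometric series; both yield the same limsup bound of the form $3M\tau$.
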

\begin{proof} Let $\tau>0$ be any number such that $\limsup_{k\to  \infty}\left|\frac{f^{(k)}(u)}{k!}\right|^{1/k}<\tau$.
Then there exists $B$ such that 
$|f^{(k)}(u)| \leq B k!\tau^k$ for all $k$. 
Using \eqref{direct}
and \eqref{mono-bound} we now obtain
\begin{align*}
|\K{n}[f](u)| &\leq \sum_{k=0}^n\left|\K{n}
\left[\frac{t^k}{k!}\right](0)\right||f^{(k)}(u)|
\leq \sum_{k=0}^n \frac{(3M)^n}{k!^p}Bk!\tau ^k= (3M)^nB\sum_{k=0}^n k!^{1-p}\tau^k
\end{align*}
Since $k\leq n$, we have $k!<k(k/\e)^k\leq n(n/\e)^k$ and we obtain
\begin{align*}
|\K{n}[f](u)| &\leq  (3M)^nBn^{1-p}\sum_{k=0}^n \left(\left(\frac{n}{\e}\right)^{1-p}\tau\right)^k= (3M)^nBn^{1-p}\frac{\left(\left(\frac{n}{\e}\right)^{1-p}\tau\right)^{n+1}-1}{\left(\frac{n}{\e}\right)^{1-p}\tau-1}\\
&< (3M)^n \frac{B n^{1-p}\left(\frac{n}{e}\right)^{1-p}\tau}{\left(\frac{n}{e}\right)^{1-p}\tau-1} \left(\left(\frac{n}{e}\right)^{1-p}\tau\right)^{n}
\end{align*}

Consequently 
\begin{align*}
\left|\frac{\K{n}[f](u)}{n!^{1-p}}\right|^{\frac{1}{n}}  &\leq \frac{3M \left(\frac{B n^{1-p}\left(\frac{n}{e}\right)^{1-p}\tau}{\left(\frac{n}{e}\right)^{1-p}\tau-1}\right)^{\frac{1}{n}} \left(\frac{n}{e}\right)^{1-p}\tau}{\left(\frac{n}{e}\right)^{1-p}}=
3M \left(\frac{B n^{2-2p}\tau}{n^{1-p}-e^{1-p}}\right)^{\frac{1}{n}}\tau
\end{align*}
Our claims now follow from the fact that  $\left(\frac{B n^{2-2p}\tau}{n^{1-p}-e^{1-p}}\right)^{\frac{1}{n}}$ is bounded.
\end{proof}

\begin{theorem}\label{PWT}
Let \mom\ be weakly bounded. 
\begin{enumerate}
\item If a function $f(z)$ is analytic on a domain $G\subseteq\Cset$ and
$u\in G$ then the chromatic expansion $\CE[f,u](z)$ of $f(z)$ converges uniformly to $f(z)$ on
a disc $D\subseteq G$, centered at $u$.

\item In particular, if $f(z)$ is entire, then the chromatic expansion
    $\CE[f,u](z)$ of $f(z)$ converges for every $z\in G$
    and the convergence is uniform on every
    disc around $u$ of finite radius.
\end{enumerate}
\end{theorem}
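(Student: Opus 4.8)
The plan is to exploit the exact cancellation between the growth of the chromatic derivatives $\K{k}[f](u)$ and the decay of the basis functions $\K{k}[\mm](z-u)$. The lemma immediately preceding this theorem bounds $|\K{k}[f](u)|$ by (essentially) $(\rho_1+\varepsilon)^k\,k!^{1-p}$ for any $\varepsilon>0$, where $\rho_1:=\limsup_k|\K{k}[f](u)/k!^{1-p}|^{1/k}<\infty$, while Lemma~\ref{bounds1} bounds $|\K{k}[\mm](z-u)|$ by $|L(z-u)|^k\,{\e}^{|L(z-u)|^\kappa}/k!^{1-p}$. Multiplying, the $k$-th term of $\CE[f,u](z)$ is dominated by $((\rho_1+\varepsilon)L\,|z-u|)^k\,{\e}^{|L(z-u)|^\kappa}$, a geometric series in $|z-u|$ with a bounded prefactor. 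Here I use that weak boundedness of $\mom$ forces $\rho=0$, so by Corollary~\ref{moments-asymptotics} the function $\mm$, and hence every $\K{k}[\mm]$, is entire and Lemma~\ref{bounds1} applies.

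First I would fix a disc $D_0\subseteq G$ centred at $u$ (possible since $G$ is open), so that $f$ is analytic on $D_0$ and the preceding lemma gives a finite $\rho_1$; in case (2) the same lemma gives $\rho_1=0$. Taking, say, $\varepsilon=1$ and any $r>0$ with $r<\mathrm{radius}(D_0)$ and $r<\tfrac1{2(\rho_1+1)L}$, the domination above shows $\CE[f,u](z)$ converges absolutely and uniformly on the closed disc $\overline D$ of radius $r$ about $u$. For part (2), where $\rho_1=0$, given any $R>0$ I would instead pick $\varepsilon<\tfrac1{2LR}$ to get absolute and uniform convergence on the closed disc of radius $R$. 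In either case the partial sums $\CA[f,n,u]$ are analytic (since each $\K{k}[\mm]$ is entire) and converge uniformly on compact subsets of the open disc $D$ (resp.\ of $\Cset$), so by Weierstrass's theorem the sum $S(z)$ is analytic there and $D^m_z[\CA[f,n,u]]\to D^m_z[S]$ uniformly near $u$ for every $m$.

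To identify $S$ with $f$ I would compare Taylor coefficients at $u$. For fixed $m$ and every $n\ge m$, equation~\eqref{bkn} annihilates the terms with $k>m$ in $D^m_z[\CA[f,n,u](z)]\big|_{z=u}$, leaving $\sum_{k=0}^{m}(-1)^k\K{k}[f](u)\,(D^m\circ\K{k})[\mm](0)$, which equals $f^{(m)}(u)$ by \eqref{k-to-d}; this value is independent of $n\ge m$, so passing to the limit gives $S^{(m)}(u)=f^{(m)}(u)$ for all $m$. Hence $S$ and $f$ are analytic on $D$ (resp.\ on $\Cset$) with identical Taylor series at $u$; since that series converges to $f$ throughout $D_0\supseteq D$ and to $S$ throughout $D$, we get $S\equiv f$ on $D$, which is assertion (1); in case (2) it converges to both on all of $\Cset$, giving (2).

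The main obstacle is precisely the bookkeeping of the first paragraph: neither $\K{k}[f](u)$ nor $\K{k}[\mm](z-u)$ is individually summable, and the argument lives or dies on the factor $k!^{1-p}$ cancelling between them so that what remains is geometric in $|z-u|$. This is also why, in general, one only obtains a disc $D$ of finite radius — its size shrinks as $\rho_1$ or $L$ grows — and why the entire case is special, since there $\rho_1=0$ and the geometric ratio can be forced below $1$ on an arbitrarily large disc. It is also the reason $D$ must be taken inside $G$: the identity-theorem step requires $S$ and $f$ to be analytic on one and the same region.
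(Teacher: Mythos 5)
Your proposal is correct and follows essentially the same route as the paper: bound $|\K{k}[f](u)|$ by $B\tau^k k!^{1-p}$ via the lemma preceding the theorem, dominate $|\K{k}[\mm](z-u)|$ by $|L(z-u)|^k{\e}^{|L(z-u)|^\kappa}/k!^{1-p}$ via Lemma~\ref{bounds1}, and conclude geometric (hence uniform) convergence on a disc of radius smaller than $1/(L\tau)$, with the entire case giving arbitrarily large discs since $\tau$ can be taken arbitrarily small. Your identification of the sum with $f$ via Weierstrass's theorem and matching Taylor coefficients at $u$ using \eqref{bkn} and \eqref{k-to-d} is a slightly more explicit rendering of the paper's final step, which instead invokes termwise application of $\K{m}$ and \eqref{orthonorm}, so the two arguments coincide in substance.
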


\begin{proof}
By the previous lemma $\limsup_{n\to \infty}|\K{n}[f](u)/n!^{1-p}|^{1/n}<\infty$. Let $\tau>0$ be an arbitrary number such that $\limsup_{n\to \infty}|\K{n}[f](u)/n!^{1-p}|^{1/n}<\tau$. Then there exists $B_\tau>0$ such that 
$|\K{n}[f](u)|<B_{\tau}\tau^nn!^{1-p}$ for all $n$. Let
$L$ and $\kappa$ be such that \eqref{bound-Knb} holds;
then Lemma \ref{bounds1} implies that for all $n$,
\[
|\K{n}[f](u)\K{n}[\mm](z-u)|^{1/n}<\left(B_{\tau}{\e}^{|L(z-u)|^\kappa}\right)^{1/n}L\tau |z-u|.
\]
Thus, the chromatic series converges uniformly inside every disc
$\mathcal{D}(u,r)\subseteq G$,  centered at $u$, of a radius $r<1/(L\tau)$.
Since by \eqref{insidediff}
\begin{equation*}
\K{m}[\CA[f,u](z)]\big|_{z=u}=\sum_{n=0}^{\infty}
(-1)^n\K{n}[f](u)(\K{m}\circ\K{n})[\mm](0)=\K{m}[f](u),
\end{equation*}
$\CA[f,u](z)$ converges to $f(z)$ on $\mathcal{D}(u,r)$.
\end{proof}

\begin{proposition}\label{exp-type}
Assume that \mom\ is weakly bounded for some $0\leq p <1$ and let $\kappa$ be an integer such that $\kappa\geq 1/(1-p)$ holds.
Assume also that $f(z)$ satisfies that there exists $Q\geq 1$ such that $|\K{n}[f](0)|<Q^n$ for all $n$.
Then there exist $C,L>0$ such that $|f(z)|\leq
C{\e}^{L|z|^\kappa}$. 
\end{proposition}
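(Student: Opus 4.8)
The plan is to route everything through the Taylor series of $f$ at the origin. The hypotheses already make $\K{n}[f](0)$ meaningful, so $f$ is at least analytic near $0$; I will first bound its Taylor coefficients, deduce that $f$ extends to an entire function, and then estimate that entire function. The starting point is equation \eqref{inverse}, which expresses $\dd^{n}$ as a linear combination of $\K{0},\dots,\K{n}$ with coefficients $(-1)^k(\dd^{n}\circ\K{k})[\mm](0)$; applying both sides to $f$ and evaluating at $0$ gives
\[
f^{(n)}(0)=\sum_{k=0}^{n}(-1)^k\,(\dd^{n}\circ \K{k})[\mm](0)\;\K{k}[f](0).
\]
Since every $\K{k}$ is a polynomial in $\dd$ with constant coefficients, $\dd^{n}$ and $\K{k}$ commute, so $(\dd^{n}\circ\K{k})[\mm](0)=(\K{k}\circ\dd^{n})[\mm](0)$, and then \eqref{b-bound} of Lemma~\ref{bounds} bounds this by $(M+1)^{2n}n!^{\,p}$, uniformly in $k$. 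Using also $|\K{k}[f](0)|<Q^k\leq Q^n$ (legitimate because $Q\geq 1$ and $k\leq n$), I get
\[
|f^{(n)}(0)|\leq (n+1)\,(M+1)^{2n}\,Q^n\,n!^{\,p}.
\]

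Because $n!^{\,1/n}\to\infty$, this estimate forces $\limsup_{n\to\infty}|f^{(n)}(0)/n!|^{1/n}=0$, so the Taylor series of $f$ at $0$ converges on all of $\Cset$ and $f$ extends to an entire function. For every $z\in\Cset$ I then bound, using $n+1\leq 2^n$,
\[
|f(z)|\leq\sum_{n=0}^{\infty}\frac{|f^{(n)}(0)|}{n!}\,|z|^n
\leq\sum_{n=0}^{\infty}\frac{(n+1)\big((M+1)^2 Q\,|z|\big)^n}{n!^{\,1-p}}
\leq\sum_{n=0}^{\infty}\frac{\big(2(M+1)^2 Q\,|z|\big)^n}{n!^{\,1-p}}.
\]
Finally I invoke Lemma~\ref{boundsP} with $l=2(M+1)^2 Q$; the hypothesis $\kappa\geq 1/(1-p)$ is precisely what that lemma demands, and it supplies $C>0$ and some $L_0>0$ with the last sum bounded by $C\,{\e}^{|L_0 z|^\kappa}=C\,{\e}^{L_0^{\kappa}|z|^\kappa}$. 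Setting $L=L_0^{\kappa}$ gives $|f(z)|\leq C\,{\e}^{L|z|^\kappa}$, as required.

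I do not expect a serious obstacle; the only places needing care are the commutation $\dd^{n}\circ\K{k}=\K{k}\circ\dd^{n}$, which is what lets \eqref{b-bound} be applied with the correct index, the harmless factor $n+1$ (absorbed into $2^n$), and checking that the growth exponent emerging from Lemma~\ref{boundsP} is exactly $\kappa$ and not larger. As an alternative one could instead substitute $|\K{n}[f](0)|<Q^n$ directly into the chromatic expansion $\CE[f,0](z)$, which equals $f(z)$ once $f$ is known to be entire by Theorem~\ref{PWT}, and estimate term by term using Lemma~\ref{bounds1} together with Lemma~\ref{boundsP}; this reaches the same conclusion but with marginally more bookkeeping.
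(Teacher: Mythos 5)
Your proof is correct, but it is not the route the paper takes: the paper expands $f$ in the chromatic basis rather than the monomial one. Concretely, the paper's proof notes that $|\K{n}[f](0)|<Q^n$ gives $|\K{n}[f](0)/n!^{1-p}|^{1/n}\to 0$, invokes Theorem~\ref{PWT} so that $f(z)=\sum_{n}(-1)^n\K{n}[f](0)\,\K{n}[\mm](z)$ everywhere, bounds each basis function via Lemma~\ref{bounds1} (i.e.\ $|\K{n}[\mm](z)|\leq \frac{|Lz|^{n}}{n!^{1-p}}\,\e^{|Lz|^{\kappa}}$), and finishes with Lemma~\ref{boundsP} --- in other words, exactly the ``alternative'' you sketch in your closing sentence. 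Your main argument instead converts the chromatic data into Taylor data through \eqref{inverse}, bounds $|(\dd^{n}\circ\K{k})[\mm](0)|=|(\K{k}\circ\dd^{n})[\mm](0)|\leq (M+1)^{2n}n!^{\,p}$ by \eqref{b-bound} (the commutation is legitimate, since all of these are constant-coefficient operator polynomials in $\dd$), and then sums the Taylor series using Lemma~\ref{boundsP}; the bookkeeping with the factor $n+1\leq 2^n$ and with $L=L_0^{\kappa}$ is fine. This variant is sound and slightly more self-contained: it bypasses Theorem~\ref{PWT} and Lemma~\ref{bounds1} altogether, although the two proofs ultimately rest on the same inequality \eqref{b-bound}, since Lemma~\ref{bounds1} is itself proved by precisely this kind of termwise Taylor estimate. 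What the paper's route buys is that it stays inside the chromatic-expansion formalism and exhibits $f$ directly as its everywhere-convergent chromatic series, which is then reused in the surrounding discussion. One shared caveat, not a defect of your argument relative to the paper: both proofs really bound the entire function determined by the data at $0$, so one needs $f$ to coincide with (the analytic continuation given by) its Taylor, respectively chromatic, series; this is implicit in the proposition's phrasing and is handled no more explicitly in the paper's own proof.
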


\begin{proof}
Assume that $|\K{n}[f](0)|<Q^n$; then $|\K{n}[f](0)/n!^{1-p}|^{1/n}\to  0$. Consequently, by Theorem \ref{PWT}  the chromatic expansion of $f(z)$ converges everywhere to $f(z)$ and, using Lemma \ref{boundsP} and estimate \eqref{bound-Knb}  from Lemma \ref{bounds1},  we obtain
\begin{align*}
|f(z)|&\leq \sum_{n=0}^{\infty}|\K{n}[f](0)||\K{n}[\mm](z)|
\leq \sum_{n=0}^{\infty} \frac{|Q\,L_1 z|^{n}}{n!^{1-p}}\,{\e}^{|L_1 z|^\kappa}
\leq C_1{\e}^{|L_2 z|^\kappa}{\e}^{|L_1 z|^\kappa}\leq C{\e}^{L| z|^\kappa}
\end{align*}
for suitably chosen constants. 
\end{proof}

\begin{corollary} Assume that \mom\ is weakly bounded and $\kappa$  an integer such that $\kappa\geq 1/(1-p)$.
If $f(z)\in\LB$ then there exist $C,L>0$ such that $|f(z)|\leq
C{\e}^{L|z|^\kappa}$. 
\end{corollary}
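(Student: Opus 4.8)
The plan is to obtain this as an immediate consequence of Proposition~\ref{exp-type}; the only point to verify is that membership in $\LB$ forces the chromatic coefficients of $f$ at $0$ to satisfy the growth hypothesis of that proposition.

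First I would note that, since $\mom$ is weakly bounded, Corollary~\ref{moments-asymptotics} gives $\rho=0$, so that $\st{2}=\Cset$ and every $f\in\LB$ is in fact entire; thus the asserted inequality $|f(z)|\leq C\,\e^{L|z|^\kappa}$ is a statement about all of $\Cset$. Next, by definition of $\LB$ we have $\sum_{n=0}^{\infty}|\K{n}[f](0)|^2<\infty$, so the sequence $\bigl(\K{n}[f](0)\bigr)_{n\in\Nset}$ is bounded; fixing $Q\geq 1$ with $|\K{n}[f](0)|\leq Q$ for all $n$ we get $|\K{n}[f](0)|\leq Q^n$ for all $n\geq 1$, and hence $\bigl|\K{n}[f](0)/n!^{1-p}\bigr|^{1/n}\to 0$, since $Q^{1/n}\to 1$ while $n!^{(1-p)/n}\to\infty$. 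This last limit is exactly what is used at the start of the proof of Proposition~\ref{exp-type}.

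With this in hand, I would simply invoke Proposition~\ref{exp-type} with this $Q$ and the prescribed $\kappa$ (which by hypothesis satisfies $\kappa\geq 1/(1-p)$) to conclude that there exist $C,L>0$ with $|f(z)|\leq C\,\e^{L|z|^\kappa}$. If one prefers a self-contained argument, the same estimate comes out of expanding $f$ in its (everywhere convergent, by part (2) of Theorem~\ref{PWT}) chromatic series $f(z)=\sum_{n=0}^{\infty}(-1)^n\K{n}[f](0)\,\K{n}[\mm](z)$, bounding $|\K{n}[\mm](z)|$ via Lemma~\ref{bounds1}, and summing $\sum_{n}|QLz|^n/n!^{1-p}$ with Lemma~\ref{boundsP}; the $n=0$ term then contributes only $|f(0)|\,|\mm(z)|\leq |f(0)|\,\e^{|Lz|^\kappa}$, which is harmlessly absorbed into $C$.

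There is essentially no real obstacle here: all the work is already done in Proposition~\ref{exp-type} and the supporting Lemmas~\ref{boundsP} and~\ref{bounds1}, and this corollary is just the remark that an $\ell^2$ sequence is bounded, hence trivially geometrically bounded. The single point deserving a word of care is matching the exact form ``$|\K{n}[f](0)|<Q^n$'' of the hypothesis of Proposition~\ref{exp-type} at the index $n=0$, which is why I would phrase the verification in terms of the growth rate $\bigl|\K{n}[f](0)/n!^{1-p}\bigr|^{1/n}\to 0$ that the proof genuinely relies on.
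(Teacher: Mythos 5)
Your proof is correct and follows exactly the route the paper intends: the corollary is stated as an immediate consequence of Proposition~\ref{exp-type}, obtained by noting that the $\ell^2$ condition defining $\LB$ makes the coefficients $\K{n}[f](0)$ bounded, hence (geometrically) dominated by $Q^n$, with $\rho=0$ ensuring $f$ is entire. Your extra care about the $n=0$ index and the fallback direct estimate via Lemmas~\ref{bounds1} and~\ref{boundsP} only tightens a detail the paper leaves implicit.
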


So, while functions $f(z)\in\LB$ are bounded inside the strip $\st{2}$, outside of $\st{2}$, in general, they grow as $C{\e}^{|L z|^\kappa}$.
For bounded moment functionals, such as those
corresponding to the Legendre or the Chebyshev polynomials, we
have $p=0$ and thus $\kappa=1$; consequently, Proposition \ref{exp-type} implies that
in the bounded case functions which are in \LB\ are of exponential type, i.e., $|f(z)|\leq C{\e}^{|L z|}$. For \mom\
corresponding to the Hermite polynomials p=1/2 (see Example 3);
thus, we get that there exists $C,L>0$ such that $|f(z)|\leq
C{\e}^{L|z|^2}$ for all $f\in\LB$. Such a bound is asymptotically tight (modulo the constants $C$ and $L$) because
for the Hermite polynomials $\mm(z)={\e}^{-z^2/4}$; thus for $z=\ii y$ for $y>0$ we have $\mm(z)=\mm(\ii y)={\e}^{y^2/4}={\e}^{|z|^2/4}$. 

More generally, as we have shown, the moment functionals corresponding to exponential weights $\da={\e}^{-c|\omega|^\frac{1}{p}}d\omega$ are weakly bounded. So, if 
\begin{equation*}
f(z)=\int_{-\infty}^{\infty}F(\omega){\e}^{\ii\,\omega\,z}{\e}^{-c|\omega|^\frac{1}{p}}d\omega
\end{equation*}
for some $F(\omega)$ such that 
\begin{equation*}
\int_{-\infty}^{\infty}{|F(\omega)|^2}{\e}^{-c|\omega|^\frac{1}{p}}<\infty,
\end{equation*}
then $F(\omega)\in\LT$ and consequently $f(z)\in\LB$. Thus, $|f(z)|\leq
C{\e}^{L|z|^\kappa}$ for some $C,L>0$ and for $\kappa$ an integer such that $\kappa\geq 1/(1-p)$.
The following conjecture, postulating that the implication in the opposite direction also holds, if true, would be an interesting generalisation of the Paley - Wiener Theorem. 

\begin{conjecture} Assume that  $\mathrm{w}(\omega)$ is a non-negative even weight function and let $f(z)$ be an arbitrary entire function for which  there exists a function $F(\omega)$ such that 
\begin{equation}\label{pw1}
\int_{-\infty}^{\infty}{|F(\omega)|^2}{\mathrm{w}(\omega)}d\omega<\infty
\end{equation}
and such that for all $z\in\Cset$
\begin{equation}\label{pw2}
f(z)=\int_{-\infty}^{\infty}F(\omega)\mathrm{w}(\omega){\e}^{\ii\,\omega\,z}d\omega.
\end{equation}
Then: 
\begin{enumerate} 
\item $f(z)$ satisfies $|f(z)|<C{\e}^{L|z|}$ for some $L,C>0$ just in case the weight function $\mathrm{w}(\omega)$ in equations \eqref{pw1} and \eqref{pw2} can be chosen to be supported in a compact interval;
\item $f(z)$ satisfies $|f(z)|<C{\e}^{L|z|^\kappa }$ for some $L,C>0$ and an integer $\kappa > 1$ just in case $\mathrm{w}(\omega)$ can be chosen  to satisfy
$\mathrm{w}(\omega)\leq {\e}^{-B |\omega|^{-\frac{\kappa}{\kappa -1}}}$ for some $B>0$ and all $\omega\in\Rset$.
\end{enumerate}
\end{conjecture}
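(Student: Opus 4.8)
\emph{The ``if'' halves of both parts are essentially already in hand.} If the weight $\mathrm{w}$ in \eqref{pw1}--\eqref{pw2} can be taken with support in $[-a,a]$, then Cauchy--Schwarz applied to \eqref{pw2} gives $|f(z)|\le{\e}^{a|z|}\big(\int|F|^2\mathrm{w}\big)^{1/2}\big(\int_{-a}^{a}\mathrm{w}\big)^{1/2}$, so $f$ is of exponential type. For the growth bound in (2), put $g:=F\mathrm{w}$; the decay hypothesis on $\mathrm{w}$ (which matches the weakly bounded exponential weights ${\e}^{-c|\omega|^{1/p}}$ with $p=(\kappa-1)/\kappa$, by the lemma of \S\ref{sec5}) gives $\int|g|^2{\e}^{B|\omega|^{\kappa/(\kappa-1)}}d\omega\le\int|F|^2\mathrm{w}\,d\omega<\infty$, so with $\mathrm{w}_*(\omega):={\e}^{-B|\omega|^{\kappa/(\kappa-1)}}$ the function $\varphi:=g/\mathrm{w}_*$ lies in the corresponding $\LT$, whence $f=f_\varphi\in\LB$ and the corollary following Proposition~\ref{exp-type} yields $|f(z)|\le C{\e}^{L|z|^\kappa}$.

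\emph{Part (1), ``only if'': reduction to classical Paley--Wiener.} Evaluating \eqref{pw2} at $z=0$ and at $z=\ii v$ shows that $g:=F\mathrm{w}$ decays faster than every exponential, so $g\in L^1(\Rset)$ and $f$, the inverse Fourier transform of $g$, is bounded on $\Rset$. Since $f$ is moreover entire of exponential type $\le L$, the Paley--Wiener--Schwartz theorem places the (tempered) Fourier transform of $f\big|_{\Rset}$ --- which, as $g\in L^1$, is $g$ up to the usual reflection and scaling --- inside $[-L,L]$; hence $g$ vanishes a.e.\ off $[-L,L]$. Then $\mathrm{w}_1:=\mathrm{w}\cdot\chi_{[-L,L]}$ and $F_1:=g/\mathrm{w}_1$ satisfy \eqref{pw1}--\eqref{pw2} with $\mathrm{w}_1$ compactly supported (the quotient is legitimate because $\int|g|^2\mathrm{w}^{-1}<\infty$ forces $g=0$ wherever $\mathrm{w}=0$).

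\emph{Part (2), ``only if'': the heart of the matter.} Again $g:=F\mathrm{w}\in L^1$ and $f=\widehat g$ is entire of order $\kappa$ and finite type. By the ``if'' computation above it suffices to establish the quantitative Paley--Wiener estimate
\begin{equation}\label{plan-pw}
\int_{-\infty}^{\infty}|g(\omega)|^2\,{\e}^{a|\omega|^{\kappa/(\kappa-1)}}\,d\omega<\infty\qquad\text{for some }a>0,
\end{equation}
for then $\mathrm{w}_1(\omega):={\e}^{-\frac a2|\omega|^{\kappa/(\kappa-1)}}$ and $F_1:=g/\mathrm{w}_1$ finish the proof. I would prove \eqref{plan-pw} by the standard contour--shift/saddle--point device: write $g$ as a regularised inverse transform, $g(\omega)=\tfrac1{2\pi}\lim_{\varepsilon\to0}\int_{\Rset}f(x){\e}^{-\varepsilon x^{2m}}{\e}^{-\ii\omega x}\,dx$ with $2m>\kappa$, and deform the contour into the half-plane $\sgn(\omega)\,\mathrm{Im}\,z<0$ by a height $v=v(\omega)$; the bound $|f(z)|\le C{\e}^{L|z|^{\kappa}}$ then contributes a factor ${\e}^{Lv^{\kappa}-|\omega|v}$, minimised by $v(\omega)\asymp|\omega|^{1/(\kappa-1)}$, which yields the pointwise estimate $|g(\omega)|\le C'{\e}^{-a|\omega|^{\kappa/(\kappa-1)}}$ and hence \eqref{plan-pw}.

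\emph{The main obstacle} is making this contour shift rigorous when $f$ fails to be integrable on $\Rset$: the standing hypotheses only yield $f\big|_{\Rset}\in C_0$, while $|f|$ genuinely grows on horizontal lines, so a naive bound on the shifted integral diverges as $\varepsilon\to0$ with $v$ held fixed. One must exploit the extra regularity actually present --- that $g\in L^1$, i.e.\ that the ``Borel-type'' transform attached to $f$ is a genuine function --- together with a Phragm\'en--Lindel\"of argument in the strip between the real axis and the shifted line, in order to propagate the decay of $f\big|_{\Rset}$ off the real axis. The bounded case $p=0$ (i.e.\ $\kappa=1$) is exactly the classical theorem, so the real difficulty --- and the reason the statement is posed only as a conjecture --- is confined to the finite-order regime $\kappa>1$; note moreover that the soft machinery of \S\S2--5 cannot simply be substituted here, since the factor $(3M)^n$ in \eqref{mono-bound} of Lemma~\ref{bounds} is far too lossy for the term-by-term estimate of the chromatic series to converge.
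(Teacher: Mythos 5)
There is no paper proof to compare against here: the statement you are proving is posed in the paper explicitly as a \emph{conjecture} (a proposed generalisation of the Paley--Wiener theorem), so the only question is whether your sketch actually closes it, and it does not. The parts you do carry out look sound in outline: for the ``if'' direction of (1), Cauchy--Schwarz on \eqref{pw2} works (granting local integrability of $\mathrm{w}$); for the ``if'' direction of (2), the inequality $\int|g|^2\e^{B|\omega|^{\kappa/(\kappa-1)}}d\omega\leq\int|F|^2\mathrm{w}\,d\omega$ with $g=F\mathrm{w}$ is correct and does put $f$ in the space $\LB$ attached to the exponential weight with $p=(\kappa-1)/\kappa$ (though one can reach $|f(z)|\leq C\e^{L|z|^\kappa}$ more directly by Cauchy--Schwarz plus an elementary optimisation of $\int\e^{-B|\omega|^{\kappa/(\kappa-1)}+2|\omega||z|}d\omega$, without the weakly bounded machinery); and for the ``only if'' half of (1), the observation that validity of \eqref{pw2} for every complex $z$ forces $\int|F\mathrm{w}|\e^{v|\omega|}d\omega<\infty$ for all $v>0$, hence $g\in L^1$, $f$ bounded on $\Rset$, and then Paley--Wiener--Schwartz localises $g$ a.e.\ in $[-L,L]$, is a legitimate reduction (better phrased with $F_1=F\chi_{[-L,L]}$ than with the division $g/\mathrm{w}_1$).

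The genuine gap is exactly where you flag it: the ``only if'' half of (2), which is the substantive content of the conjecture. Your contour-shift/saddle-point argument needs the pointwise (or integrated) bound $|g(\omega)|\leq C'\e^{-a|\omega|^{\kappa/(\kappa-1)}}$, but the only information available off the real axis is the \emph{upper} bound $|f(z)|\leq C\e^{L|z|^{\kappa}}$, which makes the shifted integral uncontrollable as $\varepsilon\to 0$; and the Phragm\'en--Lindel\"of repair you propose is circular as stated, since propagating decay from $\Rset$ into the strip requires a boundary estimate on the shifted line, which is precisely what is being sought, while on $\Rset$ itself the hypotheses only give the rate-free statement $f|_{\Rset}\in C_0$ coming from $g\in L^1$. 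So the proposal proves at most the easy implications and leaves the hard direction untouched; this is consistent with the paper, which states the claim only as a conjecture and offers no proof, but it means the proposal cannot be accepted as a proof of the statement.
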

Note that, unlike what we have conjectured,  the Paley Wiener Theorem requires $f(t)$ to be in $L^2$; however, for example, while the Bessel function $J_0(t)$ of the first kind is not in $L^2$ but it is of exponential type and it satisfies $J_0(z)=\frac{1}{\pi}\int_{-1}^{1}\frac{1}{\sqrt{1-\omega^2}}{\e}^{\ii\,\omega\,z}d\omega$.

 \subsection{Generalisations of some classical equalities for
the Bessel functions}

Theorem \ref{PWT} generalizes
the classic result that every entire function can be expressed
as a Neumann series of Bessel functions \cite{WAT}, by
replacing the Neumann series with a chromatic expansion that
corresponds to any weakly bounded moment functional. Thus,
many classical results on Bessel functions from \cite{WAT}
immediately follow from Theorem \ref{PWT} and generalise to functions
$\K{n}[\mm](z)$ corresponding to any weakly bounded moment
functional \mom. Below we give a few illustrative examples.

\begin{corollary}\label{eiw}
Let $\PP$ be the orthonormal polynomials associated
with a weakly bounded moment functional \mom; then, by the completeness of  $\PP$ in \LT, Proposition \ref{some} with $\varphi(\omega)\equiv 1$ implies that the equation 
\begin{equation}
{\e}^{{\ii}\omega z}=\sum_{n=0}^{\infty}(-\ii)^{n}
p_{n}(\omega)\,\K{n}[\mm](z)
\end{equation}
holds in the sense of $\LT$ for all $z\in\st{2}$. However, such an equation also holds point-wise for all $z\in\Cset$ and all $\omega$, including $\omega$ outside of the support of $\alpha(\omega)$.
\end{corollary}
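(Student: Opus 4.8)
The plan is to read the right-hand side of the asserted identity as the chromatic expansion, centred at $u=0$, of the entire function $z\mapsto {\e}^{{\ii}\omega z}$, and then quote Theorem~\ref{PWT}. The $\LT$ part of the statement is essentially given: weak boundedness forces $\rho=0$ (Corollary~\ref{moments-asymptotics}), so $\st{2}=\Cset$ and, by Lemma~\ref{meuw}, ${\e}^{{\ii}\omega z}\in\LT$ for every $z$; since $\PP$ is complete in $\LT$ (Corollary~\ref{cite0}), Proposition~\ref{some} with $\varphi(\omega)\equiv 1$, for which $f_\varphi=\mm$, turns \eqref{ft-expand} into the displayed equation in the sense of $\LT$. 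The subtlety is only that this is an equality of elements of $L^2(d\alpha)$, and thus carries no information at points $\omega$ lying off the support of $\alpha$; it is the point-wise statement that genuinely strengthens it there.

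For the point-wise claim, fix $\omega\in\Cset$ and put $g_\omega(z)={\e}^{{\ii}\omega z}$, which is entire in $z$. Identity \eqref{iwt} gives $\K{n}[g_\omega](0)={\ii}^{n}p_n(\omega)$, so, using $(-1)^{n}{\ii}^{n}=(-{\ii})^{n}$,
\[
\CE[g_\omega,0](z)=\sum_{n=0}^{\infty}(-1)^{n}\K{n}[g_\omega](0)\,\K{n}[\mm](z)=\sum_{n=0}^{\infty}(-{\ii})^{n}p_n(\omega)\,\K{n}[\mm](z),
\]
which is exactly the series to be summed. Since $g_\omega$ is entire, the lemma immediately preceding Theorem~\ref{PWT} yields $\lim_{n}|\K{n}[g_\omega](0)/n!^{1-p}|^{1/n}=0$, so Theorem~\ref{PWT}(2) applies: $\CE[g_\omega,0](z)$ converges to $g_\omega(z)={\e}^{{\ii}\omega z}$ for every $z\in\Cset$, uniformly on each disc of finite radius about $0$. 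As $\omega$ was arbitrary --- in particular possibly outside the support of $\alpha$ --- this is the assertion.

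Since all the analytic effort is already in Theorem~\ref{PWT}, there is no real obstacle; the two points deserving care are the coefficient bookkeeping above and the observation that Theorem~\ref{PWT} is being applied to an entire function $g_\omega$ that in general need not belong to $\LB$ (e.g., for $\omega$ in the support of $\alpha$ one typically has $\sum_{n}p_n(\omega)^2=\infty$) --- this is legitimate because that theorem requires only analyticity of its argument, not membership in $\LB$. If one wished to avoid invoking Theorem~\ref{PWT}, the same conclusion follows directly from the estimate \eqref{bound-Knb} of Lemma~\ref{bounds1} together with $(|p_n(\omega)|/n!^{1-p})^{1/n}\to 0$ (again because $g_\omega$ is entire), which gives absolute and locally uniform convergence of the series, whose sum is then identified with ${\e}^{{\ii}\omega z}$ by matching all of its chromatic derivatives at $z=0$ via \eqref{insidediff} and appealing to analyticity.
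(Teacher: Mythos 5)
Your proof is correct and follows the paper's own route: the paper proves this corollary precisely by combining Theorem~\ref{PWT} applied to the entire function $z\mapsto {\e}^{{\ii}\omega z}$ with the identity \eqref{iwt}, which is exactly your argument (with the $\LT$ part read off from Proposition~\ref{some} as in the statement itself). Your extra remarks on the coefficient bookkeeping and on the fact that Theorem~\ref{PWT} needs only analyticity, not membership in $\LB$, are accurate elaborations of the same proof.
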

\begin{proof}
Follows directly from Theorem \ref{PWT} and
\eqref{iwt}.
\end{proof}
Corollary \ref{eiw} generalises the well known equality for the
Chebyshev polynomials $T_{n}(\omega)$ and the Bessel functions
${\mathrm J}_{n}(z)$, i.e.,
\[
{\e}^{\ii \omega z}={\mathrm J}_0(z) + 2\sum_{n=1}^{\infty}
{\ii}^{n}T_{n}(\omega){\mathrm J}_{n}(z).
\]

In Example 6, \ref{eiw} becomes the equality for the Hermite
polynomials $H_n(\omega)$:
\[
{\e}^{\ii \omega z}=\sum_{n=0}^{\infty}
\frac{H_{n}(\omega)}{n!}\left(\frac{\ii z}{2}\right)^{n}
{\e}^{-\frac{z^2}{4}}.
\]

Using Proposition \ref{unif-con} to expand $\mm(z+u)\in\LB$
into chromatic series around $z=u$, we get that for all
$z,u\in\Cset$
\begin{equation*}\mm(z+u)=\sum_{n=0}^{\infty}(-1)^n
\K{n}[\mm](u)\K{n}[\mm](z),
\end{equation*}
which generalises the equality
\[{\mathrm J}_0(z+u)={\mathrm J}_0(u){\mathrm J}_0(z)+2
\sum_{n=1}^{\infty}(-1)^n{\mathrm J}_n(u){\mathrm J}_n(z).
\]

For the next formula, in order to get a simple form we restrict ourselves to weakly bounded symmetric moment functionals. Thus, $\beta_n=0$ and if we apply Theorem \ref{PWT} to the constant function
$f(z)\equiv 1$, we get that $\K{2n}[f(z)]=\prod_{k=1}^{n}
\frac{\gamma_{2k-2}}{\gamma_{2k-1}}$ and $\K{2n+1}[f(z)]=0$. This implies that 
 the chromatic expansion of $f(z)\equiv 1$ yields that for all weakly bounded moment functionals and 
for all $z\in\Cset$
\begin{equation*}
\mm(z)+\sum_{n=1}^{\infty}(-1)^n \left(\prod_{k=1}^{n}
\frac{\gamma_{2k-2}}{\gamma_{2k-1}}\right)\K{2n}[\mm](z)=1,
\end{equation*}
with $\gamma_n$ the recursion coefficients from \eqref{poly}.
This equality generalises the special case which is the equality for the Bessel functions 
\[
{\mathrm J}_{0}(z)+2\sum_{n=1}^{\infty}
{\mathrm J}_{2n}(z)=1.
\]

\section{Some spaces of almost periodic functions}\label{nonsep}

\subsection{}
In signal processing functions with a finite $L^2$ norm are considered to be signals of finite energy; on the other hand, functions such as 
$f(t)={\e}^{\ii\, \omega\, t}$ have infinite energy but finite power, defined as 
\begin{align}\label{bes}
\|f\|_b=\limsup_{T\to \infty}\left(\frac{1}{2T}\int_{-T}^{T}|f(t)|^2 dt\right)^{1/2}.
\end{align}
The space $B^2$ of the Besicovitch almost periodic functions is the closure of the trigonometric polynomials under the semi-norm $\|f\|_b$ given by \eqref{bes}.

In our setup, functions from $\LL$, thus satisfying $\sum_{n=0}^{\infty}| \K{n}[f](0)|^2<\infty$, can be considered to be signals of finite energy. 
For weakly bounded functionals \mom\ the pure harmonic oscillations $f(t)={\e}^{\ii \omega t}$ do not belong to $\LL$ because $\sum_{n=0}^{\infty}| \K{n}[f](0)|^2=\sum_{n=0}^{\infty}p_n(\omega)^2$
diverges. We now introduce some normed spaces in which pure harmonic oscillations have finite norms; thus, functions from such spaces can be considered to have finite power. 
Note that weakly bounded moment functionals, since $0\leq p<1$, satisfy
\[
\sum_{n=0}^{\infty}\frac{1}{\gamma_n}\geq\sum_{n=0}^{\infty}\frac{1}{M(n+1)^p}=\infty.
\]
We weaken the condition that $1/M(n+1)^p\leq \gamma_n\leq M(n+1)^p$ for some $M$ and $0\leq p<1$ by replacing it with the condition that $\sum_{n=0}^{\infty}\frac{1}{\gamma_n}$ should diverge, but impose some additional conditions on the recursion coefficients, listed below. For simplicity, we consider only symmetric moment functionals. The conditions given below can be weakened, as shown in \cite{GT1}  and as discussed in detail in  \cite{GS1} .  The theory can also be extended to non symmetric functionals which satisfy $\lim_{n\to\infty}|\beta_n|/\gamma_n<2$; see \cite{GT} and \cite{GT2}.

Let $\s{n}$ be the first and $\ds{n}$ the second order forward finite differences of the recursion coefficients:
\begin{align*}
&\s{n}=\g{n+1}-\g{n}; && \ds{n}=\s{n+1}-\s{n}. &
\end{align*}
We consider families of orthonormal polynomials such that the corresponding recursion coefficients $\gamma_n$ satisfy the following conditions. %
\newlist{UR}{enumerate}{3}
\setlist[UR]{label=($\mathcal{C}_{\arabic*})$}
\begin{UR}
\begin{multicols}{2}
\item\label{c1} $\g{n}\rightarrow \infty$;\\
\item\label{c2} $\s{n}\rightarrow 0;$
\end{multicols}
\item \label{c3} There exist $n_0, m_0$ such that  $\g{n+m}>\g{n}$ holds for  all $n\geq n_0$ and all $m\geq m_0$.
\begin{multicols}{2}
\item\label{c4} $\displaystyle{\sum_{j=0}^{\infty}\frac{1}{\gamma_j}}=\infty$; 
\item\label{c5} there exists  $\kappa>1$ such that $\ \displaystyle{\sum_{j=0}^{\infty}\frac{1}{\gamma_j^\kappa}}<\infty$;
\end{multicols}
\begin{multicols}{2}
    \item\label{c6} $\displaystyle{\sum_{n=0}^{\infty}\frac{|\s{n}|}{\gamma_n^2}<\infty;}$
    \item\label{c7} $\displaystyle{\sum_{n=0}^{\infty}\frac{|\ds{n}|}{\g{n}}<\infty.}$
    \end{multicols}
\end{UR} 

Note that if the Hermite polynomials are normalised into a corresponding orthonormal family with respect to the weight $W(x)=\e^{-x^2}/\sqrt{\pi}$, then their recursion coefficients are of the form 
$\g{n}=\frac{1}{\sqrt{2}}\left(n+1\right)^{{1}/{2}}$. 

 While at the first sight these conditions might appear rather complicated, it turns out that the recursion coefficients of many important families of orthonormal polynomials satisfy these conditions, as exemplified by the following lemma.

\begin{lemma}[\cite{JAT}]\label{lemma0} Conditions \ref{c1}-\ref{c7} are satisfied  by the Hermite polynomials, and more generally,  
\begin{enumerate}
\item[(a)] by  families with recursion coefficients of the form $\g{n}=c(n+1)^p$ for any $0<p<1$ and $c>0$;
\item[(b)] by families orthonormal with respect to the exponential weight $W(\omega)=\exp(-c|\omega|^\beta)$ for $\beta>1$ and $c>0$.
\end{enumerate}
\end{lemma}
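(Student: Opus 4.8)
The plan is to treat the two cases separately, reducing (b) to the explicit computations of (a) by importing known asymptotics for the recursion coefficients.

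For part (a) I would verify the seven conditions directly, since $\g{n}=c(n+1)^p$ with $0<p<1$ is given in closed form and only elementary estimates on $x\mapsto x^p$ are needed. Concretely: $\g{n}$ is strictly increasing with $\g{n}\to\infty$ since $p>0$; by the mean value theorem $\s{n}=c\bigl((n+2)^p-(n+1)^p\bigr)=\OO{n^{p-1}}$; and a second-order Taylor estimate gives $\ds{n}=\OO{n^{p-2}}$. From these, \ref{c1} holds because $p>0$; \ref{c3} holds with $n_0=0$, $m_0=1$ by monotonicity; \ref{c2} holds because $p-1<0$; \ref{c4} holds because $\sum_j(j+1)^{-p}$ diverges for $p\le1$; \ref{c5} holds for any $\kappa>1/p$ (then $p\kappa>1$); \ref{c6} holds because $|\s{n}|/\g{n}^2=\OO{n^{-p-1}}$ is summable; and \ref{c7} holds because $|\ds{n}|/\g{n}=\OO{n^{-2}}$ is summable. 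Taking $p=1/2$, $c=1/\sqrt2$ gives the Hermite case.

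For part (b) the recursion coefficients are not explicit, so the plan is to import their asymptotics and then rerun the argument of (a). The weight $W(\omega)=\exp(-c|\omega|^\beta)$ is even, so $\mom$ is symmetric and $\beta_n\equiv0$, and only the conditions on $\g{n}$ are in question; setting $p:=1/\beta$ we have $0<p<1$ since $\beta>1$. The leading asymptotics $\LIM{n}\g{n}/(n+1)^{p}=\tfrac12$ are already in the excerpt (Theorem~1.3 of \cite{deift}, as used in the lemma preceding Lemma~\ref{bounds}). To obtain decay rates for $\s{n}$ and $\ds{n}$ --- which the leading term alone does not provide --- I would invoke the refined, term-by-term differentiable asymptotic expansion of the Freud recursion coefficients: this comes from the Freud (string) equation when $\beta$ is an even integer, and from the Riemann--Hilbert analysis of Kriecherbauer--McLaughlin and Deift et al.\ for general $\beta>1$, and it yields $\s{n}=\OO{n^{p-1}}$ and $\ds{n}=\OO{n^{p-2}}$, exactly as in (a). Granting these two estimates, the verification of \ref{c1}--\ref{c7} is word for word that of part (a), with the same $\kappa>1/p$ in \ref{c5}; the detailed bookkeeping is the content of \cite{JAT}.

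The main obstacle is precisely this last step of case (b): passing from the Freud conjecture $\g{n}\sim\tfrac12 n^{1/\beta}$ to quantitative control of the first and second differences $\s{n}$ and $\ds{n}$. This requires knowing that the asymptotic expansion of $\g{n}$ can be differenced term by term --- routine when $\beta$ is an even integer, but genuinely requiring the sharp Riemann--Hilbert asymptotics when $\beta$ is not --- and it is the only part of the argument that is not an elementary calculation.
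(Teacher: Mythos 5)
The paper offers no internal proof of this lemma---it is imported verbatim from \cite{JAT}---so the only comparison possible is with the intended elementary verification, and your part (a) supplies exactly that: the estimates $\s{n}=\OO{n^{p-1}}$, $\ds{n}=\OO{n^{p-2}}$ for $\g{n}=c(n+1)^p$ and the resulting checks of \ref{c1}--\ref{c7} (with $\kappa>1/p$ for \ref{c5}) are correct and complete, and the Hermite case is indeed the instance $p=1/2$, $c=1/\sqrt{2}$.

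For part (b) your reduction is the right one in spirit, and you correctly locate the crux: the leading asymptotics $\g{n}/(n+1)^{p}\to \mathrm{const}$ (the statement the paper takes from Theorem 1.3 of \cite{deift}) give \ref{c1}, \ref{c4}, \ref{c5} but none of \ref{c2}, \ref{c3}, \ref{c6}, \ref{c7}, since $\mathcal{O}$-estimates cannot be differenced; supplying difference-level control of the Freud recursion coefficients is precisely what is delegated to \cite{JAT} and, through it, to the string-equation/Riemann--Hilbert asymptotics. Two refinements, though. First, ``word for word that of part (a)'' overstates the transfer: in (a) you obtained \ref{c3} with $m_0=1$ from strict monotonicity, which is not available for Freud weights; there \ref{c3} must itself be extracted from the asymptotics with a quantitative error bound, choosing $m_0$ so large that the main increment, of order $m_0\,n^{p-1}$, dominates the error fluctuations---so \ref{c3}, like \ref{c6} and \ref{c7}, already needs more than the limit relation. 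Second, you do not need the full strength $\s{n}=\OO{n^{p-1}}$, $\ds{n}=\OO{n^{p-2}}$: for \ref{c6} any bound $\s{n}=\OO{n^{p-\delta}}$ with $\delta>1-p$ suffices, and for \ref{c7} any bound making $|\ds{n}|/\g{n}$ summable, which gives useful slack if the error terms available in the literature for non-even-integer $\beta$ are weaker than the formally differenced expansion. With these caveats your outline is consistent with how the paper itself disposes of the lemma, namely by citation rather than by a self-contained argument.
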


\begin{theorem}[\cite{JAT}]\label{t1}
Assume that the recursion coefficients $\g{n}$ which correspond to a symmetric positive definite family of orthonormal polynomials $(p_n:n\in\Nset)$ satisfy conditions \ref{c1}-\ref{c7}; then the limit $\lim_{n\rightarrow \infty}\g{n}(p^2_{n}(\omega)+p^2_{n+1}(\omega))$ exists for every $\omega\in\Rset$; more over, for every $B>0$ there exist $m_B$ and $M_B$ such that $0<m_B<M_B<\infty$ and such that  for all $\omega$ which satisfy $|\omega|\leq B$,
\begin{equation}\label{eqbounds}
m_B\leq\lim_{n\rightarrow \infty}\g{n}(p^2_{n}(\omega)+p^2_{n+1}(\omega)) \leq M_B
\end{equation}
with the limit converging uniformly on the set of all $\omega$ such that $|\omega|\leq B$.
\end{theorem}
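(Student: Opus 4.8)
The family is symmetric, so $\beta_n=0$ and the recurrence \eqref{poly} reads $\g{n}p_{n+1}(\omega)=\omega\,p_n(\omega)-\g{n-1}p_{n-1}(\omega)$. The plan is not to attack $\g{n}(p_n^2+p_{n+1}^2)$ head-on but to study the modified quantity
\[
H_n(\omega)=\g{n}\big(p_n(\omega)^2+p_{n+1}(\omega)^2\big)-\omega\,p_n(\omega)\,p_{n+1}(\omega).
\]
Since $|\omega\,p_np_{n+1}|\le\tfrac{|\omega|}{2\g{n}}\,\g{n}(p_n^2+p_{n+1}^2)$, once a uniform bound $\g{n}(p_n^2+p_{n+1}^2)\le M_B$ on $\{|\omega|\le B\}$ is available one has $\g{n}(p_n^2+p_{n+1}^2)-H_n(\omega)\to0$ uniformly there, so the two limits exist together and agree. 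Thus it suffices to prove, uniformly on $\{|\omega|\le B\}$: (i) an upper bound $\g{n}(p_n^2+p_{n+1}^2)\le M_B$; (ii) existence of $\lim_n H_n(\omega)$; (iii) a lower bound $\g{n}(p_n^2+p_{n+1}^2)\ge m_B>0$.

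The cornerstone is a telescoping identity. Using \eqref{poly} at index $n$ one rewrites $H_n=\g{n}p_n^2-\g{n-1}p_{n-1}p_{n+1}$, and at index $n+1$ one gets $H_{n+1}=\g{n+1}p_{n+1}^2-\g{n}p_np_{n+2}$; subtracting and applying \eqref{poly} once more to eliminate $p_{n-1}$ yields the exact identity
\[
H_{n+1}(\omega)-H_n(\omega)=\s{n}\,R_n(\omega),\qquad R_n(\omega)=p_{n+1}(\omega)^2+p_n(\omega)\,p_{n+2}(\omega),
\]
valid for all $\omega\in\Cset$ since it is a polynomial identity. Hence $\lim_n H_n(\omega)$ exists uniformly on $\{|\omega|\le B\}$ precisely when $\sum_n\s{n}R_n(\omega)$ converges uniformly there. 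I would also record the elementary two-sided bound $\big(1-\tfrac{|\omega|}{2\g{n}}\big)(p_n^2+p_{n+1}^2)\le H_n/\g{n}\le\big(1+\tfrac{|\omega|}{2\g{n}}\big)(p_n^2+p_{n+1}^2)$, valid whenever $\g{n}>|\omega|$, which (with \ref{c1}) lets one pass freely between $H_n$, $\g{n}(p_n^2+p_{n+1}^2)$ and the analogous quantities at neighbouring indices at the cost of factors $1+O(1/\g{n})$.

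For (i) and (iii): the second-kind polynomials $q_n(\omega)$ satisfy the same recurrence, so the same telescoping identity holds for $H_n^{(q)}$. A Gronwall-type bootstrap on $\max_{k\le n}H_k$ and $\max_{k\le n}H_k^{(q)}$, fed by that identity and by conditions \ref{c3}, \ref{c5} (and \ref{c4}, which prevents $\g{n}$ from growing so fast that the oscillatory regime is lost), gives the uniform upper bound $M_B$ for both. For the lower bound, the Casoratian $\g{n}\big(q_{n+1}(\omega)p_n(\omega)-q_n(\omega)p_{n+1}(\omega)\big)$ is independent of $n$ and of $\omega$ — from $p_0=1$, $q_0=0$, $q_1=1/\g{0}$ it equals $1$ — so Cauchy--Schwarz, $\big(q_{n+1}p_n-q_np_{n+1}\big)^2\le(q_n^2+q_{n+1}^2)(p_n^2+p_{n+1}^2)$, gives $1\le\big[\g{n}(q_n^2+q_{n+1}^2)\big]\big[\g{n}(p_n^2+p_{n+1}^2)\big]\le M_B\,\g{n}(p_n^2+p_{n+1}^2)$, i.e. $\g{n}(p_n^2+p_{n+1}^2)\ge 1/M_B=:m_B>0$, uniformly on $\{|\omega|\le B\}$.

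The heart of the argument, and the main obstacle, is (ii): uniform convergence of $\sum_n\s{n}R_n(\omega)$. The naive estimate $|R_n|\le\tfrac32\max_k p_k^2=O(H_n/\g{n})$ only gives $\sum_n|\s{n}R_n|\lesssim\sum_n|\s{n}|/\g{n}$, which \emph{diverges} in the cases of interest (e.g. $\g{n}\asymp(n+1)^p$, where $|\s{n}|/\g{n}\asymp 1/n$). The resolution is that \ref{c1} forces the recurrence into an asymptotically ``period-four'' regime, $p_{n+2}\approx-p_n$: substituting \eqref{poly} gives $R_n=(p_{n+1}^2-p_n^2)+\tfrac{\s{n}}{\g{n+1}}p_n^2+\tfrac{\omega}{\g{n+1}}p_np_{n+1}$, whence $R_n+R_{n+1}=(p_{n+2}^2-p_n^2)+O\!\big((|\omega|+|\s{n}|+|\s{n+1}|)/\g{n}\big)$ and $p_{n+2}^2-p_n^2=O\!\big((|\omega|+|\s{n}|)/\g{n}\big)$, so $R_n$ is ``asymptotically alternating''. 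Summing $\sum_n\s{n}R_n$ and repeatedly pairing consecutive terms (summation by parts), the divergent series $\sum 1/\g{n}$ never stands alone: it appears only multiplied by $|\s{n}|/\g{n}$, producing $\sum|\s{n}|/\g{n}^2<\infty$ by \ref{c6}, or by $|\ds{n}|=|\s{n+1}-\s{n}|$, producing $\sum|\ds{n}|/\g{n}<\infty$ by \ref{c7}; the boundary terms vanish because $\s{n}\to0$ (\ref{c2}), and the residual absolutely convergent tails are absorbed by \ref{c5}. The same mechanism, run on $\max_{k\le n}H_k$, also supplies the a priori bound needed in (i). The hard part will be carrying out this bookkeeping rigorously and uniformly in $\omega\in[-B,B]$: one must track the slowly varying amplitude of $p_n$ and its phase (which increases by $\approx\pi/2$ per step) accurately enough that every error term falls into one of the summable combinations furnished by \ref{c2}, \ref{c5}, \ref{c6}, \ref{c7}, with all constants depending only on $B$ — a discrete Levinson / Harris--Lutz-type analysis in which essentially all of the work resides.
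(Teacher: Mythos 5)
The paper itself contains no proof of Theorem \ref{t1} --- it is imported from \cite{JAT} --- so your proposal can only be judged on its own terms rather than against an in-paper argument. The algebra you do present is correct: with $H_n(\omega)=\g{n}\bigl(p_n(\omega)^2+p_{n+1}(\omega)^2\bigr)-\omega\,p_n(\omega)p_{n+1}(\omega)$ one indeed has $H_n=\g{n}p_n^2-\g{n-1}p_{n-1}p_{n+1}$ and the telescoping identity $H_{n+1}-H_n=\s{n}\,(p_{n+1}^2+p_np_{n+2})$; the Casoratian $\g{n}(q_{n+1}p_n-q_np_{n+1})\equiv 1$ is constant, so Cauchy--Schwarz does reduce the lower bound $m_B$ to an upper bound of type (i) for the second-kind solutions; and you are right that the naive estimate leads to $\sum_n|\s{n}|/\g{n}$, which diverges already for $\g{n}\asymp(n+1)^p$, so the oscillation of $p_n$ must be exploited. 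This is a sensible discrete Levinson-type skeleton, and the decomposition $\s{n}R_n=\s{n}(p_{n+1}^2-p_n^2)+\s{n}^2p_n^2/\g{n+1}+\omega\,\s{n}\,p_np_{n+1}/\g{n+1}$ is the right way to set up the summation by parts.

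The genuine gap is that steps (i) and (ii), which are exactly where the theorem lives, are only described, and as written they are circular: the summability of the error terms via \ref{c6} and \ref{c7}, and of the Abel-summed term $\sum_n \ds{n}\,p_{n+1}^2$, is obtained only after inserting the a priori bound $p_k^2\leq C_B/\g{k}$ uniformly on $|\omega|\leq B$, i.e.\ boundedness of $H_k$; and boundedness of $H_k$ is in turn asserted to come from ``the same mechanism''. Breaking this circle is the actual content of the theorem: one must run the estimates against $h_N=\max_{k\leq N}H_k$, choose $n_0$ so large that the tails of $\sum|\s{n}|/\g{n}^2$ and $\sum|\ds{n}|/\g{n}$ and the boundary term $|\s{N}|$ are small enough to absorb $h_N$ back into the left-hand side, handle the finitely many indices with $\g{n}\leq 2B$, and do all of this uniformly in $\omega\in[-B,B]$ and simultaneously for the second-kind solutions needed in your lower-bound step --- none of which is carried out, and you yourself flag that ``essentially all of the work resides'' there. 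Moreover, conditions \ref{c3}, \ref{c4} and \ref{c5} are invoked only decoratively (``a Gronwall-type bootstrap fed by \ref{c3}, \ref{c5}''), with no indication of where they actually enter; a complete argument must either exhibit their role or show they are dispensable for this statement. So: the identity, the Casoratian trick and the diagnosis of the difficulty are right, but the uniform upper bound and the uniform convergence of the telescoped series --- the theorem itself --- remain unproved in your proposal.
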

Note that this is in contrast with the case when the recursion coefficients are bounded; a classical result 
of Nevai implies that in the bounded case the sequence $\g{n}(p^2_{n}(\omega)+p^2_{n+1}(\omega))$ cannot converge; see \cite{PN}.  So it is not surprising that in our case, the slower the $\g{n}$ grow to infinity, the slower the above limit converges.

\begin{corollary}[\cite{JAT}]\label{corollary1}

Let $\gamma_n$ be as in Theorem~\ref{t1}; then the limits below exist and satisfy
\begin{equation}\label{eqsum1}
\lim_{n\rightarrow \infty}
\frac{\sum_{k=0}^np^2_k(\omega)}{\sum_{k=0}^n\frac{1}{\gamma_k}}=\frac{1}{2}\lim_{n\rightarrow \infty}\g{n}(p^2_{n}(\omega)+p^2_{n+1}(\omega))
\end{equation}
and convergence of the two limits is uniform on every compact set. \footnote{Corollary \ref{corollary1} implies our conjecture from \cite{EJA} (under the presence of additional assumptions stated above).}
\end{corollary}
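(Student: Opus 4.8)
The plan is to deduce this corollary from Theorem~\ref{t1} by a weighted Ces\`aro (Stolz--Ces\`aro) averaging argument, with attention paid to uniformity. Write $S_n=\sum_{k=0}^{n}1/\gamma_k$ and $c_k(\omega)=\gamma_k\bigl(p_k^2(\omega)+p_{k+1}^2(\omega)\bigr)$, and let $c(\omega)=\lim_{k\to\infty}c_k(\omega)$, which exists by Theorem~\ref{t1}, with the convergence uniform on every set $\{|\omega|\le B\}$ and $0<m_B\le c(\omega)\le M_B$ there. Condition~\ref{c4} gives $S_n\to\infty$. A preliminary observation is that $p_n^2(\omega)\to 0$ uniformly on compacts: once $n$ is large enough that $c_n(\omega)\le M_B+1$ uniformly on $\{|\omega|\le B\}$, one has $p_n^2(\omega)\le c_n(\omega)/\gamma_n\le (M_B+1)/\gamma_n\to 0$ by~\ref{c1}.

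Next I would rewrite the numerator as a telescoping-adjusted average. Summing the identity $c_k(\omega)/\gamma_k=p_k^2(\omega)+p_{k+1}^2(\omega)$ over $k=0,\dots,n$ and using $p_0\equiv 1$ gives
\[
\sum_{k=0}^{n}\frac{c_k(\omega)}{\gamma_k}=2\sum_{k=0}^{n}p_k^2(\omega)-1+p_{n+1}^2(\omega),
\]
so that
\[
\frac{\sum_{k=0}^{n}p_k^2(\omega)}{S_n}=\frac12\cdot\frac{\sum_{k=0}^{n}c_k(\omega)/\gamma_k}{S_n}+\frac{1-p_{n+1}^2(\omega)}{2S_n}.
\]
The last term tends to $0$ uniformly on $\{|\omega|\le B\}$, since $0\le 1-p_{n+1}^2(\omega)\le 1$ while $S_n\to\infty$. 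It therefore remains to show that the weighted average $A_n(\omega):=S_n^{-1}\sum_{k=0}^{n}c_k(\omega)/\gamma_k$ converges to $c(\omega)$ uniformly on $\{|\omega|\le B\}$.

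This is the familiar fact that a summability method with nonnegative weights $1/\gamma_k$ whose partial sums diverge is regular; only its uniform version is needed. Given $\varepsilon>0$, choose $N$ with $|c_k(\omega)-c(\omega)|<\varepsilon$ for all $k\ge N$ and all $|\omega|\le B$ (Theorem~\ref{t1}). Splitting the sum at $N$,
\[
|A_n(\omega)-c(\omega)|\le\frac{1}{S_n}\sum_{k=0}^{N-1}\frac{|c_k(\omega)-c(\omega)|}{\gamma_k}+\frac{\varepsilon}{S_n}\sum_{k=N}^{n}\frac{1}{\gamma_k}\le\frac{C_{N,B}}{S_n}+\varepsilon,
\]
where $C_{N,B}$ is a bound for the fixed finite sum over $\{|\omega|\le B\}$ (each $p_k$ is a polynomial, so the finitely many terms are bounded on the compact set). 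Letting $n\to\infty$ and then $\varepsilon\to0$ gives $A_n(\omega)\to c(\omega)$ uniformly on $\{|\omega|\le B\}$; combined with the displayed identity this yields \eqref{eqsum1} together with uniform convergence of both limits on every compact set. The corollary is thus essentially a soft consequence of Theorem~\ref{t1}: the only thing requiring care is the bookkeeping for uniformity, and that is handled entirely by the uniform convergence and the bounds $m_B,M_B$ already furnished there, so I do not expect any genuine obstacle.
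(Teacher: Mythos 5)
Your argument is correct, but it is not the route the paper takes: the paper does not prove Corollary~\ref{corollary1} at all (it is imported from \cite{JAT}), and its closest in-house analogue, the proof of Lemma~\ref{FG}, obtains the factor $\tfrac12$ by pairing consecutive terms and invoking the general Stolz--Ces\`aro theorem for the ratio of partial sums, treating the parameter pointwise. You instead prove the statement from scratch: the exact telescoping identity $\sum_{k=0}^{n}\gamma_k^{-1}c_k(\omega)=2\sum_{k=0}^{n}p_k^2(\omega)-1+p_{n+1}^2(\omega)$ reduces everything to showing that the weighted means $S_n^{-1}\sum_{k=0}^n c_k(\omega)/\gamma_k$ converge to $c(\omega)$, and you verify the uniform version of this regularity fact directly by splitting at a cutoff $N$ supplied by the uniform convergence in Theorem~\ref{t1}, using the bounds $m_B,M_B$ and condition \ref{c4}. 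What your approach buys is precisely the uniformity on compact sets, which a bare appeal to Stolz--Ces\`aro (as in the Lemma~\ref{FG} proof) does not deliver without extra bookkeeping, and it avoids the separate treatment of even and odd partial sums; the Stolz--Ces\`aro route is shorter when only pointwise convergence is needed. One cosmetic slip: the inequality $0\leq 1-p_{n+1}^2(\omega)\leq 1$ need not hold for small $n$ (a polynomial can exceed $1$ in modulus on the compact set), but this is harmless since your preliminary observation $p_{n+1}^2(\omega)\to 0$ uniformly, together with boundedness of the finitely many initial polynomials on the compact set, gives a uniform bound on $|1-p_{n+1}^2(\omega)|$, which is all the estimate requires.
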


\begin{corollary}[\cite{JAT}]\label{corollary2} If 
 $\gamma_n=c(n+1)^p$ for some $c>0$ and some $p$ such that $0<p<1$, then 
 \[
\displaystyle{
0<\lim_{n\rightarrow\infty}\frac{\sum_{k=0}^np^2_k(\omega)}{(n+1)^{1-p}}<\infty,
}
\]
and convergence of the limit  is uniform on every compact set.
\end{corollary}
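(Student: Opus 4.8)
The plan is to obtain Corollary~\ref{corollary2} as an immediate consequence of Corollary~\ref{corollary1}, combined with an elementary asymptotic estimate for the partial sums $\sum_{k=0}^{n}1/\g{k}$. First I would observe that $\g{n}=c(n+1)^{p}$ with $0<p<1$ is exactly case (a) of Lemma~\ref{lemma0}, so conditions \ref{c1}--\ref{c7} hold and both Theorem~\ref{t1} and Corollary~\ref{corollary1} apply. By \eqref{eqsum1} the ratio $\bigl(\sum_{k=0}^{n}p_k^2(\omega)\bigr)/\bigl(\sum_{k=0}^{n}1/\g{k}\bigr)$ converges, uniformly on every compact set, to $\tfrac12\lim_{n\to\infty}\g{n}\bigl(p_n^2(\omega)+p_{n+1}^2(\omega)\bigr)$; moreover \eqref{eqbounds} of Theorem~\ref{t1} squeezes this limit between $m_B/2$ and $M_B/2$ on $\{|\omega|\le B\}$, so it is a strictly positive and finite (indeed continuous) function of $\omega$.

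Next I would compute the denominator asymptotics. Since $\g{k}=c(k+1)^{p}$, we have $\sum_{k=0}^{n}1/\g{k}=\tfrac1c\sum_{k=0}^{n}(k+1)^{-p}$, and because $0<p<1$ this sum diverges; comparing it with $\int_{1}^{n+2}x^{-p}\,dx=\tfrac{1}{1-p}\bigl((n+2)^{1-p}-1\bigr)$ (from above and below by the usual monotonicity argument) yields
\[
\sum_{k=0}^{n}\frac{1}{\g{k}}=\frac{1}{c(1-p)}\,(n+1)^{1-p}\bigl(1+o(1)\bigr),\qquad n\to\infty .
\]
In particular $\dfrac{1}{(n+1)^{1-p}}\sum_{k=0}^{n}1/\g{k}\to \dfrac{1}{c(1-p)}$, a strictly positive finite constant that does not depend on $\omega$.

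Finally I would simply multiply the two displays, writing
\[
\frac{\sum_{k=0}^{n}p_k^2(\omega)}{(n+1)^{1-p}}=\frac{\sum_{k=0}^{n}p_k^2(\omega)}{\sum_{k=0}^{n}1/\g{k}}\cdot\frac{\sum_{k=0}^{n}1/\g{k}}{(n+1)^{1-p}} .
\]
The first factor converges uniformly on compacts to $\tfrac12\lim_{n\to\infty}\g{n}\bigl(p_n^2(\omega)+p_{n+1}^2(\omega)\bigr)\in(0,\infty)$, and the second converges to the constant $1/(c(1-p))>0$; hence the product converges, uniformly on every compact set, to
\[
\lim_{n\to\infty}\frac{\sum_{k=0}^{n}p_k^2(\omega)}{(n+1)^{1-p}}=\frac{1}{2c(1-p)}\lim_{n\to\infty}\g{n}\bigl(p_n^2(\omega)+p_{n+1}^2(\omega)\bigr),
\]
which by \eqref{eqbounds} is strictly positive and finite for every $\omega$ (and bounded between positive constants on each compact set). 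This is precisely the asserted conclusion.

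There is essentially no obstacle of substance once Corollary~\ref{corollary1} is in hand: the only points requiring a little care are (i) doing the integral comparison sharply enough to pin down the constant $1/(c(1-p))$ rather than merely a $\Theta\bigl((n+1)^{1-p}\bigr)$ bound, and (ii) checking that the uniformity on compacts is preserved under the product — which is automatic here because the second factor is independent of $\omega$. The positivity of the limit is the one place where the full strength of Theorem~\ref{t1} is used, namely the lower bound $m_B>0$ in \eqref{eqbounds}.
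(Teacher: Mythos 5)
Your proposal is correct and follows exactly the route the paper intends: Corollary~\ref{corollary2} is obtained from Corollary~\ref{corollary1} (via Lemma~\ref{lemma0}(a) and the positivity bounds \eqref{eqbounds} of Theorem~\ref{t1}) together with the elementary asymptotic $\sum_{k=0}^{n}1/\gamma_k\sim (n+1)^{1-p}/(c(1-p))$, which is the same computation the paper itself performs when deriving the Hermite norm $\|f\|^{H}$. Nothing is missing.
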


\subsection{Spaces of signals of finite power}

\begin{definition} Assume that the recurrence coefficients satisfy conditions \ref{c1}-\ref{c7}.
We denote by $\CC$ the vector space of functions $f(t)\in\Dset$ for which the sequence of corresponding functions $(\beta_n^f(t)\ :\ n\in \Nset)$ defined by 
\begin{align}\label{boundf}
\beta_n^f(t)=\gamma_n(|\K{n}[f](t)|^2+|\K{n+1}[f](t)|^2)
\end{align}
converges uniformly on every compact interval $I\subset \Rset$.
\end{definition}

\begin{lemma}\label{FG} If  $f(t)\in\CC$ then $(\beta_n^f(t)\ :\ n\in \Nset)$ converges to a constant function, $\lim_{n\to \infty}\beta_n^f(t)=L$. Moreover, if we define 
\begin{align}
\nu^{f}_n(t)&=\frac{\sum_{k=0}^{n} |\K{k}[f(t)]|^2}{\sum_{k=0}^{n}\frac{1}{\gamma_k}};
\end{align}
then $\nu^{f}(t)$ converges to constant function $L/2$, uniformly on every compact interval.
\end{lemma}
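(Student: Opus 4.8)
The plan is to exploit the Christoffel--Darboux identity of Lemma~\ref{CD} to bound the derivative of $\nu^f_n$ by $\beta^f_n/b_n$, where $b_n=\sum_{k=0}^{n}\gamma_k^{-1}$; this converts the \emph{assumed} local uniform convergence of $\beta^f_n$ into local uniform convergence of $(\nu^f_n)'$ to $0$, and hence into the statement that $\nu^f_n(t)-\nu^f_n(0)\to 0$ uniformly on compacta. In parallel I would establish, by a Stolz--Ces\`aro computation, that for each fixed $t$ one has $\nu^f_n(t)\to L(t)/2$, where $L(t)=\lim_n\beta^f_n(t)$; comparing the two facts forces $L$ to be constant, after which both convergence assertions of the lemma follow. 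Write $S_n(t)=\sum_{k=0}^{n}|\K{k}[f](t)|^2$, so that $\nu^f_n=S_n/b_n$; since $\gamma_k>0$ and $\sum_{k}\gamma_k^{-1}=\infty$ by \ref{c4}, $b_n$ is strictly increasing with $b_n\to\infty$.

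First I would take $g=f$ in Lemma~\ref{CD}, obtaining
\[
S_n'(t)=\gamma_n\bigl(\K{n+1}[f](t)\,\overline{\K{n}[f](t)}+\K{n}[f](t)\,\overline{\K{n+1}[f](t)}\bigr)=2\gamma_n\,\mathfrak{Re}\bigl(\K{n+1}[f](t)\,\overline{\K{n}[f](t)}\bigr),
\]
so that $|S_n'(t)|\le\gamma_n\bigl(|\K{n}[f](t)|^2+|\K{n+1}[f](t)|^2\bigr)=\beta^f_n(t)$ by $2|ab|\le|a|^2+|b|^2$, and therefore $|(\nu^f_n)'(t)|\le\beta^f_n(t)/b_n$. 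Since $f\in\CC$, on a compact interval $I$ (which we may assume contains the origin) the functions $\beta^f_n$ converge uniformly, hence are uniformly bounded on $I$, say by $K_I$; thus $|(\nu^f_n)'(t)|\le K_I/b_n\to 0$ uniformly on $I$. Integrating from $0$, $|\nu^f_n(t)-\nu^f_n(0)|\le |t|\,K_I/b_n$ for $t\in I$, so $\nu^f_n(t)-\nu^f_n(0)\to 0$ uniformly on every compact interval.

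Next, fix $t$ and set $a_k=|\K{k}[f](t)|^2\ge 0$, so $\gamma_k(a_k+a_{k+1})=\beta^f_k(t)\to L(t)$. I would then run the weighted-average (Stolz--Ces\`aro/Toeplitz) argument that also underlies Corollary~\ref{corollary1}: from the telescoping identity $2\sum_{k=0}^{n}a_k=a_0+a_n+\sum_{k=0}^{n-1}\gamma_k^{-1}\beta^f_k(t)$, dividing by $b_n$, the term $(a_0+a_n)/b_n$ vanishes in the limit (indeed $a_n\le\gamma_n^{-1}\beta^f_n(t)\to 0$ by \ref{c1}, while $b_n\to\infty$), and the remaining term is an average of the convergent sequence $(\beta^f_k(t))_k$ against the nonnegative weights $\gamma_k^{-1}$, whose sum diverges, hence converges to $L(t)$; so $\nu^f_n(t)\to L(t)/2$. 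Combining with the previous paragraph, $L(t)/2=\lim_n\nu^f_n(t)=\lim_n\nu^f_n(0)=L(0)/2$ for all $t$, so $L\equiv L(0)=:L$ is constant, i.e.\ $\lim_n\beta^f_n(t)=L$. Finally, for $t$ in a compact $I$ containing $0$, $|\nu^f_n(t)-L/2|\le|\nu^f_n(t)-\nu^f_n(0)|+|\nu^f_n(0)-L/2|$, and both summands tend to $0$ --- the first uniformly on $I$ by the second paragraph, the second as an ordinary numerical limit --- so $\nu^f_n(t)\to L/2$ uniformly on every compact interval.

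I do not anticipate a genuinely hard step: everything hinges on the elementary bound $|S_n'(t)|\le\beta^f_n(t)$ supplied by Lemma~\ref{CD}, and the only subtlety is recognizing that one needs \emph{both} the pointwise limit $\nu^f_n(t)\to L(t)/2$ and the local uniform smallness of $(\nu^f_n)'$ to conclude that $L$ is constant. Only $\gamma_n\to\infty$ and $\sum_n\gamma_n^{-1}=\infty$ (i.e.\ \ref{c1} and \ref{c4}) are actually used; conditions \ref{c2}, \ref{c3} and \ref{c5}--\ref{c7} play no role in this particular lemma.
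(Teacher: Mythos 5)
Your proof is correct and follows essentially the same route as the paper's: both use Lemma~\ref{CD} with $g=f$ to bound $\bigl|(\nu^f_n)'(t)\bigr|\leq \beta^f_n(t)\big/\sum_{k=0}^{n}\gamma_k^{-1}$, conclude that $(\nu^f_n)'\to 0$ locally uniformly, and combine this with a Stolz--Ces\`aro/averaging argument showing $\nu^f_n(t)\to L(t)/2$ pointwise to force $L$ to be constant. The only notable difference is in the averaging step: your telescoping identity $2\sum_{k=0}^{n}a_k=a_0+a_n+\sum_{k=0}^{n-1}\gamma_k^{-1}\beta^f_k(t)$ replaces the paper's even/odd pairing of terms, which silently relies on $\gamma_{n+1}/\gamma_n\to 1$ (via \ref{c1}--\ref{c2}), so your version is marginally more economical in the hypotheses it invokes, needing only \ref{c1} and \ref{c4}.
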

\begin{proof}
Assume $\beta_n^f(t)\rightarrow l(t)$, uniformly on every compact interval $I\subset \Rset$. 
We now note that our assumptions on $\gamma_n$ imply that 
\begin{align*}
\frac{1}{2}\lim_{n\to\infty}\gamma_n(|\K{n}[f](t)|^2+|\K{n+1}[f](t)|^2)=\lim_{n\to\infty}
\frac{|\K{2n}[f](t)|^2+|\K{2n+1}[f](t)|^2}{\frac{1}{\gamma_{2n}}+\frac{1}{\gamma_{2n+1}}}=\frac{l(t)}{2}.
\end{align*}
For odd $n$, $n=2k+1$, since $\sum_{j=0}^ 1/\gamma_j$ diverges, the (general) Stolz - Ces\`{a}ro Theorem applies and we obtain 
\[ \frac{\sum_{j=0}^{2k+1}|\K{j}[f(t)]|^2}{\sum_{j=0}^{2k+1}\frac{1}{\gamma_j}}=
\frac{\sum_{j=0}^k(|\K{2j}[f(t)]|^2+|\K{2j+1}[f(t)]|^2)}
{\sum_{j=0}^k\left(\frac{1}{\gamma_{2j}}+\frac{1}{\gamma_{2j+1}}\right)}\rightarrow \frac{l(t)}{2}\]
For even $n=2k+2$ we observe that 
\[\frac{\sum_{j=0}^{2k+2}|\K{j}[f(t)]|^2}{\sum_{j=0}^{2k+2}\frac{1}{\gamma_j}}=
\frac{\sum_{j=0}^{2k+1}|\K{j}[f(t)]|^2}{\sum_{j=0}^{2k+1}\frac{1}{\gamma_j}}
\left(1-\frac{\frac{1}{\gamma_{2k+2}}}{\sum_{j=0}^{2k+2}\frac{1}{\gamma_j}}\right)+
\frac{\K{2k+2}[f(t)]|^2}{\sum_{j=0}^{2k+2}\frac{1}{\gamma_j}}
\]

We now prove that $\nu^{f}(t)$ is constant on $\Rset$. 
We observe that  \eqref{C-D}  with $g(x)=f(x)$ yields
\begin{align*}
\left|\frac{d}{d t}\nu_{n}^{f}(t)\right|&\leq \frac{\gamma_{n}\,
(|\K{n+1} [f(t)]\,\overline{\K{n} [f(t)]}|+|\K{n} [f(t)]\,\overline{\K{n+1} [f(t)]}|)}{\sum_{k=0}^{n}\frac{1}{\gamma_k}}\\
&\leq
\frac{\gamma_{n}
(|\K{n} [f(t)]|^2+|\K{n+1} [f(t)]|^2)}{\sum_{k=0}^{n}\frac{1}{\gamma_k}}
\end{align*}
We now apply  a theorem on the exchange of the order of differentiation and summation. Since the numerator  is uniformly convergent on every compact interval it is bounded on every compact interval and since $\sum_{k=0}^{n}{1}/{\gamma_k}$ diverges, the  assumption \eqref{boundf} implies that $(\nu_{n}^{f}(t))^\prime$ converges to $0$ uniformly on I. Thus, for some $L\geq 0$, $\beta_n^f(t)$ and $\nu_n(t)$ converge to constant functions $L$ and $L/2$ respectively.
\end{proof}

\begin{corollary}\label{30}
The following expression,
whose value does not depend on $t$, defines a semi-norm on \CC:
\begin{align}
\Norm{f}=\lim_{n\to\infty}\sqrt{\frac{\sum_{k=0}^{n} |\K{k}[f(t)]|^2}{\sum_{k=0}^{n}\frac{1}{\gamma_k}}}.
\end{align}
Thus, if we let $\CC_0\subset \CC$ to be the set of all $f\in\CC$ such that 
\[ \lim_{n\to\infty}\sqrt{\frac{\sum_{k=0}^{n} |\K{k}[f(t)]|^2}{\sum_{k=0}^{n}\frac{1}{\gamma_k}}}=0,\] 
then $\Norm{f}$ is a norm on the quotient space $\CC_2=\CC/\CC_0$.
\end{corollary}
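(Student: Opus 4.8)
The plan is to reduce the whole statement to Lemma~\ref{FG} together with Minkowski's inequality for the partial sums; after that everything is formal. First I would record well-definedness and independence of $t$: for $f\in\CC$, Lemma~\ref{FG} gives that $\nu_n^f(t)$ converges, uniformly on every compact interval, to the constant $L/2$, where $L=\lim_{n\to\infty}\beta_n^f(t)\ge 0$. Since $x\mapsto\sqrt{x}$ is continuous, $\Norm{f}=\lim_{n\to\infty}\sqrt{\nu_n^f(t)}=\sqrt{L/2}$, so the limit defining $\Norm{f}$ exists, is independent of $t$, and is non-negative.

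Next I would verify the two remaining semi-norm axioms. Homogeneity is immediate from linearity of each $\K{n}$: from $\K{n}[\lambda f](t)=\lambda\,\K{n}[f](t)$ we get $\beta_n^{\lambda f}=|\lambda|^2\beta_n^f$ (so $\lambda f\in\CC$) and $\nu_n^{\lambda f}=|\lambda|^2\nu_n^f$, whence $\Norm{\lambda f}=|\lambda|\,\Norm{f}$. For the triangle inequality, fix $n$ and $t$ and observe that $h\mapsto\bigl(\K{0}[h](t),\dots,\K{n}[h](t)\bigr)$ is a linear map into $\Cset^{n+1}$; composing with the Euclidean norm, dividing by the positive constant $\bigl(\sum_{k=0}^{n}1/\gamma_k\bigr)^{1/2}$, and applying Minkowski's inequality in $\Cset^{n+1}$ gives
\[
\sqrt{\nu_n^{f+g}(t)}\ \le\ \sqrt{\nu_n^{f}(t)}+\sqrt{\nu_n^{g}(t)}.
\]
Since $\CC$ is a vector space, $f+g\in\CC$, so by the first step all three sequences converge; letting $n\to\infty$ yields $\Norm{f+g}\le\Norm{f}+\Norm{g}$. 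This establishes that $\Norm{\,\cdot\,}$ is a semi-norm on $\CC$.

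Finally, passing to the quotient is the standard argument for semi-normed spaces. The set $\CC_0=\{f\in\CC:\Norm{f}=0\}$ is a linear subspace — it is the kernel of the semi-norm, hence closed under scalars by homogeneity and under sums by the triangle inequality. For $h\in\CC_0$, $\Norm{f+h}\le\Norm{f}$ and $\Norm{f}\le\Norm{f+h}+\Norm{-h}=\Norm{f+h}$, so $\Norm{\,\cdot\,}$ is constant on each coset of $\CC_0$; thus $\Norm{[f]}:=\Norm{f}$ is well-defined on $\CC_2=\CC/\CC_0$, inherits non-negativity, homogeneity and the triangle inequality, and $\Norm{[f]}=0$ forces $f\in\CC_0$, i.e.\ $[f]=0$, so it is a genuine norm. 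The only non-formal ingredient is the first step — convergence of $\nu_n^f(t)$ to a $t$-independent constant — which is precisely Lemma~\ref{FG} (itself resting on the Christoffel--Darboux identity \eqref{C-D} and the asymptotics of Theorem~\ref{t1}); the only other point beyond pure bookkeeping is the appeal to $f+g\in\CC$ in the triangle inequality, and that is guaranteed by $\CC$ being a linear space.
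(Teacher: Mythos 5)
Your proposal is correct and follows exactly the route the paper intends: the paper states Corollary~\ref{30} as an immediate consequence of Lemma~\ref{FG} (which supplies existence and $t$-independence of the limit), leaving the homogeneity, the Minkowski inequality for the partial sums $\nu_n$, and the standard quotient-by-kernel argument as the routine verification you have written out. The only external ingredient you lean on, namely that $f+g\in\CC$, is precisely the paper's own assertion that $\CC$ is a vector space, so no new gap is introduced.
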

Note that if $f(t)\in\LL$ then $\sum_{k=0}^{n} |\K{k}[f(t)]|^2$ converges and thus, since 
$\sum_{k=0}^{n}\frac{1}{\gamma_k}$ diverges, $\Norm{f}=0$, which means that $\LL\subseteq\CC_0$.

\begin{lemma}\label{prop1} Let $f,g\in\CC$ and let 
\begin{equation}\label{sigma}
\sigma^{fg}_n(t)=\frac{\sum_{k=0}^{n}
\K{k}[f(t)]{\K{k}[\overline{g(t)}]}}{\sum_{k=0}^{n}\frac{1}{\gamma_k}}.
\end{equation}
If  for some $t=t_0$ the sequence $\sigma^{fg}_n(t_0)$ converges as $n\rightarrow\infty$, then $\sigma^{fg}_n(t)$ converges for all $t$ to a constant function.
\end{lemma}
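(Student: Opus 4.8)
The plan is to follow the same architecture as the proof of Lemma~\ref{FG}: show that the $t$-derivative of $\sigma_n^{fg}(t)$ tends to $0$ uniformly on compact intervals, and then conclude from convergence at the single point $t_0$ that $\sigma_n^{fg}$ converges everywhere to a constant. The bridge between the numerator of $\sigma_n^{fg}$ and its derivative is the Christoffel--Darboux-type identity of Lemma~\ref{CD}.

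First I would reconcile the numerator with \eqref{C-D}. Since we work in the symmetric case $\beta_n=0$, the operators $\K{k}$ have real coefficients, so for a real argument $t$ one has $\K{k}[\overline{g(t)}]=\overline{\K{k}[g(t)]}$ (exactly the identity used in the proof of Lemma~\ref{CD}). Hence $N_n(t):=\sum_{k=0}^{n}\K{k}[f(t)]\,\K{k}[\overline{g(t)}]=\sum_{k=0}^{n}\K{k}[f(t)]\,\overline{\K{k}[g(t)]}$, and \eqref{C-D} gives $N_n'(t)=\gamma_n\bigl(\K{n+1}[f(t)]\,\overline{\K{n}[g(t)]}+\K{n}[f(t)]\,\overline{\K{n+1}[g(t)]}\bigr)$. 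Writing $S_n=\sum_{k=0}^{n}1/\gamma_k$, which does not depend on $t$, we have $\sigma_n^{fg}=N_n/S_n$ and therefore
\[
\bigl|\bigl(\sigma_n^{fg}\bigr)'(t)\bigr|\le\frac{\gamma_n\bigl(|\K{n}[f(t)]|^2+|\K{n+1}[f(t)]|^2+|\K{n}[g(t)]|^2+|\K{n+1}[g(t)]|^2\bigr)}{2S_n}=\frac{\beta_n^f(t)+\beta_n^g(t)}{2S_n},
\]
where I applied $2|ab|\le|a|^2+|b|^2$ to each of the two products in $N_n'$ and $\beta_n^f,\beta_n^g$ are as in \eqref{boundf}. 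Since $f,g\in\CC$, the sequences $(\beta_n^f(t))$ and $(\beta_n^g(t))$ converge uniformly on every compact interval $I$ and hence are uniformly bounded there, while $S_n\to\infty$ by condition~\ref{c4}; therefore $\bigl(\sigma_n^{fg}\bigr)'(t)\to 0$ uniformly on every compact interval.

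Finally I would combine this with the hypothesis $\sigma_n^{fg}(t_0)\to c$ for some $c\in\Cset$. For an arbitrary $t\in\Rset$, letting $I$ be the closed interval with endpoints $t_0$ and $t$ and using $\sigma_n^{fg}(t)=\sigma_n^{fg}(t_0)+\int_{t_0}^{t}\bigl(\sigma_n^{fg}\bigr)'(s)\,ds$, the integral is bounded in absolute value by $|t-t_0|\sup_{s\in I}|(\sigma_n^{fg})'(s)|\to 0$, so $\sigma_n^{fg}(t)\to c$ for every $t$; equivalently one may invoke the standard theorem on differentiating a sequence of functions whose derivatives converge uniformly. Since $c$ is independent of $t$, the limit is the constant function $c$. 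I expect no genuine obstacle beyond this bookkeeping; the only step needing a word of justification is the identity $\K{k}[\overline{g(t)}]=\overline{\K{k}[g(t)]}$ that makes Lemma~\ref{CD} directly applicable, together with the observation — already exploited for Lemma~\ref{FG} — that membership in $\CC$ upgrades ``uniform convergence of $\beta_n^f$'' to ``uniform boundedness of $\beta_n^f$ on compacts,'' which is all the numerator estimate requires.
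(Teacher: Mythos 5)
Your proposal is correct and follows essentially the same route as the paper: bound $\bigl|\frac{d}{dt}\sigma_n^{fg}(t)\bigr|$ via the Christoffel--Darboux identity of Lemma~\ref{CD} together with $2|ab|\le |a|^2+|b|^2$, note that membership in $\CC$ makes the numerator uniformly bounded on compacts while $\sum_{k\le n}1/\gamma_k$ diverges, and then propagate convergence from $t_0$ to all $t$. Your explicit justification of $\K{k}[\overline{g(t)}]=\overline{\K{k}[g(t)]}$ and of the final integration step merely spells out details the paper leaves implicit.
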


\begin{proof} 
Using  \eqref{C-D}  we get 
\begin{align*}
\left|\frac{d}{d t}\sigma_{n}^{fg}(t)\right|&\leq\frac{\g{n}\,
(|\K{n+1}_t[f(t)]\,\K{n}_t[\overline{g(t)}]|+|\K{n}_t[f(t)]\,\K{n+1}_t[\overline{g(t)}]|)}{\sum_{k=0}^{n}\frac{1}{\gamma_k}}\\
&\leq
\frac{\g{n}
(|\K{n}_t[f(t)]|^2+|\K{n+1}_t[f(t)]|^2+|\K{n}_t[g(t)]|^2+|\K{n+1}_t[g(t)]|^2)}{2\sum_{k=0}^{n}\frac{1}{\gamma_k}}.
\end{align*}
Since $f,g\in\CC$, the numerator converges uniformly on every finite interval; since the denominator diverges, $\left|\frac{d}{d t}\sigma_{n}^{f}(t)\right|$ converges uniformly to zero on every finite interval. Thus, if $\sigma^{fg}(t_0)$ converges for some $t=t_0$ it converges for all $t$ to a constant function. 
\end{proof}

\begin{theorem}\label{HA}
If the recurrence coefficients satisfy conditions \ref{c2}-\ref{c6}, then the complex exponentials $e_{\omega}(t)={\e}^{\ii \omega t}$ belong to the associated space $\CC_2$ and  for every $e_{\omega}(t)={\e}^{\ii \omega t}$ and $e_{\sigma}(t)={\e}^{\ii \sigma t}$ such that $\omega\neq \sigma$ we have 
\begin{align*}
 \LIM{n}\frac{\sum_{k=0}^{n} \K{k}[{\e}^{\ii \omega t}]  \K{k}[\overline{{\e}^{\ii \sigma t}}] }
{\sum_{j=0}^{n}\frac{1}{\gamma_j}}= 0.
\end{align*}
\end{theorem}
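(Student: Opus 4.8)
The plan is to reduce the whole statement to the Christoffel--Darboux identity \eqref{CDP} for the orthonormal polynomials $p_k$, by way of the defining identity \eqref{iwt} of the chromatic derivatives and the symmetry assumed throughout this subsection (so $\beta_n=0$ and $p_k(-\omega)=(-1)^kp_k(\omega)$). First I would rewrite the summand: by \eqref{iwt}, $\K{k}[{\e}^{\ii\omega t}]=\ii^kp_k(\omega){\e}^{\ii\omega t}$, while for real $\sigma$, applying \eqref{iwt} with frequency $-\sigma$ and using the parity of $p_k$,
\[
\K{k}[\overline{{\e}^{\ii\sigma t}}]=\K{k}[{\e}^{-\ii\sigma t}]=\ii^kp_k(-\sigma){\e}^{-\ii\sigma t}=(-\ii)^kp_k(\sigma){\e}^{-\ii\sigma t}.
\]
Since $\ii^k(-\ii)^k=1$, multiplication gives $\K{k}[{\e}^{\ii\omega t}]\,\K{k}[\overline{{\e}^{\ii\sigma t}}]=p_k(\omega)p_k(\sigma){\e}^{\ii(\omega-\sigma)t}$, hence
\[
\sum_{k=0}^{n}\K{k}[{\e}^{\ii\omega t}]\,\K{k}[\overline{{\e}^{\ii\sigma t}}]={\e}^{\ii(\omega-\sigma)t}\sum_{k=0}^{n}p_k(\omega)p_k(\sigma).
\]

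For the membership $e_\omega\in\CC_2$, I would take $\sigma=\omega$ above: then $\beta_n^{e_\omega}(t)=\gamma_n\bigl(|p_n(\omega)|^2+|p_{n+1}(\omega)|^2\bigr)=\gamma_n\bigl(p_n(\omega)^2+p_{n+1}(\omega)^2\bigr)$, which is independent of $t$ and converges as $n\to\infty$ by Theorem \ref{t1}; hence $e_\omega\in\CC$. By Corollary \ref{corollary1} (with $f=e_\omega$, so that $\nu_n^{e_\omega}(t)=\sum_{k=0}^{n}p_k(\omega)^2\big/\sum_{k=0}^{n}1/\gamma_k$),
\[
\Norm{e_\omega}^2=\lim_{n\to\infty}\nu_n^{e_\omega}(t)=\frac12\lim_{n\to\infty}\gamma_n\bigl(p_n(\omega)^2+p_{n+1}(\omega)^2\bigr),
\]
and the lower bound $m_B>0$ of Theorem \ref{t1} (with $B=|\omega|$) gives $\Norm{e_\omega}>0$, so $e_\omega\notin\CC_0$ and $e_\omega$ represents a nonzero element of $\CC_2$.

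For the off-diagonal limit, since $\omega\neq\sigma$ the identity \eqref{CDP} gives $\sum_{k=0}^{n}p_k(\omega)p_k(\sigma)=\gamma_n\bigl(p_{n+1}(\omega)p_n(\sigma)-p_{n+1}(\sigma)p_n(\omega)\bigr)/(\omega-\sigma)$, so
\[
\frac{\sum_{k=0}^{n}\K{k}[{\e}^{\ii\omega t}]\,\K{k}[\overline{{\e}^{\ii\sigma t}}]}{\sum_{j=0}^{n}1/\gamma_j}=\frac{{\e}^{\ii(\omega-\sigma)t}}{\omega-\sigma}\cdot\frac{\gamma_n\bigl(p_{n+1}(\omega)p_n(\sigma)-p_{n+1}(\sigma)p_n(\omega)\bigr)}{\sum_{j=0}^{n}1/\gamma_j}.
\]
It remains to bound the numerator on the right uniformly in $n$. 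Put $B=\max(|\omega|,|\sigma|)$; Theorem \ref{t1} provides a constant $M_B<\infty$ with $\gamma_np_n(\xi)^2\le M_B$ and $\gamma_np_{n+1}(\xi)^2\le M_B$ for all $n$ and all $\xi$ with $|\xi|\le B$. Hence $|\gamma_np_{n+1}(\omega)p_n(\sigma)|\le\gamma_n\cdot\sqrt{M_B/\gamma_n}\cdot\sqrt{M_B/\gamma_n}=M_B$, and similarly for the other term, so $\bigl|\gamma_n\bigl(p_{n+1}(\omega)p_n(\sigma)-p_{n+1}(\sigma)p_n(\omega)\bigr)\bigr|\le 2M_B$ for every $n$. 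Since $\sum_{j=0}^{n}1/\gamma_j\to\infty$ by condition \ref{c4}, the displayed ratio tends to $0$, which is the claim.

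The computation itself is short; essentially all the substance is imported from Theorem \ref{t1} and Corollary \ref{corollary1}, so there is no real obstacle, only bookkeeping. The one point needing a word of care is passing from the uniform-limit bound of Theorem \ref{t1} to the bound $\gamma_np_n(\xi)^2\le M_B$ valid for \emph{every} $n$ and $|\xi|\le B$ (routine: a uniformly convergent sequence of continuous functions on $[-B,B]$ is uniformly bounded, which also absorbs the finitely many initial terms). Beyond that, one just notes which hypothesis does which job: condition \ref{c4} forces $\sum1/\gamma_j\to\infty$ in the denominator, the uniform upper bound from Theorem \ref{t1} keeps the numerator bounded, and the convergence assertions of Theorem \ref{t1} and Corollary \ref{corollary1} supply the strict positivity $\Norm{e_\omega}>0$ that makes $e_\omega$ a genuine nonzero member of $\CC_2$.
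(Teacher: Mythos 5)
Your proposal is correct and follows essentially the same route as the paper: rewrite the chromatic derivatives of the exponentials via \eqref{iwt} (using symmetry/parity), get membership and strict positivity of the norm from Theorem \ref{t1} and Corollary \ref{corollary1}, and kill the cross term with the Christoffel--Darboux identity \eqref{CDP} together with the boundedness of $\gamma_n(p_n^2+p_{n+1}^2)$ and the divergence of $\sum_j 1/\gamma_j$ from condition \ref{c4}. The only cosmetic difference is that you bound $|\gamma_n p_{n+1}(\omega)p_n(\sigma)|$ by taking square roots of the individual bounds $\gamma_n p_k(\xi)^2\le M_B$ (boundedness of the convergent sequences at $\omega$ and $\sigma$ already suffices), whereas the paper uses $2|ab|\le a^2+b^2$ and the same convergence facts.
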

\begin{proof}
Let $\omega> 0$;  since
\[
 |\K{n}[{\e}^{\ii \omega t}]|^2 = |{\ii}^n p_{n}(\omega){\e}^{\ii \omega t}|^2= p_{n}^2(\omega)
\]
and thus 
\begin{equation}{\g{n}( |\K{n}[{\e}^{\ii \omega t}]|^2+ |\K{n+1}[{\e}^{\ii \omega t}]|^2)}=
{\g{n}(p_{n}^2(\omega)+p_{n+1}^2(\omega))},\label{fnorm}
\end{equation}
our Corollary \ref{corollary1} implies that the sequence 
${\g{n}( |\K{n}[{\e}^{\ii \omega t}]|^2+ |\K{n+1}[{\e}^{\ii \omega t}]|^2)}$
converges to a positive finite limit uniformly in $t\in\Rset$ and uniformly in $\omega$ from any compact interval. Thus, ${\e}^{\ii \omega t} \in \CC_2$ and by Lemma \ref{30} we have $0<\|{\e}^{\ii \omega t}\|<\infty$. Let $\omega\neq \sigma$. 
Note that 
\begin{align}\label{KKsin}
\sum_{k=0}^{n} \K{k}[{\e}^{\ii \omega t}]  \K{k}[{\e}^{-\ii \sigma t}] &=\sum_{k=0}^{n}  p_{k}(\omega)p_{k}(\sigma){\e}^{\ii (\omega-\sigma) t}.
\end{align}
By the Christoffel Darboux equality \eqref{CDP} we have
\begin{align}\label{cde}
\sum_{k=0}^{n}  p_{k}(\omega)p_{k}(\sigma)=\frac{\gamma_{n}}{\omega-\sigma}
(p_{n+1}(\omega)p_{n}(\sigma)-p_{n+1}(\sigma)
p_{n}(\omega)).
\end{align}
Thus, 
\begin{align*}
\left|\sum_{k=0}^{n} \K{k}[{\e}^{\ii \omega t}]  \K{k}[{\e}^{-\ii \sigma t}]\right| &=\left|\sum_{k=0}^{n}  p_{k}(\omega)p_{k}(\sigma)\right| \\
&\leq \frac{\gamma_{n}}{\omega-\sigma}
(|p_{n+1}(\omega)p_{n}(\sigma)|+|p_{n+1}(\sigma)
p_{n}(\omega)|)\\
&\leq \frac{\gamma_{n}}{2(\omega-\sigma)}
(p_{n+1}^2(\omega)+p_{n}^2(\sigma)+p_{n+1}^2(\sigma)+
p_{n}^2(\omega)).
\end{align*}
Since $\gamma_{n}
(p_{n+1}^2(\omega)+p_{n}^2(\omega))$ and $\gamma_{n}
(p_{n+1}^2(\sigma)+p_{n}^2(\sigma))$ both converge to a finite limit and $\Sigma_{k=0}^n1/\g{k}$ diverges as $n\rightarrow\infty$,  we obtain
\begin{align*}
 \LIM{n}\frac{\sum_{k=0}^{n} \K{k}[{\e}^{\ii \omega t}]  \K{k}[{\e}^{-\ii \sigma t}] }
{\sum_{k=0}^{n}\frac{1}{\gamma_k}}= 0.
\end{align*}
\end{proof}
Thus, on the vector space of all finite linear combinations of complex exponentials $\LIM{n}\sigma_n^{fg}(t)$ is independent of $t$ and it defines an inner product which makes all complex exponentials of distinct frequencies mutually orthogonal. Such an inner product can now be extended by the same formula to spaces relevant to signal processing, such as spaces of functions of the form 
$f(t)=\sum_{k=0}^\infty q_k{\e}^{\ii \omega_k t}$ where $\sum_{k=0}^\infty |q_k|<\infty$ and for some $B>0$ all $\omega_k$ satisfy $|\omega_k|<B$. Such signals have bounded amplitude and finite bandwidth; they are important, for example, for representation of audio signals. The values of  $\gamma_n(|\K{n}[f(t)]|^2+|\K{n+1}[f(t)]|^2)$ for various $n$ are measures of ``local energy" around an instant $t$ of a signal of infinite total energy (in the usual $L_2$ sense); increasing $n$ reduces such localisation.  Operators $\{\K{k}\}_{k\in\Nset}$ can also be used for frequency estimation of multiple sinusoids in the presence of coloured noise; see \cite{IGF}.

As an example, in the space $\CC_2$ associated with the Hermite polynomials (Example 4) we have $p=1/2$ and a calculation shows that for $\gamma_n=c(n+1)^p$ we have 
\[\LIM{n}\frac{\sum_{k=0}^n\frac{1}{\gamma_n}}{\frac{(n+1)^{1-p}}{c(1-p)}}=1\]
Thus, since for the Hermite polynomials $c=1/\sqrt{2}$, the
corresponding norm is defined by
\[\|f\|^ {H}=\LIM{n}\frac{\sqrt{\sum_{k=0}^{n}|\K{k}[f](t)|^2}}{\sqrt[4]{8n+8}}.\]
Theorem \ref{t1} implies that 
for all $\omega >0 $ functions $f_{\omega}(t)={\e}^{\ii\omega\,t}$ have norm
\begin{equation}\label{HN}
\|f_\omega\|^ {H}=\frac{\e^{\omega^2/2}}{\sqrt[4]{4\pi}}.
\end{equation}

It is an open problem if analogous equalities hold for all bounded moment functionals. However, for particular \mom\  one can still define a norm and a scalar product on the vector space of trigonometric polynomials in which complex exponentials have finite positive norms and in which every two complex exponentials are mutually orthogonal.   For example, for the Chebyshev polynomials (Example 2) one can define 
\begin{align*}&&\|f\|^ {T}=\LIM{n}\sqrt{\frac{\sum_{k=0}^{n}|\K{k}[f](t)|^2}{n+1}};&& 
\langle f,g\rangle^ {T}=\LIM{n}{\frac{\sum_{k=0}^{n}\K{k}[f](t)\overline{\K{k}[g](t)}}{n+1}}.\end{align*}

\begin{proposition}\label{CE}
Let $f_{\omega}(t)={\e}^{\ii \omega t}$;  then $\|f_\omega\|^ {T}=1$ and $\langle f_\omega,f_\sigma\rangle^ {T}=0$ for all $-\pi \leq \omega, \sigma \leq \pi$ such that $\omega\neq \sigma$.
\end{proposition}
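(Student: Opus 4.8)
The plan is to reduce both claims to explicit facts about the Chebyshev polynomials of the first kind together with the Christoffel--Darboux identity \eqref{CDP}, mirroring the proof of Theorem~\ref{HA} but with the roles reversed: there the unboundedness of $\gamma_n$ was exploited, whereas here $\gamma_n$ is bounded and it is the uniform boundedness of the $\PT{k}{\cdot}$ on $[-\pi,\pi]$ that does the work. First I would record, using \eqref{iwt}, that $\K{k}[{\e}^{\ii\omega t}]={\ii}^{k}\,\PT{k}{\omega}\,{\e}^{\ii\omega t}$, hence
\[
|\K{k}[f_\omega](t)|^{2}=\PT{k}{\omega}^{2},\qquad
\K{k}[f_\omega](t)\,\overline{\K{k}[f_\sigma](t)}=\PT{k}{\omega}\,\PT{k}{\sigma}\,{\e}^{\ii(\omega-\sigma)t};
\]
this already shows that $\sum_{k=0}^{n}|\K{k}[f_\omega](t)|^{2}$ is independent of $t$, and it reduces $\|f_\omega\|^{T}$ to $\lim_{n}\bigl(\tfrac{1}{n+1}\sum_{k=0}^{n}\PT{k}{\omega}^{2}\bigr)^{1/2}$ and $\langle f_\omega,f_\sigma\rangle^{T}$ to $\lim_{n}\tfrac{{\e}^{\ii(\omega-\sigma)t}}{n+1}\sum_{k=0}^{n}\PT{k}{\omega}\,\PT{k}{\sigma}$.

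For the norm I would write $\omega=\pi\cos\theta$ with $\theta\in(0,\pi)$; since $\PT{0}{\omega}=1$ and $\PT{k}{\omega}=\sqrt{2}\cos k\theta$ for $k\geq1$, one has $\PT{k}{\omega}^{2}=1+\cos 2k\theta$ for $k\geq1$, and therefore
\[
\sum_{k=0}^{n}\PT{k}{\omega}^{2}=(n+1)+\sum_{k=1}^{n}\cos 2k\theta
=(n+1)+\frac{\sin n\theta\,\cos(n+1)\theta}{\sin\theta}.
\]
The correction term is bounded in modulus by $1/|\sin\theta|$, uniformly in $n$, so dividing by $n+1$ and letting $n\to\infty$ yields $\|f_\omega\|^{T}=1$.

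For the orthogonality, for $\omega\neq\sigma$ in $[-\pi,\pi]$ I would apply \eqref{CDP} to the family $\{\PT{k}{\cdot}\}$:
\[
\sum_{k=0}^{n}\PT{k}{\omega}\,\PT{k}{\sigma}
=\frac{\gamma_{n}\bigl(\PT{n+1}{\omega}\,\PT{n}{\sigma}-\PT{n+1}{\sigma}\,\PT{n}{\omega}\bigr)}{\omega-\sigma}.
\]
Because $\gamma_{n}=\pi/2$ for $n\geq1$ (and $\gamma_{0}=\pi/\sqrt{2}$) while $|\PT{k}{\cdot}|\leq\sqrt{2}$ on $[-\pi,\pi]$ (since $|T_{k}(x)|\leq1$ for $|x|\leq1$), the right-hand side is bounded in modulus by $2\pi/|\omega-\sigma|$, independently of $n$; dividing by $n+1$ forces the limit to be $0$, for every $t$.

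I expect the calculations themselves to be entirely routine; the one point that needs care — and the only place where the statement is delicate — is the boundary $\omega=\pm\pi$, where $\theta\in\{0,\pi\}$, the correction sum $\sum_{k=1}^{n}\cos 2k\theta$ equals $n$ rather than $O(1)$, and one gets $\|f_{\pm\pi}\|^{T}=\sqrt{2}$; thus the norm identity is to be read for $\omega\in(-\pi,\pi)$, while the orthogonality relation is unaffected and holds on the closed interval. The conceptual takeaway is that the uniform boundedness of the Chebyshev polynomials on their interval of orthogonality compensates for the boundedness of the recursion coefficients, which is precisely the opposite mechanism to the one driving Theorem~\ref{HA}.
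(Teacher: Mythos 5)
Your proof is correct and follows essentially the same route as the paper: reduce via \eqref{iwt} to the Chebyshev values $\PT{k}{\omega}$, evaluate $\sum_{k=0}^{n}\PT{k}{\omega}^{2}$ explicitly (your trigonometric summation yields exactly the paper's closed form obtained from \eqref{sumsquares}), and kill the mixed sum with the Christoffel--Darboux identity \eqref{CDP} together with the uniform bound $|\PT{k}{\omega}|\leq\sqrt{2}$. Your remark about the endpoints $\omega=\pm\pi$, where the norm is $\sqrt{2}$ rather than $1$, is a valid observation consistent with the fact that the paper's own proof establishes the norm identity only for $-\pi<\omega<\pi$.
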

\begin{proof}
Using \eqref{iwt} and \eqref{sumsquares},  one can verify that for $-\pi<\omega<\pi$,
\begin{equation*}
\|f_\omega\|^ {T}=\frac{1}{n+1}\sum_{k=0}^{n}\PT{{k}}{\omega}^2 =
\frac{2n+1}{2n+2}+\frac{\sin((2n+1) \arccos\omega)}
{(2n+2)\sqrt{1-\omega^2}}\to  1.
\end{equation*}
Consequently, $\|{f_\omega(t)}\|^T=1$.  Also, from \eqref{iwt} we get
\begin{align*}
\langle f_\omega,f_\sigma\rangle^ {T}&=
\LIM{n}\frac{\sum_{k=0}^{n}\PT{k}{\omega}\PT{k}{\sigma}}{{n+1}}
\end{align*}
Since $\PT{n}{\omega}\leq\sqrt{2}$ on $(-\pi,\pi)$, \eqref{CDP}
implies that for  $\omega\neq\sigma$,
\begin{equation*}
\LIM{n}\frac{\sum_{k=0}^{n}
\PT{k}{\omega}\PT{k}{\sigma}}{n+1}
=\LIM{n}\frac{2}{\pi}\frac{\PT{{n+1}}{\omega}
\PT{{n}}{\sigma}-\PT{{n+1}}{\sigma}\PT{{n}}{\omega}}{(n+1)(\omega-\sigma)}=0.
\end{equation*}
Thus, $\langle f_\omega,f_\sigma\rangle^ {T}=0$. 
\end{proof}

Note that in this case, unlike the case of the family
associated with the Hermite polynomials, the norm of a pure
harmonic oscillation does not depend on its
frequency (but the frequency must belong to the support of the moment distribution function). 

\subsection{A geometric interpretation of chromatic derivatives}
Let $\Langle a_n\Rangle_{n\in\Nset}$ denote a square summable complex sequence,  $\Langle a_n\Rangle_{n\in\Nset}\in l^2$. For every particular value of $t\in \Rset$ the mapping of \LL\
into $l^2$ given by $f\mapsto f^t=\Langle
\K{n}[f](t)\Rangle_{n\in\Nset}$ is a unitary isomorphism which
maps the basis of \LL,  consisting of vectors
$B^0_{k}(t)=(-1)^k\K{k}[\mm(t)]$, into vectors $b_{k}^{t}=
\Langle(-1)^k(\K{n}\circ\K{k})[\mm(t)]\Rangle_{n\in\Nset}$.
Since the scalar product given by \eqref{scl} is independent of $t$, we have 
\begin{align*} \langle b_{k}^{t},b_{m}^{t}\rangle_{l^2}&=\sum_{n=0}^\infty (-1)^k(\K{n}\circ\K{k})[\mm](t)
\overline{(-1)^m(\K{n}\circ\K{m})[\mm](t)}\\
&=\langle(-1)^k\K{k}[\mm](t),\overline{(-1)^m\K{m}[\mm](t)}\rangle_{\!\!\!{\mathcal{M}}}\\
&=\sum_{n=0}^\infty(-1)^k(\K{n}\circ\K{k})[\mm](0) \overline{(-1)^m(\K{n}\circ\K{m})[\mm](0)}\\
&=\delta(n,k)\delta(n,m)=\delta(k,m)
\end{align*}
while \eqref{projf} with $u=0$ implies 
\begin{align*} \langle
f^t,b_{k}^{t}\rangle_{l^2}&=\sum_{n=0}^\infty \K{n}[f](t)\overline{(-1)^k(\K{n}\circ\K{k})[\mm](t)}\\
&=\langle f,(-1)^k\K{k}[\mm]\rangle_{\!\!\!{\mathcal{M}}}\\
&=\sum_{n=0}^\infty\K{n}[f](0)\overline{(-1)^k(\K{n}\circ\K{k})[\mm](0)}\\
&=\K{k}[f](0)
\end{align*}
Thus, since
$\sum_{k=0}^{\infty}|\K{k}[f](0)|^2< \infty$, 
\begin{align*}
\sum_{k=0}^\infty\K{k}[f](0)b_k^t&=\sum\K{k}[f](0)\Langle (-1)^k(\K{n}\circ\K{k})[\mm](t)\Rangle\\
&=\Langle\sum_{k=0}^\infty \K{k}[f](0) (-1)^k(\K{n}\circ\K{k})[\mm](t)\Rangle\\
&=\Langle \K{n}[f](t)\Rangle
\end{align*}
we have
$\sum_{k=0}^{\infty}\K{k}[f](0)\,b_{k}^{t}\in l^2$  for every fixed $t$ and
\begin{align*}
\sum_{k=0}^{\infty}\langle f^t,b_{k}^{t}\rangle\; b_{k}^{t}&=
\sum_{k=0}^{\infty}\K{k}[f](0)\,b_{k}^{t}\\&=
\Big\langle\hspace*{-2mm}\Big\langle\sum_{k=0}^\infty\K{k}[f](0)
(-1)^k(\K{n}\circ\K{k})[\mm(t)]\Big\rangle\hspace*{-2mm}
\Big\rangle_{n\in\Nset}.
\end{align*}
Applying Corollary \ref{postdiff} we obtain
\begin{equation*}
\sum_{k=0}^{\infty}\langle f^t,b_{k}^{t}\rangle\; b_{k}^{t}=
\Langle \K{n}[f](t)\Rangle_{n\in \Nset}=f^t.
\end{equation*}

Thus, while the coordinates of $f^t=\Langle \K{n}[f](t)\Rangle_{n\in
\Nset}$ in the usual basis of $l^2$ vary with $t$, the
coordinates of $f_t$ in the bases $\{b_{k}^{t}\}_{k\in \Nset}$
remain the same as $t$ varies.

We now show that for symmetric $\mom$,  $\{b_{n}^{t}\}_{n\in\Nset}$ is the moving
frame of a helix $H:\Rset\mapsto l^2$.

\begin{lemma}[\!\!\cite{IG5}]\label{continuous}
Assume that $f(t),f^\prime(t)\in\LL$ and let $t\in\Rset$ vary; then $\{f^t\,:\,t\in\Rset\}=
\{\Langle\K{n}[f](t)\Rangle_{n\in\Nset}\,:\,t\in\Rset\}$ is a continuous curve in
$l^2$.
\end{lemma}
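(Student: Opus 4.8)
The plan is to prove the stronger statement that $t\mapsto f^t$ is Lipschitz from $\Rset$ into $l^2$, with constant $\norm{f'}$; continuity of the curve is then immediate. First I would check that $f^t$ really is an element of $l^2$ for each $t$: since $f\in\LL$ we have $\FT[f]\in\LT$, and Corollary~\ref{cdnorm} (equivalently \eqref{nor}) gives $\|f^t\|_{l^2}^2=\sum_{n=0}^{\infty}|\K{n}[f](t)|^2=\noi{\FT[f](\omega)}^2<\infty$. Next, for every real $h$ the translate $f(\cdot+h)$ again lies in $\LL$: from $f(z)=\int_{-\infty}^{\infty}\FT[f](\omega)\,{\e}^{\ii\omega z}\,\da$ one gets $f(z+h)=\int_{-\infty}^{\infty}\bigl({\e}^{\ii\omega h}\FT[f](\omega)\bigr){\e}^{\ii\omega z}\,\da$, and ${\e}^{\ii\omega h}\FT[f](\omega)\in\LT$ (the extra factor has modulus $1$ on $\Rset$), so $f(\cdot+h)=\FT^{-1}[{\e}^{\ii\omega h}\FT[f]]\in\LB$, its restriction is in $\LL$, and $\FT[f(\cdot+h)](\omega)={\e}^{\ii\omega h}\FT[f](\omega)$.

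Because each $\K{n}$ is a constant--coefficient differential operator, $\K{n}[f(\cdot+h)](t)=\K{n}[f](t+h)$, so $f^{t+h}-f^t$ is the coordinate sequence of $g_h:=f(\cdot+h)-f\in\LL$ evaluated at $t$. The $t$--independence of the $\LL$--norm in \eqref{nor} then yields
\begin{equation*}
\|f^{t+h}-f^t\|_{l^2}^2=\sum_{n=0}^{\infty}|\K{n}[g_h](t)|^2=\norm{g_h}^2=\noi{\bigl({\e}^{\ii\omega h}-1\bigr)\FT[f](\omega)}^2=\int_{-\infty}^{\infty}\bigl|{\e}^{\ii\omega h}-1\bigr|^2|\FT[f](\omega)|^2\,\da .
\end{equation*}
Using $|{\e}^{\ii\omega h}-1|=2|\sin(\omega h/2)|\le|\omega h|$ for real $\omega,h$, together with $\FT[f'](\omega)=\ii\omega\FT[f](\omega)$ (obtained by differentiating the integral for $f$ under the integral sign, legitimate by Proposition~\ref{f-phi}) and $\noi{\FT[f'](\omega)}^2=\norm{f'}^2$ from Proposition~\ref{local-space-gen}, I would bound the last display by $h^2\int_{-\infty}^{\infty}\omega^2|\FT[f](\omega)|^2\,\da=h^2\norm{f'}^2$. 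Since $f'\in\LL$ this is finite, so $\|f^{t+h}-f^t\|_{l^2}\le|h|\,\norm{f'}\to0$ as $h\to0$, uniformly in $t$; hence $\{f^t:t\in\Rset\}$ is a continuous (in fact uniformly Lipschitz) curve in $l^2$.

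I do not expect a serious obstacle: the argument is essentially bookkeeping, and the only point needing care is that each of $f(\cdot+h)$, $g_h$, $\FT[f]$ and $\omega\FT[f]$ lies in the appropriate space ($\LL$ or $\LT$), so that the identities $\norm{\cdot}^2=\sum_n|\K{n}[\cdot](t)|^2=\noi{\FT[\cdot]}^2$ may be applied -- all guaranteed by Proposition~\ref{f-phi}, Corollary~\ref{cdnorm} and Proposition~\ref{local-space-gen}. If one prefers not to invoke $f'\in\LL$, continuity alone follows from $f\in\LL$ by dominated convergence applied to $\int_{-\infty}^{\infty}|{\e}^{\ii\omega h}-1|^2|\FT[f](\omega)|^2\,\da$, whose integrand tends to $0$ pointwise $d\alpha$--almost everywhere and is dominated by $4|\FT[f](\omega)|^2\in\LT$; the hypothesis $f'\in\LL$ is exactly what upgrades this to the Lipschitz bound and is what is needed afterwards for the velocity vector of the helix.
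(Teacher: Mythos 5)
Your proposal is correct and follows essentially the same route as the paper: both express $\norm{f(\cdot+h)-f}^2$ as $\int_{-\infty}^{\infty}|1-{\e}^{\ii\omega h}|^2|\FT[f](\omega)|^2\,\da$, bound $|1-{\e}^{\ii\omega h}|$ by $|\omega h|$ via the sine estimate, and use $f'\in\LL$ (i.e.\ $\omega\FT[f]\in\LT$) to obtain the bound $h^2\noi{\FT[f']}^2\to 0$. Your added bookkeeping (the translate lying in $\LL$, the explicit Lipschitz constant, and the dominated-convergence remark) is fine but does not change the argument.
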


\begin{proof} Let $f\in\LL$ and $t, h\in \Rset$; then, using $\left|1-{\e}^{\ii\,\omega\,h}\right|^2=4\sin^2(h\omega/2)$, we obtain
\begin{align*}
\norm{f(t)-f(t+h)}^2&=\int_{-\infty}^{\infty}\left|\FT[f](\omega)\right|^2\left|1-{\e}^{\ii \,h\,\omega}\right|^2\da\\
&= h^2 \int_{-\infty}^{\infty}\left|\omega\FT[f](\omega)\right|^2
\left(\frac{\sin\left(\frac{h\omega}{2}\right)}{\frac{h\omega}{2}}\right)^2\da\\
&\leq h^2\noi{\FT[f^\prime](\omega)}^2\to  0
\end{align*}
\end{proof}

 If $g(t),g^\prime$ and $g^{\prime\prime}(t)$ belong to $\LL$,
then by Lemma \ref{deriv} $\displaystyle{\lim_{|h|\to  0}
\norm{\frac{g(t)-g(t+h)}{h}-g^\prime(t)} = 0}$; thus, the curve
$\vec{g}(t)=\Langle \K{n}[g](t)\Rangle_{n\in\Nset}\in l^2$ is
differentiable, and
$(\vec{g})^{\prime}(t)=\Langle\K{n}[g^{\prime}](t)
\Rangle_{n\in\Nset}$.

Since $\K{k}[\mm](t)\in\LL$ for all $k$, if we let
$\vec{e}_{k+1}(t)=\Langle(\K{k}\circ\K{n})[\mm](t)
\Rangle_{n\in\Nset}$ for $k\geq 0$, then  $\vec{e}_k(t)$ are differentiable
for all $k$. Since $l^2$ is a  complete space and $\vec{e}_1(t)$ is
continuous, $\vec{e}_1(t)$ has an antiderivative $\vec{H}(t)$.
Using \eqref{three-term} for the symmetric case (i.e., when $\beta_n=0$ for all $n$), we obtain
\begin{align*}
\vec{e}_1(t)&=\vec{H}^{\!\!\ \prime}(t);\\\vec{e}_1^{\
\prime}(t)&=\Langle(\dd\circ\K{n})
[\mm](t)\Rangle_{n\in\Nset}=\gamma_0\, \Langle(\K{1}\circ\K{n})
[\mm](t)\Rangle_{n\in\Nset}\\
&=\gamma_0\, \vec{e}_2(t);\label{FS1}\nonumber \\
\vec{e}_k^{\ \prime}(t)&=-\gamma_{k-2}
\Langle(\K{k-2}\circ\K{n})
[\mm](t)\Rangle_{n\in\Nset}+\gamma_{k-1}\Langle(\K{k}\circ\K{n})
[\mm](t)\Rangle_{n\in\Nset}\\
&= - \gamma_{k-2}\,\vec{e}_{k-1}(t) +
\gamma_{k-1}\,\vec{e}_{k+1}(t),\;\;\; \mbox{for $k\geq 2$}.\nonumber
\end{align*}
This means that the curve $\vec{H}(t)$ is a helix in $l^2$
because it has constant curvatures $\kappa_k=\gamma_{k-1}$ for all
$k\geq 1$; the above equations are the corresponding
Frenet--Serret formulas and
$\vec{e}_{k+1}(t)=\Langle(\K{k}\circ\K{n})[\mm](t)\Rangle_{n\in
\Nset}$ for $k\geq 0$ form the orthonormal moving frame of the
helix $\vec{H}(t)$.

\section{Remarks}
The special case of the chromatic derivatives presented in
Example 2 were first introduced in \cite{IG0}; the
corresponding chromatic expansions were subsequently introduced
in \cite{IG00} and first published in \cite{IG1,IG2}. 
These concepts emerged in the course of the
author's design of a pulse width modulation power amplifier.
Subsequently, the research team of the author's startup, \emph{Kromos
Technology Inc.,} extended these notions to various systems
corresponding to several classical families of orthogonal
polynomials \cite{CH,HB}. We also designed and implemented a
channel equaliser and a digital transceiver
(unpublished), based on chromatic expansions. 
In \cite{CNV} chromatic expansions
were related to the work of Papoulis \cite{Pap} and
Vaidyanathan \cite{VA}. In \cite{NIV} and \cite{VIN} the theory
was cast in the framework commonly used in signal processing.
More recently, the theory of chromatic derivatives and expansions was further developed in \cite{ IG5, IG6,WS,Gil,IZ, Zaygen,Sav,Zaygenfunc,SW, Wdisc,Dev,Gilprol,Zayed_Bookchap,horv,Zayfcn }
and successfully applied to several problems in signal processing in \cite{ IGF,WAKI,WIK, WSI, IWK1, IWK2}.

\bibliographystyle{plain}
\bibliography{CD-bib}

\end{document}